\documentclass[10.5pt,reqno]{amsart}
\usepackage{longtable} 
\usepackage{hyperref}
\usepackage[T1]{fontenc}
\usepackage[utf8]{inputenc}
\usepackage[english]{babel} 
\usepackage{textcomp}
\usepackage{dsfont}
\usepackage{latexsym}
\usepackage{amssymb}
\usepackage{amsthm}
\usepackage{amsmath}
\DeclareMathAlphabet{\mathpzc}{OT1}{pzc}{m}{en}
\usepackage{yfonts}
\usepackage{xfrac}
\usepackage{newlfont}
\usepackage{graphicx}
\usepackage{mathtools}
\usepackage{comment}
\usepackage{indentfirst}
\usepackage{braket}
\usepackage{mathrsfs} 
\usepackage{xcolor}

\usepackage{etoolbox}

\usepackage{scalerel}[2014/03/10]
\usepackage[usestackEOL]{stackengine}
\newcommand{\dashint}{\,\ThisStyle{\ensurestackMath{%
			\stackinset{c}{.2\LMpt}{c}{.5\LMpt}{\SavedStyle-}{\SavedStyle\phantom{\int}}}%
		\setbox0=\hbox{$\SavedStyle\int\,$}\kern-\wd0}\int}

\DeclareMathOperator{\card}{Card}

\DeclareMathOperator{\supp}{Supp}

\DeclareMathOperator{\ad}{ad}

\renewcommand{\Re}{\mathrm{Re}\,}

\newcommand{\ee}{\mathrm{e}}

\newcommand{\loc}{\mathrm{loc}}
\newcommand{\vect}[1]{\mathbf{{#1}}}
\newcommand{\dd}{\mathrm{d}}

\DeclarePairedDelimiter{\abs}{\lvert}{\rvert}

\DeclarePairedDelimiter{\norm}{\lVert}{\rVert}

\let\originalleft\left
\let\originalright\right
\renewcommand{\left}{\mathopen{}\mathclose\bgroup\originalleft}
\renewcommand{\right}{\aftergroup\egroup\originalright}

\newcommand{\grado}{\Df}
\newcommand{\N}{\mathds{N}}
\newcommand{\Z}{\mathds{Z}}
\newcommand{\Q}{\mathds{Q}}

\newcommand{\C}{\mathds{C}}

\newcommand{\R}{\mathds{R}}

\newcommand{\gf}{\mathfrak{g}}

\newcommand{\Df}{\mathfrak{D}}

\newcommand{\unif}{\mathrm{unif}}

\newcommand{\Bc}{\mathcal{B}}

\newcommand{\Cc}{\mathcal{C}}
\newcommand{\Dc}{\mathcal{D}}

\newcommand{\Fc}{\mathcal{F}}

\newcommand{\Ic}{\mathcal{I}}

\newcommand{\Kc}{\mathcal{K}}
\newcommand{\Lc}{\mathcal{L}}
\renewcommand{\Mc}{\mathcal{M}}
\newcommand{\Nc}{\mathcal{N}}
\newcommand{\Oc}{\mathcal{O}}

\newcommand{\Rc}{\mathcal{R}}
\newcommand{\Sc}{\mathcal{S}}

\newcommand{\Sr}{\mathscr{S}}

\newcommand{\meg}{\leqslant}
\newcommand{\Meg}{\geqslant}
\newcommand{\eps}{\varepsilon}
\renewcommand{\phi}{\varphi}
\newcommand{\mi}{\mu}

\keywords{Lie groups,  Triebel--Lizorkin spaces, weighted subcoercive operators.}
\thanks{{\em Math Subject Classification 2020}: 46E36, 22E30.}
\thanks{The author is a member of the 	Gruppo Nazionale per l'Analisi
	Matematica, la Probabilit\`a e le	loro Applicazioni (GNAMPA) of
	the Istituto Nazionale di Alta Matematica (INdAM). The author was partially funded by the INdAM-GNAMPA Project CUP\_E5324001950001.
}

\begin{document}
	
	\title[Besov and Triebel--Lizorkin Spaces]{Besov and Triebel--Lizorkin Spaces on Filtered Lie Groups with Polynomial Growth, II: Interpolation, Localization, Pointwise Multiplication}
	
	\author[M.\ Calzi]{Mattia Calzi} 
	\address{Dipartimento di Matematica, Universit\`a degli Studi di
		Milano, Via C. Saldini 50, 20133 Milano, Italy}
	\email{{\tt mattia.calzi@unimi.it}}
	
	\theoremstyle{definition}
	\newtheorem{deff}{Definition}[section]

	\newtheorem{oss}[deff]{Remark}
	
	\newtheorem{ass}[deff]{Assumptions}
	
	\newtheorem{nott}[deff]{Notation}

	\theoremstyle{plain}
	\newtheorem{teo}[deff]{Theorem}
	
	\newtheorem{lem}[deff]{Lemma}
	
	\newtheorem{prop}[deff]{Proposition}
	
	\newtheorem{cor}[deff]{Corollary}
	
	\begin{abstract}
		We continue to develop a theory of  Besov and Triebel--Lizorkin spaces associated with weighted subcoercive operators on a real connected Lie group, specializing to the case of groups with polynomial volume growth. We consider the full scale of spaces and consider interpolation, localization, and pointwise multiplication.
	\end{abstract}
	
	\maketitle

	\section{Introduction}

	Besov and Triebel--Lizorkin spaces form a large class of function spaces on the Euclidean spaces which provides a uniform, albeit somewhat technical, way to study simultaneously several classical function spaces, such as Sobolev spaces with integer regularity, fractional Sobolev spaces, both in the version of Sovolev--Slobodeckij--Gagliardo spaces and in the version of Bessel potential spaces, Lipschitz spaces, Hardy and BMO spaces, etc. These spaces have been extensively studied in the classical Euclidean setting (cf., e.g.,~\cite{TriebelFS,TriebelFS2,TriebelFS3}), but have also been extended to more general contexts, such as: open subsets of $\R^n$ (cf., e.g.,~\cite[Chapter 5]{TriebelFS2}); Riemannian manifolds with bounded geometry (cf., e.g.,~\cite[Chapter 7]{TriebelFS2}); Lie groups endowed with a left-invariant Riemannian (cf., e.g.,~\cite[Chapter 7]{TriebelFS2}) or sub-Riemannian metric (cf., e.g.,~\cite{BPV,BPV2,BPV3}); metric spaces endowed with suitable operators resembling a (sub-)Laplacian (cf., e.g.,~\cite{TriebelFS3,Hu}).\footnote{The literature on the subject is quite extensive and the above mentioned references should only be intended as a very short list of examples, and not as an exhaustive list of all (or only of the main) contributions.} 
	There are nonetheless some contexts where `measuring regularity' in a `Riemannian way,' that is, grouping together all differential operators of the same order, or, more generally, in a `sub-Riemannian way,' appears to be inconvenient since it either clashes with the underlying geometry of the space or with the structure of the differential operator at hand. For instance, let $G$ be a homogeneous group, that is, a simply connected nilpotent Lie group whose Lie algebra $\gf$ has a graduation $(\gf_\lambda)_{\lambda>0}$; in other words, $\gf=\bigoplus_{\lambda>0} \gf_\lambda$ and $[\gf_\lambda,\gf_\nu]\subseteq \gf_{\lambda+\mi}$ for every $\lambda,\mi>0$. Then, $G$ may be identified with $\gf$ by means of the exponential map and $\gf$ may be endowed with a family of automorphic dilations $(\delta_r)_{r>0}$ defined so that $\delta_r(X)=r^\lambda X$ for every $X\in \gf_\lambda$. Operators which are compatible with these dilations (that is, homogeneous operators) are consequently quite natural in this context and one is therefore led to consider (`Goodman type') Sobolev spaces of the form $\Set{f\in L^p(G)\colon\forall \alpha\; (d_\alpha\meg k \implies\vect X^\alpha f\in L^p(G))}$, where $\vect X^\alpha=X_1^{\alpha_1}\cdots X_n^{\alpha_n}$ for some homogeneous basis $(X_1,\dots, X_n)$ of $\gf$, and where $d_\alpha=\sum_j \alpha_j \deg(X_j)$. It turns out, however, that these spaces behave quite weirdly for general $k$ -- for instance, they do not interpolate as one may expect. In fact, a different class of `Bessel potential' Sobolev spaces exhibiting a more natural behviour was introduced in~\cite{FischerRuzhansky}, and was shown to coincide with the previous `Goodman type' Sobolev spaces only for specific values of $k$. It is now worthwhile remarking that, whereas the usual `Bessel potential' Sobolev spaces (and, more generally, several of the Besov and Triebel--Lizorkin spaces briefly mentioned above) are essentially constructed using a (sub-)Laplacian, these `homogeneous' Sobolev spaces were constructed using positive Rockland operators instead, that is, homogeneous and hypoelliptic left-invariant differential operators. A definition using second order differential operator would simply not be possible. 
	Notice, by the way, that replacing a second-order subelliptic differential operator with a higher-order one provides several technical difficulties, since the associated heat kernel cannot be positive, and there is no longer any finite speed property for the corresponding wave propagator -- tools which often lie at the core of several proofs in the literature.
	
	It is therefore natural to wonder whether there is a more general framework which allows to deal at the same time with sub-Laplacians and positive Rockland operators. As a matter of fact, ter Elst and Robinson showed that weighted subcoercive operators provide a quite reasonable and natural choice (cf.~\cite{ElstRobinson}). Indeed, given a group $G$ whose Lie algebra is endowed with a suitable increasing filtration $(\gf_\lambda)_{\lambda>0}$, it is possible to associate a homogeneous group $G_*$ (its `contraction') to $G$ in a natural way, and to associate to every left-invariant differential operator $\Lc$ of degree $d$ some homogeneous left-invariant differential operator $P$ of degree $d$ on $G_*$ (which plays the r\^ole of the `principal part' of $\Lc$). The operator $\Lc$ is then said to be weighted subcoercive if $P+P^*$ is a positive Rockland operator. Notice that this definition mimics closely that of elliptic operators, and that Rockland operators play the r\^ole of homogeneous elliptic operators. Weighted subcoercive operators then enjoy several useful properties. For example, they generate a heat semigroup $(\ee^{-t\Lc})$ whose convolution kernel satisfies suitable Gaussian estimates (even though the exponential decay depends on the degree $d$ and is milder than the classical one). If, in addition, $\Lc$ is formally self-adjoint, then the closure of $\Lc$ on the space of test functions is self-adjoint on $L^2$, hence generates a functional calculus which enjoys particularly interesting properties when $G$ has polynomial growth.
	As shown in~\cite{BCP}, using the heat semigroup associated with a weighted subcoercive operator allows one to define natural Besov and Triebel--Lizorkin  spaces  $B^{p,q}_\alpha$ and $F^{p,q}_\alpha$ on a general connected filtered Lie group. The resulting spaces then do not depend on the chosen operator. In~\cite{BCP,Calzi2,Calzi3} several properties of these spaces were proved, including:
	\begin{itemize}
		\item $B^{p,q}_\alpha$ and $F^{p,q}_\alpha$ are Banach spaces;
		
		\item the space $C^\infty_c(G)$ of test functions is dense in $B^{p,q}_\alpha$ and $F^{p,q}_\alpha$ for $p,q<\infty$;
		
		\item $B^{p',q'}_{-\alpha}$ and $F^{p',q'}_{-\alpha}$ may be canonically identified with the duals of $B^{p,q}_\alpha$ and $F^{p,q}_\alpha$, respectively, when $p,q<\infty$;
		
		\item $B^{p_1,q_1}_{\alpha_1}\subseteq B^{p_2,q_2}_{\alpha_2}$ when $p_1\meg p_2$, $\alpha_2 -Q_*/p_2\meg \alpha_1-Q_*/p_1$, and either $q_1\meg q_2$ or $\alpha_2 -Q_*/p_2< \alpha_1-Q_*/p_1$, where $Q_*$ denotes the homogeneous dimension of $G_*$;\footnote{This and the following fact are actually true when $G$ is endowed with a left Haar measure, and should be slightly modified in the general case.}
		
		\item   $F^{p_1,q_1}_{\alpha_1}\subseteq F^{p_2,q_2}_{\alpha_2}$ when $p_1\meg p_2$, $\alpha_2 -Q_*/p_2\meg \alpha_1-Q_*/p_1$, and either $q_1\meg q_2$ or $\alpha_2 -Q_*/p_2< \alpha_1-Q_*/p_1$ or $p_1<p_2$;
		
		\item $B^{p,q}_\alpha,F^{p,q}_\alpha \subseteq L^p$ when $\alpha>0$, and $F^{p,2}_0=L^p$ for $p\in (1,\infty)$;
		
		\item if $\omega\in \R$ is sufficiently large, then $(\Lc+\omega I)^{-\alpha'}$ induces canonical isomorphisms of $B^{p,q}_\alpha$ and $F^{p,q}_\alpha$ onto $B^{p,q}_{\alpha+\alpha'}$ and $F^{p,q}_{\alpha+\alpha'}$, respectively;
		
		\item if $(X_j)$ is a family of elements of $\gf$ such that the corresponding elements $Y_j$ of $\gf_*$ induce a basis of $\gf_*/[\gf_*,\gf_*]$, and if $\dd$ is the least common multiple of the $d_j=\deg(X_j)$, then $f\in B^{p,q}_\alpha$ (resp.\ $f\in F^{p,q}_\alpha$) if and only if $\ee^{-\Lc}f\in L^p$ and $X_j^{\dd/d_j}\in B^{p,q}_{\alpha-\dd}$ (resp.\ $X_j^{\dd/d_j}\in F^{p,q}_{\alpha-\dd}$) for every $j$. In particular $F^{p,2}_{k\dd}$ coincides with the above-mentioned `Goodman type' Sobolev spaces when $k\in\N$ and $p\in (1,\infty)$;
		
		\item the spaces $B^{p,q}_\alpha$ and the spaces $F^{p,q}_\alpha$ interpolate as the classical ones;
		
		\item $B^{p,q}_\alpha\cap L^\infty$ and $F^{p,q}_\alpha\cap L^\infty$ are algebras under pointwise multiplication for every $\alpha>0$. In particular, $B^{p,q}_\alpha$ and $F^{p,q}_\alpha$ are algebras for every $\alpha>Q_*/p$;
		
		\item the spaces of pointwise multipliers of $B^{p,q}_\alpha$ and $F^{p,q}_\alpha$ may be characterized for $\alpha>Q_*/p$;
		
		\item  the spaces $F^{p,q}_\alpha$  enjoy a localization property as the classical ones;

		\item some Besov and Triebel--Lizorkin spaces may be described in terms of (finite) differences.
	\end{itemize}
	
	In~\cite{Calzi4}, the full scale of Besov and Triebel--Lizorkin spaces was considered on a filtered Lie group with polynomial volume growth, and the extension of the first eight of the above properties was considered. In addition, also a discretization procedure, in the spirit of~\cite{FrazierJawerth}, was studied. Furthermore, in~\cite{CalziRizzo}, the spaces $F^{p,2}_0$ were compared with suitably defined local Hardy (for $p<\infty$) and bmo (for $p=\infty$) spaces. 
	
	The purpose of this paper is to consider the full scale of the Besov and Triebel--Lizorkin spaces (that is, for $p,q\in(0,\infty]$) when $G$ has polynomial volume growth, and to establish the remaining properties in this more general context. 
	As in~\cite{Calzi4}, the proofs are sometimes similar in spirit to those of~\cite{BCP,Calzi2,Calzi3}, but require different techniques, since the heat semigroup proves to be quite difficult to handle when $\min(p,q)<1$. 
	For example, interpolation is harder to handle, since we cannot simply directly reduce to the known interpolation theorems on spaces of sequences (of which Besov and Triebel--Lizorkin space no longer appear to be retracts, at least in general). We shall on the contrary make use of the discretization techniques developed in~\cite{Calzi4}, following~\cite{FrazierJawerth}.
	Analogously, even though we shall again rely on suitable paraproducts to study pointwise multiplication, we shall also need to implement a suitable discretization procedure in order to complete the proof in the general case. In addition, in order to achieve an appropriate localization, we shall need to provide an additional characterization of the quasi-norms of Besov and Triebel--Lizorkin spaces which is somewhat independent of the functional calculi associated with weighted subcoercive operators, and is modelled on the techniques developed in~\cite{Jordi}. 
	Concerning the characterization by differences, we were not able to develop any technique which could allow us to extend the results proved in~\cite{Calzi2}.

	\smallskip

	\section{Preliminaries}
	
	\subsection{Relatively Invariant Measures and Convolution}

	Throughout the paper, we shall denote with $G$ a connected (finite-dimensional, real) Lie group  with Lie algebra $\gf$. We shall denote with $\beta$ a left Haar measure and we shall assume that $\beta$ has polynomial volume growth, that is, there is $Q_G \Meg 0$ such that, for every compact neighbourhood $V$ of the identity $e$ in $G$, one has
	\[
	\beta(V^n)\asymp n^{Q_G}
	\]
	for $n\to \infty$. Then, $\beta$ is also a right Haar measure.
	
	We now recall some facts about convolution. Given two convolvable\footnote{We shall not provide a precise definition of this concept, since this would drag us too far away from the main topic. } distributions $f,g$ on $G$, one has
	\[
	\langle f*g, \phi\rangle= \langle f\otimes g, (x,y)\mapsto \phi(xy)\rangle
	\]
	for every $\phi\in C^\infty_c(G)$.
	We shall identify each $f\in L^1_\loc(G)$ with $f\cdot \beta$, that is, the measure with density $f$ with respect to $\beta$. Given two convolvable functions $f,g$ such that $f*g$ is absolutely continuous with respect to $\beta$, we shall generally identify $f*g$ with its density with respect to $\beta$. Thus, under very mild conditions which will be always verified in the applications,
	\begin{equation}\label{eq:1}
		(f* g)(x)=\int_G f(x y^{-1}) g(y)  \,\dd \beta(y)= \int_G f(y) g(y^{-1}x) \,\dd \beta(y).
	\end{equation} 
	
	Similar formulae apply when either $f$ or $g$ is a distribution (and the convolution is still a function).
	Observe that, if $f,g$ are convolvable distributions, $X$ is a left-invariant differential operator, and $Y$ is a right-invariant differential operator, then $Yf $ and $X g$ are convolvable and
	\[
	YX(f*g)=(Yf)*(Xg).
	\]
	
	If $f,g,h$ are distributions then, under some reasonable conditions that will always be satisfied in the applications, 
	\[
	\langle f * g, h \rangle =\langle f, h*  \check g \rangle =\langle g,   \check f * h\rangle,
	\]
	where $\langle \check f,\phi\rangle=\langle f,\check \phi \rangle$ for every $\phi\in C^\infty_c(G)$, and $\check \phi=\phi(\,\cdot\,^{-1})$.  
	
	We now recall Young's inequality in this context. Take $p_1,p_2,p_3\in [1,\infty]$ so that $\frac{1}{p_1'}+\frac{1}{p_2'}=\frac{1}{p_3'}$. Then,
	\[
	\norm{f*g}_{L^{p_3}(G)}\meg \norm{f}_{L^{p_1}(G)}\norm{g}_{L^{p_2}(G)}
	\] 
	for every two positive $\beta$-measurable functions $f,g$ (for positive measurable functions, convolution may be defined by means of~\eqref{eq:1}).  
	
	In order to simplify the notation, given a measure $\mi$ on a measurable space $X$, $p\in (0,\infty]$, and a $\mi$-measurable function $f$ on $X$, we shall also write 
	\[
	\norm{f(x)}_{L^p_x(\mi)} \qquad \text{instead of} \qquad\norm{f}_{L^p(\mi)}.
	\]
	This will be particularly useful when dealing with nested norms.

	\subsection{Differential Operators}
	
	\begin{deff}
		We shall generally identify the (complexification of the) enveloping algebra $U(G)$ of $\gf$ with the algebra of left-invariant differential operators. We shall fix a scalar product on $\gf$, and we shall endow $U(G)$ with the corresponding scalar product, namely the quotient of the natural scalar product on the (complexfication of the) tensor algebra over $\gf$.\footnote{The actual scalar product on $U(G)$ will not matter in the sequel.}
		
		Let $X$ be a left-invariant differential operator. We  denote with $X^R$ the right-invariant differential operator which induces the same point distribution as $X$ at $e$. In other words, $(X f)(e)=(X^R f)(e)$ for every $f\in C^\infty (G)$. We denote with $X^\dag$ the transpose of $X$ in the enveloping algebra $U(G)$. In other words, the mapping $X\mapsto X^\dag$ is the unique anti-automorphism of $U(G)$ (that is, such that $(XY)^\dag=Y^\dag X^\dag$) which extends the automorphism $X\mapsto -X$ of $\gf$. 
		Then, $X^\dag$ the formal transpose of $X$ (with respect to $\beta$), that is, the unique left-invariant differential operator such that
		\[
		\int_G (X f) g\,\dd \beta=\int_G f X^\dag g\,\dd \beta
		\]
		for every $f,g\in C^\infty_c(G)$. We denote with $X^*$ the formal adjoint of $X$, that is, $\overline X^\dag$. We define the formal transpose and the formal adjoint of right-invariant differential operators in a similar way. If $u$ is a distribution, we then define $X u$ so that
		\[
		\langle X u,\phi\rangle =\langle u, X^\dag \phi\rangle
		\]
		for every $\phi\in C^\infty_c(G)$. In this way, if $f\in C^\infty(G)$, then $(X f)\cdot \beta=X(f\cdot \beta)$.

		In order to simplify the notation, we write $X^{R\dag}$ instead of $(X^R)^\dag$, etc.
	\end{deff}
	
	Cf.~\cite[Proposition 2.2]{BCP} for a proof of the following result.
	
	\begin{prop}\label{prop:9}
		The following hold:
		\begin{enumerate}
			\item[\textnormal{(1)}]   $X^{R\dag}  =  X^{\dag R} $ for every $X\in U(G)$;
			
			\item[\textnormal{(2)}] $X\delta_e= X^R\delta_e=  (X^\dag \delta_e)\check{\;}$   for every $X\in U(G)$;
			
			\item[\textnormal{(3)}] $X^\dag f=(X^R \check f) \check{\,}$ for every $X\in U(G)$ and for every $f\in C^\infty(G)$.
		\end{enumerate}
	\end{prop}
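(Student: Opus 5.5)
We plan to reduce everything to the case where $X$ is a single element of $\gf$, and then extend to $U(G)$ by multiplicativity. For $X\in\gf$ we use the explicit formulas
\[
(Xf)(x)=\frac{\dd}{\dd t}\Big|_{t=0} f(x\exp(tX)),\qquad (X^R f)(x)=\frac{\dd}{\dd t}\Big|_{t=0} f(\exp(tX)x).
\]
Both operators evaluate to the same quantity at $e$. The correspondence $X\mapsto X^R$ is therefore determined by the point distribution at $e$: for a general $X\in U(G)$, $X^R$ is the right-invariant operator $\phi\mapsto (x\mapsto (X(\phi\circ R_x))(e))$, where $R_x(y)=yx$.

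We first prove (3), since it is the key identity. Set $\check f(x)=f(x^{-1})$. For $X\in\gf$, a direct computation gives
\[
(X^R\check f)(x)=\frac{\dd}{\dd t}\Big|_{t=0} f(x^{-1}\exp(-tX))=-(Xf)(x^{-1})=(X^\dag f)(x^{-1}),
\]
so $X^\dag f=(X^R\check f)\check{\,}$. The map $X\mapsto X^R$ is an anti-homomorphism from $U(G)$ into right-invariant operators, because right translations compose in reverse order. Likewise $X\mapsto X^\dag$ is an anti-automorphism. Conjugation by $f\mapsto\check f$ is multiplicative. Hence the set of $X\in U(G)$ satisfying (3) contains $\gf$ and is closed under products and linear combinations, so it is all of $U(G)$.

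For (2), we pair with a test function $\phi$. We have $\langle X\delta_e,\phi\rangle=(X^\dag\phi)(e)$, and by (3) this equals $(X^R\check\phi)(e)=\langle\delta_e,X^R\check\phi\rangle$. Since $X^R\check\phi$ and $X\check\phi$ agree at $e$, this also equals $(X\check\phi)(e)$. In the same way, $\langle X^R\delta_e,\phi\rangle=(X^{R\dag}\phi)(e)$, and $\langle (X^\dag\delta_e)\check{\;},\phi\rangle=\langle X^\dag\delta_e,\check\phi\rangle=(X\check\phi)(e)$ by the definition of the distributional derivative. So all three distributions coincide once we know $(X^{R\dag}\phi)(e)=(X\check\phi)(e)$. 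This identity follows from (1): $X^{R\dag}=X^{\dag R}$, so $(X^{\dag R}\phi)(e)=(X^\dag\phi)(e)$, which equals $(X^R\check\phi)(e)=(X\check\phi)(e)$ by (3) evaluated at $e$. Note that $\beta$ is bi-invariant here, since $G$ has polynomial growth and is therefore unimodular.

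For (1), both sides are right-invariant operators, and both anti-homomorphisms involved are compatible with products. It therefore suffices to check (1) on $X\in\gf$. There we need the transpose of the right-invariant field $X^R$ to be $-X^R$. This follows from the invariance of $\beta$ under the flow $x\mapsto\exp(tX)x$, which is left translation, so integrating by parts introduces no modular factor.

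The main obstacle is the bookkeeping of orders and signs in the (anti-)multiplicativity arguments. In particular, we must check that $(XY)^R=Y^R X^R$ is consistent with the convention that $X^R$ induces the same point distribution at $e$ as $X$, and that the transpose of a right-invariant operator is taken with respect to $\beta$.
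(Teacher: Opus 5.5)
Your proof is correct. The paper gives no argument of its own here; it only refers to \cite[Proposition 2.2]{BCP}, so there is no in-text proof to compare against. Your route is the natural self-contained verification: you check (1) and (3) on $\gf$ through the flows $x\mapsto \exp(tX)x$ and $x\mapsto x\exp(tX)$, extend them to $U(G)$ using that $X\mapsto X^R$, $X\mapsto X^\dag$ and formal transposition all reverse products, and then obtain (2) by pairing with test functions.

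Three points are worth tightening.

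First, the closure-under-products step in (3) needs $(XY)^R=Y^RX^R$ for \emph{arbitrary} $X,Y\in U(G)$, not only for $X,Y\in\gf$. This deserves more than ``right translations compose in reverse order.'' One way: use $(X^R\phi)(x)=(X\phi(\,\cdot\,x))(e)$ and $(Y\phi)(w)=(Y\phi(w\,\cdot\,))(e)$. Then both $(XY\phi)(e)$ and $(Y^RX^R\phi)(e)$ equal the result of applying $X$ in $w$ and $Y$ in $z$ to $(w,z)\mapsto\phi(wz)$ at $(e,e)$. These two operators commute because they act on separate factors of $G\times G$. Since $Y^RX^R$ is right-invariant, this gives $(XY)^R=Y^RX^R$.

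Second, your proof of (2) invokes (1) before you prove it. There is no circularity, since your proof of (1) does not use (2), but the presentation order should be (3), (1), (2).

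Third, your remark on unimodularity is not actually used in your argument. Part (1) needs only the left invariance of $\beta$, part (3) uses no measure at all, and (2) follows from the definitions of $Xu$ and $X^Ru$. Unimodularity matters only for the paper's identification of the algebraic $X^\dag$ with the formal transpose of $X$ with respect to $\beta$.
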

	
	Notice that, by (2), 
	\[
	\begin{split}
		(Xf)*g=(f*X\delta_e)*g=f*(X\delta_e*g)=f*(X^{R} \delta_e*g)=  f*(X^R g)
	\end{split}
	\]
	under some reasonable conditions on $f$ and $g$ (which are needed to grant the associativity of convolution). 
	
	In addition, observe that, by our choice of the scalar product on $U(G)$, one has $\abs{X}=\abs{X^+}=\abs{\overline X}$, hence also $\abs{X}=\abs{X^*}$ for every $X\in U(G)$.
	
	\subsection{Filtrations and Weighted Subcoercive Operators}
	
	Throughout the paper, $(\gf_\lambda)_{\lambda\Meg 0}$ will denote an increasing filtration of $\gf$ (that is, $[\gf_\lambda, \gf_\mi]\subseteq \gf_{\lambda+\mi}$ for every $\lambda, \mi\Meg 0$) such that $\gf_\lambda=0$ for every $\lambda<1$, $\bigcup_{\lambda\Meg 0} \gf_\lambda=\gf$, and $\bigcap_{\mi>\lambda} \gf_\mi=\gf_\lambda$ for every $\lambda\Meg 0$.\footnote{We consider the full range $\lambda\Meg 0$  for notational convenience, in analogy with the filtration $(U_\lambda)$, for which $U_\lambda\neq \Set{0}$ for every $\lambda\Meg 0$.}
	
	\emph{In order for a weighted subcoercive operator adapted to the filtration $(\gf_\lambda)$ to exist, we shall assume that the set $\Lambda\coloneqq \Set{\lambda\Meg 1\colon \gf_\lambda \neq \bigcup_{\mi<\lambda} \gf_\mi}$ generate a $\Q$-vector space of dimension $1$.} In other words, setting $\dd_0\coloneqq \min \Lambda$, we require that $\Lambda \subseteq \Q \dd_0$.
	
	For every $X\in \gf$, we define  $\deg X\coloneqq \min\Set{\lambda\Meg0\colon X\in \gf_\lambda}$ and we call $\deg X$ the degree of $X$.
	Define, for every $\lambda>0$, $\gf_{\lambda^-}\coloneqq \bigcup_{\mi<\lambda} \gf_\mi$, $\gf_{*,\lambda}\coloneqq\gf_\lambda/\gf_{\lambda^-} $, and 
	\[
	\gf_*\coloneqq \bigoplus_{\lambda>0} \gf_{*,\lambda}.
	\]
	Define a Lie algebra structure on $\gf_*$ as follows: if $X=\sum_{\lambda>0} (X_\lambda+ \gf_{\lambda^-})$ and $Y= \sum_{\lambda>0} (Y_\lambda+ \gf_{\lambda^-})$ for some $(X_\lambda),(Y_\lambda)\in \prod_{\lambda>0} \gf_\lambda$, then
	\[
	[X,Y]\coloneqq \sum_{\lambda,\mi>0}\left(  [X_{\lambda},Y_{\mi}]+\gf_{(\lambda+\mi)^-}\right) .
	\]
	It is easily seen that $(\gf_{*,\lambda})_{\lambda>0}$ is a graduation of type $((0,+\infty),+)$ of $\gf_*$, that is, $[\gf_{*,\lambda}, \gf_{*,\mi}]\subseteq \gf_{*,\lambda+\mi}$ for every $\lambda,\mi>0$. One may then endow $\gf_*$ with the automorphic dilations $(\delta_r)_{r>0}$ defined so that $\delta_r(X)=r^\lambda X$ for every $X\in \gf_{*,\lambda}$ and for every $\lambda>0$. Thus, $\gf_*$ is the Lie algebra of some homogeneous group $G_*$, with homogeneous dimension $Q_*\coloneqq \sum_{\lambda>0} \dim \gf_{*,\lambda}$.
	
	We observe explicitly that we required $\gf_\lambda=\Set{0}$ for $\lambda<1$ in order for the control modulus $\abs{\,\cdot\,}_*$ (cf.~Definition~\ref{def:3} below) to induce a left-invariant \emph{distance} on $G$ (rather than a quasi-distance). There may be also good reasons to require $\gf_1\neq \Set{0}$. We preferred to avoid imposing this condition in order to keep a natural comparison with graded (or, more generally, homogeneous) groups: if $\gf$ has a graduation $(\tilde \gf_j)_{j\in \Z_+^*}$ (with integer degrees), then it is natural to set $\gf_\lambda=\bigoplus_{j=1}^{[\lambda]} \tilde \gf_j$ for every $\lambda \Meg 0$, but there is no guarantee that $\tilde \gf_1$ should be non-trivial.

	We now extend this filtration to the  enveloping algebra $U(G)$.	
	For every $\lambda\Meg 0$, define $U_\lambda$ as the vector space generated by the products of the form $X_1\cdots X_k$, for $k\Meg 0$, $X_1,\dots, X_k\in \gf$ and $\deg X_1+\cdots+ \deg X_k\meg \lambda$.\footnote{Thus, the identity operator, corresponding to the case $k=0$, belongs to all $U_\lambda$.} 
	Then, $(U_\lambda)$ is an increasing filtration of $U(G)$, that is, $U_\lambda, U_\mi\subseteq U_\lambda U_\mi\subseteq U_{\lambda+\mi}$ for every $\lambda, \mi\Meg 0$.
	For every $X\in U(G)$, we define $\deg X\coloneqq \min\Set{\lambda\Meg 0\colon X\in U_\lambda}$. Notice that $\gf\cap U_\lambda=\gf_\lambda$ for every $\lambda\Meg 0$ as a consequence of Proposition~\ref{prop:8} below (and its proof), so that this definition is consistent with the previous one.
	
	Define $U_{\lambda^-}\coloneqq \bigcup_{\mi<\lambda} U_\mi$ for $\lambda>0$, $U_{0^-}\coloneqq\Set{0}$, $U_{*,\lambda}\coloneqq U_\lambda/U_{\lambda^-}$ for every $\lambda\Meg 0$, and 
	\[
	U_*\coloneqq \bigoplus_{\lambda\Meg 0} U_{*,\lambda} .
	\] 
	We define an algebra structure on $U_*$ as follows:  if $X=\sum_{\lambda\Meg 0} (X_\lambda+ U_{\lambda^-})$ and $Y= \sum_{\lambda\Meg 0} (Y_\lambda+ U_{\lambda^-})$ for some $(X_\lambda),(Y_\lambda)\in \prod_{\lambda\Meg 0} U_\lambda$, then
	\[
	XY\coloneqq \sum_{\lambda,\mi\Meg 0}\left(  X_{\lambda}Y_{\mi}+U_{(\lambda+\mi)^-}\right) .
	\]
	It is easily seen that $U_*$ becomes a graded algebra of type $([0,+\infty),+)$ with this structure.
	
	Cf.~\cite[Proposition 4.2]{BCP} for a proof of the following result.
	
	\begin{prop}\label{prop:8}
		The canonical inclusions $\gf_\lambda \subseteq U_\lambda$, $\lambda> 0$, induce a linear mapping $\pi \colon \gf_*\to U_*$. The canonical extension $U(\pi)\colon U(G_*)\to U_*$ of $\pi$ is an  isomorphism of graded algebras.
	\end{prop}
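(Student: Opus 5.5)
We first check that $\pi$ is well defined, then that $U(\pi)$ is a homomorphism of graded algebras, then that it is surjective and injective. For $\lambda>0$ the inclusion $\gf_\lambda\subseteq U_\lambda$ maps $\gf_{\lambda^-}$ into $U_{\lambda^-}$. Hence it passes to a linear map $\gf_{*,\lambda}\to U_{*,\lambda}$, and summing over $\lambda$ gives $\pi\colon\gf_*\to U_*$, homogeneous of degree $0$. To extend $\pi$ to $U(G_*)$ we need $\pi([X,Y])=\pi(X)\pi(Y)-\pi(Y)\pi(X)$, and by linearity it is enough to take $X=X_\lambda+\gf_{\lambda^-}$ and $Y=Y_\mu+\gf_{\mu^-}$. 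In $U(G)$ we have $X_\lambda Y_\mu-Y_\mu X_\lambda=[X_\lambda,Y_\mu]\in\gf_{\lambda+\mu}$. Reducing modulo $U_{(\lambda+\mu)^-}$, this is exactly $\pi$ of the bracket in $\gf_*$, since $\gf_{(\lambda+\mu)^-}\subseteq U_{(\lambda+\mu)^-}$. The universal property then gives an algebra homomorphism $U(\pi)$. It preserves gradings because $U(G_*)$ is graded by the dilations and each generator of degree $\lambda$ goes to $U_{*,\lambda}$.

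Surjectivity is immediate. $U_{*,\lambda}$ is spanned by the classes of products $X_1\cdots X_k$ with $\sum\deg X_j\le\lambda$. The class is zero when the sum is $<\lambda$, and when the sum equals $\lambda$ it is the product in $U_*$ of the classes of the $X_j$ in $U_{*,\deg X_j}$, which lie in the image of $\pi$.

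Injectivity is the real content, and we use a PBW argument. Choose a basis $X_1,\dots,X_n$ of $\gf$ adapted to the filtration: for each $\lambda$, the $X_j$ with $\deg X_j\le\lambda$ form a basis of $\gf_\lambda$. Then their classes $Y_j\in\gf_{*,\deg X_j}$ form a homogeneous basis of $\gf_*$. By PBW, the monomials $Y^\alpha$ form a basis of $U(G_*)$, with $Y^\alpha$ homogeneous of degree $d_\alpha=\sum_j\alpha_jd_j$. It therefore suffices to prove the following claim.

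\emph{Claim:} $U_\lambda$ is spanned by the $X^\alpha$ with $d_\alpha\le\lambda$.

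Granting the claim, the PBW theorem in $U(G)$ shows these $X^\alpha$ are linearly independent. Hence the classes of $X^\alpha$ with $d_\alpha=\lambda$ form a basis of $U_{*,\lambda}$, and $U(\pi)$ sends the basis $\{Y^\alpha: d_\alpha=\lambda\}$ bijectively onto it.

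The claim is the main obstacle, and we prove it by induction on degree. First expand each factor of $X_{i_1}\cdots X_{i_k}$ in the adapted basis, which does not increase degrees. Then reorder the factors into PBW order, swapping adjacent factors. Each swap produces a commutator term $[X_i,X_j]\in\gf_{d_i+d_j}$, which is a combination of basis elements of degree $\le d_i+d_j$. That term has one fewer factor and total degree still $\le\lambda$. Inducting on the pair (number of factors, number of inversions) concludes the proof.

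The one point needing care is the real-valued, possibly non-discrete set of degrees. This is harmless, because $\Lambda\subseteq\Q\dd_0$ makes $\Lambda$ finite. The induction runs on the number of factors, not on the degree, so discreteness is not needed anyway.

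The argument also yields $\gf\cap U_\lambda=\gf_\lambda$. An element of $\gf\cap U_\lambda$ is a combination of the $X^\alpha$ with $d_\alpha\le\lambda$, and by PBW uniqueness only the $\alpha$ of length $1$ can occur. Those are basis elements $X_j$ with $d_j\le\lambda$, which span $\gf_\lambda$.
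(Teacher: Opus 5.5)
Your proof is correct. The paper gives no proof of this statement of its own; it refers to \cite[Proposition 4.2]{BCP}, so there is no in-paper argument to compare with. What you give is the standard adapted-basis and Poincar\'e--Birkhoff--Witt argument.

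Your intermediate claim, that $U_\lambda$ is spanned by the ordered monomials $X^\alpha$ with $d_\alpha\le\lambda$, is exactly the ``and its proof'' the paper appeals to when it derives $\gf\cap U_\lambda=\gf_\lambda$. So your version also makes that consistency remark explicit. Once you know that the classes of the $X^\alpha$ with $d_\alpha=\lambda$ form a basis of $U_{*,\lambda}$, bijectivity in each degree follows directly. Your separate surjectivity step is therefore redundant, though harmless.

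The one inaccurate statement is your justification that $\Lambda$ is finite. Finiteness does not follow from $\Lambda\subseteq\Q\dd_0$, since a subset of $\Q\dd_0$ can be infinite. It follows from $\dim\gf<\infty$, because each jump of the filtration strictly increases $\dim\gf_\lambda$. You use this fact implicitly when you build the adapted basis: you need finitely many jumps together with $\bigcap_{\mu>\lambda}\gf_\mu=\gf_\lambda$. These ensure that each $\gf_\lambda$ equals $\gf_{\lambda_i}$ for the largest jump $\lambda_i\le\lambda$, and that the vectors added at the jump $\lambda_i$ have degree exactly $\lambda_i$. With that justification corrected, the argument is complete.
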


	From now on, we shall identify $U_*$ and $U(G_*)$ by means of $U(\pi)$.

	\begin{deff}	
		We say that a family $(X_j)_{j\in J}$ of elements of $\gf$ is a minimal basis if, setting $Y_j\coloneqq X_j+ \gf_{(\deg X_j)^-}$, the family $(Y_j)$ induces a (homogeneous) basis of $\gf_*/[\gf_*,\gf_*]$.
		We denote with $\dd$ the least common multiple of the degrees of the $X_j$, $j\in J$.\footnote{Notice that, since we assumed that $\Lambda \subseteq \Q \dd_0$, the $\deg X_j$ all belong to $\Q \dd_0$, so that their least common multiple is well defined.}
		
		We say that $\Lc \in U(G)$ is weighted subcoercive if $ \Lc+\Lc^*+U_{\grado^-}$ is a positive Rockland operator on $G_*$.
	\end{deff}
	
	Notice that, if $(X_j)$ is a minimal basis of $\gf$, then $(X_j)$ generates $\gf$ as a Lie algebra, and also generates the filtrations $(\gf_\lambda)$ and $U_\lambda$ (cf.~\cite[Proposition 4.4]{BCP}). In other words, $\gf_\lambda$  is the vector space generated by the vector fields of the form $\ad(X_{j_1})\cdots \ad(X_{j_{k-1}})X_{j_k}$, where $k\Meg 1$, $j_1,\dots, j_k\in J$, and $\deg(X_{j_1})+\cdots +\deg(X_{j_k})\meg \lambda$. Analogously, $U_\lambda$ is the vector space generated by the differential operators for the form $X_{j_1}\cdots X_{j_k}$, where $k\Meg 0$, $j_1,\dots, j_k\in J$, and $\deg(X_{j_1})+\cdots +\deg(X_{j_k})\meg \lambda$.
	In particular, $(X_j)$ is a `reduced weighted algebraic basis' of $\gf$, in the terminology of~\cite{ElstRobinson}.
	
	Observe that $\dd$ does \emph{not} depend on the choice of $(X_j)$, since it is the least common multiple of the degrees of the non-zero elements of $\gf_*/[\gf_*,\gf_*]$.
	
	Finally, as shown in~\cite[Theorem 4.7]{BCP}, the above   definition of weighted subcoercive operators is consistent with the one given in~\cite{ElstRobinson}. In particular, if $\Lc$ is weighted subcoercive, then $\deg(\Lc)/\dd$ is an even integer. 
	
	We recall the following example from~\cite[Proposition 4.9]{BCP} for the construction of weighted subcoercive operators

	\begin{prop}\label{prop:4}
		Let $(X_j)_{j\in J}$ be a minimal basis of $\gf$. Then, $\Lc\coloneqq \sum_{j\in J} (X_j^{\dd/d_j})^\dag X^{\dd/d_j}_j$, where $d_j=\deg X_j$ for every $j\in J$, is a real,  positive and formally self-adjoint  weighted subcoercive operator.
	\end{prop}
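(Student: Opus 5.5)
The plan is to check the three properties one at a time, after first setting $m_j\coloneqq \dd/d_j\in\N$ (an integer because $\dd$ is the least common multiple of the $d_j$), so that each $X_j^{m_j}$ has degree at most $\dd$.

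\emph{Reality.} The $X_j$ are real vector fields. Hence $X_j^{m_j}$ is real, and so is its transpose $(X_j^{m_j})^\dag=(-1)^{m_j}X_j^{m_j}$. Therefore $\Lc$ is real.

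\emph{Positivity and formal self-adjointness.} Since $X_j^{m_j}$ is real, $(X_j^{m_j})^\dag=(X_j^{m_j})^*$. Thus $\Lc=\sum_j (X_j^{m_j})^*X_j^{m_j}$, and $\Lc^*=\Lc$ follows at once. For $\phi\in C^\infty_c(G)$ we also get
\[
\langle \Lc\phi,\phi\rangle_{L^2}=\sum_j\norm{X_j^{m_j}\phi}_{L^2}^2\Meg 0 .
\]

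\emph{Weighted subcoercivity.} Each $X_j^{m_j}$ lies in $U_\dd$, so $\Lc$ and $\Lc^*$ lie in $U_{2\dd}$. It is enough to show that the image of $\Lc+\Lc^*=2\Lc$ in $U_{*,2\dd}\cong U(G_*)_{2\dd}$ is a positive Rockland operator; here $\grado=2\dd$. By Proposition~\ref{prop:8}, the class of $X_j$ in $U_{*,d_j}$ is $Y_j$, and the class map is multiplicative modulo lower-order terms. Hence the image of $2\Lc$ is
\[
P=2\sum_j (Y_j^{m_j})^\dag Y_j^{m_j}=2\sum_j(-1)^{m_j}Y_j^{2m_j},
\]
which is homogeneous of degree $2\dd$. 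Here we use that the transpose anti-automorphism respects the filtration and passes to $U_*$ as the transpose on $U(G_*)$. The operator $P$ is formally self-adjoint and positive on $G_*$, by the same computation as above.

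\emph{Rockland condition.} For this step I would use the standard criterion. Let $\pi$ be a nontrivial irreducible unitary representation of $G_*$ and $v$ a smooth vector with $d\pi(P)v=0$. Then $\sum_j\norm{d\pi(Y_j)^{m_j}v}^2=0$, so $d\pi(Y_j)^{m_j}v=0$ for all $j$. Since $d\pi(Y_j)$ is skew-adjoint, its kernel equals the kernel of any of its powers on smooth vectors. This is the spectral-theorem argument for the skew-adjoint closure: $\norm{A^k v}=0$ implies $Av=0$ by iterating $\norm{A^{k}v}^2=\langle A^{2k}v,v\rangle$ up to sign, via $\norm{A w}^2=-\langle A^2w,w\rangle$. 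Hence $d\pi(Y_j)v=0$ for all $j$. The $Y_j$ project to a basis of $\gf_*/[\gf_*,\gf_*]$, so they generate $\gf_*$ as a Lie algebra, because $\gf_*$ is nilpotent. Therefore $d\pi(\gf_*)v=0$, so $v$ is $G_*$-invariant, and irreducibility and nontriviality force $v=0$. Thus $P$ is Rockland.

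\emph{Main obstacle.} The delicate point is the identification of the principal part. One must check that $(X_j^{m_j})^\dag X_j^{m_j}$ has degree exactly $2\dd$, and that its class in $U_{*,2\dd}$ is the product of the classes. This follows from the graded-algebra structure of $U_*$ and Proposition~\ref{prop:8}. One must also check that $\dag$ on $U(G)$ induces $\dag$ on $U(G_*)$; this holds because it preserves each $U_\lambda$ and extends $X\mapsto -X$ on each $\gf_{*,\lambda}$.
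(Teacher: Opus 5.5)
Your proof is correct. The paper does not prove this statement itself; it only quotes \cite[Proposition 4.9]{BCP}. Your route is the natural self-contained argument: identify the principal part of $2\Lc$ in $U(G_*)$ via Proposition~\ref{prop:8} as $P=2\sum_j (Y_j^{m_j})^*Y_j^{m_j}$, then check the Rockland condition by showing that $d\pi(Y_j)v=0$ for generators of the nilpotent algebra $\gf_*$ forces $v$ to be invariant.

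One small precision concerns $\grado=\deg\Lc=2\dd$. What you need is that the class $P$ of the whole sum is nonzero, not merely that each summand has degree $2\dd$. This follows from your Rockland step applied to a nontrivial character $\chi$ of $G_*$ with $d\chi=i\xi$, where $\xi\in\gf_*^*$ is nonzero and vanishes on $[\gf_*,\gf_*]$: for it, $d\chi(P)=2\sum_j\xi(Y_j)^{2m_j}>0$.
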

	
	Here, by `positive' we mean that $\int \Lc f \overline f \,\dd \beta\Meg 0$ for every $f\in C^\infty_c(G)$.
	In particular, if $(\gf_\lambda)$ is the filtration generated by $\gf_1$, then $\Lc$ is a sub-Laplacian (a Laplacian, if $\gf_1=\gf$).

	We shall now define a control modulus on $G$. 
	\begin{deff}\label{def:3}
		Given an absolutely continuous curve $\gamma\colon [0,1]\to G$, we  define the content of $\gamma$ as the greatest lower bound of the $\eps>0$ such that 	
		\[
		\abs{P_\lambda \dd L_{\gamma(t)}^{-1}\gamma'(t)} \meg \min(\eps, \eps^{\lambda}) 
		\]
		for almost every $t\in [0,1]$ and for every $\lambda>0$, where $P_\lambda$ is the orthogonal projector of $\gf$ onto $\gf_\lambda \ominus \gf_{\lambda^-}$ (this is non-zero only for finitely many $\lambda>0$) and $L_{\gamma(t)}$ is the left translation by $\gamma(t)$. 
		
		Given $x\in G$, we shall define $\abs{x}_*$  as the greatest lower bound of the contents of the absolutely continuous curves $\gamma\colon [0,1]\to G$ such that $\gamma(0)=e$ and $\gamma(1)=x$.
		
		We  endow $G$ with the left-invariant distance $d(x,y)\coloneqq \abs{y^{-1}x}_*$.
	\end{deff} 
	
	Choosing a different scalar product on $\gf$   gives rise to bi-Lipschitz equivalent control moduli. Equivalence at infinity follows from the fact that all these control moduli are `connected moduli,'\footnote{In fact, every $x\in G$ may be written as $x_1\cdots x_k$, where $\abs{x_j}_*\meg 1$ for $j=1,\dots, k$, and $k\meg \abs{x}_*+1$.}  whereas equivalence near $e$ follows from~\cite[Corollary 6.5]{ElstRobinson}.  
	Notice that here we are not requiring $\gamma$ to be `horizontal.' One may also require $\gamma$ to be horizontal with respect to some fixed weighted algebraic basis compatible with the filtration $(\gf_\lambda)$, in the terminology of~\cite{ElstRobinson}, and still get an equivalent control modulus. This would provide a better mean value theorem for the corresponding `horizontal gradient,' but we shall not need this kind of more precise estimates.

	It is known that 
	\[
	\beta(B(e,r))\asymp r^{Q_*}
	\]
	for $ r\to 0^+$, 
	while
	\[
	\beta(B(e,r))\asymp r^{Q_G}
	\]
	for $r\to +\infty$.

	\begin{deff}
		From now on, we shall fix a \emph{formally self-adjoint} weighted subcoercive operator $\Lc$ with degree $\grado$. We shall denote with $(h_t)_{t>0}$ the corresponding heat kernel. In other words, $\ee^{-t\Lc}f=f*h_t$ for every $f\in L^2(G)$, where $(\ee^{-t\Lc})_{t>0}$ is the semigroup generated by the closure of $\Lc$, with initial domain $C^\infty_c(G)$, in $L^2(G)$ (cf.~Theorem~\ref{teo:7} below).
		
		For every $\omega\in \R$, we shall set $\Lc_\omega \coloneqq \Lc+ \omega I$.
	\end{deff}
	
	Cf.~\cite[Theorems 4.8 and 5.4]{BCP} for a proof of the following result.
	
	\begin{teo}\label{teo:7}
		There is $\omega\in \R$ such that the following hold:
		\begin{enumerate} 
			\item[\textnormal{(1)}] the closure of $\Lc$, with initial domain $C^\infty_c(G)$,  generates a semigroup of operators of $L^2(G)$; in addition, $\Lc$ is essentially self-adjoint on $C^\infty_c(G)$;
			
			\item[\textnormal{(2)}] for every $\lambda,\lambda'\Meg 0$ there are $b,C>0$ such that  
			\[
			\abs{X Y^R h_t(x)}\meg C \abs{X}\abs{Y} t^{-(Q_* + \deg(X)+\deg(Y))/\grado} \ee^{\omega t} \ee^{- b (\abs{x}_*^{\grado}/t)^{1/(\grado-1)}}
			\]
			for every $X\in U_\lambda$, for every $Y\in U_{\lambda'}$, and for every $x\in G$;
		\end{enumerate}
	\end{teo}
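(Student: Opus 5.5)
The statement is quoted from~\cite[Theorems 4.8 and 5.4]{BCP}, so the plan is to rebuild it from the theory of ter Elst and Robinson~\cite{ElstRobinson} and the identification of weighted subcoercivity given in~\cite[Theorem 4.7]{BCP}. Since $\Lc$ is formally self-adjoint, $\Lc^*=\Lc$, and by hypothesis $2\Lc+U_{\grado^-}$ is a positive Rockland operator on $G_*$. Consequently $\Lc$ is weighted subcoercive in the sense of~\cite{ElstRobinson}, with respect to a reduced weighted algebraic basis given by a minimal basis $(X_j)$.

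For (1), the first step is a Gårding-type inequality. I would show that for some $\mu,\omega_0>0$
\[
\Re\langle \Lc f, f\rangle \Meg \mu \sum_{\deg X_\alpha\meg \grado/2}\norm{\vect X^\alpha f}_2^2-\omega_0\norm{f}_2^2
\]
holds for every $f\in C^\infty_c(G)$. This follows from~\cite{ElstRobinson}: one proves it first for the Rockland principal part on $G_*$ and then transfers it to $G$ by a contraction/scaling argument, the lower-order terms being absorbed by interpolation. With this inequality in hand, $\Lc+\omega_0 I$ is symmetric and bounded below on $C^\infty_c(G)$. Essential self-adjointness then comes from the subellipticity/regularity estimates of~\cite{ElstRobinson}. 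Concretely, any $u\in L^2$ with $(\Lc^*+\omega I)u=0$ in the distributional sense for large $\omega$ is smooth and lies in the weighted Sobolev scale. A cutoff argument, using left-invariance and the fact that every compact set has bounded geometry, then gives $u=0$. The closure therefore generates a self-adjoint contraction semigroup after the shift by $\omega$.

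For (2), I would invoke the heat kernel estimates of~\cite{ElstRobinson}. The semigroup $\ee^{-t\Lc}$ has a smooth kernel satisfying
\[
\abs{\vect X^\alpha h_t(x)}\meg C t^{-(Q_*+\deg \vect X^\alpha)/\grado}\ee^{\omega t}\ee^{-b(\abs{x}_*^\grado/t)^{1/(\grado-1)}}.
\]
These are derived via Davies' perturbation method. One conjugates the semigroup by $\ee^{\psi}$, with $\psi$ ranging over bounded functions whose weighted derivatives are controlled. One proves $L^2$ bounds for the conjugated semigroup with exponential growth governed by $\sup$ of the derivatives of $\psi$. Using the Sobolev embedding with local dimension $Q_*$ and the semigroup property, these become $L^1\to L^\infty$ bounds. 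Finally one optimizes over $\psi$ to produce the factor $\ee^{-b(\abs{x}_*^\grado/t)^{1/(\grado-1)}}$. Right-invariant derivatives $Y^R$ are handled through $h_t=h_{t/2}*h_{t/2}$ and the identity $Y^R(f*g)=(Y^Rf)*g$, together with Proposition~\ref{prop:9}: $Y^R h_t$ is essentially $(Y^\dag \check h_t)\check{}$, and $\check h_t=\overline{h_t}$ by self-adjointness. The left-invariant estimate therefore transfers. Distributing $X$ and $Y^R$ on the two factors and convolving two Gaussian-type bounds, using polynomial growth to control $\beta(B(e,r))$, yields the combined exponent $(Q_*+\deg X+\deg Y)/\grado$ at the cost of shrinking $b$.

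The linear dependence on $\abs X\abs Y$ comes from expanding $X$ in a basis of the finite-dimensional space $U_\lambda$. The main obstacle is the Davies step. For $\grado>2$ the kernel is not positive and there is no finite propagation speed. The weighted derivatives of $\ee^{\psi}$ must therefore be controlled uniformly across the different degrees $\deg X_j$, and this is what forces the nonstandard exponent $1/(\grado-1)$.
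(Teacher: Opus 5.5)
The paper gives no proof of this statement. It simply quotes \cite[Theorems 4.8 and 5.4]{BCP}, which in turn rest on \cite{ElstRobinson}, so your reliance on ter Elst--Robinson is in line with it.

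Your treatment of (2) is essentially fine. By Proposition~\ref{prop:9}, $Y^R h_t=(Y^\dag \check h_t)\check{\,}$, and $\check h_t=\overline{h_t}$ by self-adjointness, so right derivatives reduce to left ones. The identity $XY^R h_t=(Y^R h_{t/2})*(X h_{t/2})$ then handles the mixed ones. One correction is needed. When $Q_G>Q_*$, one has $\int_G \ee^{-b(\abs{y}_*^{\grado}/t)^{1/(\grado-1)}}\,\dd\beta(y)\asymp t^{Q_G/\grado}$ for $t\Meg 1$, so the convolution loses a factor $t^{(Q_G-Q_*)/\grado}$ for large $t$. Shrinking $b$ does not remove this factor. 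You must also replace $\omega$ by, say, $\omega+1$; this choice is independent of $\lambda,\lambda'$, which the statement allows.

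The genuine gap is in (1). Your essential self-adjointness argument hinges on one claim: every $u\in L^2(G)$ with $(\Lc+\omega)u=0$ in $\Dc'(G)$ is smooth and lies \emph{globally} in the weighted Sobolev space of order $\grado$. That claim is exactly the nontrivial content of essential self-adjointness (maximal domain equals minimal domain), and your sketch does not supply it. A G\aa rding inequality and the a priori estimates hold for test functions, or for elements of the domain of the closure, not for an arbitrary $L^2$ distributional solution. The obvious mollifications do not help either:
\begin{itemize}
\item $u*\phi$ no longer solves the equation, since $(\Lc u)*\phi=u*\Lc^R\phi$;
\item $\phi*u$ does solve it, but is only controlled along right-invariant fields, and converting these to left-invariant derivatives brings in $\mathrm{Ad}(x)$, which is unbounded in general;
\item extracting hypoellipticity from $\int_0^\infty \ee^{-\omega t}h_t\,\dd t$ presupposes the semigroup you are trying to construct.
\end{itemize}
In any case, local hypoellipticity alone would not give global $L^2$ control.

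The fix is to reverse the logic, using the same reference you already need for (2).
\begin{itemize}
\item By \cite{ElstRobinson}, the closure of $\Lc$ on $W^{\infty,2}(G)$ (the smooth vectors of the right regular representation) generates a holomorphic semigroup on $L^2(G)$.
\item $C^\infty_c(G)$ is a core. The cut-offs $\tau_k=\chi_{B(e,k+1)}*\phi$ used elsewhere in the paper have uniformly bounded left-invariant derivatives of positive order, supported in $B(e,k+2)\setminus B(e,k)$, so $\tau_k f\to f$ in the graph norm for $f\in W^{\infty,2}(G)$.
\item Setting $T\coloneqq \Lc|_{C^\infty_c(G)}$, it follows that $\overline T+\omega$ is onto for large real $\omega$.
\item Since $T$ is symmetric, $\ker(T^*+\omega)=\operatorname{ran}(\overline T+\omega)^\perp=\{0\}$, where $T^*$ is the Hilbert-space adjoint. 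Hence $T^*=\overline T$.
\end{itemize}
Neither the G\aa rding inequality nor any regularity theory for distributional solutions is needed for (1).
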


	\subsection{The Schwartz Space}\label{sec:Schwartz}
	
	\begin{deff}\label{def:1bis}
		We define $\Sr(G)$ as the space of $f\in C^\infty(G)$ such that the seminorms $\norm{ (1+ \abs{\,\cdot\,}_*)^k X^R f}_{L^1(G)}$, $k\in \N$, $X\in U(G)$, are finite, endowed with the corresponding topology.
		
		We denote with $\Sr'(G)$ the dual of $\Sr(G)$, endowed with the topology of uniform convergence on the bounded subsets of $\Sr(G)$.
	\end{deff}
	
	Notice that, as in~\cite{Calzi4,CalziRizzo}, we denote the Schwartz space as $\Sr(G)$ instead of $\Sc(G)$ since we wish to avoid any confusion with the \emph{entirely different} `Schwartz space' $\Sc(G)$ which was used in~\cite{BCP, Calzi2, Calzi3}.
	
	Cf.~\cite[Theorem 2.15 and Corollary 2.16]{Calzi4} for a proof of the following results.
	
	\begin{teo}\label{teo:8bis}
		The following hold:
		\begin{enumerate}
			\item[\textnormal{(1)}] $\Sr(G)$ is a nuclear Fréchet space;
			
			\item[\textnormal{(2)}] $\Sr(G)$ is reflexive;
			
			\item[\textnormal{(3)}] the bounded subsets of $\Sr(G)$ are relatively compact;
			
			\item[\textnormal{(4)}] $\Sr(G)$ is a Fréchet $*$-algebra under convolution and under pointwise multiplication;
			
			\item[\textnormal{(5)}] for every $p\in [1,\infty]$, $\Sr(G)$ is the space of $f\in C^\infty(G)$ such that the seminorms  $\norm{(1+ \abs{\,\cdot\,}_*)^k  X Y^R f}_{L^p(G)}$, $k\in \N$, $X,Y\in U(G)$ (resp.\ $\norm{(1+ \abs{\,\cdot\,}_*)^k X   f}_{L^p(G)}$, $k\in \N$, $X\in U(G)$; $\norm{(1+ \abs{\,\cdot\,}_*)^k  X^R f}_{L^p(G)}$, $k\in \N$, $X\in U(G)$), are finite, and has the corresponding topology.
		\end{enumerate} 
	\end{teo}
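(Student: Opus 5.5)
The plan is to prove (5) first, since the other items reduce to it, then (4) by direct estimates, then (1), and finally obtain (2) and (3) as formal consequences of (1).

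\emph{Step 1: proof of (5).} I would compare the three families of seminorms in three moves.
\begin{enumerate}
\item[(a)] \emph{Exchange $L^1$ and $L^p$.} I would use a local Sobolev inequality on balls $B(x,1)$. By left invariance of $d$ and of the vector fields, its constants are uniform in $x$. Combined with $1+\abs{y}_*\asymp 1+\abs{x}_*$ for $y\in B(x,1)$, this lets one pass from $L^p$ to $L^\infty$ with finitely many more derivatives and the same weight. One passes from $L^\infty$ to $L^1$ and $L^p$ by increasing $k$, using $\int (1+\abs{x}_*)^{-Q_G-1}\,\dd\beta(x)<\infty$ (polynomial growth).
\item[(b)] \emph{Exchange left- and right-invariant derivatives.} I would use $(Xf)(x)=((\mathrm{Ad}(x)X)^R f)(x)$. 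Since $G$ has polynomial growth, $\mathrm{Ad}(G)$ has only eigenvalues of modulus $1$ (Guivarc'h). Hence $\norm{\mathrm{Ad}(x)}$, also on $U_\lambda$, grows at most polynomially in $\abs{x}_*$. This growth is absorbed by raising $k$.
\item[(c)] \emph{Mixed seminorms.} The mixed seminorms $XY^R$ are handled by the same identity.
\end{enumerate}
I expect the polynomial bound on $\mathrm{Ad}$ to be the one genuinely structural input. I would prove it by reducing to the nilradical plus a compact-type action, via the structure theory of groups of polynomial growth.

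\emph{Step 2: proof of (4).}
\begin{enumerate}
\item[(a)] \emph{Convolution.} Write $X^R Y(f*g)=(X^Rf)*(Yg)$ and use $(1+\abs{xy}_*)^k\meg(1+\abs{x}_*)^k(1+\abs{y}_*)^k$. Young's inequality in $L^1$ then gives $\Sr*\Sr\subseteq\Sr$, continuously, using the mixed seminorms from (5).
\item[(b)] \emph{Pointwise products.} Use the Leibniz rule for $X^R$ (its coproduct in $U(G)$). Put the whole weight on one factor, estimating it in $L^1$ and the other in $L^\infty$; this is legitimate by (5).
\item[(c)] \emph{Involutions.} Both $f\mapsto\overline f$ and $f\mapsto f^*=\overline{\check f}$ are continuous. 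For the latter use Proposition~\ref{prop:9}(3), $\abs{x^{-1}}_*=\abs{x}_*$ and unimodularity. Since Step~1 already shows $X$-seminorms and $X^R$-seminorms are interchangeable, the loss of right-invariance under $\check{\ }$ is harmless.
\end{enumerate}

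\emph{Step 3: proof of (1), then (2) and (3).}
\begin{enumerate}
\item[(a)] \emph{Fréchet.} Completeness is standard: Cauchy sequences converge in $C^\infty$ and the weighted bounds pass to the limit.
\item[(b)] \emph{A Hilbertian description.} For nuclearity I would use (5) with $p=2$ and Theorem~\ref{teo:7}. With $\omega$ large, the norms $\norm{(1+\abs{\,\cdot\,}_*)^k\Lc_\omega^k f}_{L^2}$ define the topology of $\Sr(G)$, using subcoercive/Rockland-type a priori estimates to control all $X\in U_{k\grado}$. So $\Sr(G)$ is a projective limit of weighted Sobolev Hilbert spaces $H_k$.
\item[(c)] \emph{Hilbert--Schmidt links.} I would show that $H_{k+m}\hookrightarrow H_k$ is Hilbert--Schmidt for $m$ large. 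Factor the inclusion as multiplication by $(1+\abs{\,\cdot\,}_*)^{-m}$ composed with $\Lc_\omega^{-m}$, commuted past the weight at the cost of lower-order terms. The operator $(1+\abs{\,\cdot\,}_*)^{-m}\Lc_\omega^{-m}$ has kernel $(1+\abs{x}_*)^{-m}K_m(y^{-1}x)$. Its $L^2(G\times G)$ norm equals $\norm{(1+\abs{\,\cdot\,}_*)^{-m}}_{L^2}\norm{K_m}_{L^2}$.
\begin{enumerate}
\item[(i)] The first factor is finite when $2m>Q_G$, by polynomial growth.
\item[(ii)] The second is finite when $m\grado>Q_*/2$. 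This follows by writing $K_m=\Gamma(m)^{-1}\int_0^\infty t^{m-1}e^{-\omega' t}h_t\,\dd t$ and applying the heat kernel bounds of Theorem~\ref{teo:7}(2), with $\omega$ enlarged so that $\omega'>0$ absorbs the factor $e^{\omega t}$.
\end{enumerate}
\end{enumerate}
I expect the main obstacle to be the commutation of weights with $\Lc_\omega^{-m}$. The plan for it is to bound the commutators by kernel estimates, using $\abs{x}_*\meg\abs{y}_*+\abs{y^{-1}x}_*$ and the rapid decay in Theorem~\ref{teo:7}(2).

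Given (1), items (2) and (3) follow formally. A nuclear Fréchet space is Montel, so bounded sets are relatively compact, and Montel spaces are reflexive.
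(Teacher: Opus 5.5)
The paper gives no proof of this theorem. It quotes it from \cite[Theorem 2.15 and Corollary 2.16]{Calzi4}, so there is no in-paper argument to compare your route with. Judged on its own, your outline is sound and would work:
\begin{itemize}
\item (5) follows from left-invariant local Sobolev inequalities, a polynomial bound on $\mathrm{Ad}$, and the integrability of $(1+\abs{\,\cdot\,}_*)^{-Q_G-1}$;
\item (4) follows from $X^RY(f*g)=(X^Rf)*(Yg)$, the Leibniz rule and $1+\abs{xy}_*\meg(1+\abs{x}_*)(1+\abs{y}_*)$;
\item nuclearity follows from Hilbert--Schmidt links between weighted Hilbertian norms built from $\Lc_\omega$;
\item (2) and (3) follow because nuclear Fr\'echet spaces are Montel.
\end{itemize}

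Two steps should be adjusted.

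\textbf{The `hence' in Step 1(b).} This is where the real content lies. Knowing that each $\mathrm{Ad}(x)$ has unimodular eigenvalues only bounds $\norm{\mathrm{Ad}(x)^n}$ polynomially in $n$ for fixed $x$. What you need is that products of $m$ elements of the compact set $\mathrm{Ad}(\overline B(e,1))$ have norm $O(m^c)$, uniformly. You then combine this with $x=x_1\cdots x_m$, $\abs{x_j}_*\meg 1$, $m\meg\abs{x}_*+1$.
\begin{itemize}
\item The uniform bound follows from an $\mathrm{ad}(\gf)$-invariant flag in $\gf_\C$ on whose quotients $\mathrm{Ad}(G)$ acts through relatively compact groups.
\item Such a flag exists because, on an irreducible subquotient, $\mathrm{ad}(\gf)$ acts through a reductive algebra with purely imaginary spectrum, hence a compact one.
\item Your `nilradical plus compact action' reduction should amount to this.
\end{itemize}
Also, $\mathrm{Ad}(x)$ does not preserve $U_\lambda$. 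For example, in the Heisenberg algebra with $\gf_1=\langle X,Y\rangle$, $\mathrm{Ad}(\exp sX)Y=Y+sZ$ has degree $2$. Work instead with the $\mathrm{Ad}$-invariant spaces of elements of order at most $n$; this is harmless for (5).

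\textbf{The commutation you call the main obstacle is unnecessary.} Set $w=1+\abs{\,\cdot\,}_*$ and let $H_k$ be the completion for $p_k(f)=\norm{w^k\Lc_\omega^kf}_{L^2}$.
\begin{itemize}
\item Conjugating the inclusion $H_{k+m}\to H_k$ by the isometries $f\mapsto w^k\Lc_\omega^kf$ gives exactly the operator $w^k\Lc_\omega^{-m}w^{-k-m}$.
\item Its kernel is $w(x)^kK_m(y^{-1}x)w(y)^{-k-m}$. Since $w(x)\meg w(y)w(y^{-1}x)$, this is dominated by $w(y)^{-m}(w^k\abs{K_m})(y^{-1}x)$.
\item Hence its Hilbert--Schmidt norm is at most $\norm{w^{-m}}_{L^2}\norm{w^kK_m}_{L^2}$, with no commutators.
\end{itemize}
The same trick proves Step 3(b) without weighted a priori estimates. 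Write $Xf=(\Lc_\omega^kf)*(XK_k)$. Then $\norm{w^jXf}_{L^2}\meg\norm{w^jXK_k}_{L^1}\,p_k(f)$ for $k\Meg j$. Finally, $w^jXK_k\in L^1$ as soon as $k\grado>\deg X$, by Theorem~\ref{teo:7}(2), since $\norm{w^jXh_t}_{L^1}\lesssim t^{-\deg(X)/\grado}$ for small $t$.
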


	\begin{cor}
		$\Sr'(G)$ is a complete, reflexive, bornological, and nuclear space.  The bounded subsets of $\Sr'(G)$ are relatively compact.
	\end{cor}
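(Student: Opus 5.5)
The plan is to obtain every assertion of the corollary from Theorem~\ref{teo:8bis} by general facts about duals of nuclear Fréchet spaces. By Theorem~\ref{teo:8bis}(1) and (3), $\Sr(G)$ is a Fréchet space whose bounded subsets are relatively compact, so it is a Fréchet--Montel space; by (2) it is reflexive. I would run through the standard list of properties of the strong dual $\Sr'(G)=\Sr(G)'_\beta$ in the following order.

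\emph{Reflexivity.} The strong dual of a reflexive locally convex space is reflexive, so this is immediate. \emph{Bornology and completeness.} The strong dual of a metrizable locally convex space is complete and a (DF)-space. The strong dual of a reflexive (equivalently, distinguished) Fréchet space is moreover barrelled; since it is a barrelled (DF)-space, it is quasibarrelled and hence bornological (a quasibarrelled (DF)-space is bornological, Grothendieck). Alternatively, the dual of a Fréchet--Montel space is a (DFM)-space, and such spaces are bornological and complete.

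\emph{Nuclearity.} The strong dual of a nuclear Fréchet space is nuclear. Grothendieck's theorem gives this; equivalently, $\Sr(G)$ is Montel, so the strong dual coincides with the dual endowed with the topology of precompact convergence, and the nuclearity of $\Sr(G)$ then transfers to this dual. \emph{Relative compactness.} A nuclear space is Montel once it is quasi-complete and barrelled; $\Sr'(G)$ is complete and barrelled by the above, hence Montel, so its bounded subsets are relatively compact. One can also argue directly that the dual of a Montel space is Montel.

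I do not expect a genuine obstacle: the only point needing care is bornology, which is not automatic for strong duals and requires the Montel (or distinguished) property of $\Sr(G)$ supplied by Theorem~\ref{teo:8bis}(3). I would cite the standard references (Köthe, Schaefer, Trèves, or Grothendieck) for each implication.
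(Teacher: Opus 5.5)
Your argument is correct and is essentially what lies behind the paper's reference to \cite[Corollary 2.16]{Calzi4}: every assertion follows from Theorem~\ref{teo:8bis} by the standard duality theory of nuclear Fréchet--Montel spaces (reflexivity, completeness, nuclearity and the Montel property of the strong dual), and the paper gives no independent argument here. Two of your justifications need correcting, though the conclusions they support are true. First, a reflexive Fréchet space is distinguished, but not conversely (every Banach space is distinguished). Second, ``every quasibarrelled (DF)-space is bornological'' is, as far as I recall, not valid in that generality (Valdivia constructed counterexamples). For bornology you should instead invoke Grothendieck's theorem that the strong dual of a distinguished (e.g.\ reflexive, or Fréchet--Montel) metrizable space is bornological, which is exactly your (DFM) alternative.
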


	\begin{deff}
		Define $\Oc'_{C,L}(G)$ and $\Oc'_{C,R}(G)$ as the spaces of $u\in \Dc'(G)$ such that the families $((1+\abs{x}_*)^k L_x u)_{x\in G}$ and $((1+\abs{x}_*)^k R_x u)_{x\in G}$  are bounded in $\Dc'(G)$ (with respect to the topology of uniform convergence on the bounded subset of $\Dc(G)$, or, equivalently, with respect to the weak dual topology), respectively, for every $k\in\N$. Here, $L_x$ and $R_x$ denote the left and right translation operators, so that $\langle L_x u, \phi \rangle= \langle u, \phi(x\,\cdot\,)\rangle$ and $\langle R_x u,\phi\rangle=\langle u, \phi(\,\cdot\,x^{-1})\rangle$ for every $\phi \in C^\infty_c(G)$. We endow $\Oc'_{C,L}(G)$ and $\Oc'_{C,R}(G)$ with the corresponding topology.
	\end{deff}
	
	In the following result we collect, without proof, some basic properties of the spaces $\Oc'_{C,L}(G)$ and $\Oc'_{C,R}(G)$. Cf.~\cite{Schwartz,Grothendieck} for more information on these spaces in the case $G=\R^n$.
	
	\begin{prop}
		The following hold:
		\begin{enumerate}
			\item[\textnormal{(1)}] $\Oc'_{C,R}(G)$ and $\Oc'_{C,L}(G)$ are complete nuclear Lusin spaces;
			
			\item[\textnormal{(2)}] every bounded subset of $\Oc'_{C,R}(G)$ and $\Oc'_{C,L}(G)$ is relatively compact,  carries the topology induced by $\Dc'(G)$, and is contained in the closure of some bounded subset of $\Dc(G)$;
			
			\item[\textnormal{(3)}] the mapping $u\mapsto \check u$ induces an isomorphism of $\Oc'_{C,R}(G)$ onto $\Oc'_{C,L}(G)$;
			
			\item[\textnormal{(4)}] convolution induces hypocontinuous  bilinear mappings
			\[
			\begin{aligned}
				&\Sr(G)\times \Oc'_{C,R}(G)\to \Sr(G) & & \Oc'_{C,L}(G)\times \Sr(G)\to \Sr(G)\\
				&\Sr'(G)\times \Oc'_{C,L}(G)\to \Sr'(G) & & \Oc'_{C,R}(G)\times \Sr'(G)\to \Sr'(G)\\
				&\Oc'_{C,R}(G)\times \Oc'_{C,R}(G)\to \Oc'_{C,R}(G) & &\Oc'_{C,L}(G)\times \Oc'_{C,L}(G)\to \Oc'_{C,L}(G)
			\end{aligned}
			\]
			relative to the bounded subsets of each factor;
			
			\item[\textnormal{(5)}] the mappings $\Phi_R\colon \Oc_{C,R}'(G)\to \Lc(\Sr(G))$ and $\Phi_L\colon\Oc_{C,L}'(G)\to \Lc(\Sr(G))$,  defined so that $\Phi_R(u)\phi=\phi*u$ and $\Phi_L(u')\phi=u'*\phi$ for every $\phi\in \Sr(G)$, induce isomorphisms of $\Oc'_{C,R}(G)$ onto the space of left-invariant endomorphisms of $\Sr(G)$ and of  $\Oc'_{C,L}(G)$ onto the space of right-invariant endomorphisms of $\Sr(G)$, respectively;
			
			\item[\textnormal{(6)}] if $(u_1,u_2,u_3)$ belongs to either 
			\[
			\Oc'_{C,R}(G)\times \Oc'_{C,R}(G)\times \Sr'(G)  \qquad \text{or} \qquad \Sr'(G)\times \Oc'_{C,L}(G)\times \Oc'_{C,L}(G),
			\]
			then
			\[
			(u_1*u_2)*u_3=u_1*(u_2*u_3).
			\]
		\end{enumerate}
	\end{prop}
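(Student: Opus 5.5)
The plan is to transfer the classical Euclidean arguments of Schwartz and Grothendieck for $\Oc'_C(\R^n)$, using the polynomial growth of $G$ and Theorem~\ref{teo:8bis}. Throughout, the key tool is a description of $\Oc'_{C,R}(G)$ (and, by symmetry, of $\Oc'_{C,L}(G)$) as a union of bounded sets of a concrete form. Fix $\psi\in C^\infty_c(G)$ with $\psi\Meg 0$ and $\int\psi\,\dd\beta=1$. A local parametrix argument (for instance with a power of $\Lc_\omega$, using Theorem~\ref{teo:7}) should give a structure theorem: $u\in\Oc'_{C,R}(G)$ if and only if, for every $k$, one can write
\[
u=\sum_{\alpha}X_\alpha^R f_{\alpha},
\]
a finite sum in which the $f_\alpha$ are continuous and satisfy $(1+\abs{\,\cdot\,}_*)^k f_\alpha\in L^\infty$. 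The equivalent formulation is that $u*\phi\in\Sr(G)$ for every $\phi\in C^\infty_c(G)$, with bounds uniform on bounded sets.

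With this structure theorem in hand, the items follow in order.

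\textbf{Item (3).} Inversion exchanges $L_x$ and $R_x$ up to $x\mapsto x^{-1}$, and $\abs{x^{-1}}_*=\abs{x}_*$, so (3) is immediate.

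\textbf{Item (1).} Nuclearity follows because $\Oc'_{C,R}(G)$ can be realized as a projective limit of spaces of the form (weighted $\Sr$-type space)$'$, and these are nuclear by Theorem~\ref{teo:8bis}(1). Completeness and the Lusin property come from identifying $\Oc'_{C,R}(G)$ with a closed subspace of $\Lc_b(\Sr(G))$ via $\Phi_R$.

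\textbf{Items (4) and (5).} For (4), estimate $\phi*u$ through the structure theorem. Write $\phi*X^R f=(X^{R}\phi)*f$ up to transposition, which is legitimate by Proposition~\ref{prop:9}. Then bound the seminorms of Theorem~\ref{teo:8bis}(5) by Young's inequality together with Peetre-type inequalities $(1+\abs{xy}_*)\meg(1+\abs{x}_*)(1+\abs{y}_*)$; polynomial growth makes $(1+\abs{\,\cdot\,}_*)^{-N}$ integrable for $N>Q_G$. Hypocontinuity follows because the constants depend only on bounded sets. The $\Sr'$ statements follow by transposition, using $\langle f*g,h\rangle=\langle f,h*\check g\rangle$. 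For (5), note that any continuous left-invariant endomorphism $T$ of $\Sr(G)$ commutes with left translations. Define $u$ by $\langle u,\phi\rangle=(T\check\phi)(e)$, up to the appropriate inversion. The continuity of $T$ then gives exactly the uniform boundedness of $(1+\abs{x}_*)^kR_xu$ tested on a bounded set of $\Dc$, and the equality $T=\Phi_R(u)$ is checked on $C^\infty_c$ and extended by density.

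\textbf{Item (6).} Associativity is proved first for $u_3\in\Sr(G)$ via (5): both sides are compositions of the corresponding endomorphisms. It is then extended to $\Sr'$ by duality and the density of $\Sr$ in $\Sr'$, using the hypocontinuity from (4).

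\textbf{Item (2).} This follows from the bornological structure: every bounded set sits in a set of the form $\Set{\sum_\alpha X^R_\alpha f_\alpha}$ with $f_\alpha$ in bounded sets of weighted $C^0$, and these are compact in the weaker topologies by nuclearity, as in Montel spaces.

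The main obstacle is the structure theorem, namely producing a decomposition with uniform weighted bounds on a non-nilpotent group with only polynomial growth. I would first try a local fundamental solution $E$ of $\Lc_\omega^m$ for large $m$. For this, write $\delta_e=\Lc_\omega^m(\chi E)+\rho$ with $\chi$ a cutoff and $\rho\in C^\infty_c$, using the resolvent kernel $\int_0^\infty t^{m-1}\ee^{-\omega t}h_t\,\dd t$ bounded via Theorem~\ref{teo:7}(2). Then convolve $u$ with this identity.
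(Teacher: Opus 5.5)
The paper states this proposition without proof, pointing only to Schwartz and Grothendieck for $\R^n$, so your outline has to stand on its own.

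\textbf{What is sound.} Most of your skeleton is right:
\begin{itemize}
\item the structure theorem via a local parametrix built from the resolvent kernel of $\Lc_\omega$ and Theorem~\ref{teo:7}, used with $\Lc^R_\omega$ so that the derivatives come out as $X^R$;
\item the right-invariant seminorms of Theorem~\ref{teo:8bis}(5) for (4);
\item the kernel $\langle u,\psi\rangle=(T\check\psi)(e)$ for (5);
\item inversion for (3).
\end{itemize}

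\textbf{The genuine gap: nuclearity in (1).} Consider the natural level spaces $E_k$ of distributions $u$ for which $((1+\abs{x}_*)^kR_xu)_{x}$ is bounded in $\Dc'(G)$; their projective limit is $\Oc'_{C,R}(G)$. When $G$ is not compact, these spaces are not nuclear. Take $(y_n)$ uniformly separated for the distance $(a,b)\mapsto\abs{ab^{-1}}_*$. Then $(a_n)\mapsto\sum_n a_n\delta_{y_n}$ is a topological isomorphism of $\Set{(a_n)\colon \sup_n(1+\abs{y_n}_*)^k\abs{a_n}<\infty}$ onto a subspace of $E_k$. A nuclear space has no infinite-dimensional normable subspace. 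Moreover, Theorem~\ref{teo:8bis}(1) concerns only $\Sr(G)$, and nuclearity of $\Oc'_{C,R}(G)$ appears only after intersecting over all $k$.

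\textbf{How to repair (1).} Use the embedding you invoke for completeness but never prove: $\Phi_R$ is a topological isomorphism onto a closed subspace of $\Lc_b(\Sr(G))$.
\begin{itemize}
\item One direction is immediate. Since $\langle R_xu,\phi\rangle=(\check\phi*u)(x)$, the defining seminorms $u\mapsto\sup_{\phi\in B}\norm{(1+\abs{\,\cdot\,}_*)^k(\check\phi*u)}_{L^\infty(G)}$, with $B$ bounded in $\Dc(G)$, are continuous on $\Lc_b(\Sr(G))$ by Theorem~\ref{teo:8bis}(5).
\item The other direction is exactly the uniform estimate behind (4).
\item Closedness comes from (5).
\end{itemize}
Nuclearity then follows from Grothendieck's $\Lc_b(\Sr(G))\cong\Sr'(G)\hat\otimes\Sr(G)$, since $\Sr(G)$ is nuclear Fr\'echet. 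The Lusin property of $\Lc_b(\Sr(G))$ is not automatic, however, and still needs its own argument.

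\textbf{Item (2) is only partly treated.}
\begin{itemize}
\item Relative compactness is left vague. ``By nuclearity, as in Montel spaces'' ties it to the faulty step. It does follow from nuclearity plus completeness. It also follows directly from the uniform structure theorem with a higher weight plus Arzel\`a--Ascoli, since convolving with a bounded subset of $\Dc(G)$ gives equicontinuity.
\item You do not mention that bounded sets carry the $\Dc'(G)$-topology. This follows once relative compactness is known, because a continuous injection into a Hausdorff space is a homeomorphism on compact sets.
\item The third assertion is not addressed at all. It needs an explicit approximation, e.g.\ $u\mapsto\tau_j(\phi_j*u)$ with cutoffs $\tau_j$ bounded in $W^{\infty,\infty}(G)$ and an approximate identity $(\phi_j)$ in $\Dc(G)$. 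You must show these approximants form a subset of $\Dc(G)$ bounded in $\Oc'_{C,R}(G)$. Convergence in $\Oc'_{C,R}(G)$ then follows from the previous assertion.
\end{itemize}

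\textbf{Swapped sides.}
\begin{itemize}
\item With the paper's $R_x$, $\Oc'_{C,R}(G)$ is characterized by $\phi*u\in\Sr(G)$ for $\phi\in\Dc(G)$, with the test function on the left, not by $u*\phi$. Also $\phi*X^Rf=(X\phi)*f$.
\item In (6) the $\Oc'_{C,R}(G)$ factors sit to the left of $u_3$, while $\Phi_R$ convolves on the right. Composing the $\Phi_R(u_i)$ (resp.\ the $\Phi_L$'s) only gives associativity for $\Sr\times\Oc'_{C,R}\times\Oc'_{C,R}$ (resp.\ $\Oc'_{C,L}\times\Oc'_{C,L}\times\Sr$). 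The triples in (6) must be reduced to these cases by transposition together with (3).
\end{itemize}
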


	\subsection{`Goodman' Sobolev Spaces}

	\begin{deff}
		Suppose  $\alpha\Meg0$ and $p\in [1,\infty]$. We define $L^p_0(G)$ as the closure of $C_c(G)$ in $L^p(G)$. Then, we define the Sobolev space $W^{\alpha,p}(G)$ (resp.\ $W^{\alpha,p}_0(G)$) as the space of $f\in L^p(G)$ (resp.\ $f\in L^p_0(G)$) such that $X f\in L^p(G)$ (resp.\ $X f\in L^p_0(G)$) for every $X\in U_\alpha$, endowed with the norm 
		\[
		f\mapsto \max_{\substack{X\in U_\alpha, \;  \abs{X}\meg 1}} \norm{X f}_{L^p(G)}.
		\]
		
		We define $W^{\alpha,p}_\loc(G)$ as the space of $f\in L^p_\loc(G)$ such that $f\phi\in W^{\alpha,p}$ for every $\phi\in C^\infty_c(G)$.
	\end{deff}
	Observe that by definition $L^p_0(G)= L^p(G)$ if $p<\infty$, while $L^\infty_0(G) =  C_0(G)$.

	\subsection{Auxiliary Results}
	
	We shall now consider some results on the functional calculus associated with $\Lc$.
	Observe first that, by Theorem~\ref{teo:7}, the operator $\Lc$, with initial domain $C^\infty_c(G)$, is essentially self-adjoint on $L^2(G)$, so that we may consider the associated functional calculus. In particular,  for every Borel function $m$ on $\R$ with polynomial growth, that is, such that $(1+\abs{\,\cdot\,})^{-k} m$ is bounded for some $k\in\N$, there is a unique  distribution $\Kc_\Lc(m)$ on $G$ such that $m(\Lc)\phi=\phi*\Kc_\Lc(m)$ for every $\phi\in C^\infty_c(G)$. In fact, $\Kc_\Lc(m)$ is necessarily a tempered distribution, so that the previous equality may be extended to every $\phi$ in the Schwartz space.

	\begin{deff}
		We define $D$ as the maximum of $Q_*$ and $Q_G$.\footnote{Recall that $Q_G$ is defined so that $\beta(B(e,r))\asymp r^{Q_G}$ for $r\to +\infty$, while $Q_*$ is the homogeneous dimension of $G_*$, so that $\beta(B(e,r))\asymp r^{Q_*}$ for $r\to 0^+$}.
	\end{deff}

	Cf.~\cite[Lemma 2.25]{Calzi4} for a proof of the following simple result.
	
	\begin{lem}\label{lem:37}
		There is a constant $C>0$ such that
		\[
		\norm{(1+\abs{\,\cdot\,}_*/t)^{-\alpha-D/p}  }_{L^p(G)}\meg 3 \left( \frac{C}{1-2^{-p\alpha}}\right) ^{1/p} t^{Q_*/p}\max(1,t)^{(Q_G-Q*)/p}
		\]
		for every $p\in (0,\infty]$ and for every $t,\alpha>0$.
	\end{lem}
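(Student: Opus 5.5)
The plan is to use a dyadic decomposition of $G$ into annuli with respect to the control modulus, combined with the two volume growth regimes $\beta(B(e,r))\asymp r^{Q_*}$ for small $r$ and $\beta(B(e,r))\asymp r^{Q_G}$ for large $r$. The case $p=\infty$ is trivial: the function is bounded by $1$, and the right-hand side equals $3$. So assume $p<\infty$ and set $\gamma\coloneqq p\alpha+D$. We must estimate
\[
I\coloneqq\int_G (1+\abs{x}_*/t)^{-\gamma}\,\dd\beta(x).
\]

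First, fix $C_0>0$ such that $\beta(B(e,r))\meg C_0 r^{Q_*}\max(1,r)^{Q_G-Q_*}$ for every $r>0$. Such a constant exists because of the two asymptotics and the continuity and monotonicity of $r\mapsto\beta(B(e,r))$ on compact intervals. Since $D\Meg Q_*,Q_G$, this gives $\beta(B(e,r))\meg C_0 r^{D}$ for $r\Meg1$ and $\beta(B(e,r))\meg C_0 r^{Q_*}$ for $r\meg 1$. We split $G$ into the ball $B(e,t)$ and the annuli $A_k\coloneqq B(e,2^{k+1}t)\setminus B(e,2^kt)$ for $k\Meg0$.

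On $B(e,t)$ the integrand is at most $1$, so this piece contributes at most $\beta(B(e,t))\meg C_0 t^{Q_*}\max(1,t)^{Q_G-Q_*}$. On $A_k$ the integrand is at most $2^{-k\gamma}$, and
\[
\beta(B(e,2^{k+1}t))\meg C_0 (2^{k+1}t)^{Q_*}\max(1,2^{k+1}t)^{Q_G-Q_*}\meg C_0 2^{(k+1)D} t^{Q_*}\max(1,t)^{Q_G-Q_*}.
\]
The second inequality holds because $\max(1,sr)\meg s\max(1,r)$ for $s\Meg1$, so $2^{(k+1)(Q_*+Q_G-Q_*)}\meg 2^{(k+1)D}$ when $Q_G\Meg Q_*$. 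When $Q_G<Q_*$, the factor $\max(\cdot)^{Q_G-Q_*}$ is decreasing and can be bounded trivially. Summing over $k$ gives
\[
I\meg C_0 t^{Q_*}\max(1,t)^{Q_G-Q_*}\Big(1+2^D\sum_{k\Meg0}2^{-kp\alpha}\Big)\meg \frac{C_0(1+2^D)}{1-2^{-p\alpha}}\,t^{Q_*}\max(1,t)^{Q_G-Q_*}.
\]
Taking $p$-th roots and setting $C\coloneqq C_0(1+2^D)$ yields the claim, with room to spare in the constant $3$.

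The only delicate point is the uniformity of the constant in $p$ and $\alpha$. The decay exponent $D$ is exactly what absorbs the volume growth factor $2^{kD}$, leaving the geometric series $\sum_k 2^{-kp\alpha}=(1-2^{-p\alpha})^{-1}$. It is therefore essential that $D$ dominates both $Q_*$ and $Q_G$, which is why the lemma is stated with $D/p$ in the exponent.
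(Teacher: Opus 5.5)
Your proof is correct and complete. The unified bound $\beta(B(e,r))\le C_0 r^{Q_*}\max(1,r)^{Q_G-Q_*}$, together with $\max(1,sr)\le s\max(1,r)$ for $s\ge 1$ (or monotonicity when $Q_G<Q_*$), reduces the dyadic-annuli sum to $2^D\sum_k 2^{-kp\alpha}$, and your bound even holds without the factor $3$. The paper does not reprove this lemma but cites \cite[Lemma 2.25]{Calzi4}; a dyadic decomposition against the two volume-growth regimes, as you give, is the standard argument for such a ``simple result''.
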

	 
	Cf.~\cite[Corollary 2.27]{Calzi4} for a proof of the following result.
	
	\begin{prop}\label{cor:12} 
		Take $p_0\in (0,1]$, $\lambda,c\Meg 0$ and $\rho,\eps>0$. Then, there is a constant $C>0$ such that for every $p\in [p_0,\infty]$, for every $s\Meg \eps+ c+D(1/p-1/2)_+$, for every $m\in B^{\infty,\infty}_s(\R)$ supported in $[-\rho,\rho]$, for every $r\in (0,\rho]$, and for every $X,Y\in U_\lambda$,
		\[
		\norm{XY^R\Kc_{r^{\grado}\Lc } (m ) (1+\abs{\,\cdot\,}_*/r)^c}_{L^p(G)} \meg C r^{-\deg(X)-\deg(Y)+(1/p-1)Q_*}\abs{X}\abs{Y}\norm{m}_{B^{\infty,\infty}_s(\R)}.
		\]
	\end{prop}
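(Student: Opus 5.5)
The plan is to reduce everything to a weighted estimate for a \emph{bounded} spectral multiplier, by factoring off two heat kernels that absorb the derivatives. Only the Gaussian-type bounds of Theorem~\ref{teo:7} and Lemma~\ref{lem:37} will be used; no finite propagation speed is needed.

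\emph{Factorization.} Set $t=r^{\grado}$ and $\tilde m(\lambda)\coloneqq m(\lambda)\ee^{2\lambda}$. Since $m$ is supported in $[-\rho,\rho]$, one has $\norm{\tilde m}_{B^{\infty,\infty}_s(\R)}\le C_\rho\norm{m}_{B^{\infty,\infty}_s(\R)}$. By the functional calculus,
\[
\Kc_{t\Lc}(m)=h_t*\Kc_{t\Lc}(\tilde m)*h_t ,
\]
and therefore
\[
XY^R\Kc_{t\Lc}(m)=(Y^Rh_t)*\Kc_{t\Lc}(\tilde m)*(Xh_t).
\]
By Theorem~\ref{teo:7}(2), and because $\ee^{\omega t}\le \ee^{\abs{\omega}\rho^{\grado}}$ for $r\le\rho$, the factors satisfy
\[
\norm{(1+\abs{\,\cdot\,}_*/r)^{N}\, Xh_t}_{L^q(G)}\lesssim \abs{X}\, r^{-\deg X-Q_*(1-1/q)}
\]
for every $N$ and every $q\in[1,\infty]$, and similarly for $Y^Rh_t$. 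We use the quasi-triangle inequality
\[
(1+\abs{xyz}_*/r)^c\le(1+\abs{x}_*/r)^c(1+\abs{y}_*/r)^c(1+\abs{z}_*/r)^c
\]
to distribute the weight among the three factors. This reduces the claim to a single estimate,
\[
\norm{(1+\abs{\,\cdot\,}_*/r)^{c'}\,\Kc_{t\Lc}(\tilde m)}_{L^2(G)}\lesssim r^{-Q_*/2}\norm{\tilde m}_{B^{\infty,\infty}_{c'+\eps}(\R)}, \tag{$*$}
\]
to be established for arbitrary $c'\ge 0$. The reduction proceeds as follows.

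\emph{Passing from $L^2$ to $L^p$.} For $p=\infty$, apply Young's inequality $L^2*L^2\to L^\infty$ to $(*)$ with a weighted heat factor, and then $L^1*L^\infty\to L^\infty$ for the remaining factor. For $p\in[1,\infty)$, again use Young's inequality with $L^2$ for the middle factor and suitable $L^q$ norms of the heat factors. For $p\in[p_0,1)$, first obtain the $L^1$ estimate with the enlarged weight exponent $c+D(1/p-1)+\eps/2$. Then pass to $L^p$ by Hölder's inequality with exponents $1/p$ and $1/(1-p)$, using Lemma~\ref{lem:37} to control $\norm{(1+\abs{\,\cdot\,}_*/r)^{-D(1/p-1)-\eps/2}}_{L^{p/(1-p)}}$. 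This is where the loss $D(1/p-1/2)_+$ in $s$ appears: write $1/p-1/2=(1/p-1)+1/2$, where the $1/2$ accounts for passing from $L^2$ to $L^1$ in the same way. For $r\le\rho$ the factor $\max(1,r)$ in Lemma~\ref{lem:37} is harmless. The powers of $r$ combine to $r^{-\deg X-\deg Y+(1/p-1)Q_*}$, and the constants are uniform in $p\ge p_0$.

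\emph{Proof of $(*)$, the main obstacle.} This is a weighted Plancherel estimate, and I would prove it by Hebisch's polynomial approximation method. Write $\tilde m(\lambda)=\ee^{-\lambda}G(\ee^{-\lambda})$, where $G$ is a function on the compact interval $[\ee^{-\rho},\ee^{\rho}]$ with $\norm{G}_{B^{\infty,\infty}_s}\lesssim\norm{\tilde m}_{B^{\infty,\infty}_s}$. By Jackson's theorem there are polynomials $P_N$ of degree $\le N$ with $\sup\abs{G-P_N}\lesssim N^{-s}\norm{G}_{B^{\infty,\infty}_s}$. Telescoping, $G=\sum_k(P_{2^{k}}-P_{2^{k-1}})$, where the $k$-th piece $Q_k$ has sup norm $\lesssim 2^{-ks}$ and degree $\le 2^k$. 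Each term $\ee^{-\lambda}Q_k(\ee^{-\lambda})$ is a combination of $\ee^{-jt\Lc}$ with $j\le 2^k+1$, so its kernel is a combination of the $h_{jt}$, whose Gaussian bound from Theorem~\ref{teo:7} decays like $\exp(-b(\abs{x}_*^{\grado}/(2^kt))^{1/(\grado-1)})$. Thus the kernel is essentially concentrated in the ball of radius $R_k=2^{k/\grado}r$, up to a rapidly decaying tail. Inside $B(e,R_k)$, bound the weight by $2^{kc'/\grado}$ and use Plancherel: the $L^2$ norm of the piece is at most $\sup\abs{Q_k}\cdot\norm{h_t}_{L^2}\lesssim 2^{-ks}r^{-Q_*/2}$. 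The delicate point is controlling the tail outside $B(e,R_k)$ uniformly: the coefficients of $Q_k$ are not small, so one cannot simply sum the kernel bounds term by term. I would first try to avoid this by instead multiplying by the weight $\ee^{\delta\abs{x}_*/R_k}$, treated as a perturbation of the semigroup via conjugation by exponential weights (Davies' trick). This keeps $L^2$ operator norm bounds of the form $\ee^{Cj\delta^{\grado}\cdots}$ for the polynomial in the weighted setting, and gives the $L^2$ estimate off the ball without expanding coefficients. Summing over $k$ then gives a total bound $\sum_k 2^{k(c'/\grado-s)}$ in the natural normalization of the spectral variable, which converges when $s>c'$ after rescaling; this is $(*)$.
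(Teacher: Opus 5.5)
The paper gives no proof of this statement; it is quoted from \cite{Calzi4}. I therefore judge your argument on its own terms.

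Your reduction to $(*)$ is fine: the factorization through $h_t$, the splitting of the weight by the triangle inequality, the heat-kernel bounds from Theorem~\ref{teo:7}, and Lemma~\ref{lem:37} all work. There is one slip. For $1\meg p<2$, Young's inequality with the middle factor in $L^2$ cannot land in $L^p$, since it would require an exponent $q$ with $1/q=1/2+1/p>1$. In that range you must use the same H\"older step as for $p<1$, namely $\norm{w^cF}_{L^p}\meg\norm{w^{c+\theta}F}_{L^2}\norm{w^{-\theta}}_{L^{2p/(2-p)}}$ with $w=1+\abs{\,\cdot\,}_*/r$ and $\theta>D(1/p-1/2)$. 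This is exactly where the loss $D(1/p-1/2)$ comes from when $1\meg p<2$.

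The genuine gap is in $(*)$, and it has three parts.

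\emph{The concentration claim is false.} You assert that the kernel of $\ee^{-t\Lc}Q_k(\ee^{-t\Lc})$ lives in $B(e,2^{k/\grado}r)$ up to a rapidly decaying tail. In fact a degree-$N$ polynomial that is bounded on the spectrum can spread the kernel to distance of order $Nr$. Take $\Lc=-\Delta$ on $\R$ and the Chebyshev polynomial $T_N$. The multiplier of $T_N(2\ee^{-t\Lc}-1)\ee^{-t\Lc}$ is $\cos(N\theta(r\xi))\ee^{-r^2\xi^2}$, where $\theta(\sigma)=2\arccos(\ee^{-\sigma^2/2})=2\abs{\sigma}+O(\abs{\sigma}^3)$. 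Stationary phase then puts a fixed fraction of its $L^2$ mass at $\abs{x}\approx Nr$.

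\emph{The Davies step cannot work at scale $R_k=2^{k/\grado}r$.} Bounds on the individual conjugated powers $\ee^{\psi}\ee^{-jt\Lc}\ee^{-\psi}$ say nothing about $Q_k$ of the non-normal conjugated operator unless you go back to the coefficients. In the example above, the weighted norm is exponentially large in $2^{k/2}$.

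\emph{The final count is inconsistent.} The sum $\sum_k 2^{k(c'/\grado-s)}$ would give $(*)$ for $s>c'/\grado$, and that is false. For $-\Delta$ on $\R^n$ and $m(\mu)=\chi(\mu)(\mu-1)_+^s$ one has $m\in B^{\infty,\infty}_s$, yet $(1+\abs{x})^{c}\Kc_{\Lc}(m)\notin L^2$ once $c\Meg s+1/2$. No rescaling turns the bound into $s>c'$.

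The repair is to work at radius $A2^kr$. There, the term-by-term estimate you discarded is enough.
\begin{itemize}
\item \emph{Coefficients.} On a fixed interval containing all the spectra $\sigma(\ee^{-t\Lc})$, $t\meg\rho^{\grado}$, the monomial coefficients of $Q_k$ are at most $C^{2^k}\sup\abs{Q_k}\lesssim C^{2^k}2^{-ks}$.
\item \emph{Tail.} For $\abs{x}_*\Meg A2^kr$ and $1\meg j\meg 2^k+1$, Theorem~\ref{teo:7} bounds $h_{jt}$ by $r^{-Q_*}\ee^{\abs{\omega}\rho^{\grado}(2^k+1)}\ee^{-b'A^{\grado/(\grado-1)}2^k}$ times a Gaussian factor. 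For $A$ large this beats $C^{2^k}$, so the tails are summable.
\item \emph{Inside the ball.} On $B(e,A2^kr)$ the weight is $\lesssim 2^{kc'}$, and the spectral theorem gives $\norm{Q_k(\ee^{-t\Lc})h_t}_{L^2}\lesssim 2^{-ks}r^{-Q_*/2}$.
\end{itemize}
Together these give $\sum_k 2^{k(c'-s)}$, which is finite precisely when $s>c'$, as $(*)$ requires.
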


	\begin{deff}
		For every $p\in (0,+\infty]$, for every $r>0$, and for every $\beta$-measurable function $f$ from $G$ into $\C$, we define 
		\[
		(\Nc_{p,a,r} f)(x)\coloneqq r^{-Q_*/p}\norm{ f(y) (1+d(x,y)/r)^{-a}  }_{L^p_y(G)}=r^{-Q_*/p}\norm{ \abs{f(y)} (1+d(x,y)/r)^{-a}  }_{L^p_y(G)}
		\]
		for every $x\in G$. We define also $(\Nc_{p,a}  f)(x)\coloneqq \sup_{r>0} (1+r)^{(Q_*-D)/p} (\Nc_{p,a,r} f)(x)$.
	\end{deff}

	Cf.~\cite[Corollary 2.30]{Calzi4} for a proof of the following result.
	
	\begin{prop}\label{cor:18}
		Take $p_0\in (0,\infty)$, $p\in (p_0,\infty)$, $q\in [p_0,\infty]$, $a>D/p_0$, $\mi$ a positive  Radon measure on $(0,+\infty)$ with bounded support,  and $\kappa\colon (0,+\infty )\to (0,+\infty)$ a $\mi$-measurable function. Then, there is a constant $C>0$ such that
		\[
		\norm{ (1+\kappa(t))^{(Q_*-D)/p_0} \Nc_{p_0,a,\kappa(t)}(f(t,\,\cdot\,))(x)  }_{L^{q,p}_{t,x}(\mi,\beta)}\meg C\norm{f}_{L^{q,p}(\mi,\beta)}
		\]
		for every $(\mi\otimes \beta)$-measurable function $f\colon (0,+\infty)\times G\to \C$.
	\end{prop}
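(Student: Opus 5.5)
The plan is to reduce the statement to the Fefferman--Stein vector-valued maximal inequality on the space of homogeneous type $(G,d,\beta)$. The key step is a pointwise domination of $\Nc_{p_0,a,r}$ by a Hardy--Littlewood maximal function of $\abs{f}^{p_0}$. Fix $t$ and write $r=\kappa(t)$, $g=\abs{f(t,\cdot)}^{p_0}$. Then
\[
\big((1+r)^{(Q_*-D)/p_0}\Nc_{p_0,a,r}f(t,\cdot)(x)\big)^{p_0} = (1+r)^{Q_*-D} r^{-Q_*}\int_G g(y)(1+d(x,y)/r)^{-ap_0}\,\dd\beta(y).
\]

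To bound the integral, split $G$ into the ball $B(x,r)$ and the annuli $B(x,2^{k+1}r)\setminus B(x,2^k r)$, $k\in\N$. On the $k$-th annulus the weight is at most $2^{-kap_0}$, so the integral is at most
\[
\sum_{k\Meg 0} 2^{-kap_0}\,\beta(B(x,2^{k+1}r))\,(\Mc g)(x),
\]
where $\Mc$ is the centred Hardy--Littlewood maximal operator. The volume estimates $\beta(B(e,s))\asymp s^{Q_*}$ for $s\meg 1$ and $\asymp s^{Q_G}$ for $s\Meg 1$ give
\[
\beta(B(e,s))\meg C s^{Q_*}\max(1,s)^{D-Q_*}.
\]
Combined with the prefactor, this yields
\[
(1+r)^{Q_*-D} r^{-Q_*}\,\beta(B(x,2^{k+1}r))\meg C\, 2^{kD}.
\]
This holds uniformly in $r$, and here the boundedness of the support of $\mi$ is harmless but convenient. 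Since $a>D/p_0$, the series $\sum_k 2^{k(D-ap_0)}$ converges. Hence
\[
(1+r)^{(Q_*-D)/p_0}\Nc_{p_0,a,r}(f(t,\cdot))(x)\meg C\,\big(\Mc(\abs{f(t,\cdot)}^{p_0})(x)\big)^{1/p_0}.
\]

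It then remains to show
\[
\big\|\,\|(\Mc g_t)^{1/p_0}\|_{L^q_t}\big\|_{L^p_x} = \big\|\,\|\Mc g_t\|_{L^{q/p_0}_t}\big\|_{L^{p/p_0}_x}^{1/p_0} \meg C\norm{f}_{L^{q,p}}.
\]
Here I read $L^{q,p}_{t,x}$ as the $L^p$ norm in $x$ of the $L^q(\mi)$ norm in $t$, the Triebel--Lizorkin arrangement; if it is the other order, the argument is simpler. Since $p/p_0>1$ and $q/p_0\in[1,\infty]$, this is the Fefferman--Stein inequality for $L^{p/p_0}(L^{q/p_0}(\mi))$ applied to $g_t=\abs{f(t,\cdot)}^{p_0}$. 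The case $q/p_0=\infty$ is immediate from $\sup_t\Mc g_t\meg \Mc\sup_t g_t$ and the scalar maximal theorem. For $q/p_0=1$ one uses duality, or it is directly a consequence of the general theorem.

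The main obstacle is justifying Fefferman--Stein on $G$. $(G,d,\beta)$ is a space of homogeneous type because the two-sided volume asymptotics give the doubling property. This follows from the estimate $\beta(B(x,2s))\meg C\beta(B(x,s))$, obtained from the comparisons at $0$ and $\infty$ together with compactness in between and left invariance. The vector-valued maximal theorem holds on spaces of homogeneous type, by Calderón--Zygmund theory for vector-valued operators, and measurability in $t$ is handled by $\mi$-measurability of $\kappa$ and $f$. One should also check that the case $q=p_0$, i.e.\ $L^1(\mi)$-valued, which is not covered by all references, is included. If needed, one can instead pick $p_1\in(0,p_0)$ with $a>D/p_1$ and $p_1<\min(p,q)$, and run the argument with $p_1$. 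This works because $\Nc_{p_0,a,r}\meg C\,\Nc_{p_1,a',r}$ suitably, by Hölder, or more simply by redoing the annular bound with exponent $p_1$. Then both ratios $p/p_1$ and $q/p_1$ exceed $1$, and we are in the standard range $(1,\infty)\times(1,\infty]$.
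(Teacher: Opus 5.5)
Your pointwise estimate $(1+r)^{(Q_*-D)/p_0}\Nc_{p_0,a,r}g\meg C\big(\mathcal M\abs{g}^{p_0}\big)^{1/p_0}$ is correct and uniform in $r>0$. For $q>p_0$, the Fefferman--Stein inequality on the doubling space $(G,d,\beta)$, with exponents $p/p_0\in(1,\infty)$ and $q/p_0\in(1,\infty]$, then completes the argument.

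The gap is the endpoint $q=p_0$, which the statement explicitly includes. There you have reduced matters to
\[
\Big\|\int \mathcal M g_t\,\dd\mi(t)\Big\|_{L^{p/p_0}(G)}\meg C\,\Big\|\int g_t\,\dd\mi(t)\Big\|_{L^{p/p_0}(G)},
\]
and this inequality is false, because the vector-valued maximal inequality fails for $L^1$-valued functions. Already for $G=\R$, $\mi=\sum_{j=1}^n\delta_{1/j}$ and $g_{1/j}=\chi_{[j,j+1]}$, one has $\int g_t\,\dd\mi(t)=\chi_{[1,n+1]}$ almost everywhere. On the other hand, $\sum_{j=1}^n\mathcal M\chi_{[j,j+1]}\gtrsim\log n$ on $[1,n]$, so the ratio of the two sides grows like $\log n$. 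Hence the case $q=p_0$ is not `a consequence of the general theorem', which needs $q/p_0>1$. Nor can a duality argument rescue it once the single-scale averages have been replaced by $\mathcal M$.

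Your fallback fails as well. For $p_1<p_0$, H\"older's inequality gives $\Nc_{p_1,a',r}f\lesssim\Nc_{p_0,a,r}f$, not the reverse. No reverse bound can hold for general measurable $f$: take $\abs{f}^{p_1}$ locally integrable but $\abs{f}^{p_0}$ not. Redoing the annular estimate with $p_1$ only controls $\Nc_{p_1,a,r}f$, which is not the quantity in the statement.

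The missing idea is to use that each $t$ carries a single scale $\kappa(t)$, and to dualize \emph{before} passing to $\mathcal M$. Set $s\coloneqq p/p_0\in(1,\infty)$, $g_t\coloneqq\abs{f(t,\cdot)}^{p_0}$, and
\[
A_rg(x)\coloneqq(1+r)^{Q_*-D}r^{-Q_*}\int_G g(y)(1+d(x,y)/r)^{-ap_0}\,\dd\beta(y).
\]
For $q=p_0$, the $p_0$-th power of the left-hand side of the statement is then $\norm{\int A_{\kappa(t)}g_t\,\dd\mi(t)}_{L^s(G)}$. The kernel of $A_r$ is symmetric because $d$ is, and your annular computation gives $A_rh\meg C\mathcal Mh$ uniformly in $r$. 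Therefore, for every $h\Meg 0$ in $L^{s'}(G)$, Tonelli's theorem yields
\[
\int_G\Big(\int A_{\kappa(t)}g_t\,\dd\mi(t)\Big)h\,\dd\beta=\int\!\!\int_G g_t\,A_{\kappa(t)}h\,\dd\beta\,\dd\mi(t)\meg C\,\Big\|\int g_t\,\dd\mi(t)\Big\|_{L^s(G)}\norm{\mathcal Mh}_{L^{s'}(G)}.
\]
The scalar maximal theorem on $L^{s'}(G)$, $1<s'<\infty$, then settles $q=p_0$. Combined with your case $q=\infty$, you can interpolate the positive linear map $(g_t)\mapsto(A_{\kappa(t)}g_t)$ between $L^s(G;L^1(\mi))$ and $L^s(G;L^\infty(\mi))$ (Riesz--Thorin for mixed norms). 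This gives the whole range $q\in[p_0,\infty]$ without appealing to the vector-valued Fefferman--Stein theorem at all.
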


	We shall now present some technical lemmas which show how one may  use the estimates provided in Proposition~\ref{cor:12} to deal with convolution kernels associated with Schwartz multipliers, and also how one may control the convolution of convolution kernels associated with the functional calculi of two distinct formally self-adjoint weighted sub-coercive operators.

	\begin{deff}
		Take $\lambda,\mi, a\Meg0 $, $\rho>0$, and $p\in [1,\infty]$. We say that a family $(f_t)_{t\in (0,\rho]}$ of elements of $W^{\mi,p}_\loc(G)$ is a $(\lambda,\mi,a,\rho,p)$-standard system if the following hold:
		\begin{itemize}
			\item
			\[
			\norm{Y f_{t} (1+\abs{\,\cdot\,}_*/t )^a  }_{L^p(G)}\meg \abs{Y} t^{-\deg(Y)-Q_*/p'}
			\]
			for every $t\in (0,\rho]$ and for every $Y\in U_\mi$;
			
			\item there are a basis $(X_j)_{j\in J}$ of    $U_\lambda$ and a family $(\widetilde f_{j,t})_{j\in J, t\in (0,\rho)}$ of elements of $L^{1}_\loc(G)$ such that
			\[
			f_t= \sum_{j\in J} X_j \widetilde f_{j,t}
			\]
			for every $t\in (0,\rho)$, and such that\footnote{Notice that $\deg(X_j)\meg \lambda$, and that equality need not occur.}
			\[
			\sum_{j\in J}\norm*{  \widetilde f_{j,t} (1+\abs{\,\cdot\,}_*/t)^a  }_{L^1(G)}\meg \frac{1}{\abs{X_j}} t^{\lambda}
			\]
			for every $t\in (0,\rho)$.
		\end{itemize}
	\end{deff}

	Cf.~\cite[Lemma 2.35 and Corollary 2.36]{Calzi4} for a proof of the following results.
	
	\begin{lem}\label{lem:40}
		Take $\lambda ,\mi\Meg 0$, $a,\rho,\rho'>0$, and $p\in [1,\infty]$. Then, there is a constant $C>0$ such that
		\[
		\norm{ (f_t*g^*_s)  (1+\abs{\,\cdot\,}_*/(t+s))^a }_{L^p(G)}\meg C  \min\left((t/s)^\lambda, (s/t)^\mi  \right)    (t+s)^{-Q_*/p'}
		\]
		for every $t\in (0,\rho]$, for every $s\in (0,\rho]$, for every $(\lambda,\mi,a,\rho,p)$-standard system $(f_t)$, and for every $(\mi,\lambda,a,\rho',p)$-standard system $(g_s)$.
	\end{lem}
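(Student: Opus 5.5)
The plan is to use the two halves of the definition of standard system symmetrically: the second property of one family produces the factor in one regime, and the second property of the other family produces it in the other regime. Throughout, the weight will be handled by the quasi-submultiplicativity
\[
1+\abs{xy}_*/(t+s)\meg \bigl(1+\abs{x}_*/t\bigr)\bigl(1+\abs{y}_*/s\bigr),
\]
which follows from the triangle inequality for the left-invariant distance $d$ and from $t,s\meg t+s$. The weight can therefore be split between the two factors, and each factor absorbs its own weight $(1+\abs{\,\cdot\,}_*/t)^a$ or $(1+\abs{\,\cdot\,}_*/s)^a$.

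First suppose $t\meg s$; I aim for the bound $(t/s)^\lambda (t+s)^{-Q_*/p'}$.
\begin{itemize}
\item Write $f_t=\sum_j X_j\widetilde f_{j,t}$ with $X_j\in U_\lambda$. Then
\[
f_t*g_s^*=\sum_j \widetilde f_{j,t}*\bigl(X_j^R g_s^*\bigr).
\]
This uses $(Xf)*g=f*X^R g$, up to the transpose conventions of Proposition~\ref{prop:9}.
\item Next, $X_j^R g_s^*=\bigl((X_j^{\dag})^{*}\text{-type left derivative of } g_s\bigr)^*$, via Proposition~\ref{prop:9}(3). The left derivative $Y=\overline{X_j}^{\dag}$-type operator has degree at most $\lambda$ and $\abs{Y}=\abs{X_j}$.
\item The first property of the $(\mi,\lambda,a,\rho',p)$-standard system $(g_s)$ then gives
\[
\norm{Y g_s(1+\abs{\,\cdot\,}_*/s)^a}_{L^p}\meg \abs{X_j}\,s^{-\deg(X_j)-Q_*/p'}.
\]
Inversion preserves $\abs{\,\cdot\,}_*$ and $\beta$, since $G$ is unimodular.
\item Combining the weighted splitting with Young's inequality ($L^1*L^p\subseteq L^p$) gives
\[
\norm{(f_t*g_s^*)(1+\abs{\,\cdot\,}_*/(t+s))^a}_{L^p}\meg \sum_j \frac{t^\lambda}{\abs{X_j}}\,\abs{X_j}\,s^{-\deg X_j-Q_*/p'}.
\]
\end{itemize}

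The main obstacle I expect is that $\deg X_j$ may be strictly smaller than $\lambda$, so the factor $t^\lambda s^{-\deg X_j}$ is not immediately $(t/s)^\lambda$. I would handle this using $s\meg\rho'$, which gives $s^{-\deg X_j}\meg C s^{-\lambda}$ with $C=\max(1,\rho')^{\lambda}$. For the normalization, $s^{-Q_*/p'}\meg 2^{Q_*/p'}(t+s)^{-Q_*/p'}$ because $s\Meg (t+s)/2$. The constant $C$ may depend on the finite basis $(X_j)$; this is harmless, but if the basis is allowed to vary with the system one has to note that only the product $\abs{X_j}\cdot\abs{X_j}^{-1}$ appears, so uniformity holds.

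Now suppose $s\meg t$. The argument is symmetric: expand $g_s=\sum_k Z_k\widetilde g_{k,s}$ with $Z_k\in U_\mi$. Then $g_s^*=\sum_k \widetilde g_{k,s}^{\,*}$ acted on by a right-invariant derivative, and moving that derivative onto $f_t$ gives a left-invariant derivative $Y\in U_\mi$ of $f_t$ (possibly after a transpose). The first property of $(f_t)$ with $\mi$ and Young's inequality then yield $(s/t)^\mi (t+s)^{-Q_*/p'}$. Taking the better of the two bounds in each regime, and noting that the other term in the minimum exceeds $1$ there, completes the proof.
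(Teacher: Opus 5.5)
Your proof is correct and takes the intended approach. You split into $t\meg s$ and $s\meg t$. You move the cancellation derivatives across the convolution, so that $X_j$ becomes $X_j^*$ acting on $g_s$ and $Z_k$ becomes $Z_k^*$ acting on $f_t$, preserving degree and norm. You then combine the weighted Young inequality from $1+\abs{xy}_*/(t+s)\meg(1+\abs{x}_*/t)(1+\abs{y}_*/s)$ with $s^{-\deg X_j}\meg\max(1,\rho')^{\lambda}s^{-\lambda}$ and $\max(t,s)\Meg(t+s)/2$; this is the argument standard systems are built for, and the paper does not reproduce a proof but cites \cite[Lemma 2.35]{Calzi4}, with your constant depending only on $\dim U_\lambda$, $\dim U_\mi$, $\rho$, $\rho'$, $Q_*$ and $p$.
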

	 
	\begin{lem}\label{cor:13}
		Take $\lambda, \mi  \Meg 0$, $a,\rho,\rho'>0$, $\phi\in C^\infty_c(\R)$, and $p\in [1,\infty]$. Then, there is a constant $C>0$ such that
		\[
		\norm{ (f_t*g^*_s)  (1+\abs{\,\cdot\,}_*/t)^a }_{L^p(G)}, \norm{ (g_s*f^*_t)  (1+\abs{\,\cdot\,}_*/t)^a }_{L^p(G)}\meg C   \min((t/s)^\lambda,(s/t)^\mi)  (t+s)^{-Q_*/p'}
		\]
		for every $t\in (0,\rho]$, for every $s\in (0,\rho]$, for every $(\lambda+a,\mi,a,\rho,p)$-standard system $(f_t)$, and for every $(\mi,\lambda+a,a,\rho',p)$-standard system $(g_s)$.
	\end{lem}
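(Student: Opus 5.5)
The statement to prove is Lemma~\ref{cor:13}. The plan is to reduce it to Lemma~\ref{lem:40}. The only difference is the weight: Lemma~\ref{lem:40} uses $(1+\abs{\,\cdot\,}_*/(t+s))^a$, while here we want $(1+\abs{\,\cdot\,}_*/t)^a$.

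When $s\meg t$, one has $t\meg t+s\meg 2t$, so the two weights are comparable up to the factor $2^a$. The claim then follows directly from Lemma~\ref{lem:40}. For this we note that a $(\lambda+a,\mi,a,\rho,p)$-standard system is also a $(\lambda,\mi,a,\rho,p)$-standard system, up to a constant depending only on $\rho$. Indeed, $U_\lambda\subseteq U_{\lambda+a}$, and the factor $t^{\lambda+a}\meg \rho^a t^\lambda$. Similarly, the system $(g_s)$ is a $(\mi,\lambda,a,\rho',p)$-standard system, since the derivative conditions only weaken when $U_{\lambda+a}$ is replaced by $U_\lambda$. For the second quantity, $g_s*f_t^*$, we apply the same lemma with the roles of $f$ and $g$ exchanged.

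The main case is $s>t$. Here the weight $(1+\abs{x}_*/t)^a$ can be as large as $(s/t)^a(1+\abs{x}_*/(t+s))^a$, because
\[
1+\abs{x}_*/t\meg \frac{t+s}{t}\,(1+\abs{x}_*/(t+s)).
\]
Applying Lemma~\ref{lem:40} with $\lambda$ replaced by $\lambda+a$ therefore gives the bound
\[
(t+s)^a t^{-a}\,\min\bigl((t/s)^{\lambda+a},(s/t)^{\mi}\bigr)(t+s)^{-Q_*/p'}.
\]
Since $s>t$, the minimum is at most $(t/s)^{\lambda+a}$. Moreover $(t+s)^a t^{-a}\meg 2^a (s/t)^a$. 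The product is therefore bounded by $2^a (t/s)^\lambda (t+s)^{-Q_*/p'}$, which equals $2^a\min((t/s)^\lambda,(s/t)^\mi)(t+s)^{-Q_*/p'}$ because $t/s<1$.

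For this application of Lemma~\ref{lem:40} we use the given systems as they are: $(f_t)$ is a $(\lambda+a,\mi,a,\rho,p)$-standard system and $(g_s)$ is a $(\mi,\lambda+a,a,\rho',p)$-standard system. This is exactly why the extra $a$ degrees appear in the hypotheses.

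For $g_s*f_t^*$ with $s>t$, we apply Lemma~\ref{lem:40} to the pair $(g_s)$, $(f_t)$ with the parameters swapped. This gives $\min((s/t)^{\mi},(t/s)^{\lambda+a})$, and the same absorption of $(s/t)^a$ works.

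The main point to check is that the weight comparison $1+r/t\meg ((t+s)/t)(1+r/(t+s))$ holds; it does, since $t+s\Meg t$. One must also confirm that Lemma~\ref{lem:40} allows parameter ranges $t,s\in(0,\rho]$ and $(0,\rho']$ with different upper bounds. When $\rho\neq\rho'$, we use $\max(\rho,\rho')$ and note that standard systems restrict to smaller ranges. The role of $\phi$ in the statement is vacuous, so no condition on it is needed.
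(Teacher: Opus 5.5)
Your reduction to Lemma~\ref{lem:40} via the weight comparison $1+\abs{x}_*/t\meg \frac{t+s}{t}\,(1+\abs{x}_*/(t+s))$ is the right approach. Your treatment of the case $s>t$, for both $f_t*g^*_s$ and $g_s*f^*_t$, is correct.

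The flaw is in the case $s\meg t$. There you claim that a $(\lambda+a,\mi,a,\rho,p)$-standard system is, up to a constant, a $(\lambda,\mi,a,\rho,p)$-standard system because $U_\lambda\subseteq U_{\lambda+a}$. That inclusion does handle the derivative condition, which is why your analogous claim for $(g_s)$ is fine. For the cancellation condition, however, it points the wrong way. You are given $f_t=\sum_j X_j\widetilde f_{j,t}$ with $(X_j)$ a basis of $U_{\lambda+a}$, and the basis elements of degree in $(\lambda,\lambda+a]$ do not belong to $U_\lambda$. To rewrite $f_t$ over a basis of $U_\lambda$ you would have to move some derivatives onto the $\widetilde f_{j,t}$. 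The definition gives only a weighted $L^1$ bound on these functions, which are merely locally integrable. So the claimed downgrading does not follow from the definition, and the factor $t^{\lambda+a}\meg\rho^a t^\lambda$ does not address this.

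No downgrading is needed. Apply Lemma~\ref{lem:40} with $\lambda+a$ in place of $\lambda$ in both cases, exactly as you do for $s>t$. For $s\meg t$ one has $t/s\Meg 1$, so $\min((t/s)^{\lambda+a},(s/t)^{\mi})=(s/t)^{\mi}=\min((t/s)^{\lambda},(s/t)^{\mi})$. Also $((t+s)/t)^a\meg 2^a$, so the estimate holds with constant $2^a C$. The same applies to $g_s*f^*_t$ with the roles exchanged.

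With this change your argument is complete. It is the natural derivation of this statement from Lemma~\ref{lem:40}: the extra $a$ in the cancellation order is there precisely to absorb the factor $(1+s/t)^a$.

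A minor point: Lemma~\ref{lem:40} is already stated with the same pair $\rho,\rho'$, so no adjustment involving $\max(\rho,\rho')$ is needed. A standard system could not be extended to a larger parameter range in any case.
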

	
	 Cf.~\cite[Lemma 2.46]{Calzi4} for a proof of the following result.
	
	\begin{lem}\label{lem:25d}
		Take $\eta,\delta>0$, $\eps\in (0,1)$, and $q\in (0,\infty]$. Then, there is a constant $C>0$ such that
		\[
		\norm*{\sum_{j\in \N} \eps^{\delta(j-j')_+ + \eta (j'-j)_+} \abs{a_j}}_{\ell^q_{j'}(\N)}\meg C \norm{a_j }_{\ell^q_j(\N)}
		\]
		for every $(a_j)\in \C^{\N}$.  
	\end{lem}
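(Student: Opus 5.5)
Since $\eps\in(0,1)$, I would prove this discrete Young-type inequality by splitting into the cases $q\meg 1$ and $q\Meg 1$. In both cases everything reduces to the summability of the kernel $k(n)\coloneqq \eps^{\delta n_+ + \eta (-n)_+}$, $n\in\Z$. Here $n=j-j'$, and the kernel decays geometrically in both directions:
\[
\sum_{n\in\Z} k(n)^r = \frac{1}{1-\eps^{r\delta}}+\frac{\eps^{r\eta}}{1-\eps^{r\eta}}<\infty
\]
for every $r>0$.

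\emph{Case $q\in[1,\infty]$.} Write $b_{j'}\coloneqq\sum_{j\in\N} k(j-j')\abs{a_j}$. Extending $(\abs{a_j})$ by zero to $\Z$, $b$ is the restriction to $\N$ of a convolution on $\Z$ of $(\abs{a_j})$ with the kernel $n\mapsto k(n)$, up to reflection. Young's inequality on $\Z$ ($\ell^1*\ell^q\subseteq \ell^q$) then gives
\[
\norm{b}_{\ell^q}\meg \norm{k}_{\ell^1(\Z)}\norm{a}_{\ell^q}.
\]
Equivalently, Minkowski's inequality gives $\norm{b}_{\ell^q}\meg \sum_n k(n)\norm{(a_{j'+n})_{j'}}_{\ell^q}\meg \norm{k}_{\ell^1}\norm{a}_{\ell^q}$. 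This covers $q=\infty$ as well.

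\emph{Case $q\in(0,1)$.} I use the $q$-triangle inequality $(\sum_j x_j)^q\meg\sum_j x_j^q$ for $x_j\Meg 0$. It gives
\[
b_{j'}^q\meg \sum_j k(j-j')^q\abs{a_j}^q .
\]
Summing over $j'$ and exchanging the sums (Tonelli), I obtain
\[
\norm{b}_{\ell^q}^q\meg \Big(\sum_{n\in\Z}k(n)^q\Big)\norm{a}_{\ell^q}^q .
\]
The bracket is finite by the computation above with $r=q$.

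The constant obtained is $C=\norm{k}_{\ell^{\min(1,q)}(\Z)}$, which depends only on $\eta$, $\delta$, $\eps$ and $q$. There is no real obstacle; the only care needed is to handle $q<1$ via $q$-subadditivity instead of Minkowski's inequality, and to note that the sums may be infinite a priori but Tonelli applies since all terms are nonnegative.
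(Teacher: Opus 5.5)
Your proof is correct. You write the sum as a discrete convolution on $\Z$ with the two-sided geometric kernel $k(n)=\eps^{\delta n_+ +\eta(-n)_+}$; Minkowski's (Young's) inequality handles $q\Meg 1$, and the $q$-triangle inequality plus Tonelli handles $q<1$, giving $C=\norm{k}_{\ell^{\min(1,q)}(\Z)}$. The paper does not prove this lemma itself but cites \cite[Lemma 2.46]{Calzi4}, so there is no in-paper argument to compare against; yours is the standard, self-contained proof of such a statement.
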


	\subsection{The Spaces $\Cc^q(\eps)$}
	
	In order to deal with the Triebel--Lizorkin spaces $F^{\infty,q}_\alpha(G)$, we shall need to introduce the spaces $\Cc^q(\mi,a)$ in order to replace the spaces $L^{q,\infty}(\mi,\beta)$. We shall present a simplified version of~\cite{Calzi3,Calzi4}, which, in turn, follow~\cite{FrazierJawerth,Rychkov}.  
	
	\begin{deff}
		Take $\eps\in (0,1)$ and $q\in (0,\infty]$. Then, we define $\Cc^q(\eps)$ as the space of $(f_j)\in L^1_\loc(G)^{\N}$ such that  
		\[
		\sup_{N\in \N, x\in G} \eps^{-N Q_*/q}\norm{  \chi_{[N,+\infty)\times B(x,\eps^{N})}(j,x') f_j(x') }_{L^q_{(j,x')}(\N\times G)}
		\]
		is finite, endowed with the corresponding quasi-norm. 
	\end{deff}
	 
	The following result is an analogue of Proposition~\ref{cor:18}. Cf.~\cite[Lemma 2.48]{Calzi4} for a proof.
	
	\begin{lem}\label{lem:2c} 
		Take $ p_0\in (0,\infty]$, $\eps\in (0,1)$, $b>D/p_0$, $c>1$,    and a sequence $(\kappa_j)$ such that $\kappa_j\in (0,c \eps^j]\cup [1/c,+\infty)$ for every $j\in\N$. 
		Then, there is   $C>0$ such that, for every $q\in [p_0,\infty]$ and for every  $(f_j)\in L^1_\loc(G)^\N$,
		\[
		\begin{split}
			\norm{ (1+\kappa_j)^{-b+Q_*/p_0}[\Nc_{p_0,b,\kappa_j}  f_j](x) }_{\Cc^q_{(j,x)}(\eps)} \meg C  \norm{  f_j(x) }_{\Cc^q_{(j,x)}(\eps)} .
		\end{split}
		\]
	\end{lem}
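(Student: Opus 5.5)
Since the weight $(1+\kappa_j)^{-b+Q_*/p_0}$ and the exponent $b$ in $\Nc_{p_0,b,\kappa_j}$ are both fixed, the plan is to reduce the estimate to a pointwise bound on each $\Cc^q$-window. Fix $N\in\N$ and $x_0\in G$. We have to bound $\eps^{-NQ_*/q}\norm{\chi_{[N,\infty)\times B(x_0,\eps^N)}(j,x)(1+\kappa_j)^{-b+Q_*/p_0}\Nc_{p_0,b,\kappa_j}f_j(x)}_{L^q_{(j,x)}}$ by $C$ times the $\Cc^q(\eps)$ quasi-norm of $(f_j)$, which we denote by $A$. When $q=\infty$ the argument is the same with suprema in place of sums.

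\emph{Step 1: annular decomposition.} For $x\in B(x_0,\eps^N)$ and $j\Meg N$, split $G$ into the region $B(x_0,2\eps^N)$ and the annuli $B(x_0,2^{k+1}\eps^N)\setminus B(x_0,2^k\eps^N)$, $k\Meg 1$. Write $\Nc_{p_0,b,\kappa_j}f_j(x)^{p_0}=\kappa_j^{-Q_*}\int\abs{f_j(y)}^{p_0}(1+d(x,y)/\kappa_j)^{-bp_0}\,\dd\beta(y)$. On the annulus of index $k$ we have $d(x,y)\asymp 2^k\eps^N$. Cover each annulus by about $\beta(B(x_0,2^{k+1}\eps^N))/\beta(B(\cdot,\eps^N))$ balls of radius $\eps^N$; this count is at most $C2^{kD}$ by the doubling estimates with $Q_*$ near $0$ and $Q_G$ at infinity. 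On each such ball $B'$, the quantity $\eps^{-NQ_*/q}\norm{\chi_{[N,\infty)\times B'}f}_{L^q}$ is at most $A$.

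\emph{Step 2: sum in $j$ at fixed $x$ (the main obstacle).} The difficulty is that the $L^q$ norm in $j$ is taken \emph{outside} the $L^{p_0}_y$ integral, and $q\Meg p_0$. I would handle it with Minkowski's inequality in $L^{q/p_0}$, which applies because $q/p_0\Meg1$. Raise everything to the power $p_0$ and bound
\[
\sum_j\Bigl(\int_{B'}\abs{f_j(y)}^{p_0}w_j\Bigr)^{q/p_0}.
\]
For small $\kappa_j\meg c\eps^j$ and $y\in B'$ with $d(x,y)\asymp 2^k\eps^N$, the weight satisfies $\kappa_j^{-Q_*}(1+2^k\eps^N/\kappa_j)^{-bp_0}\meg C\kappa_j^{bp_0-Q_*}(2^k\eps^N)^{-bp_0}$.

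This does not decay fast enough in a window of size comparable to $\kappa_j$. So I would use a second decomposition: for $j\Meg N$, cover $B'$ by balls of radius $\eps^j$ and apply the $\Cc^q$ condition at level $j$, not at level $N$. Each such ball carries $\int\abs{f_{j'}}^{q}$ (over $j'\Meg j$) at most $A^q\eps^{jQ_*}$. The local part $B(x,\eps^j)$ is then controlled by Hölder's inequality, replacing $p_0$ by $q$ via the averaging $\kappa_j^{-Q_*}\int_{B(x,\kappa_j)}$. The outer part produces factors $(\eps^{j-N}2^{-k})^{bp_0-D}$, which are summable in both $j$ and $k$ because $b>D/p_0$. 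The large-$\kappa_j$ case $\kappa_j\Meg1/c$ is analogous: the prefactor $(1+\kappa_j)^{-b+Q_*/p_0}$ compensates for the growth $\kappa_j^{Q_G-Q_*}$ of volumes at large scale, so that a single scale-$1$ cover suffices, with geometric decay in the distance.

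\emph{Step 3: integrate over $x$.} Integrating the bound from Step 2 over $x\in B(x_0,\eps^N)$ and applying Lemma~\ref{lem:25d} to the double series in $j$ and $k$ gives the total bound $C A^q\eps^{NQ_*}$, with a constant uniform in $q\Meg p_0$. Taking the supremum over $N$ and $x_0$ concludes. Step 2 is the crucial one: the pointwise control of the interior ball $B(x,\eps^j)$ cannot come from $\Cc^q$ alone. If the Hölder averaging fails there, my fallback would be the maximal-function trick: bound $\Nc_{p_0,b,\kappa_j}$ by the Hardy–Littlewood maximal function of $\abs{f_j}^{p_0}$ raised to $1/p_0$, and use the Fefferman–Stein vector-valued inequality localized to the balls $B(x_0,2\eps^N)$.
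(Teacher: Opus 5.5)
There is a genuine gap, located exactly where you flag the main obstacle: the contribution of $y\in B(x_0,2\eps^N)$ to $\Nc_{p_0,b,\kappa_j}f_j(x)$ when $\kappa_j\meg c\eps^j$.

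\textbf{Why the level-$j$ approach fails.} The $\Cc^q(\eps)$ condition at level $j$, i.e.\ $\int_{B(z,\eps^j)}\abs{f_j}^q\,\dd\beta\meg A^q\eps^{jQ_*}$, can only give bounds that are pointwise in $x$. In this region there is no decay in $j-N$: the factor $(\eps^{j-N}2^{-k})^{bp_0-D}$ appears only when $d(x,y)\gtrsim\eps^N$. Two cases arise.
\begin{enumerate}
\item Consider the shell $\eps^j\meg d(x,y)\meg 3\eps^N$. Your decomposition actually leaves it untreated: it lies neither in $B(x,\eps^j)$ nor in the level-$N$ annuli with $k\Meg 1$. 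There the level-$j$ cover gives at best $\lesssim A$, already from the innermost dyadic shell.
\item On $B(x,\eps^j)$ the level-$j$ condition gives nothing at all when $\kappa_j\ll\eps^j$, because $\kappa_j^{-Q_*}\int_{B(x,\kappa_j)}\abs{f_j}^q\,\dd\beta$ is not controlled by it. For example, a spike of height $H$ on a ball of radius $\rho\ll\eps^j$ with $H^q\rho^{Q_*}\meg A^q\eps^{jQ_*}$ makes $\Nc_{p_0,b,\rho}f_j\approx H$ at its centre.
\end{enumerate}
Even the bound $\lesssim A$ is useless: integrating it over $B(x_0,\eps^N)$ and summing over $j\Meg N$ gives $\sum_{j\Meg N}A^q\eps^{NQ_*}=\infty$. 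Lemma~\ref{lem:25d} cannot rescue Step~3, since there is no off-diagonal decay to exploit. Level-$j$ information is essentially $L^\infty$ information. What you need is the level-$N$ condition, which controls all $j\Meg N$ at once.

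\textbf{The missing step.} Avoid pointwise estimates near the diagonal (the case $q=\infty$ is trivial). Set $r\coloneqq q/p_0\in[1,\infty)$ and $k_j\coloneqq\kappa_j^{-Q_*}(1+\abs{\,\cdot\,}_*/\kappa_j)^{-bp_0}$. Then $[\Nc_{p_0,b,\kappa_j}(f_j\chi_{B(x_0,2\eps^N)})]^{p_0}=(\abs{f_j}^{p_0}\chi_{B(x_0,2\eps^N)})*k_j$. By Lemma~\ref{lem:37}, since $bp_0>D$, one has $\norm{k_j}_{L^1(G)}\meg C$ uniformly for $\kappa_j\meg c$. Apply Young's inequality in $L^r$; equivalently, follow your H\"older step by Fubini in $x$ rather than by the level-$j$ bound. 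Covering $B(x_0,2\eps^N)$ with boundedly many balls of radius $\eps^N$ then gives
\[
\sum_{j\Meg N}\int_G[\Nc_{p_0,b,\kappa_j}(f_j\chi_{B(x_0,2\eps^N)})]^q\,\dd\beta\meg C^{r}\sum_{j\Meg N}\int_{B(x_0,2\eps^N)}\abs{f_j}^q\,\dd\beta\meg C'C^{r}A^q\eps^{NQ_*}.
\]

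\textbf{The rest is fine.} The far part $d(x_0,y)\Meg 2\eps^N$ works as you treat it. More simply, you can use the level-$N$ balls of Step~1 and Minkowski's inequality in $\ell^{q/p_0}_j$. The case $\kappa_j\Meg 1/c$ also works, with a unit-ball cover and the level-$0$ condition. All constants are then uniform in $q\in[p_0,\infty]$.

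\textbf{Your fallback would not prove the statement as written.} The Hardy--Littlewood maximal operator is unbounded on $L^{q/p_0}$ when $q=p_0$, and its norm blows up as $q\downarrow p_0$. The lemma requires a constant independent of $q\in[p_0,\infty]$.
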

	 
	Cf.~\cite[Lemma 2.49]{Calzi4} for a proof of the following result.
	
	\begin{lem}\label{lem:25c}
		Take $q\in (0,\infty]$, $\eps,\eps'\in (0,1)$, $ c>1$, $a,  \eta,\delta>0$,  and $b>D/q$.  Then, there is a constant $C>0$ such that 
		\[
		\norm*{  \sum_{j\in\N} \frac{\eps^{j\delta} \eps'^{k\eta}}{(\eps^j+\eps'^{k})^{\delta+\eta}} (\Nc_{q,b,c \eps^j}f_j)(x)}_{\Cc^q_{(k,x)}(\eps')}\meg  C \norm{   f_j(x)  }_{\Cc^q_{(j,x)}(\eps)}
		\]
		for every  $(f_j)\in L^1_\loc(G)^{\N}$.
	\end{lem}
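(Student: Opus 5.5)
We fix $N\in\N$ and $x\in G$, and we estimate
\[
\eps'^{-NQ_*/q}\norm*{\chi_{[N,+\infty)\times B(x,\eps'^N)}(k,x')\sum_{j\in\N} w_{j,k}(\Nc_{q,b,c\eps^j}f_j)(x')}_{L^q_{(k,x')}(\N\times G)},
\qquad w_{j,k}\coloneqq\frac{\eps^{j\delta}\eps'^{k\eta}}{(\eps^j+\eps'^k)^{\delta+\eta}} ,
\]
uniformly in $N$ and $x$. The first step is to observe that
\[
w_{j,k}\asymp\min\bigl((\eps^j/\eps'^k)^{\eta},(\eps'^k/\eps^j)^{\delta}\bigr),
\]
so the kernel decays geometrically away from the ``diagonal'' $\eps^j\approx\eps'^k$. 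Let $M\in\N$ be the least integer with $\eps^M\meg \eps'^N$. We split the sum over $j$ into the \emph{coarse} part $j<M$ and the \emph{fine} part $j\Meg M$, and treat them separately. We use the $\min(1,q)$-triangle inequality to recombine the two parts.

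\emph{Fine part.} For $k\Meg N$ and $j\Meg M$, we apply Lemma~\ref{lem:25d}, or rather its proof, i.e.\ a Schur-type estimate for the geometrically decaying matrix $(w_{j,k})$ after reindexing $\eps'^k$ by the closest power of $\eps$. Pointwise in $x'$, this bounds
\[
\norm*{\sum_{j\Meg M} w_{j,k}\,g_j(x')}_{\ell^q_k([N,\infty))}\meg C\norm{g_j(x')}_{\ell^q_j([M,\infty))},
\qquad g_j\coloneqq\Nc_{q,b,c\eps^j}f_j .
\]
Integrating over $x'\in B(x,\eps'^N)\subseteq B(x,\eps^{M-1})$, we are reduced to bounding $\norm{\chi_{[M-1,\infty)\times B(x,\eps^{M-1})}(j,x')\,g_j(x')}_{L^q}$. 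Since $\eps'^{NQ_*/q}\asymp\eps^{MQ_*/q}$ up to constants depending only on $\eps$, this is exactly controlled by Lemma~\ref{lem:2c}, applied with $p_0=q$, $\kappa_j=c\eps^j$ (here $\kappa_j\meg c$, so the factor $(1+\kappa_j)^{-b+Q_*/q}$ is harmless). This gives $C\eps^{MQ_*/q}\norm{f}_{\Cc^q(\eps)}$.

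\emph{Coarse part.} For $j<M$, the radius $c\eps^j$ exceeds $\eps'^N$. We show that
\[
(\Nc_{q,b,c\eps^j}f_j)(x')\meg C\,\norm{f}_{\Cc^q(\eps)}\qquad\text{for } x'\in B(x,\eps'^N).
\]
To prove this, we cover $G$ by balls $B(y_i,\eps^j)$ of bounded overlap and use the definition of $\Cc^q(\eps)$ on each ball, which gives $\norm{f_j}_{L^q(B(y_i,\eps^j))}\meg \eps^{jQ_*/q}\norm{f}$. We then sum against $(1+d(x',y_i)/\eps^j)^{-bq}$. The resulting series converges because $b>D/q$, via the volume growth in Lemma~\ref{lem:37}; the $Q_G$ versus $Q_*$ discrepancy at large distances is what forces $D$ here. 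Consequently the $L^q$ norm over $[N,\infty)\times B(x,\eps'^N)$ of the coarse part is at most
\[
C\norm{f}\,\eps'^{NQ_*/q}\,\norm*{\sum_{j<M}w_{j,k}}_{\ell^q_k([N,\infty))}.
\]
Since $w_{j,k}\meg(\eps'^k/\eps^j)^{\delta}$ and $\eps'^k\meg\eps'^N\meg \eps^{M-1}$, this double geometric sum is bounded uniformly in $N$. For $q<1$ we use the $q$-triangle inequality instead of Minkowski; the case $q=\infty$ is the same with suprema.

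The step I expect to be the main obstacle is the fine part, namely matching the two different dyadic grids $(\eps^j)$ and $(\eps'^k)$. One must check that the Schur bound, together with the inclusion $B(x,\eps'^N)\subseteq B(x,\eps^{M-1})$, really lands in the localized quantity controlled by Lemma~\ref{lem:2c}, with constants independent of $N$, $x$ and $q$. I would handle this by replacing each $k$ with the unique $j(k)$ satisfying $\eps^{j(k)+1}<\eps'^k\meg\eps^{j(k)}$, noting that each $j$ is hit boundedly many times, and then invoking Lemma~\ref{lem:25d}.
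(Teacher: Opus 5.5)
Your argument is correct. This paper does not prove the lemma itself; it quotes it from \cite[Lemma 2.49]{Calzi4}. Your route is the natural one:
\begin{itemize}
\item split the $j$-sum at the scale $\eps'^N$;
\item for $\eps^j\meg\eps'^N$, use a Schur bound (Lemma~\ref{lem:25d} after reindexing $\eps'^k$ to the nearest power of $\eps$), followed by Lemma~\ref{lem:2c} with $p_0=q$ and $\kappa_j=c\eps^j$;
\item for $\eps^j>\eps'^N$, use the uniform pointwise bound $\Nc_{q,b,c\eps^j}f_j\meg C\norm{f}_{\Cc^q(\eps)}$, which is where $b>D/q$ and Lemma~\ref{lem:37} enter.
\end{itemize}

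A few small slips should be fixed.
\begin{itemize}
\item The exponents in your comparison for the weight are swapped. In fact $w_{j,k}\asymp\min\bigl((\eps^j/\eps'^k)^{\delta},(\eps'^k/\eps^j)^{\eta}\bigr)$. In the coarse part you must therefore use $w_{j,k}\meg(\eps'^k/\eps^j)^{\eta}$, which holds exactly when $\eps^j\Meg\eps'^k$. Your bound with exponent $\delta$ is false when $\delta>\eta$. The corrected bound gives the same uniform geometric estimate, because $\eps'^N<\eps^{M-1}$.
\item When $N=0$ one has $M=0$. Then $\eps^{M-1}$ must be replaced by $\eps^0=1$, and there are no coarse terms.
\item The constants only need to be uniform in $N$ and $x$. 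The factor $\eps^{-Q_*/q}$ coming from $\eps^{M-1}\meg\eps'^N/\eps$ depends on $q$, which is allowed since $q$ is fixed in the statement.
\end{itemize}
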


	\begin{deff}
		Take $q\in (0,\infty]$ and $\eps,\eta\in (0,1)$. Define 
		\[
		(m^q_{\eta,\eps,j} f)(x)\coloneqq \sup\Set{\lambda>0\colon  \beta \Big(\Set{y\in B(x,\eps^j) \colon  \norm{   f_k(x) }_{\ell^q_k(j+\N)}>\lambda  }\Big)> \eta\beta(B(e,\eps^j))}
		\]
		and
		\[
		(m^q_{\eta,\eps} f)(x)\coloneqq \sup_{j\in\N} (m^q_{\eta,\eps,j} f )(x)
		\]
		for every  $f=(f_j)\in L^1_\loc(G)^\N$ and for every $x\in G$.
	\end{deff}
	
	\begin{deff}
		Take $\eps\in (0,1)$, $\delta>0$, and $R\Meg 2$. Given $K\subseteq \N\times \N$, we say that $(x_{j,k})\in G^{K}$ is a reduced $(\eps,\delta,R)$-lattice on $G$ if the $B(x_{j,k},\delta\eps^j)$, $j\in K_k$, are pairwise disjoint and the $B(x_{j,k}, \delta R\eps^j)$, $j\in K_k$, cover $G$ for every $k\in\N$, where $K_k\coloneqq \Set{j\in\N\colon (j,k)\in K}$.
	\end{deff}
	
	Existence of reduced $(\eps,\delta,2)$-lattices is trivial, since it suffices to take $(x_{j,k})$ so that $(x_{j,k})_k$ is a maximally $2\delta\eps^j$-separated for every $j\in \N$. If $G$ is not compact, we may therefore assume that $K=\N\times \N$.
	
	Cf.~\cite[Remark 3.7]{Calzi3} for a discussion of the measurability of $m^q_{\eta,\eps,j}$ and $m^q_{\eta,\eps}$. Cf.~\cite[Lemmas 2.53 and 5.5]{Calzi4} for a proof of the following results.
	
	\begin{lem}\label{lem:59b}
		Take $p\in (0,\infty)$, $q\in (0,\infty]$, and  $\eps,\eta\in (0,1)$. Then there is a constant $C>0$ such that for every $f=(f_j)\in L^1_\loc(G)^\N$ there is a sequence $(E_j)$ of $\beta$- measurable subsets  of $  G$ such that 
		\[
		\beta( E_j\cap B(x,\eps^j))\Meg (1-\eta) \beta(B(e,\eps^j))
		\]
		for every $x\in G$ and for  every $j\in\N$, and such that
		\[
		\norm{\chi_{E_j} f_j(x)}_{L^{q,p}_{j,x}(\N,G)}\meg \norm{m^q_{\eta,\eps}f}_{L^p(G)}\meg C\norm{f_j(x)}_{L^{q,p}_{j,x}(\N,G)}
		\]
		and
		\[
		\frac{1}{C}\norm{\chi_{E_j} f_j(x)}_{L^{q,\infty}_{j,x}(\N,G)}\meg \norm{m^q_{\eta,\eps} f}_{L^\infty(G)}\meg C\norm{f}_{\Cc^{q}(\eps)}.
		\]
		More precisely,
		\[
		 \norm{  (\chi_{E_k} f_k)(x) }_{\ell^q_k( \N)}\meg (m^q_{\eta,\eps} f)(x)
		\]
		for every $x\in G$.
	\end{lem}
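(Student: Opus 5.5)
The plan is to construct the sets $E_j$ directly from the level sets of the tails
\[
F_j(y)\coloneqq \norm{f_k(y)}_{\ell^q_k(j+\N)},
\]
which are decreasing in $j$ with $F_0=\norm{f_k}_{\ell^q_k(\N)}$. We prove the pointwise bound first and then derive the two norm inequalities from maximal-function arguments. The natural choice is
\[
E_j\coloneqq\Set{y\in G\colon F_j(y)\meg (m^q_{\eta,\eps}f)(y)}.
\]
The pointwise estimate is then elementary. Fix $x$. If $x\in E_k$ for no $k$, there is nothing to prove. Otherwise let $k_0$ be the least $k$ with $x\in E_k$. Then
\[
\norm{(\chi_{E_k}f_k)(x)}_{\ell^q_k(\N)}\meg \norm{f_k(x)}_{\ell^q_k(k_0+\N)}=F_{k_0}(x)\meg (m^q_{\eta,\eps}f)(x).
\]
Taking $L^p$ or $L^\infty$ norms in $x$ immediately yields the left-hand inequalities.

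The density condition $\beta(E_j\cap B(x,\eps^j))\Meg(1-\eta)\beta(B(e,\eps^j))$ is where I expect the main obstacle. By the definition of $m^q_{\eta,\eps,j}$ and continuity of measures along $\lambda\downarrow m^q_{\eta,\eps,j}(x)$,
\[
\beta\big(\Set{y\in B(x,\eps^j)\colon F_j(y)>(m^q_{\eta,\eps,j}f)(x)}\big)\meg \eta\beta(B(e,\eps^j)).
\]
This inequality compares $F_j(y)$ with the value of $m$ at the centre $x$, whereas $E_j$ compares it with the value at $y$ itself. The gap has to be closed using the symmetry $y\in B(x,\eps^j)\iff x\in B(y,\eps^j)$. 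I would argue by contradiction. Suppose
\[
A_x\coloneqq\Set{y\in B(x,\eps^j)\colon F_j(y)>(m^q_{\eta,\eps}f)(y)}
\]
had measure $>\eta\beta(B(e,\eps^j))$. Choose $\lambda$ as the largest level such that $A_x\cap\Set{F_j>\lambda}$ still has measure $>\eta\beta(B(e,\eps^j))$, and pick $z\in A_x$ with $F_j(z)$ close to that level. I would then try to show that $B(z,\eps^j)$ captures enough of this set to force $(m^q_{\eta,\eps,j}f)(z)\Meg\lambda$, which would contradict $z\in A_x$. Balls of the same radius centred at different points overlap only partially, so this step may require either replacing $\eta$ in the definition of $E_j$ by a smaller parameter, or defining $E_j$ through $m^q_{\eta',\eps,j}$ evaluated at an infimum over $B(y,\eps^j)$. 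Small-scale doubling of $\beta$ ($\beta(B(e,r))\asymp r^{Q_*}$ for $r\meg1$) would absorb the resulting constants. This is the step I would work out most carefully, following~\cite[Remark 3.7]{Calzi3} and Frazier--Jawerth.

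For the right-hand inequalities I use Chebyshev's inequality. For $r>0$ and $\lambda<(m^q_{\eta,\eps,j}f)(x)$,
\[
\eta\beta(B(e,\eps^j))<\beta\big(\Set{y\in B(x,\eps^j)\colon F_j(y)>\lambda}\big)\meg \lambda^{-r}\int_{B(x,\eps^j)}F_j^r\,\dd\beta .
\]
With $r=q$, $F_j\le\norm{f_k}_{\ell^q(k\ge j)}$, and the definition of $\Cc^q(\eps)$ (take $N=j$), this gives $(m^q_{\eta,\eps,j}f)(x)\meg (C/\eta)^{1/q}\norm{f}_{\Cc^q(\eps)}$, hence the $L^\infty$ bound. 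With $r<p$, $F_j\meg F_0$ gives
\[
(m^q_{\eta,\eps}f)^r\meg C\eta^{-1}\,\Mc_{\meg1}(F_0^r),
\]
where $\Mc_{\meg1}$ is the Hardy--Littlewood maximal operator over balls of radius at most $1$. This operator is bounded on $L^{p/r}(G)$ by local doubling and the Vitali covering lemma. Hence
\[
\norm{m^q_{\eta,\eps}f}_{L^p(G)}\meg C\norm{F_0}_{L^p(G)}=C\norm{f_j(x)}_{L^{q,p}_{j,x}(\N,G)}.
\]
This completes the plan, modulo the density step.
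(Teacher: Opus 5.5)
Your pointwise bound (via the least $k_0$ with $x\in E_{k_0}$) is correct, and so are both right-hand inequalities. For the $L^p$ bound, Chebyshev with $r<p$ gives $(m^q_{\eta,\eps,j}f)^r\meg\eta^{-1}\Mc_{\meg1}(F_0^r)$, and the local maximal operator is bounded on $L^{p/r}(G)$. For the $L^\infty$ bound, Chebyshev with $r=q$ and the definition of $\Cc^q(\eps)$ with $N=j$ suffice.

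The gap is the density step, which you leave open, and it cannot be closed. With the same $\eta$ in the lower density $1-\eta$ and in $m^q_{\eta,\eps}$, the statement is false. So neither your contradiction argument nor any other choice of $(E_j)$ can work; the paper itself only cites \cite{Calzi4} here.

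Here is a counterexample. Take $G=\R$ with $\gf=\gf_1$, so that $d$ is Euclidean and $\beta$ is Lebesgue measure. Take $\eta\in(0,1/2)$, $f_k=0$ for $k\Meg1$, and $f_0=F$ with $F(z)\coloneqq\max\{1,10-\min(\abs{z-1/2},\abs{z+1/2})\}$.
\begin{itemize}
\item Since all tails with $j\Meg1$ vanish, $m^q_{\eta,\eps}f=m^q_{\eta,\eps,0}f$. As $F>0$, the pointwise bound forces $E_0\subseteq\Set{F\meg m^q_{\eta,\eps,0}f}$.
\item Computing the level sets of $F$ in $B(y,1)=(y-1,y+1)$ gives $m^q_{\eta,\eps,0}f(y)=10-(2\eta+t)/3<F(y)$ for $y=1/2+t$, $0<t<\eta$.
\item Similarly, $m^q_{\eta,\eps,0}f(y)=10-(2\eta-t)/3<F(y)$ for $y=1/2-t$, $0\meg t<\eta/2$.
\item By symmetry, $\beta(B(0,1)\setminus E_0)\Meg3\eta>\eta\beta(B(e,1))$.
\end{itemize}
The mechanism is that a point just right of $1/2$ sees the peak at $1/2$ fully but the one at $-1/2$ only partly. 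This is exactly the partial-overlap obstruction you identified. It cannot be ``absorbed'' by doubling, because the stated bound carries no constant.

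What does hold, with your $E_j$, is
\[
\beta(E_j\cap B(x,\eps^j))\Meg(1-N_0\eta)\,\beta(B(e,\eps^j)),
\]
where $N_0$ bounds, by local doubling, the number of balls of radius $r/2$ needed to cover a ball of radius $r\meg1$. To prove it, set $r=\eps^j$ and proceed as follows.
\begin{itemize}
\item Cover $B(x,r)$ by balls $B(w_i,r/2)$, $i\meg N_0$, and put $S_i\coloneqq B(w_i,r/2)\setminus E_j$.
\item Unless $\beta(S_i)\meg\delta$, pick $y_i\in S_i$ with $\beta(S_i\cap\Set{F_j<F_j(y_i)})\meg\delta$.
\item Since $S_i\subseteq B(y_i,r)$ and $F_j(y_i)>m^q_{\eta,\eps,j}f(y_i)$, the right-continuity you already used gives $\beta(S_i\cap\Set{F_j\Meg F_j(y_i)})\meg\eta\beta(B(e,r))$.
\item Sum over $i$ and let $\delta\to0$.
\end{itemize}
Equivalently, one gets density $1-\eta$ if the pointwise majorant is $m^q_{\eta/N_0,\eps}f$, which still satisfies your right-hand bounds. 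This is what your ``smaller parameter'' idea amounts to. However, $m^q_{\eta',\eps}f\Meg m^q_{\eta,\eps}f$ for $\eta'<\eta$, so it proves a corrected lemma rather than the stated one. The downstream uses (Lemma~\ref{lem:60b} and the interpolation argument) only need some fixed lower density, so they survive after shrinking $\eta$.
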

	
	\begin{lem}\label{lem:60b}
		Take $p\in (0,\infty)$, $q\in (0,\infty]$, $\eta,\eps\in (0,1)$, $\delta_0>0$ and $R\Meg 2$. Then, there is a constant $C>0$ such that the following hold. Take a reduced $(\eps,\delta,R)$-lattice $(x_{j,k})_{(j,k)\in J}$ on $G$, with $\delta\in (0,\delta_0]$, and define $\iota\colon \C^{J}\to \C^{\N\times G}$ so that $\iota(\lambda) (j,x)\coloneqq \sum_{k} \lambda_{j,k}\chi_{B(x_{j,k}, \delta \eps^j)}(x)$ for every $\lambda\in \C^{J}$ and for every $(j,x)\in \N\times G$. In addition, for every $j\in\N$ take a $\beta$-measurable subsets $E_j$ of $  G$ such that 
		\[
		\beta(E_j\cap B(x_{j,k},\delta\eps^j))\Meg \eta \beta(B(e,\delta\eps^j))  
		\]
		for every  $(j,k)\in J$. Then,  
		\[
		\norm{ \iota(\lambda) }_{L^{q,p}(\N,G)}\meg C \norm{\chi_{E_j}(x) \iota(\lambda)(j,x) }_{ L^{q,p}_{(j,x)}(\N,G)}
		\]
		and
		\[
		\norm{ \iota(\lambda) }_{\Cc^{q}(\eps)}\meg C \norm{\chi_{E_j}(x) \iota(\lambda)(j,x) }_{ L^{q,\infty }_{j,x}(\N, G)}
		\]
		for every $\lambda\in \C^{J}$.
	\end{lem}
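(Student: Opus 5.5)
The plan is to dominate $\iota(\lambda)$ pointwise by a Peetre-type maximal function $\Nc_{r,a,\kappa_j}$ of $\chi_{E_j}\iota(\lambda)(j,\,\cdot\,)$, taken at scale $\kappa_j\coloneqq \delta\eps^j$. The two claimed inequalities then follow from Proposition~\ref{cor:18} (for $L^{q,p}$) and from Lemma~\ref{lem:2c} (for $\Cc^q(\eps)$).

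\emph{Step 1: parameters.} Fix $r\in(0,\min(p,q,1))$; for $q=\infty$ it is enough that $r<p$. Fix $a>D/r$. All constants must be uniform in $\delta\in(0,\delta_0]$. For this I use that $\beta(B(e,\rho))\asymp \rho^{Q_*}$ uniformly for $\rho\in(0,\delta_0]$. This holds because $\beta(B(e,\rho))\asymp \rho^{Q_*}$ as $\rho\to0^+$ and $\beta(B(e,\rho))$ is continuous and positive on compact ranges of $\rho$.

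\emph{Step 2: pointwise bound.} Fix $j$ and $x\in G$, and write $B_{j,k}\coloneqq B(x_{j,k},\delta\eps^j)$. For fixed $j$ these balls are pairwise disjoint, so $\iota(\lambda)(j,x)=0$ unless $x\in B_{j,k}$ for exactly one $k$. In that case $\iota(\lambda)(j,\,\cdot\,)=\lambda_{j,k}$ on $B_{j,k}$. For $y\in B_{j,k}$ we have $d(x,y)\meg 2\delta\eps^j$, hence $(1+d(x,y)/\kappa_j)^{-a}\Meg 3^{-a}$. Therefore
\[
(\Nc_{r,a,\kappa_j}(\chi_{E_j}\iota(\lambda)(j,\,\cdot\,)))(x)\Meg 3^{-a}\kappa_j^{-Q_*/r}\abs{\lambda_{j,k}}\,\beta(E_j\cap B_{j,k})^{1/r}\Meg 3^{-a}\abs{\lambda_{j,k}}\Big(\eta\,\beta(B(e,\kappa_j))\,\kappa_j^{-Q_*}\Big)^{1/r}.
\]
The last bracket is bounded below by $c\,\eta$ with $c>0$ independent of $j$ and of $\delta$, by Step~1. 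It follows that
\[
\abs{\iota(\lambda)(j,x)}\meg C\,(\Nc_{r,a,\kappa_j}(\chi_{E_j}\iota(\lambda)(j,\,\cdot\,)))(x)
\]
for every $j$ and every $x\in G$.

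\emph{Step 3: the $L^{q,p}$ estimate.} I apply Proposition~\ref{cor:18} with $p_0=r$. The measure is $\mi\coloneqq\sum_{j\in\N}\delta_{\eps^j}$, which has bounded support. The scale function is $\kappa(t)\coloneqq\delta t$, and I set $f(\eps^j,\,\cdot\,)\coloneqq\chi_{E_j}\iota(\lambda)(j,\,\cdot\,)$. Since $\kappa\meg\delta_0$, the factor $(1+\kappa)^{(Q_*-D)/r}$ is bounded below uniformly. One point needs checking: the constant in Proposition~\ref{cor:18} must not depend on $\delta$. The scales $\kappa$ always lie in $(0,\delta_0]$, and that proposition allows an arbitrary measurable $\kappa$, so its constant is uniform. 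Taking $L^{q,p}$ quasi-norms of the pointwise bound from Step~2 then gives the first claimed inequality.

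\emph{Step 4: the $\Cc^q(\eps)$ estimate, and the main difficulty.} I apply Lemma~\ref{lem:2c} with $p_0=r$, $b=a$, $\kappa_j=\delta\eps^j$ and $c\coloneqq1+\delta_0$. This $c$ satisfies $c>1$, and $\kappa_j\meg c\,\eps^j$, so the lemma's hypotheses hold, and $(1+\kappa_j)^{-b+Q_*/r}$ is bounded below. Combining Lemma~\ref{lem:2c} with Step~2 gives
\[
\norm{\iota(\lambda)}_{\Cc^q(\eps)}\meg C\norm{\chi_{E_j}\iota(\lambda)(j,\,\cdot\,)}_{\Cc^q(\eps)}.
\]
It remains to replace the $\Cc^q(\eps)$ quasi-norm on the right by the $L^{q,\infty}$ quasi-norm. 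This is the delicate point. Here one genuinely needs
\[
\norm{g}_{\Cc^q(\eps)}\meg C\norm{g}_{L^{q,\infty}}
\]
for the specific $g_j\coloneqq\chi_{E_j}\iota(\lambda)(j,\,\cdot\,)$, a bound that fails for general sequences. I would try to get it as follows. The quantity
\[
\eps^{-NQ_*/q}\norm{\chi_{[N,\infty)\times B(x,\eps^N)}g}_{L^q}
\]
is bounded by $\beta(B(x,\eps^N))^{1/q}\eps^{-NQ_*/q}\sup_{y}\norm{g_j(y)}_{\ell^q_j}$, and the volume factor is bounded since $\beta(B(x,\eps^N))\lesssim\eps^{NQ_*}$. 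The catch is that $\norm{g_j(y)}_{\ell^q_j}$ is an $\ell^q$ sum over $j\Meg N$ taken at a single point $y$, while $\norm{g}_{L^{q,\infty}}$ controls exactly $\sup_y\norm{g_j(y)}_{\ell^q_j}$. So, with $L^{q,\infty}$ read as $\norm{\,\norm{g_j(y)}_{\ell^q_j}}_{L^\infty_y}$, the bound follows by integrating over the ball. Should this reading of $L^{q,\infty}$ turn out to be inadequate, I would instead route the argument through $m^q_{\eta,\eps}$ and Lemma~\ref{lem:59b}.
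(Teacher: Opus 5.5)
Your proof is correct. The paper gives no proof of this lemma and only refers to~\cite[Lemma 5.5]{Calzi4}. Your route is the Frazier--Jawerth argument written with the maximal-function tools the paper imports for this purpose: you dominate $\abs{\iota(\lambda)(j,\,\cdot\,)}$ pointwise by $\Nc_{r,a,\delta\eps^j}(\chi_{E_j}\iota(\lambda)(j,\,\cdot\,))$, using disjointness of the balls and the density of $E_j$, and then apply Proposition~\ref{cor:18} and Lemma~\ref{lem:2c}.

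\textbf{The claim in Step~4 is backwards.} You say that $\norm{g}_{\Cc^q(\eps)}\meg C\norm{g}_{L^{q,\infty}}$ ``fails for general sequences.'' In fact your own computation proves it for every $g$, since $\beta(B(x,\eps^N))\meg C\eps^{NQ_*}$ for all $N\in\N$. It is the reverse inequality that fails in general, which is why Lemma~\ref{lem:59b} needs the sets $E_j$. So the fallback through $m^q_{\eta,\eps}$ is unnecessary.

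\textbf{The second inequality does not need Lemma~\ref{lem:2c}.} Suppose $j\Meg N$ and $B(x_{j,k},\delta\eps^j)$ meets $B(x,\eps^N)$. Then this ball lies in $B(x,(1+2\delta_0)\eps^N)$. Moreover $\beta(B(x_{j,k},\delta\eps^j))\meg \eta^{-1}\beta(E_j\cap B(x_{j,k},\delta\eps^j))$. Together with the disjointness of the balls in $k$, this gives
\[
\sum_{j\Meg N}\int_{B(x,\eps^N)}\abs{\iota(\lambda)(j,y)}^q\,\dd\beta(y)\meg \frac{1}{\eta}\int_{B(x,(1+2\delta_0)\eps^N)}\sum_{j\Meg N}\chi_{E_j}(y)\abs{\iota(\lambda)(j,y)}^q\,\dd\beta(y)\meg \frac{C}{\eta}\,\eps^{NQ_*}\norm{\chi_{E_j}(x)\iota(\lambda)(j,x)}_{L^{q,\infty}_{j,x}(\N,G)}^q .
\]
This yields the $\Cc^q(\eps)$ bound for $q<\infty$; the case $q=\infty$ is trivial. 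The constant is uniform in $\delta$ for free. This also sidesteps whether the constant of Lemma~\ref{lem:2c} depends on $(\kappa_j)$ only through $c$, which your Step~4 tacitly assumes.

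\textbf{Uniformity in $\delta$ for the first inequality.} Independence of $\delta\in(0,\delta_0]$ is not literally part of the statement of Proposition~\ref{cor:18}, since there $C$ is chosen after $\kappa$. Your one-line justification (``arbitrary measurable $\kappa$'') becomes rigorous after a short gluing argument. Suppose the optimal constants were unbounded along $\delta_n$. Put the atoms for the different $n$ on disjoint sets of points, say $\mi=\sum_{n,j}\delta_{2^{-m(n,j)}}$ for a bijection $m$, with $\kappa(2^{-m(n,j)})=\delta_n\eps^j$. This single pair $(\mi,\kappa)$ would then admit no finite constant, contradicting the proposition. You should spell this out.
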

	
\subsection{Besov and Triebel--Lizorkin Spaces}

\begin{deff}\label{def:12}
	Take $\eps\in (0,1)$ and a formally self-adjoint weighted subcoercive operator $\Lc'$ on $G$. We define $\Psi_\eps(\Lc')$ the set of $(\psi_j)\in \Sr(G)^\N$ such that there is some bounded sequence $(\phi_j)$ of elements of $C^\infty_c(\R)$ such that 
	\[
	0\not \in \overline{\bigcup_{j\Meg 1}\supp \phi_j }, \qquad 
	\inf_{\lambda\in \R} \sum_{j\in\N} \abs{\phi_j(\eps^{j }\lambda)}>0,
	\]
	and
	\[
	\psi_j= \Kc_{\eps^{j }\Lc'}(\phi_j )
	\]
	for every $j\in\N$.
	We define $\widetilde \Psi_\eps(\Lc')$ as the set of $(\psi_j)\in \Psi_\eps(\Lc')$ such that $\sum_{j\in\N} \psi_j=\delta_e$ in $\Sr'(G)$; equivalently, with the above notation, $\sum_{j\in\N} \phi_j(\eps^j\,\cdot\,)=1$ on $\sigma(\Lc')$.
	
	We shall simply write $\Psi_\eps$ and $\widetilde \Psi_\eps$ instead of $\Psi_\eps(\Lc)$ and $\widetilde \Psi_\eps(\Lc)$, respectively.
\end{deff}

\begin{deff}
		Take $p,q\in (0,\infty]$ and $\alpha\in \R$. Take $\eps\in (0,1)$ and $(\psi_{j})\in \Psi_\eps$. Then, we define $B^{p,q}_\alpha(G)$ as the space of $f\in \Sr'(G)$ such that
	\[
	\norm*{\eps^{-j\alpha/d} \norm{ f*\psi_{j}}_{L^p(G)}}_{\ell^q_j(\N)}<\infty,
	\]
	endowed with the corresponding quasi-norm. We define $\mathring B^{p,q}_\alpha(G)$ as the closure of $\Sr(G)$ in $B^{p,q}_\alpha(G)$.
	
	If  $p<\infty$, then we define $F^{p,q}_\alpha(G)$ as the space of $f\in \Sr'(G)$ such that
	\[
	\norm*{ \norm{\eps^{-j\alpha/d} f*\psi_{j}}_{\ell^q_j(\N)}}_{L^p(G)}<\infty,
	\]
	endowed with the corresponding quasi-norm. We define $F^{\infty,q}_\alpha(G)$ as the space of $f\in \Sr'(G)$ such that
	\[
	\norm{\eps^{-j\alpha/d} f*\psi_{j}}_{\Cc^q_{(j,x)}(\eps^{1/\grado})}<\infty,
	\]
	endowed with the corresponding quasi-norm.
	We define $\mathring F^{p,q}_\alpha(G)$ as the closure of $\Sr(G)$ in $F^{p,q}_\alpha(G)$.
\end{deff}

\subsection{Discretization}

\begin{deff}
	Take $p,q\in (0,\infty]$, $\alpha\in \R$, a subset $K$ of $\N\times \N$ and $\eps\in (0,1)$. Then, we define  $b^{p,q}_{\alpha}(\eps)$ as the space of $\lambda\in \C^{\N\times \N}$ such that
	\[
	\norm{\eps^{j (Q_*/p-\alpha)}\lambda_{j,k}  }_{\ell^{p,q}_{k,j}(\N,\N)}<\infty,
	\]
	endowed with the corresponding quasi-norm.  We define $b^{p,q}_\alpha(\eps,K)$ as the space of $\lambda\in \C^K$ such that $\lambda^0\in b^{p,q}_\alpha(\eps)$, endowed with the corresponding quasi-norm, where $\lambda^0\in \C^{\N\times \N}$ is chosen so that $\lambda^0_{j,k}=\lambda_{j,k}$ when $(j,k)\in K$, while $\lambda^0_{j,k}=0$ otherwise. 
	
	In addition, take $\delta>0$, $R\Meg 2$, and a reduced $(\eps,\delta,R)$-lattice $(x_{j,k})_{(j,k)\in K'}$. Then, for $p\in (0,\infty)$ we define $f^{p,q}_\alpha((x_{j,k}))$ as the space of $\lambda\in \C^{K'}$ such that
	\[
	\norm{ \chi_{K'}(j,k) \eps^{-j \alpha} \lambda_{j,k} \chi_{B(x_{j,k}, \delta  \eps^j)}(x) }_{L^{q,q,p}_{j,k,x}(\N,\N,G)}<\infty,
	\]
	(with some abuse of notation) endowed with the corresponding quasi-norm. Finally, we define $f^{\infty,q}_\alpha((x_{j,k}))$ as the space of $\lambda\in \C^{K'}$ such that 
	\[
	\norm*{ \norm{ \chi_{K'}(j,k)\eps^{-j \alpha} \lambda_{j,k} \chi_{B(x_{j ,k}, \delta  \eps^{j})}(x)}_{\ell^q_k(\N)} }_{\Cc^{q}_{(j,x)}(\eps)}<\infty,
	\]
	endowed with the corresponding quasi-norm.
	
	We denote with  $\mathring b^{p,q}_\alpha(\eps)$, $\mathring b^{p,q}_\alpha(\eps,K)$, and $\mathring f^{p,q}_\alpha((x_{j,k}))$ the closures of $\C^{(\N\times \N)}$, $\C^{(K)}$, and $\C^{(K')}$ in  $  b^{p,q}_\alpha(\eps )$, $b^{p,q}_\alpha(\eps,K)$, and $f^{p,q}_\alpha((x_{j,k}))$, respectively.
\end{deff}

Observe that, even though $(x_{j,k})$ determines $\eps$ uniquely, it does not determine $\delta,R$. As Remark~\ref{oss:6} below shows, nonetheless, the space $f^{p,q}_\alpha((x_{j,k}))$ does not depend on $\delta,R$ (its norm does, though).

We also observe that $b^{p,q}_\alpha(\eps,K)$ and $\mathring b^{p,q}_\alpha(\eps,K)$ are retracts of $b^{p,q}_\alpha(\eps)$ and $\mathring b^{p,q}_\alpha(\eps)$, respectively  (the retraction being the restriction mapping $b^{p,q}_\alpha(\eps)\to b^{p,q}_\alpha(\eps,K)$ and the corresponding section being the mapping $\lambda \mapsto \lambda^0$), so that basically every assertion concerning the former spaces follow from the corresponding one on the latter spaces. In other words, we shall generally prove our assertions only for the spaces $b^{p,q}_\alpha(\eps)$ and $\mathring b^{p,q}_\alpha(\eps)$. In addition, for simplicity, we shall \emph{not} write $ \chi_{K'}(j,k)$ in the quasi-norms appearing in the definition of  $f^{p,q}_\alpha((x_{j,k}))$; in other words, we shall convene to consider $0$ the terms in those quasi-norms which are not defined.  In fact, even with the term $ \chi_{K'}(j,k)$ there would still be some abuse of notation.

Cf.~\cite[Remark 4.5]{Calzi4} for a proof of the following result.

\begin{oss}\label{oss:6}
	Take $p,q\in (0,+\infty]$ with $p<\infty$, $\alpha\in \R$, $\eps\in (0,1)$, $\delta_0>0$, $R\Meg 2$, and $c>1$. Then, there is a constant $C>0$ such that
	\[
	\norm{ \eps^{-j \alpha} \lambda_{j,k} \chi_{B(x_{j,k}, c\delta \eps^j)}(x) }_{L^{q,q,p}_{j,k,x}(\N,\N,G)}\meg C \norm{ \eps^{-j \alpha} \lambda_{j,k} \chi_{B(x_{j,k}, (\delta/c) \eps^j)}(x) }_{L^{q,q,p}_{j,k,x}(\N,\N,G)}
	\]
	and 
	\[
	\norm*{ \norm{\eps^{-j \alpha} \lambda_{j,k} \chi_{B(x_{j,k},c \delta  \eps^j)}(x)}_{\ell^q_k(\N)} }_{\Cc^{q}_{(j,x)}(\eps)}\meg C\norm*{ \norm{\eps^{-j \alpha} \lambda_{j,k} \chi_{B(x_{j,k}, (\delta /c) \eps^j)}(x)}_{\ell^q_k(\N)} }_{\Cc^{q}_{(j,x)}(\eps)}
	\]
	for every reduced $(\eps,\delta,R)$-lattice $(x_{j,k})_{(j,k)\in K}$, with $\delta\in (0,\delta_0]$, and for every $\lambda\in \C^{K}$. 
\end{oss}

In particular, it does not matter which radius we take for the balls $B(x_{j,k}, \delta \eps^j)$ appearing in the definition of the spaces $f^{p,q}_\alpha((x_{j,k}))$, as long as it is comparable to $\delta \eps^j$. Taking exactly $\delta \eps^j$ makes the balls $B(x_{j,k}, \delta \eps^j)$  disjoint, so that we may replace the inner $\ell^q_k(\N)$ norm with a sum in $k$ (even without taking modules); taking $R\delta \eps^j$ makes the quasi-norm of  $f^{p,q}_\alpha((x_{j,k}))$ better for some proofs.

Cf.~\cite[Proposition 4.8]{Calzi4} for a proof of the following `sampling theorem.'

\begin{prop}\label{prop:14}
	Take $p,q\in (0,+\infty]$, $\alpha\in \R$, $\eps\in (0,1)$, $\delta_0>0$ and $R\Meg 2$.  
	Take $(\psi_{j})\in \Psi_\eps$. Then, there is a constant $C>0$ such that, for every reduced $(\eps^{1/\grado},\delta,R)$-lattice $(x_{j,k})_{(j,k)\in K}$ on $G$, with $\delta\in (0,\delta_0]$,
	if we define $U_{\delta R,\eps}$ as the set of $(z_{j,k})\in G^{K}$ such that $z_{j,k}\in \overline B(e,\delta R\eps^j)$ for every $(j,k)\in K$, and
	\[
	S_{(x_{j,k})} \colon \Sr'(G)\ni f \mapsto ((f*\psi_{2,j})(x_{j,k}))_{j,k}\in \C^{K},
	\]
	then
	\[
	\frac{1}{C} \norm{f}_{B^{p,q}_\alpha(G)}\meg \delta^{Q_*/p} \max_{z\in U_{\delta R,\eps}}\norm{ S_{(x_{j,k}z_{j,k})}f}_{b^{p,q}_\alpha(\eps^{1/\grado},K)}\meg C \norm{f}_{B^{p,q}_\alpha(G)}
	\] 
	and
	\[
	\frac{1}{C} \norm{f}_{F^{p,q}_\alpha(G)}\meg \max_{z\in U_{\delta R,\eps}}  \norm{ S_{(x_{j,k}z_{j,k})}f }_{f^{p,q}_\alpha((x_{j,k}))}\meg C \norm{f}_{F^{p,q}_\alpha(G)}
	\]
	for every $f\in\Sr'(G)$. In addition, there is $\delta_->0$ such that, if $\delta\meg \delta_-$, then
	\[
	\frac{1}{C} \norm{f}_{B^{p,q}_\alpha(G)}\meg \delta^{Q_*/p} \min_{z\in U_{\delta R,\eps}}\norm{ S_{(x_{j,k}z_{j,k})}f}_{b^{p,q}_\alpha(\eps^{1/\grado},K)}\meg C \norm{f}_{B^{p,q}_\alpha(G)}
	\] 
	for every $f\in B^{p,q}_\alpha(G)$,  and
	\[
	\frac{1}{C} \norm{f}_{F^{p,q}_\alpha(G)}\meg \min_{z\in U_{\delta R,\eps}}  \norm{ S_{(x_{j,k}z_{j,k})}f }_{f^{p,q}_\alpha((x_{j,k}))}\meg C \norm{f}_{F^{p,q}_\alpha(G)}
	\]
	for every $f\in F^{p,q}_\alpha(G)$.
	
	Finally, the mapping $S_{(x_{j,k})}$ induces isomorphisms of $\mathring B^{p,q}_\alpha(G)$ and $\mathring F^{p,q}_\alpha(G)$ onto closed vector subspaces of $\mathring b^{p,q}_\alpha(\eps^{1/\grado},K) $ and $\mathring f^{p,q}_\alpha((x_{j,k}))$, respectively. 
\end{prop}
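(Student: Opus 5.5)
The argument is the Frazier--Jawerth scheme: sample $f*\psi_j$ on the lattice, and control the oscillation of $f*\psi_j$ over each ball $B(x_{j,k},\delta R\eps^{j/\grado})$ by a Peetre-type maximal function.

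First, reduce to a reproducing formula. Choose $(\psi'_j)\in\widetilde\Psi_\eps$ whose spectral cutoffs equal $1$ on the supports of the $\phi_j$ defining $(\psi_j)$, up to finitely many neighbours. Then $f*\psi_j=\sum_{|j'-j|\meg N} f*\psi'_{j'}*\psi_j$, and, by Proposition~\ref{cor:12}, every $X\in U_\lambda$ satisfies
\[
\abs{X(f*\psi_j)(x)}\lesssim \eps^{-j\deg(X)/\grado}\,\bigl(\Nc_{r,a,\eps^{j/\grado}}(f*\tilde\psi_{j})\bigr)(x)
\]
for a suitable nearby sequence $(\tilde\psi_j)$, any $r\in(0,\min(p,q))$, and $a>D/r$. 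This is a Peetre maximal inequality; for $r<1$ it is obtained from the standard bootstrap argument applied to $f*\psi_j$, which is smooth and of moderate growth since $f\in\Sr'(G)$.

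\textbf{Upper bounds (the $\max$ inequalities).} For $z_{j,k}\in\overline B(e,\delta R\eps^{j/\grado})$ and $x\in B(x_{j,k},\delta\eps^{j/\grado})$, we have $1+d(x,y)/\eps^{j/\grado}\asymp 1+d(x_{j,k}z_{j,k},y)/\eps^{j/\grado}$ uniformly in $\delta\meg\delta_0$. Hence
\[
\abs{(f*\psi_j)(x_{j,k}z_{j,k})}\lesssim \bigl(\Nc_{r,a,\eps^{j/\grado}}(f*\tilde\psi_j)\bigr)(x)
\]
for all such $x$. Summing over $k$ using the disjointness of the balls $B(x_{j,k},\delta\eps^{j/\grado})$ gives the following.
\begin{itemize}
\item In the Besov case, $\delta^{Q_*/p}\eps^{jQ_*/(p\grado)}\norm{\lambda_{j,\cdot}}_{\ell^p}\lesssim \norm{\Nc(f*\tilde\psi_j)}_{L^p}$; the volume normalisation is absorbed in $b^{p,q}_\alpha$.
\item In the Triebel--Lizorkin case with $p<\infty$, the bound holds pointwise in $(j,x)$.
\end{itemize}
Then Proposition~\ref{cor:18} (respectively Lemma~\ref{lem:2c} for $p=\infty$) removes $\Nc$. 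Finally, $\norm{f*\tilde\psi_j}$ is controlled by the norm defined through $(\psi_j)$, by the independence of the norm from the choice in $\Psi_\eps$ established in~\cite{Calzi4}.

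\textbf{Lower bounds (the $\min$ inequalities).} This is the main obstacle. For any $x\in B(x_{j,k},\delta R\eps^{j/\grado})$ and $z$ as above, the mean value theorem along a curve of small content, combined with the derivative estimate, gives
\[
\abs{(f*\psi_j)(x)}\meg \abs{(f*\psi_j)(x_{j,k}z_{j,k})}+C\delta\,\bigl(\Nc_{r,a,\eps^{j/\grado}}(f*\tilde\psi_j)\bigr)(x).
\]
Taking quasi-norms, the first term is bounded by the sampled norm via Remark~\ref{oss:6}, which enlarges the balls from $\delta$ to $\delta R$. The second term is at most $C\delta\norm{f}$, after removing $\Nc$ as before and using again the equivalence of the norms for $(\psi_j)$ and $(\tilde\psi_j)$, which brings in the neighbouring indices. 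For $\delta\meg\delta_-$ small this error is absorbed, provided $\norm{f}<\infty$ is known a priori; that is why the lower bound with $\min$ is stated only for $f$ in the space. The delicate point is the $\delta$-dependence: in the Besov case the factor $\delta^{Q_*/p}$ must be tracked, and for $p=\infty$ in the $F$ scale Lemmas~\ref{lem:59b} and~\ref{lem:60b} are needed to pass between $\Cc^q$ and the sampled sequence. For the lower bound with $\max$ we also need to handle $f\notin B^{p,q}_\alpha$. To do so, apply the same argument to $\eps^{j\gamma}$-weighted truncations, or to $\alpha'\ll\alpha$ where finiteness is automatic for tempered $f$, and then let the truncation go.

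\textbf{The closed-range statement.} Injectivity with closed range on $\mathring B$ and $\mathring F$ follows from the two-sided estimates. For $\phi\in\Sr(G)$, the sequence $S\phi$ lies in the closure of finitely supported sequences, because of the rapid decay of $\phi*\psi_j$ in both $j$ and $k$. By continuity, $S$ therefore maps $\mathring B^{p,q}_\alpha(G)$ and $\mathring F^{p,q}_\alpha(G)$ into $\mathring b^{p,q}_\alpha(\eps^{1/\grado},K)$ and $\mathring f^{p,q}_\alpha((x_{j,k}))$, respectively.
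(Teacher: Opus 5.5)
Your upper bounds (Peetre-type control of $f*\psi_j$ near the sample points, then Proposition~\ref{cor:18} or Lemma~\ref{lem:2c}) follow the standard Frazier--Jawerth route and are sound. So is your mean-value/absorption argument for the lower bounds with $\min$. The gap is in the lower bounds with $\max$. The statement asserts these for \emph{every} $\delta\in(0,\delta_0]$, with $\delta_0$ arbitrary, and for every $f\in\Sr'(G)$.

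You obtain lower bounds only by absorbing an error of size $C\delta\norm{f}$, which forces $\delta\meg\delta_-$. Nothing in your plan covers $\delta\in(\delta_-,\delta_0]$. Regarding the lattice as one with smaller $\delta$ and larger $R$ does not help, because the error is governed by $\delta R$.

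Your device for dispensing with the a priori finiteness of $\norm{f}$ also fails. A tempered distribution need not have finite $B^{p,q}_{\alpha'}$ or $F^{p,q}_{\alpha'}$ quasi-norm for any $\alpha'$; polynomials on $\R^n$ are already counterexamples. Truncating in $j$ does not cure the possible spatial growth of a single $f*\psi_j$.

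This direction actually needs neither Peetre's inequality nor absorption. Read the radius of $U_{\delta R,\eps}$ as $\delta R\eps^{j/\grado}$, as you do. The sequence quasi-norms are monotone in $\abs{\lambda_{j,k}}$ and $f*\psi_j$ is continuous. Hence the maximum over $z$ is attained by choosing each $z_{j,k}$ so that $\lambda_{j,k}\coloneqq\abs{(f*\psi_j)(x_{j,k}z_{j,k})}=\max_{\overline B(x_{j,k},\delta R\eps^{j/\grado})}\abs{f*\psi_j}$. The balls $B_{j,k}\coloneqq B(x_{j,k},\delta R\eps^{j/\grado})$ cover $G$ by the definition of a reduced lattice, so
\[
\abs{(f*\psi_j)(x)}\meg \sup_k \lambda_{j,k}\chi_{B_{j,k}}(x)\meg \norm{\lambda_{j,k}\chi_{B_{j,k}}(x)}_{\ell^q_k}
\]
for every $x\in G$.

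In the Besov case this gives $\norm{f*\psi_j}_{L^p(G)}^p\meg\sum_k \lambda_{j,k}^p\beta(B_{j,k})\meg C(\delta R)^{Q_*}\eps^{jQ_*/\grado}\sum_k\lambda_{j,k}^p$, using $\beta(B(e,r))\asymp r^{Q_*}$ for $r\meg\delta_0R$. This is the Besov lower bound with the factor $\delta^{Q_*/p}$. For the $F$ scale, take the $L^{q,p}$ or $\Cc^q$ quasi-norm and shrink the radius from $\delta R\eps^{j/\grado}$ to $\delta\eps^{j/\grado}$ by Remark~\ref{oss:6}.

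This holds for all $f\in\Sr'(G)$ and all $\delta\in(0,\delta_0]$, with both sides possibly infinite. Absorption is needed only for the $\min$ bounds and for the closed-range assertion. The latter uses the fixed choice $z=e$ and therefore also requires $\delta\meg\delta_-$.
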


\begin{deff}\label{def:11}
	Take $\eps\in (0,1)$, $\delta,N>0$, $R\Meg 2$, $K,S\in \N$, $p\in [1,\infty]$ and a reduced $(\eps ,\delta, R)$-lattice $(x_{j,k})_{(j,k)\in J}$. We say that a family $(a_{j,k})_{(j,k)\in J}$ is a system of $(K,S,N,p,\Lc)$-atoms associated with $(x_{j,k})$ if there is a family $(b_{j,k})_{(j,k)\in J, j\Meg 1}$ such that following hold:
	\begin{enumerate}
		\item $a_{0,k'},b_{j,k}\in \Sr'(G)$   and $a_{j,k}=\Lc^S b_{j,k}$  for every $(0,k'),(j,k)\in J$ with $j\Meg 1$;
		
		\item $a_{0,k'}, \Lc^K a_{0,k'}, b_{j,k}, \Lc^{S+K}b_{j,k}\in L^p_\loc(G)$,
		\[
		\norm{(1+d(\,\cdot\,,x_{0,k}) )^{N} a_{0,k}}_{L^p(G)}, \norm{(1+d(\,\cdot\,,x_{0,k}) )^{N}\Lc^K a_{0,k}}_{L^p(G)}\meg 1,
		\]
		and
		\[\norm{(1+d(\,\cdot\,,x_{j,k})/\eps^j )^{N}  b_{j,k}}_{L^p(G)}, \norm{(1+d(\,\cdot\,,x_{j,k})/\eps^j )^{N}  (\eps^{j\grado}\Lc)^{K+S}b_{j,k}}_{L^p(G)}\meg \eps^{j(S\grado+Q_*/p)}
		\]
		for every $(0,k'),(j,k)\in J$ with $j\Meg 1$.
	\end{enumerate} 
	
	We say that a family $(a_{j,k})_{(j,k)\in J}$ is a system of strong $(K,S,\Lc)$-atoms associated with $(x_{j,k})$ if there is a family $(b_{j,k})_{(j,k)\in J, j\Meg 1}$ such that following hold:
	\begin{enumerate}
		\item   $a_{0,k'},b_{j,k}\in C^\infty(G)$  and $a_{j,k}=\Lc^S b_{j,k}$   for every $(0,k'),(j,k)\in J$ with $j\Meg 1$;
		
		\item $a_{0,k'}$ is supported in $B(x_{0,k'},2\delta R)$ and $b_{j,k}$ is supported in $B(x_{j,k}, 2\delta R \eps^{j })$  for every $(0,k'),(j,k)\in J$ with $j\Meg 1$;
		
		\item  $\abs{X a_{0,k'}}\meg \abs{X}$ for every $X\in U_{K\grado}$ and  $\abs{X b_{j,k}}\meg \abs{X}\eps^{j(S\grado-\deg(X) )}$ for every $X\in U_{(K+S)\grado}$, and for every $(0,k'),(j,k)\in J$ with $j\Meg 1$.
	\end{enumerate}  
\end{deff}

When $p=1$, one may replace $L^1(G)$ with the space $\Mc^1(G)$ of bounded measures on $G$ in the above estimates.

Cf.~\cite[Theorems 4.14 and 4.15]{Calzi4} for a proof of the following `atomic decomposition theorems.' 

\begin{teo}\label{teo:13bis}
	Take $p,q\in (0,\infty]$  and $\alpha\in \R$, and take $K,S\in \N$, $N>0$, and  $p_0\in [1,\infty]$ so that $K>(Q_*/p_0+\alpha)/\grado $, $S>   (Q_*(1/\min(1,p)-1/p_0) -\alpha) /\grado $, and $N>D/\min(1,p)$. Take $\eps\in (0,1) $, $\delta>0$ and $R\Meg 2$. Then, there is a constant $C>0$ such that, for every system of $(K,S,N,p_0,\Lc)$-atoms $(a_{j,k})$ associated with some  reduced $(\eps ,\delta,R)$-lattice $(x_{j,k})_{(j,k)\in J}$,
	\[
	\norm*{\sum_{j,k} \lambda_{j,k} a_{j,k}  }_{B^{p,q}_\alpha(G)}\meg C \norm{\lambda }_{b^{p,q}_{\alpha}(\eps,J )}
	\]
	for every $\lambda\in  b^{p,q}_{\alpha}(\eps,J )$, where the sum converges in $\Sr'(G)$  and also in $\mathring B^{p,q}_\alpha(G)$ if $\lambda\in  \mathring b^{p,q}_{\alpha}(\eps,J )$. In addition, for every $f\in B^{p,q}_\alpha(G)$ there are a system $(a_{f,j,k})_{j,k}$ of strong $(K,S,\Lc)$-atoms associated with $(x_{j,k})$ and $\lambda_f\in b^{p,q}_{\alpha}(\eps,J )$ such that 
	\[
	\norm{\lambda_f }_{b^{p,q}_{\alpha}(\eps,J )}\meg C\norm{f}_{B^{p,q}_\alpha(G)}
	\]
	and such that $f=\sum_{j,k} \lambda_{f,j,k} a_{f,j,k} $.
\end{teo}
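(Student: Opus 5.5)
This statement is Theorem~\ref{teo:13bis}, which the paper cites from \cite[Theorems 4.14 and 4.15]{Calzi4}; the plan below reconstructs a proof along the lines of the Frazier--Jawerth method. There are two halves: a synthesis estimate (atoms to distributions) and a decomposition (distributions to atoms). In both, the quasi-norm of $B^{p,q}_\alpha$ is computed with a fixed $(\psi_{j'})\in\Psi_{\eps^{\grado}}$. Take $\psi_{j'}=\Kc_{\eps^{j'\grado}\Lc}(\phi_{j'})$, so that the scales match the lattice; one may first pass from $\eps$ to $\eps^{\grado}$ in the indexing of Definition~\ref{def:12}.

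\emph{Synthesis.} For an atom $a_{j,k}=\Lc^S b_{j,k}$ with $j\Meg 1$, I will estimate $a_{j,k}*\psi_{j'}$ in two regimes.
\begin{itemize}
\item When $j'\meg j$, write $\Lc^S b_{j,k}*\psi_{j'}=b_{j,k}*\Lc^S\psi_{j'}$. Then use $\Lc^S\psi_{j'}=\eps^{-j'S\grado}\Kc_{\eps^{j'\grado}\Lc}(\lambda^S\phi_{j'})$. Proposition~\ref{cor:12} gives weighted $L^{p_0'}$ bounds on this kernel. Combined with the weighted $L^{p_0}$ bound on $b_{j,k}$, this yields a pointwise bound
\[
\abs{a_{j,k}*\psi_{j'}(x)}\lesssim \eps^{(j-j')S\grado}\,\eps^{jQ_*/p_0}\eps^{-j'Q_*/p_0}\,(1+d(x,x_{j,k})/\eps^{j'})^{-N'}.
\]
\item When $j'>j$, use $K$ instead. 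Write $\psi_{j'}=\Lc^K\tilde\psi_{j'}\eps^{j'K\grado}$ with $\tilde\psi_{j'}$ again of the same type, which is possible because $0\notin\supp\phi_{j'}$. Moving $\Lc^K$ onto $\Lc^S b_{j,k}$ gives the gain $\eps^{(j'-j)K\grado}$.
\end{itemize}
The level-$0$ atoms are handled in the same way using only the $K$ condition.

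Next I sum over $k$. For fixed $j,j'$, the function $\sum_k \lambda_{j,k}(1+d(x,x_{j,k})/\eps^{\min(j,j')})^{-N}$ is controlled in $L^p$ by $\eps^{\min(j,j')Q_*/p}$ times $\norm{\lambda_{j,\cdot}}_{\ell^p}$. When $p\Meg1$ this follows from Lemma~\ref{lem:37}. When $p<1$, apply the $p$-triangle inequality to the sum and use $N>D/p$, again with Lemma~\ref{lem:37} (this is where $N>D/\min(1,p)$ enters). Care is needed when $j'<j$: the bump has width $\eps^{j'}$, but the centres are $\eps^j$-separated. One bounds $\sum_k$ by $\eps^{(j-j')Q_*(1/p-1)}$-type factors, and these are absorbed by the condition on $S$. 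Multiplying by $\eps^{-j'\alpha}$ produces a kernel $\eps^{\delta(j-j')_++\eta(j'-j)_+}$ with $\delta,\eta>0$ exactly under the stated hypotheses on $K$ and $S$. Lemma~\ref{lem:25d} then closes the $\ell^q$ estimate.

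Convergence in $\Sr'(G)$ follows from the same pointwise bounds tested against Schwartz functions. When $\lambda\in\mathring b^{p,q}_\alpha$, convergence in $B^{p,q}_\alpha$ follows from approximating $\lambda$ by finitely supported sequences and applying the estimate to the tails.

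\emph{Decomposition.} Use a Calderón-type reproducing formula. Choose $(\psi_j)\in\widetilde\Psi$ of the form $\psi_j=\Kc(\phi_j)$ with $\sum_j\phi_j(\eps^{j\grado}\cdot)=1$. Factor $\psi_j=\psi^{(1)}_j*\psi^{(2)}_j$, where $\psi^{(2)}_j=\eps^{jS\grado}\Lc^S\theta_j$ and $\theta_j$ is compactly supported. This needs kernels with compact support, so $\theta_j$ cannot come from the functional calculus directly. I expect this to be the main obstacle, since finite propagation speed is unavailable. My first attempt is to truncate the Schwartz kernels with cutoffs at scale $\eps^j$ and control the errors by the super-polynomial decay of Theorem~\ref{teo:7}, then iterate a Neumann-series correction. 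Alternatively, one can take a smooth compactly supported $\theta$ and replace $\widetilde\Psi$ by a family built from $\Lc^S$ applied to dilates of $\theta$ via the contraction group. This approach requires the discrete Calderón identity from the sampling machinery of Proposition~\ref{prop:14}.

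Granting the identity, write
\[
f=\sum_j\sum_k\int_{B_{j,k}}(f*\psi^{(1)}_j)(y)\,\psi^{(2)}_j(y^{-1}\,\cdot\,)\,\dd\beta(y),
\]
where $(B_{j,k})$ is a partition subordinate to the lattice balls. Set $\lambda_{j,k}=\eps^{-jQ_*}\beta(B_{j,k})\sup_{B_{j,k}}\abs{f*\psi^{(1)}_j}$ up to normalisation, and let $a_{j,k}$ be the normalised piece. The support and derivative bounds then give strong atoms. Finally, $\norm{\lambda}_{b^{p,q}_\alpha}\lesssim\norm{f}_{B^{p,q}_\alpha}$ follows from the sampling theorem, Proposition~\ref{prop:14}, through its maximal (max over $z$) form.
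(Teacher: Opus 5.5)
There is no proof of this theorem in the paper (it is quoted from \cite{Calzi4}), so I am judging your argument on its own terms.

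\textbf{Synthesis.} This half is the standard almost-orthogonality argument and is sound in outline. However, the exponents you write down would not produce the stated hypotheses.
\begin{itemize}
\item For $j'>j$, H\"older's inequality between $\Lc^{K+S}b_{j,k}$ (weighted $L^{p_0}$ at scale $\eps^j$) and $\tilde\psi_{j'}$ ($L^{p_0'}$ at scale $\eps^{j'}$) only gives
\[
\abs{a_{j,k}*\psi_{j'}(x)}\lesssim \eps^{(j'-j)(K\grado-Q_*/p_0)}\,(1+d(x,x_{j,k})/\eps^{j})^{-N}.
\]
So the gain is $\eps^{(j'-j)(K\grado-Q_*/p_0)}$, not $\eps^{(j'-j)K\grado}$. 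This loss is exactly why $Q_*/p_0$ appears in the condition on $K$.
\item For $j'<j$ and $p\Meg 1$, Young's inequality bounds the $L^p$ norm of $\sum_k\abs{\lambda_{j,k}}(1+d(\cdot,x_{j,k})/\eps^{j'})^{-N}$ by $\eps^{jQ_*/p-(j-j')Q_*}\norm{\lambda_{j,\cdot}}_{\ell^p}$, not by $\eps^{j'Q_*/p}\norm{\lambda_{j,\cdot}}_{\ell^p}$. In general the loss relative to $\eps^{jQ_*/p}$ is $\eps^{-(j-j')Q_*/\min(1,p)}$. Against the gain $\eps^{(j-j')(S\grado+Q_*/p_0)}$, this yields exactly the condition on $S$.
\item Convergence in $\mathring B^{p,q}_\alpha(G)$ also requires each atom to lie in $\mathring B^{p,q}_\alpha(G)$. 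Atoms are not Schwartz functions, so this is not automatic when $p$ or $q$ is infinite.
\end{itemize}

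\textbf{Decomposition: the genuine gap.} You reduce this half to an identity that you then grant, and that identity is its entire content. What is needed is an exact representation $f=\sum_i (Vf)*\tau_i*\eps^{iS\grado}\Lc^S\theta_i$ where:
\begin{itemize}
\item $V$ is bounded on $B^{p,q}_\alpha(G)$;
\item $(\tau_i)\in\Psi_{\eps^{\grado}}$, so that Proposition~\ref{prop:14} controls $\sup_{B_{j,k}}\abs{(Vf)*\tau_j}$;
\item $\theta_i\in C^\infty_c(G)$ is supported in $B(e,\delta R\eps^i)$, with $\abs{X\theta_i}\lesssim\eps^{-i(Q_*+\deg X)}$ for $X\in U_{(K+S)\grado}$.
\end{itemize}

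Neither of your suggestions supplies this. The dilations $\delta_r$ are automorphisms of $G_*$, not of $G$, so there are no dilates on $G$ compatible with convolution. Proposition~\ref{prop:14} is a sampling norm equivalence, not a reproducing formula.

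Truncation does not give an exact identity either. Cutting a functional-calculus kernel at radius $\sim\eps^j$ leaves an error as large as the kernel itself. Cutting at radius $c\eps^j$ with $c$ large leaves an error of relative size $O(c^{-N})$ at \emph{every} scale. Contrast Lemma~\ref{lem:64}: there, truncation at a fixed radius makes the errors $O(\eps^{jN})$, so they sum to a Schwartz function that Dixmier--Malliavin absorbs exactly. But the resulting supports do not shrink with $j$, so that lemma does not give strong atoms.

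To make your truncation route work, you would have to do all of the following:
\begin{enumerate}
\item Prove that $I-T$ has quasi-norm $\lesssim c^{-N'}$ on $B^{p,q}_\alpha(G)$. This needs almost orthogonality as in Lemmas~\ref{lem:40} and~\ref{cor:13}, using that the errors are $\Lc^S$ of small functions, together with Peetre-type maximal estimates for $p<1$.
\item Invert $T$ by a Neumann series in the quasi-Banach space, and decompose $T^{-1}f$ rather than $f$.
\item Construct cutoffs $\chi_j$ at scale $\eps^j$ with $\abs{X\chi_j}\lesssim\eps^{-j\deg X}$, again without dilations (e.g.\ via exponential coordinates adapted to the filtration).
\item Since $c$ must be large while $\delta R$ is prescribed, shift the scale index by a fixed $m$ with $c\eps^m$ small compared with $\delta R$. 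The first $m$ levels then have to be recast as level-$0$ strong atoms, e.g.\ through a partition of unity subordinate to the balls $B(x_{0,k},2\delta R)$.
\end{enumerate}
None of this is in the proposal. As it stands, the existence of a strong-atom decomposition with $\norm{\lambda_f}_{b^{p,q}_\alpha(\eps,J)}\lesssim\norm{f}_{B^{p,q}_\alpha(G)}$ remains unproved.
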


\begin{teo}\label{teo:13}
	Take $p,q\in (0,\infty]$  and $\alpha\in \R$, and take $K,S\in \N$, $N>0$, and $p_0\in [1,\infty]$ so that $K>(Q_*/p_0+\alpha) /\grado$, $S>   (Q_*(1/\min(1,p,q)-1/p_0) -\alpha)/\grado $, and $N>D/\min(1,p,q)$. Take $\eps\in (0,1) $, $\delta>0$ and $R>2$. Then, there is a constant $C>0$ such that,  for every system of $(K,S,N,p_0,\Lc)$-atoms $(a_{j,k})$ associated with some reduced $(\eps ,\delta,R)$-lattice $(x_{j,k})_{(j,k)\in J}$,  
	\[
	\norm*{\sum_{j,k} \lambda_{j,k} a_{j,k}  }_{F^{p,q}_\alpha(G)}\meg C \norm{\lambda}_{f^{p,q}_\alpha((x_{j,k}))}
	\]
	for every $\lambda\in f^{p,q}_\alpha((x_{j,k}))$, where the sum converges in $\Sr'(G)$ and also in $\mathring F^{p,q}_\alpha(G)$ if $ \lambda\in f^{p,q}_\alpha((x_{j,k}))$. In addition,  for every $f\in F^{p,q}_\alpha(G)$ there are a system $(a_{f,j,k})_{j,k}$ of strong $(K,S,\Lc)$-atoms associated with $(x_{j,k})$ and $\lambda_f\in f^{p,q}_\alpha((x_{j,k}))$ such that 
	\[
	\norm{\lambda_f }_{f^{p,q}_\alpha((x_{j,k}))}\meg C\norm{f}_{F^{p,q}_\alpha(G)}
	\]
	and such that $f=\sum_{j,k} \lambda_{f,j,k} a_{f,j,k} $.
\end{teo}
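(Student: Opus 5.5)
The plan is to prove the two halves separately: first the synthesis estimate, then the decomposition. The synthesis half is an almost-orthogonality argument combined with maximal inequalities. The decomposition half combines the sampling theorem (Proposition~\ref{prop:14}) with a perturbative inversion in the spirit of~\cite{FrazierJawerth}.

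\emph{Synthesis.} Fix $(\psi_{j'})\in\Psi_{\eps^{\grado}}$, reindexed so that $\psi_{j'}=\Kc_{\eps^{j'\grado}\Lc}(\phi_{j'})$, and estimate $a_{j,k}*\psi_{j'}$ pointwise in two regimes.
\begin{itemize}
\item For $j'\Meg j$, write $\psi_{j'}=(\eps^{j'\grado}\Lc)^{K}\Kc_{\eps^{j'\grado}\Lc}(\lambda^{-K}\phi_{j'})$, which is legitimate since $0\notin\supp\phi_{j'}$ for $j'\Meg1$. Move $\Lc^K$ onto $a_{j,k}$, or $\Lc^{K+S}$ onto $b_{j,k}$ when $j\Meg 1$. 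Since $\phi_{j'}$ is supported away from $0$, the factor $\lambda^{-K}$ is harmless.
\item For $j'<j$, use $a_{j,k}=\Lc^S b_{j,k}$ and move $\Lc^S$ onto $\psi_{j'}$.
\end{itemize}
In both regimes, Proposition~\ref{cor:12} with weights $(1+\abs{\,\cdot\,}_*/\eps^{j'})^N$ and Hölder's inequality in $L^{p_0}$ yield
\[
\abs{a_{j,k}*\psi_{j'}(x)}\lesssim \eps^{(j'-j)_+(K\grado-Q_*/p_0)+(j-j')_+(S\grado+Q_*/p_0)}\eps^{-jQ_*/p_0}\bigl(1+d(x,x_{j,k})/\eps^{\min(j,j')}\bigr)^{-N}.
\]
Take $r<\min(1,p,q)$ with $N>D/r$. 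Summing over $k$ and using the disjointness of the balls $B(x_{j,k},\delta\eps^j)$, bound
\[
\sum_k\abs{\lambda_{j,k}}\eps^{-jQ_*/p_0}\bigl(1+d(x,x_{j,k})/\eps^{\min(j,j')}\bigr)^{-N}
\]
by $\eps^{-(j-j')_+ D/r}$ times $\Nc_{r,N,\eps^{j}}$ applied to $\sum_k\abs{\lambda_{j,k}}\chi_{B(x_{j,k},\delta\eps^j)}$. The hypotheses on $K$ and $S$ guarantee that the resulting geometric factors still decay on both sides. Lemma~\ref{lem:25d}, together with Proposition~\ref{cor:18} applied with exponent $p_0=r$ (respectively Lemma~\ref{lem:25c} when $p=\infty$), then gives the bound by $\norm{\lambda}_{f^{p,q}_\alpha((x_{j,k}))}$. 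Convergence in $\Sr'(G)$ follows from the same kernel estimates tested against Schwartz functions. Convergence in $\mathring F^{p,q}_\alpha(G)$ for finitely supported-approximable $\lambda$ follows by applying the estimate to tails.

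\emph{Decomposition.} I will construct strong atoms from a discretized reproducing formula. Choose $\theta\in C^\infty_c(G)$ supported in $B(e,\delta)$ and set $\gamma_j\coloneqq(\eps^{j\grado}\Lc)^S\theta_j$, where $\theta_j$ is a suitable localized rescaling whose derivatives obey the size bounds of Definition~\ref{def:11}. The model is $\theta_j=\theta*h_{\eps^{j\grado}}$ truncated; since $G$ has no dilations, it must be built via Theorem~\ref{teo:7}. Define
\[
Tf\coloneqq\sum_{j,k}\int_{B(x_{j,k},\delta\eps^j)}(f*\psi_j)(y)\,\gamma_j(y^{-1}\,\cdot\,)\,\dd\beta(y).
\]
Each summand equals $\lambda_{j,k}a_{j,k}$ with $\lambda_{j,k}\coloneqq\beta(B(e,\delta\eps^j))\sup_{B(x_{j,k},\delta\eps^j)}\abs{f*\psi_j}$ (suitably normalized), and $a_{j,k}$ is a strong $(K,S,\Lc)$-atom. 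The bound $\norm{\lambda}_{f^{p,q}_\alpha}\lesssim\norm{f}_{F^{p,q}_\alpha}$ follows from the max form of Proposition~\ref{prop:14}. It remains to show that $T$ is invertible on $F^{p,q}_\alpha(G)$; writing $f=T(T^{-1}f)$ then gives the decomposition with coefficients controlled by $\norm{T^{-1}f}\lesssim\norm f$.

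\emph{Main obstacle.} The hard step is proving $\norm{(I-T)f}_{F^{p,q}_\alpha}\meg\frac12\norm{f}_{F^{p,q}_\alpha}$. This requires the continuous identity $\sum_j\psi_j*\gamma_j=\delta_e$ (up to an invertible error) together with a smallness estimate for the discretization error. The lack of finite propagation speed and of dilations means $\gamma_j$ cannot simply be a spectral multiplier: it must be compactly supported and yet nearly reproduce $\psi_j$. My first attempt is the following. Take $\gamma_j=\eps^{jS\grado}\Lc^S(\chi_j\,\Kc_{\eps^{j\grado}\Lc}(\tilde\phi))$ with $\chi_j$ a cutoff at scale $\eps^j$ and $\tilde\phi$ chosen so that $\sum_j\phi_j\tilde\phi(\eps^{j\grado}\cdot)\lambda^S=1$. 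Then bound the cutoff error and the error from freezing $y$ in each ball as $O(\delta^{\eta})$-small standard systems via Lemmas~\ref{lem:40} and~\ref{cor:13}. Finally, reinsert the resulting errors into the synthesis estimate above to close the Neumann series.
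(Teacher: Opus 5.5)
The paper does not prove this theorem; it quotes it from \cite{Calzi4}, so I assess your argument on its own terms. There is a genuine gap in the decomposition half. Everything rests on the estimate $\norm{(I-T)f}_{F^{p,q}_\alpha(G)}\meg\frac12\norm{f}_{F^{p,q}_\alpha(G)}$, which you do not prove, and your construction cannot deliver it.

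First, $T$ integrates $f*\psi_j$ only over the balls $B(x_{j,k},\delta\eps^j)$. These are disjoint but do not cover $G$, so even an exact identity $\sum_j\psi_j*\gamma_j=\delta_e$ would leave an error of fixed relative size. You need a Borel partition $(B_{j,k})_k$ with $B(x_{j,k},\delta\eps^j)\subseteq B_{j,k}\subseteq B(x_{j,k},\delta R\eps^j)$, as in the proof of Theorem~\ref{teo:6c}.

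Second, and more seriously, a strong atom of level $j$ must be supported in $B(x_{j,k},2\delta R\eps^j)$ with $\delta,R$ \emph{prescribed}. So $\gamma_j$ must be truncated at a fixed multiple of the scale $\eps^j$ at which $\Kc_{\eps^{j\grado}\Lc}(\tilde\phi)$ lives. The truncation error is then a fixed fraction of $\gamma_j$, not controlled by any parameter at your disposal, and it gets worse as $\delta\to0$. There is no $O(\delta^\eta)$ smallness. Freezing $y$ (which your $T$ does not actually do) would need cells small compared with the kernel scale, while truncation needs the support large compared with it. An index shift, using $\psi_{j+m}$ at level $j$, would help for $j\Meg1$, but not for the unit-scale kernel at level $0$.

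The missing idea is that no compactly supported reproducing kernel is needed. The cancellation of a strong atom is $a_{j,k}=\Lc^S b_{j,k}$ with $b_{j,k}$ compactly supported, and $\Lc^S$ is a \emph{local} operator. Take an exact spectral identity $\delta_e=\sum_j\psi_j*\eta_j$ with $\eta_j=(\eps^{j\grado}\Lc)^S\widetilde\eta_j$ for $j\Meg1$; this is possible because the multipliers vanish near $0$. Then $f*\psi_j*\eta_j=(\eps^{j\grado}\Lc)^S F_j$ with $F_j\coloneqq f*\psi_j*\widetilde\eta_j$.
\begin{itemize}
\item Localize the function $F_j$, not the kernel, with a partition of unity $(\phi_{j,k})_k$ subordinate to $(B(x_{j,k},2\delta R\eps^j))_k$ satisfying $\abs{X\phi_{j,k}}\lesssim\abs{X}\eps^{-j\deg(X)}$.
\item Let $\lambda_{j,k}$ be a constant multiple of $\sup_{B(x_{j,k},2\delta R\eps^j)}\max_{X\in U_{(K+S)\grado},\,\abs{X}\meg1}\eps^{j\deg(X)}\abs{XF_j}$, and set $b_{j,k}\coloneqq\eps^{jS\grado}\phi_{j,k}F_j/\lambda_{j,k}$.
\item The Leibniz rule shows these are strong atoms, and $f=\sum_{j,k}\lambda_{j,k}a_{j,k}$ holds exactly. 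At level $0$, cut $f*\psi_0*\eta_0$ directly.
\item Since $\abs{XF_j(x)}\lesssim\eps^{-j\deg(X)}\sup_y\abs{(f*\psi_j)(xy^{-1})}(1+\abs{y}_*/\eps^j)^{-a}$, Peetre-type maximal estimates give $\norm{\lambda}_{f^{p,q}_\alpha((x_{j,k}))}\lesssim\norm{f}_{F^{p,q}_\alpha(G)}$.
\end{itemize}
This construction is exact and linear, with no inversion. It matches the shape of strong atoms (radius $2\delta R\eps^j$, derivatives up to order $(K+S)\grado$) and the `universal' choice invoked in the footnote to Proposition~\ref{prop:16}.

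Your synthesis half is the standard argument and can be repaired, but two displayed claims are wrong as written.
\begin{itemize}
\item The factor $\eps^{-jQ_*/p_0}$ in your pointwise bound is spurious, since atoms and $f^{p,q}_\alpha((x_{j,k}))$ are both $L^\infty$-normalized. Taken literally, it ruins the estimate.
\item For $j'<j$ you cannot bound $\sum_k\abs{\lambda_{j,k}}(1+d(x,x_{j,k})/\eps^{j'})^{-N}$ by $\eps^{-(j-j')D/r}\,\Nc_{r,N,\eps^j}$ at the finer scale. A single coefficient at distance $\eps^{j'}$ from $x$ gives $2^{-N}$ on the left and roughly $\eps^{(j-j')(N-D/r)}$ on the right.
\end{itemize}
To fix the second point, keep the coarser scale $\eps^{j'}$, with loss $\eps^{-(j-j')Q_*/r}$. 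The exponent must be the local dimension $Q_*$, not $D$; with $D$, the hypothesis on $S$ would not suffice when $Q_G>Q_*$. Then move the $j$-sum inside by reverse Minkowski and apply Proposition~\ref{cor:18} with the scale tied to the outer index $j'$. For $j'\Meg j$, keep the scale $\eps^j$ of the inner index instead.
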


\section{Interpolation}

We first recall some basic definitions on the notion of complex interpolation of quasi-Banach pairs we shall use, cf.~\cite{KaltonMitrea,KMM}.

\begin{deff}
	A quasi-Banach space $X$ is said to be analytically convex (or $A$-convex) if there is a constant $C>0$ such that $\norm{P(0)}_X\meg C \max_{\abs{z}=1} \norm{P(z)}_X$ for every (complex) polynomial mapping $P\colon \C\to X$.
\end{deff}

Cf.~\cite[Theorem 7.4]{KMM} for a list of equivalent formulation of analytic convexity.

\begin{deff}
	Let $U$ be an open subset of $\C$, and let $X$ be a quasi-Banach space. A function $f\colon U\to X$ is holomorphic if it can be written as the sum of a uniformly convergent power series in the neighbourhood of each $x\in U$.
\end{deff}

Cf., e.g.,~\cite{Kalton} for more information on the theory of holormphic functions with values in a quasi-Banach space.

\begin{deff}
	Let $(X_0,X_1)$ be a pair of quasi-Banach spaces, with $X_0+X_1$ analytically convex, and take $\theta\in (0,1)$. Define $S\coloneqq \Set{z\in \C\colon 0\meg \Re z \meg 1}$ and let $U$ be the interior of $S$. Define $\Fc(X_0,X_1)$ as the space of bounded continuous functions $f\colon S\to X_0+X_1$ which are holomorphic on $U$ and induce bounded continuous functions $\R\ni t\mapsto f(j+i t)\in X_j$ ($j=0,1$), endowed with the quasi-norm $\max_{j=0,1}\sup_{t\in \R}\norm{f(j+it)}_{X_j}$.
	Then, $(X_0,X_1)_{[\theta]}$ is the image of $\Fc(X_0,X_1)$ under the mapping $f\mapsto f(\theta)$, endowed with the corresponding quasi-norm.
\end{deff}

\begin{lem}\label{lem:74}
	Take $p,q\in (0,\infty]$, $\alpha\in \R$, $\eps\in (0,1)$, and a reduced $(\eps,\delta,R)$-lattice $(x_{j,k})_{(j,k)\in J}$ on $G$ for some $\delta>0$ and for some $R\Meg 2$. Then, $b^{p,q}_\alpha(\eps)$, $f^{p,q}_\alpha((x_{j,k}))$, $B^{p,q}_\alpha(G)$,  and $F^{p,q}_\alpha(G)$ are analytically convex. 
\end{lem}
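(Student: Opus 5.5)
We first treat the sequence spaces and then pass to the function spaces through the sampling theorem and the atomic decomposition. For the sequence spaces we use the standard characterization of analytic convexity (cf.~\cite[Theorem 7.4]{KMM}): a quasi-Banach lattice of measurable functions is analytically convex if and only if it is $r$-convex for some $r>0$, i.e.\ lattice-convex after raising to some power. The space $b^{p,q}_\alpha(\eps)$ is a weighted mixed-norm space $\ell^q_j(\ell^p_k)$. Hence, for $r\meg\min(1,p,q)$, its $r$-convexification $\lambda\mapsto \abs{\lambda}^r$ maps it isometrically (up to the power $r$) into a Banach lattice $\ell^{q/r}(\ell^{p/r})$. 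Thus $b^{p,q}_\alpha(\eps)$ is $r$-convex, hence analytically convex.

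For $f^{p,q}_\alpha((x_{j,k}))$ with $p<\infty$ we argue in the same way. The quasi-norm is the $L^p(\ell^q)$ quasi-norm of the function $\sum_k \eps^{-j\alpha}\abs{\lambda_{j,k}}\chi_{B(x_{j,k},\delta\eps^j)}$, the balls for fixed $j$ being disjoint. This is a lattice quasi-norm which is $r$-convex for $r\meg\min(p,q)$: indeed, raising to the power $r$ lands in $L^{p/r}(\ell^{q/r})$, a Banach lattice. For $p=\infty$ we use the $\Cc^q(\eps)$ quasi-norm. Its $r$-th power, for $r\meg q$, is a supremum over $(N,x)$ of $L^{q/r}$-norms of restrictions, each of which is a lattice norm. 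A supremum of such norms is again a Banach lattice norm, so $r$-convexity holds here too.

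For the function spaces $B^{p,q}_\alpha(G)$ and $F^{p,q}_\alpha(G)$ we would transfer analytic convexity by means of Proposition~\ref{prop:14} and Theorems~\ref{teo:13bis} and~\ref{teo:13}. Let $P$ be a polynomial with values in $B^{p,q}_\alpha(G)$. By Proposition~\ref{prop:14}, the sampling map $S=S_{(x_{j,k})}$ (for a suitable lattice with parameter $\eps^{1/\grado}$) is linear and satisfies $\norm{f}_{B}\asymp \norm{Sf}_{b}$ with uniform constants; here we use the version with $z_{j,k}=e$, which is allowed by taking $\delta$ small and combining the max and min forms. Then $S\circ P$ is a polynomial with values in $b^{p,q}_\alpha$. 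Combining this with the analytic convexity of $b^{p,q}_\alpha$ gives
\[
\norm{P(0)}_{B}\meg C\norm{SP(0)}_{b}\meg C'\max_{\abs z=1}\norm{SP(z)}_{b}\meg C''\max_{\abs z=1}\norm{P(z)}_{B}.
\]
The same argument applies to $F^{p,q}_\alpha$ with $f^{p,q}_\alpha$. In this route the atomic decomposition is not even needed; equivalence of the quasi-norms under a linear map suffices.

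The main obstacle is making sure that the two-sided estimate of Proposition~\ref{prop:14} holds for a fixed linear sampling operator, i.e.\ with $z$ fixed rather than with a max or min over $z$. The max-form upper bound and the min-form lower bound together give exactly $\norm{Sf}\asymp\norm{f}$ for the choice $z=e$ once $\delta\meg\delta_-$, so we choose the lattice accordingly. If this failed, we would instead use a retraction argument: the atomic synthesis operator of Theorems~\ref{teo:13bis} and~\ref{teo:13}, built with fixed atoms $\Lc^S$ applied to the kernels of a Calder\'on reproducing formula, composed with the sampling map, reproduces $f$. This exhibits the function space as a retract of the sequence space, and analytic convexity passes to retracts.
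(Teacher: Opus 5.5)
Your argument is correct. For the function spaces it follows the paper's route. The paper also deduces analytic convexity of $B^{p,q}_\alpha(G)$ and $F^{p,q}_\alpha(G)$ from that of the sequence spaces, through Proposition~\ref{prop:14} and the stability of analytic convexity under isomorphic embeddings (\cite[Proposition 7.5]{KMM}). Your remark that $e\in U_{\delta R,\eps}$ makes explicit a point the paper leaves implicit: for $\delta\meg\delta_-$, the max- and min-forms give a two-sided estimate for the single linear operator $S_{(x_{j,k})}$ on all of $B^{p,q}_\alpha(G)$ and $F^{p,q}_\alpha(G)$.

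The genuine difference is in the sequence spaces. The paper quotes \cite[Lemma 7.6]{KMM} for vector-valued spaces such as $\ell^q(\N;\ell^p(\N))$ and $L^p(G;\ell^q(\N))$. Since these are Bochner-type spaces, it can only realize the closure $\mathring f^{p,q}_\alpha((x_{j,k}))$ as a subspace of $L^p_0(G;\ell^q_0(\N))$. It then has to reach the full space $f^{p,q}_\alpha((x_{j,k}))$ by a truncation argument with the characteristic functions of $J\cap\{0,\dots,k\}^2$. It also treats $p=\infty$ separately, by embedding $\Cc^q(\eps)$ into $\ell^\infty(\N\times G;L^q(\N\times G))$.

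Your lattice $r$-convexity argument needs only the elementary implication ``lattice $r$-convex $\Rightarrow$ analytically convex''. That implication follows from subharmonicity of $z\mapsto\abs{P(z)_{j,k}}^r$ in each coordinate and the triangle inequality for the $r$-concavified norm. With it you handle $b^{p,q}_\alpha(\eps)$, $f^{p,q}_\alpha$ for $p<\infty$, and the $\Cc^q$-based $f^{\infty,q}_\alpha$ uniformly and directly on the full non-separable spaces, with no density or truncation step. In exchange, the paper's route uses only off-the-shelf vector-valued statements.

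One caveat: drop your fallback. The paper stresses that these function spaces are not known to be retracts of the sequence spaces in general. It provides no exact discrete reproducing formula with fixed atoms. The atoms in Theorems~\ref{teo:13bis} and~\ref{teo:13} depend on $f$, so they do not give a linear synthesis operator inverting the sampling map. Your main line does not need the fallback, so the proof stands without it.
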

 
\begin{proof}
	By~\cite[Lemma 7.6]{KMM}, $\ell^q(\N)$ is analytically convex. By the same reference, also $\ell^{p,q}(\N,\N)\cong \ell^q(\N; \ell^p(\N))$ and $L^p(G;\ell^q(\N)) $ are analytically convex. It then follows that $b^{p,q}_\alpha(\eps)$ is analytically convex, so that the same holds for $B^{p,q}_\alpha(G)$, thanks to Proposition~\ref{prop:14} and~\cite[Proposition 7.5]{KMM}. In addition, $L^{q,p}_0(\N,G)\cong L^p_0(G;\ell^q_0(\N))$, so that $\mathring f^{p,q}_\alpha((x_{j,k}))$ is analytically convex by~\cite[Proposition 7.5]{KMM} when $p<\infty$.
	Consequently, there is a constant $C>0$ such that $\norm{P(0)}\meg C \max_{\abs{z}=1}\norm{P(z)}$ for every (complex) polynomial $P$ with values in $\mathring f^{p,q}_\alpha((x_{j,k}))$. 
	Then, for every complex polynomial $P$ with values in $  f^{p,q}_\alpha((x_{j,k}))$, $\norm{P(0)}=\sup_{k\in \N} \norm{\chi_k P(0)}\meg C \sup_{k\in \N}\max_{\abs{z}=1} \norm{\chi_k P(z)}\meg \max_{\abs{z}=1}\norm{P(z)}$, where $\chi_k$ is the characteristic function of $J\cap\Set{0,\dots,k}^2$ for every $k\in\N$. 
	It then follows that also $ f^{p,q}_\alpha((x_{j,k}))$ is analytically convex, so that also  $F^{p,q}_\alpha(G)$ is analytically convex by Proposition~\ref{prop:14} and~\cite[Proposition 7.5]{KMM}.   
	Finally, assume that $p=\infty$. If $q=\infty$, then $f^{\infty,\infty}_\alpha((x_{j,k})) $ is canonically isomorphic to $b^{\infty,\infty}_\alpha(\eps)$, so that the assertion follows as before (alternatively, one may simply observe that every Banach space is analytically convex). If, otherwise, $q<\infty$, then it suffices to observe that $\Cc^q(\eps)$ is canonically isomorphic to a closed subspace of  $\ell^\infty(\N\times G;L^q(\N\times G))$, which is analytically convex by~\cite[Lemma 7.6]{KMM}, applied twice as before. One may then conclude as before.
\end{proof}

\begin{deff}
	Let $J$ be a set.
	Take $\theta\in (0,1)$ and let $X_0,X_1$ be two quasi-Banach subspaces of $\C^{J}$ such that, if $a\in X_j$ and $b\in \C^{J}$ are such that $\abs{b}\meg \abs{a}$, then $b\in X_j$ ($j=0,1$). Then, the Calder\'on product $X_0^{1-\theta}X_1^\theta$ is the space of $a\in \C^{J}$ such that  $\abs{a}\meg \abs{a^{(0)}}^{1-\theta} \abs{a^{(1)}}^{\theta}$ for some   $a^{(j)}\in X_j$ ($j=0,1$), endowed with the quasi-norm defined so that $\norm{a}_{X_0^{1-\theta}X_1^\theta}$ is the greatest lower bound of the products $\norm{a^{(0)}}^{1-\theta}_{X_0} \norm{a^{(1)}}^{\theta}_{X_1}$ for every $ a^{(0)}, a^{(1)}$ as before.
\end{deff}

\begin{lem}\label{lem:75}
	Take $p_0,p_1,q_0,q_1\in (0,\infty]$, $\alpha_0,\alpha_1\in \R$, $\theta\in (0,1)$, $\eps\in (0,1)$, and a reduced $(\eps,\delta,R)$-lattice $(x_{j,k})_{(j,k)\in J}$ for some $\delta>0$ and for every $R\Meg 2$. Take 
	\[
	X_0,X_1\in \Set{b^{p_0,q_0}_{\alpha_0}(\eps,J),b^{p_1,q_1}_{\alpha_1}(\eps,J),f^{p_0,q_0}_{\alpha_0}((x_{j,k})),f^{p_1,q_1}_{\alpha_1}((x_{j,k}))},
	\]
	and let $Y_0,Y_1$ be the closures of $\C^{(J)}$ in $X_0,X_1$, respectively.
	Then, the following hold:
	\begin{itemize}
		\item $X_0+X_1$ and $Y_0+Y_1$ are analytically convex;
		\item  $(Y_0,Y_1)_{[\theta]}= Y_0^{1-\theta} Y_1^{\theta}$;
		
		\item $(X_0,X_1)_{[\theta]}$ is the closure of $X_0\cap X_1$ in $ X_0^{1-\theta} X_1^{\theta}$.
	\end{itemize} 
\end{lem}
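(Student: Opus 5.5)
The plan is to reduce everything to the abstract theory of complex interpolation of quasi-Banach lattices, essentially~\cite{KaltonMitrea} and~\cite{KMM}. All the spaces in question are solid subspaces of $\C^J$ of the kind treated there. For the $f$-spaces I identify $\lambda$ with the function $\iota(\lambda)$, which is well defined because the balls $B(x_{j,k},\delta\eps^j)$ are disjoint for fixed $j$. Under this identification each $X_i$ becomes a quasi-Banach function lattice over a $\sigma$-finite measure space:
\begin{itemize}
\item over $\N\times\N$ with counting measure for the $b$-spaces;
\item over $\N\times G$ for the $f$-spaces.
\end{itemize}
In both cases the quasi-norm is the norm of a weighted mixed Lebesgue space ($\ell^{p,q}$, $L^{q,p}$, or the $\Cc^q$-type space) restricted to sequences that are constant on the relevant balls.

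\emph{Step 1: analytic convexity.} Each $X_i$ is analytically convex by Lemma~\ref{lem:74}; the same argument applies verbatim to $b^{p,q}_\alpha(\eps,J)$, which is a retract of $b^{p,q}_\alpha(\eps)$. Each $X_i$ is also $s$-convex for some $s>0$ as a lattice, since it is built from mixed $\ell^p/L^q$-quasi-norms and $\Cc^q$; for instance, $s=\min(1,p_i,q_i)$ works. I then use~\cite{KMM}: a quasi-Banach lattice is analytically convex if and only if it is lattice $s$-convex for some $s>0$. A sum of two $s$-convex lattices is $s$-convex, with the quasi-norm of $a$ comparable to $\inf\{\norm{a_0}+\norm{a_1}\colon \abs{a}\meg\abs{a_0}+\abs{a_1}\}$, and one may also reduce to nonnegative decompositions. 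Hence $X_0+X_1$ is analytically convex, and the same holds for $Y_0+Y_1$, since closed sublattices inherit the property.

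\emph{Step 2: the separable parts.} I invoke the Kalton--Mitrea theorem (\cite[Theorem 3.4]{KaltonMitrea}, or its refinement in~\cite{KMM}): if $(Z_0,Z_1)$ is a pair of analytically convex quasi-Banach lattices and at least one of them is separable, or has order continuous quasi-norm, then $(Z_0,Z_1)_{[\theta]}=Z_0^{1-\theta}Z_1^\theta$. To apply it I check that $Y_0$ and $Y_1$ have order continuous quasi-norms, that is, that every element of $Y_i$ is the limit of its truncations $\chi_k\lambda$, with $\chi_k$ the indicator of $J\cap\{0,\dots,k\}^2$ as in the proof of Lemma~\ref{lem:74}. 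This holds by construction: $Y_i$ is the closure of finitely supported sequences, and truncation is a contraction by solidity. One should also note that $Y_0^{1-\theta}Y_1^\theta$ itself has order continuous norm, by dominated convergence through the factors. This identifies $(Y_0,Y_1)_{[\theta]}$.

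\emph{Step 3: the general case.} For $(X_0,X_1)$ the reverse inclusion $(X_0,X_1)_{[\theta]}\subseteq X_0^{1-\theta}X_1^\theta$ holds in full generality for analytically convex lattices, by the Kalton--Mitrea argument. The space $X_0\cap X_1$ is dense in $(X_0,X_1)_{[\theta]}$ by the standard Gaussian-regularisation argument $f\mapsto e^{\eps(z-\theta)^2}f$, combined with the fact that $X_0\cap X_1$ is dense in $\Fc$; this is again taken from~\cite{KMM}. Conversely, for $a\in X_0\cap X_1$ with $a$ in the Calder\'on product, I use the factorisation $\abs{a}= \abs{a^{(0)}}^{1-\theta}\abs{a^{(1)}}^\theta$, with $a^{(0)},a^{(1)}\geq 0$, given by Lozanovskii-type near-optimal factorisation. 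The function $f(z)=\operatorname{sgn}(a)\,(a^{(0)})^{1-z}(a^{(1)})^{z}$ then gives $\norm{a}_{[\theta]}\lesssim\norm{a}_{X_0^{1-\theta}X_1^\theta}$, after truncating the supports to finite sets in order to ensure continuity and boundedness in $X_0+X_1$ and then passing to the limit using the $\sup_k$-representation of the quasi-norms, as in Lemma~\ref{lem:74}. So the two quasi-norms agree on $X_0\cap X_1$, and taking closures gives the third claim.

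I expect the main obstacle to be this last step: the spaces with $p=\infty$ or $q=\infty$, especially the $\Cc^q$-space, are neither separable nor order continuous. The claim therefore cannot simply be reduced to the theorem on separable pairs, and the continuity of $f$ on the boundary lines has to be controlled by hand through finite truncations and a Fatou-type property of the quasi-norms.
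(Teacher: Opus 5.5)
There is a genuine gap in Step~3. Most of your outline matches the paper's: \cite[Theorem 7.9]{KMM} for the separable pair $(Y_0,Y_1)$, truncations $\chi_k$ to reach $(X_0,X_1)$, and density of $X_0\cap X_1$. Step~1 via lattice $s$-convexity is a sound alternative to the paper's truncation argument, provided you work over $J$ throughout. Mixed pairs $b$/$f$ are allowed, so $\iota$ should only serve to show that the $f$-quasi-norms are $s$-convex; this transfers because the balls $B(x_{j,k},\delta\eps^j)$ are disjoint for fixed $j$. The inclusion $(X_0,X_1)_{[\theta]}\subseteq X_0^{1-\theta}X_1^\theta$ is also fine. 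Truncation works in that direction because the Calder\'on product of these Fatou sequence lattices is again Fatou (extract coordinatewise limits of normalized factors). Hence $\norm{a}_{X_0^{1-\theta}X_1^\theta}\lesssim \sup_k \norm{\chi_k a}_{(Y_0,Y_1)_{[\theta]}}\meg \norm{a}_{(X_0,X_1)_{[\theta]}}$.

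The gap is the reverse estimate. Your truncated functions only give $\sup_k\norm{\chi_k a}_{[\theta]}\lesssim \norm{a}_{X_0^{1-\theta}X_1^\theta}$. The `$\sup_k$-representation' of Lemma~\ref{lem:74} is a property of the lattice quasi-norms, not of $\norm{\,\cdot\,}_{(X_0,X_1)_{[\theta]}}$, which has no Fatou property.

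For a counterexample, take $X_i=b^{\infty,\infty}_{\alpha_i}(\eps,J)$ with $\alpha_0<\alpha_1$. For each $j$ pick $k_j$ with $(j,k_j)\in J$, and set $a_{j,k_j}=\eps^{j\alpha_\theta}$, with $a=0$ elsewhere.
\begin{itemize}
\item Every $\chi_k a$ has $[\theta]$-quasi-norm at most $1$: multiply the $(j,k_j)$-coordinate by $\eps^{j(z-\theta)(\alpha_1-\alpha_0)}$.
\item Yet $a\notin (X_0,X_1)_{[\theta]}$. Indeed, that space lies in the closure of $X_0\cap X_1=X_1$ in $X_0^{1-\theta}X_1^\theta=b^{\infty,\infty}_{\alpha_\theta}(\eps,J)$, and every element of this closure satisfies $\eps^{-j\alpha_\theta}\sup_{k'}\abs{\lambda_{j,k'}}\to 0$.
\end{itemize}
Restricting to $a\in X_0\cap X_1$ does not rescue finite truncation. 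When some exponent is $\infty$, the tails $(1-\chi_k)a$ do not tend to $0$ in $X_0$ or $X_1$. Moreover, $\norm{(1-\chi_k)a}_{X_0}^{1-\theta}\norm{(1-\chi_k)a}_{X_1}^{\theta}$ is not controlled by the Calder\'on quasi-norm. The paper's footnote identity $\norm{a}_{(X_0,X_1)_{[\theta]}}=\sup_k\norm{\chi_k a}_{(X_0,X_1)_{[\theta]}}$ fills the same slot. By the example it can only be meant for $a\in(X_0,X_1)_{[\theta]}$, and there its nontrivial inequality needs exactly the argument below.

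The missing idea is to truncate along the ratio of the factors rather than along the index set.
\begin{itemize}
\item Normalize. Given $a\in X_0\cap X_1$ and a factorization, multiply both factors by $\abs{a}/((a^{(0)})^{1-\theta}(a^{(1)})^\theta)\meg 1$ and rescale. This gives $\abs{a}=(a^{(0)})^{1-\theta}(a^{(1)})^\theta$ and $\norm{a^{(0)}}_{X_0}=\norm{a^{(1)}}_{X_1}$; no Lozanovskii factorization is needed.
\item Cut. Let $E_n$ be the set where $a\neq 0$ and $n^{-1}\meg a^{(1)}/a^{(0)}\meg n$, and put $h=\log(a^{(1)}/a^{(0)})$ there.
\item Main piece. $F_n(z)=\chi_{E_n}a\,\ee^{(z-\theta)h}$ is entire with values in $X_0\cap X_1$: its power series converges because $\abs{h}\meg \log n$ and the quasi-norms are solid. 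It is bounded on the strip, with $\abs{F_n(it)}=\chi_{E_n}a^{(0)}$ and $\abs{F_n(1+it)}=\chi_{E_n}a^{(1)}$. Hence $\norm{\chi_{E_n}a}_{[\theta]}\meg \norm{a^{(0)}}_{X_0}^{1-\theta}\norm{a^{(1)}}_{X_1}^\theta$.
\item Remainder. Where $a^{(1)}>n a^{(0)}$ one has $\abs{a}\meg n^{-(1-\theta)}a^{(1)}$, and where $a^{(1)}<a^{(0)}/n$ one has $\abs{a}\meg n^{-\theta}a^{(0)}$. Since $\norm{x}_{[\theta]}\meg \norm{x}_{X_0}^{1-\theta}\norm{x}_{X_1}^{\theta}$ for $x\in X_0\cap X_1$, and $a\in X_0\cap X_1$, both pieces tend to $0$ in $(X_0,X_1)_{[\theta]}$.
\end{itemize}
This gives $\norm{a}_{[\theta]}\lesssim \norm{a}_{X_0^{1-\theta}X_1^\theta}$ on $X_0\cap X_1$, after which your closure argument concludes.
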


By Proposition~\ref{prop:14} and~\cite[Proposition 7.5]{KMM}, one may then deduce that also the corresponding sums of Besov and Triebel--Lizorkin spaces are analytically convex, so that one may consider the corresponding complex interpolation spaces in the sense described above.

\begin{proof}
	Using Lemma~\ref{lem:74} and~\cite[Theorem 7.9]{KMM}, we see that $Y_0+Y_1$ is analytically convex, and that $(Y_0,Y_1)_{[\theta]}= Y_0^{1-\theta} Y_1^{\theta}$. Arguing as in the proof of Lemma~\ref{lem:74}, one may then show that also $X_0+X_1$ is analytically convex.  Arguing as in the proof of Lemma~\ref{lem:74},\footnote{In fact, it is readily verified that $\norm{a}_{(X_0,X_1)_{[\theta]}}=\sup_k \norm{\chi_k a}_{(X_0,X_1)_{[\theta]}}=\sup_k \norm{\chi_k a}_{(Y_0,Y_1)_{[\theta]}}$ for every $a\in \C^{J}$, where $\chi_k$ is the characteristic function of $J\cap \Set{0,\dots, k}^2$.} one may then show that $(X_0,X_1)_{[\theta]}$ is a closed vector subspace of $ X_0^{1-\theta} X_1^{\theta}$ (with the induced topology), so that the conclusion follows from the fact that $X_0\cap X_1$ is dense in $(X_0,X_1)_{[\theta]}$. 
\end{proof}

\begin{teo}\label{teo:9b}
	Take $\theta\in (0,1)$, $\alpha,\alpha_0,\alpha_1\in \R$, $p,p_0,p_1,q,q_0,q_1\in (0,\infty]$, and define $\alpha_\theta,p_\theta,q_\theta$ so that $\alpha_\theta=(1-\theta)\alpha_0+\theta \alpha_1$, $\frac{1}{p_\theta}=\frac{1-\theta}{p_0}+\frac{\theta}{p_1}$, and $\frac{1}{q_\theta}=\frac{1-\theta}{q_0}+\frac{\theta}{q_1}$. Then, the following equalities hold (with equivalence of quasi-norms):
	\begin{enumerate}
		\item[\textnormal{(1)}] $(B_{\alpha_0}^{p, q_0}(G),B_{\alpha_1}^{p, q_1}(G))_{\theta,q}= B_{\alpha_\theta}^{p, q}(G)$ ($\alpha_0\neq \alpha_1$);
		
		\item[\textnormal{(2)}] $(B_{\alpha}^{p, q_0}(G),B_{\alpha}^{p, q_1}(G))_{\theta,q_\theta}= B_{\alpha}^{p, q_\theta}$;
		
		\item[\textnormal{(3)}] $(B_{\alpha_0}^{p_0, q_0}(G),B_{\alpha_1}^{p_1, q_1}(G))_{\theta,q_\theta}= B_{\alpha_\theta}^{p_\theta, q_\theta}(G)$ ($q_0,q_1<\infty$, $p_\theta=q_\theta$);
		
		\item[\textnormal{(4)}] $(B_{\alpha_0}^{p_0, q_0}(G),B_{\alpha_1}^{p_1, q_1}(G))_{[\theta]}$ is the closure of $B_{\alpha_0}^{p_0, q_0}(G)\cap B_{\alpha_1}^{p_1, q_1}(G)$ in $ B_{\alpha_\theta}^{p_\theta, q_\theta}(G)$ (which is $ B_{\alpha_\theta}^{p_\theta, q_\theta}(G)$ if either $q_\theta\neq \infty$ or $\alpha_0=\alpha_1$);   
		
		\item[\textnormal{(5)}] $(F_{\alpha_0}^{p, q_0}(G),F_{\alpha_1}^{p, q_1}(G))_{\theta,q}= B_{\alpha_\theta}^{p, q}(G)$ ($\alpha_0\neq \alpha_1$);
		
		\item[\textnormal{(6)}] $(F_{\alpha}^{p_0, q}(G),F_{\alpha}^{p_1, q}(G))_{\theta,p_\theta}= F_{\alpha}^{p_\theta,q}(G)$; 
		
		\item[\textnormal{(7)}] $(F_{\alpha_0}^{p_0, q_0}(G),F_{\alpha_1}^{p_1, q_1}(G))_{[\theta]}$ is the closure of $F_{\alpha_0}^{p_0, q_0}(G)\cap F_{\alpha_1}^{p_1, q_1}(G)$ in $ F_{\alpha_\theta}^{p_\theta, q_\theta}(G)$ (which is $ F_{\alpha_\theta}^{p_\theta, q_\theta}(G)$ if $p_\theta<\infty$ and either $q_\theta\neq \infty$ or $\alpha_0=\alpha_1$).
	\end{enumerate}
	In addition, the following equalities hold (with equivalence of quasi-norms):
	\begin{enumerate}
		\item[\textnormal{(1$'$)}] $(\mathring B_{\alpha_0}^{p, q_0}(G),\mathring B_{\alpha_1}^{p, q_1}(G))_{\theta,q,0}= \mathring B_{\alpha_\theta}^{p, q}(G)$ ($\alpha_0\neq \alpha_1$);
		
		\item[\textnormal{(2$'$)}] $(\mathring B_{\alpha}^{p, q_0}(G),\mathring B_{\alpha}^{p, q_1}(G))_{\theta,q_\theta}= \mathring B_{\alpha}^{p, q_\theta}$;
		
		\item[\textnormal{(3$'$)}] $(\mathring B_{\alpha_0}^{p_0, q_0}(G),\mathring B_{\alpha_1}^{p_1, q_1}(G))_{\theta,q_\theta,0}= \mathring B_{\alpha_\theta}^{p_\theta, q_\theta}(G)$ ($q_0,q_1<\infty$, $p_\theta=q_\theta$);
		
		\item[\textnormal{(4$'$)}] $(\mathring B_{\alpha_0}^{p_0, q_0}(G),\mathring B_{\alpha_1}^{p_1, q_1}(G))_{[\theta]}= \mathring B_{\alpha_\theta}^{p_\theta, q_\theta}(G)$;  
		
		\item[\textnormal{(5$'$)}] $(\mathring F_{\alpha_0}^{p, q_0}(G),\mathring F_{\alpha_1}^{p, q_1}(G))_{\theta,q,0}= \mathring B_{\alpha_\theta}^{p, q}(G)$ ($\alpha_0\neq \alpha_1$);
		
		\item[\textnormal{(6$'$)}] $(\mathring F_{\alpha}^{p_0, q}(G),\mathring F_{\alpha}^{p_1, q}(G))_{\theta,p_\theta}= \mathring F_{\alpha}^{p_\theta,q}(G)$;

		\item[\textnormal{(7$'$)}] $(\mathring F_{\alpha_0}^{p_0, q_0}(G),\mathring F_{\alpha_1}^{p_1, q_1}(G))_{[\theta]}= \mathring F_{\alpha_\theta}^{p_\theta, q_\theta}(G)$.
	\end{enumerate}
\end{teo}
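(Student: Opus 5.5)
The plan is to transfer everything to the sequence spaces through the discretization machinery, i.e.\ to show that $B^{p,q}_\alpha(G)$ and $F^{p,q}_\alpha(G)$ are retracts of the corresponding sequence spaces, with the same retraction for all parameters. Fix $\eps\in(0,1)$, $(\psi_j)\in\Psi_\eps$, a reduced $(\eps^{1/\grado},\delta,R)$-lattice $(x_{j,k})_{(j,k)\in J}$ with $\delta$ small, and $K,S,N$ so large that Theorems~\ref{teo:13bis} and~\ref{teo:13} apply \emph{simultaneously} to all the finitely many parameter triples $(p_i,q_i,\alpha_i)$ involved. The sampling operator $S=S_{(x_{j,k})}$ of Proposition~\ref{prop:14} is then bounded from each $B^{p_i,q_i}_{\alpha_i}(G)$ (resp.\ $F^{p_i,q_i}_{\alpha_i}(G)$) into $b^{p_i,q_i}_{\alpha_i}(\eps^{1/\grado},J)$ (resp.\ $f^{p_i,q_i}_{\alpha_i}((x_{j,k}))$), up to the harmless factor $\delta^{Q_*/p}$. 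The difficulty is that the atomic decomposition of Theorems~\ref{teo:13bis} and~\ref{teo:13} provides $f\mapsto\lambda_f$ with atoms depending on $f$, which is not linear. I would therefore construct a \emph{fixed} linear synthesis operator $T\lambda\coloneqq\sum_{j,k}\lambda_{j,k}a_{j,k}$ with $f$-independent atoms such that $T\circ S'=I$ for a suitable linear analysis operator $S'$ of the same type as $S$.

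To build it, I would follow Frazier--Jawerth. Choose $(\psi_j)\in\widetilde\Psi_\eps$ of the form $\psi_j=\tilde\psi_j*\tilde\psi_j$, with $\tilde\psi_j=\Kc_{\eps^j\Lc}(\phi_j)$, and discretize $f=\sum_j f*\tilde\psi_j*\tilde\psi_j$ by writing $f*\tilde\psi_j*\tilde\psi_j(x)=\int (f*\tilde\psi_j)(y)\,\tilde\psi_j(y^{-1}x)\,\dd\beta(y)$. Splitting $G$ into the cells of a Voronoi-type partition $(Q_{j,k})$ adapted to the lattice, and replacing $(f*\tilde\psi_j)(y)$ by $(f*\tilde\psi_j)(x_{j,k})$ on $Q_{j,k}$, gives a linear map $f\mapsto T S' f$. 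The functions $\beta(Q_{j,k})^{-1}\int_{Q_{j,k}}\tilde\psi_j(y^{-1}\,\cdot\,)\,\dd\beta(y)$, suitably normalized, are $(K,S,N,p_0,\Lc)$-atoms, by Proposition~\ref{cor:12} and since $\phi_j$ vanishes near $0$, so that $\tilde\psi_j=\Lc^S$ applied to a Schwartz function. The error $I-TS'$ is controlled through the mean value theorem, with a gain of a positive power of $\delta$, by the almost-orthogonality estimates (Lemmas~\ref{lem:40}, \ref{cor:13}, \ref{lem:25d}, \ref{lem:25c}) and the maximal functions of Proposition~\ref{cor:18} and Lemma~\ref{lem:2c}. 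Hence $\norm{I-TS'}<1$ on every one of the finitely many spaces involved for $\delta$ small, and $R\coloneqq(TS')^{-1}T$ satisfies $R\circ S'=I$, with $R$ and $S'$ bounded on all the relevant spaces. This uniform invertibility via a Neumann series is the main obstacle, in particular for $p=\infty$, where the $\Cc^q$ maximal estimates of Lemmas~\ref{lem:2c} and~\ref{lem:25c} are needed and where the oscillation estimate must be carried out in the $f^{\infty,q}$ quasi-norm.

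With the retraction in hand, the real-method statements (1), (2), (3), (5), (6) reduce to the corresponding statements for $b$ and $f$ sequence spaces. These are weighted $\ell^q(\ell^p)$ and $L^p(\ell^q)$ mixed-norm spaces, or $\Cc^q$-type spaces when $p=\infty$, for which the classical results for vector-valued sequence spaces apply (cf.~Triebel and Frazier--Jawerth). For instance, (5) follows from $f^{p,q_i}_{\alpha_i}\hookrightarrow b^{p,\max(p,q_i)}_{\alpha_i}$ and $b^{p,\min(p,q_i)}_{\alpha_i}\hookrightarrow f^{p,q_i}_{\alpha_i}$ together with (1), and (6) follows from $L^p$-interpolation of $\ell^q$-valued functions. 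The complex statements (4) and (7) follow from Lemma~\ref{lem:75}: by the retraction and the interpolation property of the complex method for analytically convex quasi-Banach pairs, $(X_0,X_1)_{[\theta]}$ is the closure of $X_0\cap X_1$ in the Calder\'on product $X_0^{1-\theta}X_1^\theta$. Computing, via H\"older for the ``$\subseteq$'' inclusion and the explicit factorization $\abs{a}=\abs{a}^{\,\cdot}$ with weights for the reverse one, shows that the Calder\'on product equals $b^{p_\theta,q_\theta}_{\alpha_\theta}$ (resp.\ $f^{p_\theta,q_\theta}_{\alpha_\theta}$, using Lemmas~\ref{lem:59b} and~\ref{lem:60b} when $p_\theta=\infty$). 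Density of the intersection holds when $q_\theta<\infty$ (and $p_\theta<\infty$ for $F$), and when $\alpha_0=\alpha_1$ it holds by a truncation in the $k$ variable. Finally, the primed statements follow in the same way, since $S'$ and $R$ map $\Sr(G)$ and finitely supported sequences into the closures ($\mathring{}$-spaces) by Proposition~\ref{prop:14} and Theorems~\ref{teo:13bis}, \ref{teo:13}, and since the $(\,\cdot\,)_{\theta,q,0}$ methods and Lemma~\ref{lem:75} are compatible with retracts.
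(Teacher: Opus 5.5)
Your overall architecture (pass to the sequence spaces, interpolate there, come back) is the paper's. Two steps, however, do not hold up as written.

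\textbf{The transfer step.} You treat the non-linearity of $f\mapsto(a_f,\lambda_f)$ in Theorems~\ref{teo:13bis} and~\ref{teo:13} as an obstruction, and you remove it by building a linear retraction $R=(TS')^{-1}T$ through a Neumann series. The estimate $\norm{I-TS'}<1$ on all the spaces involved, including $p<1$ and the $\Cc^q$ setting, is only asserted, yet everything rests on it. It amounts to showing that these spaces are retracts of the sequence spaces, which the paper explicitly does not claim. It may well be provable along the lines you indicate, and it would give more, but it is not needed.

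The missing observation is that the non-linearity is harmless:
\begin{itemize}
\item For each fixed system of atoms $a$, the synthesis map $\Psi_a\colon\lambda\mapsto\sum\lambda_{j,k}a_{j,k}$ is linear and bounded $Y_i\to X_i$ uniformly in $a$. Hence it is uniformly bounded $\Fc(Y_0,Y_1)\to\Fc(X_0,X_1)$.
\item Every $f\in X$ equals $\Psi_{a_f}(\lambda_f)$ with $\norm{\lambda_f}_Y\lesssim\norm{f}_X$, so $X\subseteq\Fc(X_0,X_1)$.
\item Conversely, the sampling maps $\Phi_z$, $z\in U$, are uniformly bounded $\Fc(X_0,X_1)\to Y$. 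Proposition~\ref{prop:14} bounds $\norm{f}_X$ by $\max_z\norm{\Phi_z f}_Y$ for every $f\in\Sr'(G)$.
\end{itemize}
This is Proposition~\ref{prop:16}, and it works for any interpolation functor.

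\textbf{The genuine gap: assertion (6).} The space $f^{p,q}_0((x_{j,k}))$ is not $L^p(G;\ell^q)$. Via $\iota$ it is the subspace of functions constant on the lattice balls, and interpolation does not commute with passing to subspaces. For $p_0,p_1<\infty$, the identity $(L^{p_0}(\ell^q),L^{p_1}(\ell^q))_{\theta,p_\theta}=L^{p_\theta}(\ell^q)$ only gives $(f^{p_0,q}_0,f^{p_1,q}_0)_{\theta,p_\theta}\subseteq f^{p_\theta,q}_0$.

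The reverse inclusion would require the subspace to be complemented, and this fails for $p<1$. Composing a bounded projection with a single coefficient functional would give a non-zero continuous linear functional on $L^p$ of a ball, which has trivial dual. Moreover, the endpoint $p_1=\infty$ is modelled on $\Cc^q(\eps)$, not on $L^\infty(\ell^q)$.

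The paper proves (6) by the Frazier--Jawerth argument:
\begin{itemize}
\item It shows $(f^0,f^{\infty,q}_0)^{(1+p)/p}_{p/(1+p),1+p}=f^{p,q}_0$.
\item This comes from a two-sided comparison of $K(t,a;f^0,f^{\infty,q}_0)$ with $K(t,m^q_{1/4,\eps}(\iota(a));L^0,L^\infty)$, via Lemmas~\ref{lem:59b} and~\ref{lem:60b}.
\item It then concludes by the power and reiteration theorems.
\end{itemize}

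\textbf{A related gap in (7).} For the inclusion $f^{p_\theta,q_\theta}_0\subseteq(f^{p_0,q_0}_0)^{1-\theta}(f^{p_1,q_1}_0)^\theta$, factoring $\abs{a_{j,k}}$ into powers with $j$-dependent weights fails when $p_0/q_0\neq p_1/q_1$. The correct factor $\norm{\iota(a)(\,\cdot\,,x)}_{\ell^{q_\theta}}^{p_\theta/p_\ell-q_\theta/q_\ell}$ depends on $x$, not on $(j,k)$. The paper discretizes it through the level sets $E'_k$ and the index sets $A_k$, using Lemmas~\ref{lem:59b} and~\ref{lem:60b} for every $p_\theta$, not only $p_\theta=\infty$.
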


One may wonder whether $(F_{\alpha_0}^{p_0, q_0}(G),F_{\alpha_1}^{p_1, q_1}(G))_{\theta,q_\theta}= F_{\alpha_\theta}^{p_\theta,q_\theta}(G)$ when $p_\theta=q_\theta$ and $p_0,p_1<\infty$. If $p_0,p_1>1$, this follows from~\cite[Theorem 10.1]{BCP}. In the general case, one has $(L^{p_0}(G;\ell^{q_0}(\N)),L^{p_1}(G;\ell^{q_1}(\N)))_{\theta,q_\theta}=L^{p_\theta}(G;\ell^{q_\theta}(\N))$, whence a continuous inclusion $(F_{\alpha_0}^{p_0, q_0}(G),F_{\alpha_1}^{p_1, q_1}(G))_{\theta,q_\theta}\subseteq F_{\alpha_\theta}^{p_\theta,q_\theta}(G)$. Nonetheless, we were not able to obtain a proof of the reverse inclusion in this generality.

This result is a consequence of the following abstract one, which allows us to refer to the interpolation properties of the corresponding spaces of sequences even though we do not know if our Besov and Triebel--Lizorkin spaces are retracts of the corresponding spaces of sequences in full generality.

\begin{prop}\label{prop:16}
	Take  $\alpha,\alpha_0,\alpha_1\in \R$, $p, p_0,p_1, q_0,q_1\in (0,\infty]$, $\eps\in (0,1)$, $\delta>0$, $R\Meg 2$, and a reduced $(\eps, \delta,R)$-lattice $(x_{j,k})_{(j,k)\in J}$. Let $\Fc$ be an interpolation functor on a category of pairs of  quasi-Banach spaces including Besov and Triebel--Lizorkin spaces and the corresponding spaces of sequences.
	
	If 
	\[
	\Fc(b^{p_0,q_0}_{\alpha_0}(\eps),b^{p_1,q_1}_{\alpha_1}(\eps))=b^{p ,q }_{\alpha }(\eps)\qquad \text{(resp.\ $\Fc(\mathring b^{p_0,q_0}_{\alpha_0}(\eps),\mathring b^{p_1,q_1}_{\alpha_1}(\eps))=\mathring b^{p ,q }_{\alpha }(\eps)$),}
	\] 
	then 
	\[
	\Fc(B^{p_0,q_0}_{\alpha_0}(G),B^{p_1,q_1}_{\alpha_1}(G))=B^{p ,q }_{\alpha }(G)\qquad \text{(resp.\ $\Fc(\mathring B^{p_0,q_0}_{\alpha_0}(G),\mathring B^{p_1,q_1}_{\alpha_1}(G))=\mathring B^{p ,q }_{\alpha }(G)$).}
	\] 
	
	Analogously, if   
	\[
	\Fc(f^{p_0,q_0}_{\alpha_0}((x_{j,k})),f^{p_1,q_1}_{\alpha_1}((x_{j,k})))=f^{p ,q }_{\alpha }((x_{j,k}))\qquad \text{(resp.\ $\Fc(\mathring f^{p_0,q_0}_{\alpha_0}((x_{j,k})),\mathring f^{p_1,q_1}_{\alpha_1}((x_{j,k})))=\mathring f^{p ,q }_{\alpha }((x_{j,k}))$),}
	\]
	then 
	\[
	\Fc(F^{p_0,q_0}_{\alpha_0}(G),F^{p_1,q_1}_{\alpha_1}(G))=F^{p ,q }_{\alpha }(G)\qquad \text{(resp.\  $\Fc(\mathring F^{p_0,q_0}_{\alpha_0}(G),\mathring F^{p_1,q_1}_{\alpha_1}(G))=\mathring F^{p ,q }_{\alpha }(G)$).}
	\]	
\end{prop}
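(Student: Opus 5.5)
The idea is to realize the function spaces as retracts of the sequence spaces, but with the retraction not being linear on the whole space. We use instead the pair analysis/synthesis $S\colon f\mapsto$ samples and $T\colon\lambda\mapsto\sum\lambda_{j,k}a_{j,k}$ for a \emph{fixed} system of atoms. Concretely, fix $(\psi_j)\in\widetilde\Psi_{\eps^{\grado}}$ (so that the sampling lattice of Proposition~\ref{prop:14} is the given $(\eps,\delta,R)$-lattice; rescale $\eps$ if needed). Fix also parameters $K,S,N,p_0$ large enough for both $(p_0,q_0,\alpha_0)$ and $(p_1,q_1,\alpha_1)$ in Theorems~\ref{teo:13bis} and~\ref{teo:13}. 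The synthesis operator $T$ built from a fixed system of $(K,S,N,p_0,\Lc)$-atoms is then linear and bounded from $b^{p_i,q_i}_{\alpha_i}(\eps)$ to $B^{p_i,q_i}_{\alpha_i}(G)$, for $i=0,1$ and for $(p,q,\alpha)$ as well. The sampling operator $S_{(x_{j,k}z_{j,k})}$ is linear and bounded in the reverse direction, uniformly in $z$, by Proposition~\ref{prop:14}.

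The key point is to manufacture a reproducing identity $T\circ S=I$ (or at least $T\circ S$ invertible) with \emph{linear} maps independent of $f$. Here I would follow the Frazier--Jawerth $\varphi$-transform. Write $\delta_e=\sum_j\psi_j*\psi_j'$ with $\psi'_j=\Kc_{\eps^{j\grado}\Lc}(\phi'_j)$ and $\sum_j\phi_j\phi'_j=1$. Then discretize each convolution $f*\psi_j*\psi'_j(x)=\int f*\psi_j(y)\psi'_j(y^{-1}x)\,\dd\beta(y)$ by the Riemann sum over the cells $B(x_{j,k},\delta\eps^j)$ of a disjoint partition subordinate to the lattice. This gives $f=T S f+\Rc f$, where $a_{j,k}$ are (normalized) translates of $\psi'_j$; these are $(K,S,N,p_0,\Lc)$-atoms since $\phi'_j$ vanishes near $0$ for $j\Meg1$. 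The remainder $\Rc f$ is controlled by the oscillation of $f*\psi_j$ on cells of size $\delta\eps^j$. Using the maximal-type estimates already behind Proposition~\ref{prop:14} (the $\Nc_{p,a}$ functions, Proposition~\ref{cor:18}, Lemmas~\ref{lem:2c}, \ref{lem:25c}, \ref{lem:25d} and the almost-orthogonality Lemma~\ref{cor:13}), one shows $\norm{\Rc}\meg C\delta^{\gamma}$ for some $\gamma>0$ simultaneously on all the spaces involved. Choosing $\delta$ small makes $I-\Rc$ invertible by a Neumann series in each quasi-Banach space. This works since the quasi-norms are $r$-norms for some $r>0$ after renorming (Aoki--Rolewicz), so the series converges. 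The inverses agree on intersections because they are given by the same series.

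Then $R\coloneqq (I-\Rc)^{-1}T$ and $S$ are linear maps with $RS=I$, bounded on each pair; that is, $B^{p,q}_\alpha(G)$ is a retract of $b^{p,q}_\alpha(\eps)$ compatibly for the three parameter choices, and likewise for $F$ and $f$. The conclusion follows from the abstract retract principle: $S$ maps $\Fc(B_0,B_1)$ into $\Fc(b_0,b_1)=b^{p,q}_\alpha(\eps)$, and $R$ maps the latter into $B^{p,q}_\alpha(G)$, so $\Fc(B_0,B_1)\subseteq B^{p,q}_\alpha(G)$. Conversely, $S$ maps $B^{p,q}_\alpha(G)$ into $\Fc(b_0,b_1)$, and $R$ maps that into $\Fc(B_0,B_1)$, and $f=RSf$. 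For the $\mathring{}$ versions, we observe that $S$ maps $\Sr(G)$ into the closure of finitely supported sequences and $R$ maps finitely supported sequences into $\mathring B$ (convergence statement in Theorem~\ref{teo:13bis}), so both maps preserve the closed subspaces.

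The main obstacle is the remainder estimate $\norm{\Rc}\meg C\delta^\gamma$ when $\min(p,q)<1$ and for $p=\infty$ with the $\Cc^q$ norms. The derivative of $f*\psi_j$ must be controlled by Peetre-type maximal functions $\Nc_{r,a,\eps^j}$ with $r<\min(p,q)$, and then resynthesized via Theorem~\ref{teo:13}. I would try first to write $\Rc f$ itself as a sum of molecules with coefficients $\delta^\gamma\sup_{z}\abs{f*\psi_j(x_{j,k}z)}$ and invoke the $\max_z$ part of Proposition~\ref{prop:14}.
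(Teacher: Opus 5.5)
\paragraph{Where the proposal breaks down.} Your argument depends entirely on the remainder bound $\norm{\Rc}\meg C\delta^{\gamma}$. You need it simultaneously on $B^{p_0,q_0}_{\alpha_0}(G)$, $B^{p_1,q_1}_{\alpha_1}(G)$, $B^{p,q}_\alpha(G)$ and on the $F$-analogues. This bound is asserted, not proved; you flag it yourself as the main obstacle. The route you sketch for it does not work when $\min(1,p)<1$ (resp.\ $\min(1,p,q)<1$ for $F$):
\begin{itemize}
\item With the natural normalisation, the Riemann weights $\beta(Q_{j,k})\eps^{-jQ_*}\approx\delta^{Q_*}$ make the fine-lattice molecule coefficients of size $\delta^{Q_*+\gamma}\sup_z\abs{\cdots}$.
\item Proposition~\ref{prop:14} on a $\delta$-lattice is a two-sided equivalence, so the sampled sequence really has $b$-quasi-norm of order $\delta^{-Q_*/p}\norm{f}_{B^{p,q}_\alpha(G)}$.
\item The constant of Theorem~\ref{teo:13bis} is bounded below independently of $\delta$, as one sees by testing it on a single atom.
\end{itemize}
This chain of estimates therefore only gives $\norm{\Rc f}_{B^{p,q}_\alpha(G)}\lesssim\delta^{\gamma-Q_*(1/p-1)}\norm{f}_{B^{p,q}_\alpha(G)}$. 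That is not small once $Q_*(1/p-1)\Meg\gamma$, which happens for all small $p$, because $\gamma$ is a fixed order coming from a Taylor expansion.

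Some further idea would be needed. For instance, you might regroup the fine cells into single molecules at scale $\eps^j$, so that Theorem~\ref{teo:13bis} and Proposition~\ref{prop:14} are only used on a lattice of fixed density. Nothing of this kind is in the proposal.

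There is a second problem. The Neumann series forces $\delta$ to be small, but the statement concerns a given lattice with arbitrary $\delta$. For the $F$-part the hypothesis is on $f^{p,q}_\alpha((x_{j,k}))$ for that very lattice, so you cannot freely refine it. Note also that a linear retraction is exactly what the paper says it does not know how to obtain in this generality. If your route could be completed, it would prove more than is needed.

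\paragraph{What the paper does instead.} No retraction is needed; the ingredients in your first paragraph already suffice. Write $B_i\coloneqq B^{p_i,q_i}_{\alpha_i}(G)$ and fix $K,S,N$ large enough for all three parameter triples. Let $V$ be the set of systems of $(K,S,N,\infty,\Lc)$-atoms associated with the given lattice, and $U$ the set of admissible $(z_{j,k})$ from Proposition~\ref{prop:14}.
\begin{itemize}
\item By Theorem~\ref{teo:13bis}, the synthesis maps $\Psi_a\colon\lambda\mapsto\sum_{j,k}\lambda_{j,k}a_{j,k}$, $a\in V$, are bounded $b^{p_i,q_i}_{\alpha_i}(\eps,J)\to B_i$ with constants independent of $a$.
\item By Proposition~\ref{prop:14}, the sampling maps $\Phi_z$, $z\in U$, are bounded in the reverse direction uniformly in $z$.
\item Hence both families stay uniformly bounded after applying $\Fc$, using that $b^{p,q}_\alpha(\eps,J)$ is a retract of $b^{p,q}_\alpha(\eps)$.
\end{itemize}

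Let $f\in B^{p,q}_\alpha(G)$. The decomposition half of Theorem~\ref{teo:13bis} gives an $f$-dependent $a_f\in V$ (strong atoms are such atoms up to a constant) and $\lambda_f$ with $\norm{\lambda_f}\lesssim\norm{f}$. Then $f=\Psi_{a_f}\lambda_f\in\Fc(B_0,B_1)$ with control of the quasi-norm. The nonlinearity of $f\mapsto(a_f,\lambda_f)$ is harmless, because $\Fc$ is only applied to each fixed linear map $\Psi_a$.

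Conversely, let $f\in\Fc(B_0,B_1)$. Then $\norm{\Phi_z f}_{b^{p,q}_\alpha(\eps,J)}\lesssim\norm{f}_{\Fc(B_0,B_1)}$ uniformly in $z$, and the $\max_z$ half of Proposition~\ref{prop:14} gives $f\in B^{p,q}_\alpha(G)$.

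This is the paper's proof. It works for every $\delta$ and all $p,q\in(0,\infty]$, with Theorem~\ref{teo:13} and the $F$-part of Proposition~\ref{prop:14} for Triebel--Lizorkin spaces, and it needs no remainder estimate at all.
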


Notice that we consider, in the statement, the spaces $b^{p,q}_\alpha(\eps)$ instead of the more `natural' $b^{p,q}_\alpha(\eps,J)$ since it is easier to find interpolation results for the former spaces rather than the latter (and the results for the former spaces imply the corresponding assertions for the latter). This result may be appropriately modified to cover also (4) and (7) of Theorem~\ref{teo:9b}.\footnote{In this case, one has to observe that, if $f$ belongs to the intersection of two Besov (or Triebel--Lizorkin) spaces, then it may be written as a sum of an appropriate system of atoms with coefficients in the intersection of the corresponding spaces of sequences, as one observes in the proof of~\cite[Theorems 4.14 and 4.15]{Calzi4}. More precisely, the system of atoms and the corresponding coefficients may be chosen in an `universal way,' that is, so that the coefficients belong to the spaces of sequences corresponding to any Besov or Triebel--Lizorkin space to which $f$ belongs, for every fixed $f$, provided that the conditions on $K,S,N$ are met.}

\begin{proof}
	We prove only the first assertion; the other ones may be proved similarly. Observe first that, since $b^{p_2,q_2}_{\alpha_2}(\eps,J)$ and $\mathring b^{p_2,q_2}_{\alpha_2}(\eps,J)$ are retracts of $b^{p_2,q_2}_{\alpha_2}(\eps)$ and $\mathring  b^{p_2,q_2}_{\alpha_2}(\eps)$, respectively, for every $p_2,q_2\in (0,+\infty]$ and for every $\alpha_2\in \R$, with the same retractions and sections, we also have
	\[
	\Fc(b^{p_0,q_0}_{\alpha_0}(\eps,J),b^{p_1,q_1}_{\alpha_1}(\eps,J))=b^{p ,q }_{\alpha }(\eps,J)\qquad \text{(resp.\ $\Fc(\mathring b^{p_0,q_0}_{\alpha_0}(\eps,J),\mathring b^{p_1,q_1}_{\alpha_1}(\eps,J))=\mathring b^{p ,q }_{\alpha }(\eps,J)$).}
	\]
	Set $X_0=B^{p_0,q_0}_{\alpha_0}(G)$, $X_1=B^{p_1,q_1}_{\alpha_1}(G)$, $X=B^{p ,q }_{\alpha }(G)$, $Y_0=b^{p_0,q_0}_{\alpha_0}(\eps,J)$, $Y_1=b^{p_1,q_1}_{\alpha_1}(\eps,J)$, $Y=b^{p ,q }_{\alpha }(\eps,J) $. 
	Fix $(\psi_j)\in \Psi_{\eps^{\grado}}$ and let $U$ be the set of $(z_{j,k})\in G^J$ such that $z_{j,k}\in \overline B(e, \delta R \eps^j)$ for every $(j,k)\in J$. In addition, fix $K,S,N\in\N$ so that $K,S>Q_*/\min(1,p_0,p_1,p,q_0,q_1,q)+\max(\abs{\alpha_0},\abs{\alpha_1},\abs{\alpha})$ and $N>D/\min(1,p_0,p_1,p,q_0,q_1,q)$, and	 let $V$ be the set of systems of $(K,S,N,\infty,\Lc)$-atoms.
	For every $z\in U$ and for every $a\in V$, consider the mappings $\Phi_z\colon \Sr'(G)\ni u \mapsto ((u*\psi_j)(x_{j,k}z_{j,k}))\in \C^{J}$, and $\Psi_a\colon \lambda \mapsto \sum_{j,k} \lambda_{j,k} a_{j,k}$. Then, Proposition~\ref{prop:14} shows that the $\Phi_z$, $z\in U$, induce equicontinuous linear mappings $X_0\to Y_0$, $X_1\to Y_1$, and $X\to Y$, while Theorem~\ref{teo:13bis} shows that  the $\Psi_a$, $a\in V$, induce  equicontinuous linear mappings $Y_0\to X_0$, $Y_1\to X_1$, and $Y\to X$. 
	By assumption, $\Fc(Y_0,Y_1)=Y$. Then, the $\Phi_z$, $z\in U$, induce equicontinuous linear mappings $\Fc(X_0,X_1)\to \Fc(Y_0,Y_1)=Y$, while the $\Psi_a$, $a\in V$, induce equicontinuous linear mappings $Y=\Fc(Y_0,Y_1)\to \Fc(X_0,X_1)$. Since for every $f\in X$ there are $\lambda_f\in Y$ with comparable quasi-norm and  $a_f\in V$  such that $f=\Psi_{a_f}(\lambda_f)$, thanks to Theorem~\ref{teo:13bis},  this proves that $X\subseteq \Fc(X_0,X_1)$ continuously. On the other hand, if $f\in  \Fc(X_0,X_1)$, then the quasi-norms of $\Phi_z f$, $z\in U$, in $Y=\Fc(Y_0,Y_1)$ are uniformly controlled by the quasi-norm of $f$, so that $f\in X$  by Proposition~\ref{prop:14}. More precisely, $\Fc(X_0,X_1)\subseteq X$ continuously.  The proof is therefore complete.
\end{proof}

\begin{proof}[Proof of Theorem~\ref{teo:9b}.]
	\textsc{Step I} Observe first that, by  Proposition~\ref{prop:16}, we may reduce to proving the various assertions for the corresponding spaces of sequences. For simplicity, we shall simply write $b^{p,q}_\alpha$ and $f^{p,q}_\alpha$ instead of $b^{p,q}_\alpha(\eps)$ and $f^{p,q}_\alpha((x_{j,k}))$, where $(x_{j,k})_{(j,k)\in J}$ is a (fixed) reduced $(\eps,1,2)$-lattice on $G$.
	Then, observe that (1$'$)--(3$'$) and (5$'$)--(6$'$) follow from (1)--(3) and (5)--(6) and from the fact that $K(t,a,\mathring b^{p_0,q_0}_{\alpha_0}, \mathring b^{p_1,q_1}_{\alpha_1})= K(t,a, b^{p_0,q_0}_{\alpha_0},   b^{p_1,q_1}_{\alpha_1} )$ and $K(t,a,\mathring f^{p_0,q_0}_{\alpha_0} , \mathring f^{p_1,q_1}_{\alpha_1} )= K(t,a, f^{p_0,q_0}_{\alpha_0} ,   f^{p_1,q_1}_{\alpha_1} )$ for every $a\in \C^{(\N\times \N)}$ and for every $t>0$, where 
	\[
	K(t,a,A_0,A_1)=\inf \Set{\norm{a_0}_{A_0}+t\norm{a_1}_{A_1}\colon a=a_0+a_1, a_0\in A_0, a_1\in A_1}
	\]
	for every  pair of normed abelian groups $(A_0,A_1)$, for every $t>0$, and for every $a$ in the sum $\Sigma(A_0,A_1)$. Indeed, it suffices to observe that, if $a\in \C^{(J)}$ may be written as $a^{(0)}+a^{(1)}$ and if $K$ is the (finite) support of $a$, then $a=\chi_K a^{(0)}+\chi_K a^{(1)}$ and $\chi_K a^{(0)}$ and $\chi_K a^{(1)}$ have smaller quasi-norms than $a^{(0)}$ and $a^{(1)}$, respectively, in the considered spaces. We recall, incidentally, that $\norm{a}_{(A_0,A_1)_{\theta,q}}=\norm{t^{-\theta} K(t,a,A_0,A_1)}_{L^q (\mi)}$, where $\mi$ denotes the Haar measure on $(0,+\infty)$ such that $\mi([1,e])=1$. 
	
	Assertions (1)--(3)   follow from Proposition~\ref{prop:16} and from the known interpolation properties of the spaces $b^{p,q}_\alpha $. Cf.~\cite[Theorem 5.6.1]{BerghLofstrom} for (1), (2), and~\cite[Theorems 5.6.2 and 5.2.1]{BerghLofstrom} for (3). 
	Assertion  (5)   follows from assertion  (1)   and~\cite[Proposition 4.9]{Calzi4}.
	
	\textsc{Step II} We now prove assertion  (6). Observe that we may assume that $p_\theta\neq \infty$, for otherwise there is nothing to prove. In addition, we may reduce to the case $\alpha=0$. We proceed as in~\cite{FrazierJawerth} and  show that $(f^{0} , f^{\infty,q}_0 )_{p/(1+p),1+p}^{(1+p)/p}=f^{p,q}_\alpha $ for every $p\in (0,\infty)$, where $f^0 $ is the space of $a\in \C^{J}$ such that 
	\[
	\beta\bigg(\bigcup_{j,k\colon a_{j,k}\neq 0} B(x_{j,k}, \eps^j)\bigg)<\infty,
	\]
	considered as a normed abelian group, and where $X^{c}$ denotes the normed abelian group $X$ endowed with the `quasi-norm' $\norm{\,\cdot\,}^{c}_X$, for $c>0$.
	Using the power theorem (cf.~\cite[Theorem 3.11.6]{BerghLofstrom}) one may then deduce
	\[
	(f^{p_0,q}_0, f^{p_1,q}_0)_{\theta,q}=( (f^{0} , f^{\infty,q}_0 )_{p_0/(1+p_0),1+p_0},(f^{0} , f^{\infty,q}_0 )_{p_1/(1+p_1),1+p_1}  )_{\eta, 1+p_\theta}^{(1+p_\theta)/p_\theta},
	\]
	where $\eta=\theta\frac{p_\theta(1+p_1)}{p_1(1+p_\theta)}$. By means of the reiteration theorem (cf.~\cite[Theorem 3.11.5]{BerghLofstrom}) one may then deduce
	\[
	(f^{p_0,q}_0, f^{p_1,q}_0)_{\theta,q}=(f^0,f^{\infty,q}_0)_{p_\theta/(1+p_\theta),1+p_\theta}^{(1+p_\theta)/p_\theta}=f^{p_\theta,q}_0,
	\]
	whence the conclusion. Now, set $\mi=\sum_{j\in\N} \delta_{\eps^j}$, define $L^0$ as the normed abelian group of the $\beta$-measurable functions which are concentrated in a set with finite measure, endowed with the `quasi-norm' 
	\[
	\norm{f}_{L^0}=\beta(\Set{x\in G\colon f(x)\neq 0}) 
	\]
	and simply write $L^\infty$ instead of $L^\infty(G)$. In addition, set $\iota(a)( j,x)=\sum_{k} a_{j,k}\chi_{B(x_{j,k}, \eps^j)}$ for every $j\in\N$ and for every $x\in G$, and observe that, by Lemmas~\ref{lem:59b} and~\ref{lem:60b}, $\norm{a}_{f^{p,q}_0}$ is equivalent to $\norm{m^q_{1/4,\eps}(\iota(a))}_{L^p(G)}$ for every $p\in (0,\infty)$. 
	Since $(L^0,L^\infty)_{p/(1+p),1+p}^{(1+p)/p}=L^p(G)$ (as one sees arguing as in the proof of~\cite[Lemma 6.1]{FrazierJawerth}), in order to prove that $(f^{0} , f^{\infty,q}_0)_{p/(1+p),1+p}^{(1+p)/p}=f^{p,q}_0 $, it will then suffice   to show that there is a constant $C_1>0$ such that
	\[
	\frac{1}{C_1} K(t,m^q_{1/4,\eps}(\iota(a)),L^0,L^\infty)\meg K(t,a,f^0, f^{\infty,q}_\alpha)\meg C_1  K(t,m^q_{1/4,\eps}(\iota(a)),L^0,L^\infty).
	\]
	Then, take $a\in f^0+ f^{\infty,q}_0$ and $t>0$. Take $a^{(0)}\in f^0$ and $a^{(1)}\in f^{\infty,q}_0$ so that $a=a^{(0)}+a^{(1)}$, and let us prove that
	\[
	m^q_{1/4,\eps}(\iota(a))\meg 2^{(1/q-1)_+}(m^q_{1/8,\eps}(\iota(a^{(0)}))+m^q_{1/8,\eps}(\iota(a^{(1)}))).
	\]
	To this aim, take $j\in\N$, $x\in G$, and $\lambda_k>2^{(1/q-1)_+}m^{q}_{1/8,\eps,j}(\iota(a^{(k)}))(x)$ for $k=0,1$, so that
	\[
	\beta\Big( \Set{y\in B(x,\eps^j) \colon \norm{\iota(a^{(k)})(h,y)}_{\ell^q_h(j+\N)}>2^{-(1/q-1)_+}\lambda_k  } \Big)\meg \frac 1 8 \beta(B(e,\eps^j))
	\]
	for $k=0,1$. Since $\norm{\iota(a )(h,y)}_{\ell^q_h(j+\N)}\meg 2^{(1/q-1)_+}(\norm{\iota(a^{(0)})(h,y)}_{\ell^q_h(j+\N)}+\norm{\iota(a^{(1)})(h,y)}_{\ell^q_h(j+\N)})$ for every $y\in G$, this implies that
	\[
	\beta\Big( \Set{y\in B(x,\eps^j) \colon \norm{\iota(a )(h,y)}_{\ell^q_h(j+\N)}>\lambda_0+\lambda_1  } \Big)\meg \frac 1 4  \beta(B(e,\eps^j)),
	\]
	so that
	\[
	m^q_{1/4,\eps,j}(\iota(a))\meg \lambda_0+\lambda_1.
	\]
	The assertion follows from the arbitrariness of $\lambda_0$ and $\lambda_1$, and the by the arbitrariness of $j$.

	Set $f_0\coloneqq \inf(m^q_{1/4,\eps}(\iota(a)), 2^{(1/q-1)_+}m^q_{1/8,\eps}(\iota(a^{(0)})))$ and $f_1\coloneqq (m^q_{1/4,\eps}(\iota(a))-2^{(1/q-1)_+}m^q_{1/8,\eps}(\iota(a^{(0)})))_+$, and observe that $f_0+f_1= m^q_{1/4,\eps}(\iota(a))$, 
	\[
	\norm{f_0}_{L^0}\meg  \norm{m^q_{1/8,\eps}(\iota(a^{(0)}))}_{L^0},
	\]
	and 
	\[
	\norm{f_1}_{L^\infty}\meg 2^{(1/q-1)_+}\norm{m^q_{1/8,\eps}(\iota(a^{(1)}))}_{L^\infty},
	\]
	so that there is a constant $C_2>0$ such that
	\[
	\begin{split}
	K(t,m^q_{1/4,\eps}(\iota(a)),L^0,L^\infty)&\meg \norm{f_0}_{L^0}+t\norm{f_1}_{L^\infty}\\
		&\meg  C_2(\norm{a^{(0)}}_{f^0}+ t\norm{a^{(1)}}_{f^{\infty,q}_0}),
	\end{split} 
	\]
	thanks to Lemma~\ref{lem:59b} (extended to $f_0$ with a similar proof). Consequently, $K(t,m^q_{1/4,\eps}(\iota(a)),L^0,L^\infty)\meg  C_2K(t,a,f^0, f^{\infty,q}_0)$. 
	Conversely, take $f_0\in L^0$ and $f_1\in L^\infty$ so that $m^{q}_{1/4,\eps}(\iota(a))=f_0+f_1$. For every $j,k\in \N$, define $B_{j,k}^+\coloneqq \Set{x\in B(x_{j,k}, \eps^j)\colon f_0(x)\neq 0}$ and $B_{j,k}^-\coloneqq B(x_{j,k}, \eps^j)\setminus B_{j,k}^+$, and set $A\coloneqq\Set{ (j,k)\in J\colon \beta(B^+_{j,k})> \frac 1 2 \beta(B(e, \eps^j))}$. 
	Define $a^{(0)}\coloneqq \chi_A a$ and $a^{(1)}\coloneqq a-a^{(0)}$. By Lemma~\ref{lem:59b}, we may find a sequence $(E_j)$ of $\beta$-measurable subsets of $G$ such that $\beta(  B(x,\eps^j) \cap E_j)>\frac 3 4 \beta(B(e,\eps^j))$ for every $x\in G$ and for every $j\in \N$, and such that,  
	\[
	\norm{\chi_{E_j }(x) \iota(a )( j,x)}_{\ell^q_j(\N)}\meg m^{q}_{1/4,\eps}(\iota(a))(x)
	\]
	for every $x\in G$.

	Set, for every $j\in\N$, 
	\[
	E^{(0)}_j\coloneqq \Set{x\in E_j\colon \exists k\in \N \quad (j,k)\in A\land x\in B_{j,k}^+}
	\]
	and 
	\[
	E^{(1)}_j\coloneqq \Set{x\in E_j\colon \exists k\in \N \quad (j,k)\notin A\land x\in B_{j,k}^-},
	\]
	so that  $\beta(E^{(0)}_j\cap B(x_{j,k}, \eps^j))\Meg \frac 1 4 \beta(B(e, \eps^j))$ for every $(j,k)\in A$, while $\beta(E^{(1)}_j\cap B(x_{j,k}, \eps^j))\Meg \frac 1 4 \beta(B(e, \eps^j))$ for every $(j,k)\not\in A$. In addition, clearly
	\[
	\norm{a^{(0)}}_{f^0}\meg 4 \norm*{\sup_{j\in \N} \chi_{E^{(0)}_j}(x) \iota(a^{(0)})( j,x)}_{L^0_x(G)},
	\]
	while, arguing as in the proof  of Lemma~\ref{lem:60b}, one may show that there is a constant $C_3>0$ such that
	\[
	\norm{a^{(1)}}_{f^{\infty,q}_0}\meg C_3 \norm*{\chi_{E^{(1)}_j}(x) \iota(a^{(1)})(j,x)}_{L^{q,\infty}_{j,x}(\N,G)}.
	\]
	Now, observe that, if $\sup_{j\in \N}\chi_{E^{(0)}_j}(x) \iota(a^{(0)})(j,x)\neq 0$, then there is $(j,k)\in A$ such that $x\in B^+_{j,k}$, so that $f_0(x)\neq 0$. Therefore, $\norm{a^{(0)}}_{f^0}\meg 4 \norm{f_0}_{L^0}$. In addition, if $\chi_{E^{(1)}_j}(x) \iota(a^{(1)})( j,x)\neq 0$, then there is $k $ such that $x\in B^-_{j,k}$, so that $f_1(x)= m^q_{1/4,\eps}(\iota(a))(x)$. 
	It then follows that 
	\[
	\norm{\chi_{E^{(1)}_j}(x) \iota(a^{(1)})( j,x)}_{\ell^q_j(\N)}\meg\norm{\chi_{E_j}(x) \iota(a )( j,x)}_{\ell^q_j(\N)}\meg  m^q_{1/4,1}(\iota(a))(x)=  f_1(x)
	\]
	for every $x\in G$ such that the left hand side is non-zero. Consequently, 
	\[
	\norm{a^{(1)}}_{f^{\infty,q}_0}\meg C_3 \norm*{\chi_{E^{(1)}_j}(x) \iota(a^{(1)})(j,x)}_{L^{q,\infty}_{j,x}(\N,G)}\meg C_3 \norm{f_1}_{L^\infty}.
	\]
	Consequently, $K(t,a,f^0,f^{\infty,q}_0)\meg \max(4,C_3) K(t,m^{q}_{1/4,\eps}(\iota(a)),L^0,L^\infty)$, whence the conclusion.

	\textsc{Step III} We now prove assertion (7), keeping the notation of~\textsc{step II}. By Lemma~\ref{lem:75} (and some elementary remarks), it will suffice to show that $(f^{p_0,q_0}_{\alpha_0})^{1-\theta}(f^{p_1,q_1}_{\alpha_1})^\theta=f^{p_\theta,q_\theta}_{\alpha_\theta}$. Observe that we may again reduce to the case $\alpha_0=\alpha_1=0$, so that $\alpha_\theta=0$ as well.
	Take $a\in (f^{p_0,q_0}_{0})^{1-\theta}(f^{p_1,q_1}_{0})^\theta$, and take $a^{(j)}\in f^{p_j,q_j}_{0}$, $j=0,1$, so that $\abs{a}\meg \abs{a^{(0)}}^{1-\theta} \abs{a^{(1)}}^\theta$.  Take $\eta\in (1/2,1)$ and observe that, by Lemma~\ref{lem:59b}, for every $\ell=0,1$ there are a sequence $(E_j^{(\ell)})$ of $\beta$-measurable subsets of $G$   and a constant $C_5>0$ such that $\norm{\chi_{E^{(\ell)}_j}(x) \iota(a^{(\ell)})(j,x)}_{L^{q_\ell,p_\ell}_{j,x}(\N,G)}\meg C_5 \norm{a^{(\ell)}}_{f^{p_\ell,q_\ell}_0}$ and such that $\beta( B(x\in \eps^j) \cap  E_j^{(\ell)})\Meg \eta \beta(B(e,\eps^j))$ for every $j\in \N$ and for every $x\in G$. Consequently, by H\"older's inequality,
	\[
	\begin{split}
		\norm{\chi_{E^{(0)}_j\cap E^{(1)}_j}(x) \iota(a)(j,x)}_{L^{q_\theta,p_\theta}_{j,x}(\N,G)}&\meg \norm{\chi_{E^{(0)}_j}(x) \iota(a^{(0)})(j,x)}_{L^{q_0,p_0}_{j,x}(\N,G)}^{1-\theta} \norm{\chi_{E^{(1)}_j}(x)\iota(a^{(1)})(j,x)}_{L^{q_1,p_1}_{j,x}(\N,G)}^{\theta}\\
		&\meg C_5\norm{a^{(0)}}_{f^{p_0,q_0}_0}^{1-\theta} \norm{a^{(1)}}_{f^{p_1,q_1}_0}^{\theta}.
	\end{split} 
	\]
	Since clearly $\beta\big( B(x,\eps^j)\cap E^{(0)}\cap E^{(1)} \big)\Meg (1-2\eta)\beta(B(e,\eps^j))$ for every $j\in \N$ and for every $x\in G$, by Lemma~\ref{lem:60b} we see that there is a constant $C_6>0$ such that
	\[
	\norm{a}_{f^{p_\theta,q_\theta}_0}\meg C_6 \norm{\chi_{E^{(0)}_j\cap E^{(1)}_j}(x) \iota(a)(j,x)}_{L^{q_\theta,p_\theta}_{j,x}(\N,G)},
	\]
	so that
	\[
	\norm{a}_{f^{p_\theta,q_\theta}_0}\meg C_5 C_6 \norm{a}_{(f^{p_0,q_0}_{0})^{1-\theta}(f^{p_1,q_1}_{0})^\theta}
	\]
	by the arbitrariness of $a^{(0)}$ and $a^{(1)}$.

	Conversely, take $a\in f^{p_\theta,q_\theta}_0$, and assume, for the sake of definiteness, that $p_\theta/p_0-q_\theta/q_0\Meg 0$ (interpreting $p_\theta/p_0$ as $1$ when $p_\theta=\infty$, and analogously for similar expressions), so that $p_\theta/p_1-q_\theta/q_1\meg 0$. By Lemma~\ref{lem:59b}, we may take a sequence $(E_j)$ of $\beta$-measurable  subsets of $G$ such that $\beta(  B(x,\eps^j)\cap E_j)\Meg \eta\beta(B(e,\eps^j))$ for every $j\in\N$ and for every $x\in G$, and such that $\norm{\chi_{E_j}(x) \iota(a)(j,x)}_{L^{q_\theta,p_\theta}_{j,x}(\mi,\beta)} \meg C_5 \norm{a}_{f^{p_\theta,q_\theta}_0}$. For every $k\in\Z$, define 
	\[
	E'_k\coloneqq \Set{x\in G\colon \norm{\chi_{E_j}(x)\iota(a)(j,x)}_{\ell^{q_\theta}_j(\N)}>2^{k} } 
	\]
	and
	\[
	A_k\coloneqq \Set{(j,k')\in J\colon \beta(B(x_{j,k'},\eps^j)\cap E'_k)\Meg \frac 1 2 \beta(B(e,\eps^j))> \beta(B(x_{j,k'},\eps^j)\cap E'_{k+1})}.
	\]
	Observe that, if $a_{j,k'}\neq 0$ and we take $k\in\Z$ so that $ \abs{a_{j,k'}}> 2^{k}$, then $B(x_{j,k'},\eps^j)\cap E_j \subseteq E'_k$, so that $ \beta(B(x_{j,k'},\eps^j)\cap E'_k)\Meg \frac 1 2 \beta(B(e,\eps^j))$ (since $\eta>\frac 1 2 $). Since $\bigcap_k E'_k$ is $\beta$-negligible, we then see that $(j,k')\in A_k$ for some $k\in\Z$. 
	Define $a^{(0)},a^{(1)}\in [0,+\infty)^{J}$ so that
	\[
	a^{(\ell)}_{j,k'}= 2^{k(p_\theta/p_\ell-q_\theta/q_\ell)}\abs{a_{j,k'}}^{q_\theta/q_\ell}  
	\]
	if $(j,k')\in A_k$ for some $k\in\Z$, while $a^{(\ell)}_{j,k'}=0$ otherwise, with the convention $\frac{q_\theta}{q_\ell}=1$ when $q_\theta=\infty$ (so that $q_0=q_1=\infty$) and, analogously, $\frac{p_\theta}{p_\ell}=1$ when $p_\theta=\infty$. By the previous remarks, we have $\abs{a}= \abs{a^{(0)}}^{1-\theta}\abs{a^{(1)}}^\theta$. 
	
	Now, set $E'^{(0)}_k\coloneqq E'_k$  and  $E'^{(1)}_k\coloneqq G\setminus E'_{k+1}$ for every $k\in\Z$, and define 
	\[
	E^{(\ell)}_j\coloneqq E_j\cap \bigg(\bigcup_{k'\colon (j,k')\not \in \bigcup_{k\in\Z} A_k}   B(x_{j,k'},\eps^j)\cup \bigcup_{k\in\Z}\bigcup_{k'\colon (j,k')\in A_k}  [E'^{(\ell)}_k\cap B(x_{j,k'},\eps^j)]\bigg).
	\] 
	Observe that 
	\[
	\beta \big( B(x_{j,k'},\eps^j)\cap E^{(\ell)}_j \big)\Meg (\eta-1/2)\beta(B(e,\eps^j))
	\] 
	for every $(j,k')\in J$. Consequently, by Lemma~\ref{lem:60b}   there is a constant $C_7>0$ such that 
	\[
	\begin{split}
	\norm{a^{(\ell)}}_{f^{p_\ell,q_\ell}_0}&\meg C_7 \norm{\chi_{E^{(\ell)}_j}(x) \iota(a^{(\ell)})(j,x)}_{L^{q_\ell,p_\ell}_{j,x}(\N,G)}\\
		&\meg C_7 \norm*{\norm*{\norm{\chi_{E_k'^{(\ell)}\cap B(x_{j,k'},\eps^j)}(x) \chi_{E_j}(x) a_{j,k'}^{(\ell)} }_{\ell^{q_\ell}_{(j,k')}(A_k)}}_{\ell^{q_\ell}_k(\Z)}}_{L^{p_\ell}_x(G)}\\
		&\meg C_7 \norm*{\norm*{ 2^{k(p_\theta/p_\ell -q_\theta/q_\ell)}\norm{\chi_{E_k'^{(\ell)}\cap B(x_{j,k'},\eps^j)}(x) \chi_{E_j}(x) a_{j,k'}  }_{\ell^{q_\theta}_{(j,k')}(A_k)}^{q_\theta/q_\ell} }_{\ell^{q_\ell}_k(\Z)}}_{L^{p_\ell}_x(G)}\\
		&=C_7\norm*{\norm*{ 2^{k(p_\theta/p_\ell -q_\theta/q_\ell)}\chi_{E_k'^{(\ell)}}(x)\norm{\chi_{E_j}(x)\iota(a)( j,x) }_{\ell^{q_\theta}_{j}(\N)}^{q_\theta/q_\ell} }_{\ell^{q_\ell}_k(\Z)}}_{L^{p_\ell}_x(G)}\\
			&\meg C_7 2^{(p_\theta/p_\ell-q_\theta/q_\ell)_-}\norm*{\norm*{\chi_{E_k'^{(\ell)}}(x)\norm{\chi_{E_j}(x)\iota(a)(j,x) }_{\ell^{q_\theta}_{j}(\N)}^{p_\theta/p_\ell  }    }_{\ell^{q_\ell}_k(\Z)}}_{L^{p_\ell}_x(G)}\\
			&\meg C_7 2^{(p_\theta/p_\ell-q_\theta/q_\ell)_-}\norm{\chi_{E_j}(x) \iota(a)(j,x)}_{L^{q_\theta,p_\theta}_{j,x}(\N,G)},
	\end{split}
	\]
	where the fifth inequality follows from the following facts:
	\begin{itemize}
		\item  if $x\in E'^{(0)}_k=E'_k$, then $2^k<\norm{\chi_{E_j}(x)\iota(a)(j,x)}_{\ell^{q_\theta}_j(\N)}$, so that 
		\[
		2^{k(p_\theta/p_0 -q_\theta/q_0)}<\norm{\chi_{E_j}(x)\iota(a)(j,x)}_{\ell^{q_\theta}_j(\N)}^{p_\theta/p_0 -q_\theta/q_0}
		\]
		since $p_\theta/p_0 -q_\theta/q_0\Meg 0$;
		
		\item  if $x\in E'^{(1)}_k=G\setminus E'_{k+1}$, then $2^{k+1}\Meg \norm{\chi_{E_j}(x)\iota(a)(j,x)}_{\ell^{q_\theta}_j(\N)}$, so that 
		\[
		2^{k(p_\theta/p_1 -q_\theta/q_1)}\meg 2^{(p_\theta/p_1 -q_\theta/q_1)_-} \norm{\chi_{E_j}(x)\iota(a)(j,x)}_{\ell^{q_\theta}_j(\N)}^{p_\theta/p_1 -q_\theta/q_1}
		\] 
		since $p_\theta/p_1 -q_\theta/q_1\meg 0$. 
	\end{itemize}
	Therefore,
	\[
	\norm{a^{(\ell)}}_{f^{p_\ell,q_\ell}_0}\meg C_5  C_7 2^{(p_\theta/p_\ell-q_\theta/q_\ell)_-} \norm{a}_{f^{p_\theta,q_\theta}_0}.
	\]
	It then follows that
	\[
	\norm{a}_{(f^{p_0,q_0}_{0})^{1-\theta}(f^{p_1,q_1}_{0})^\theta}\meg C_5  C_7 2^{ \theta \abs{p_\theta/p_1-q_\theta/q_1}} \norm{a}_{f^{p_\theta,q_\theta}_0},
	\]
	whence the conclusion. 
	
	\textsc{Step IV} We now prove assertions (4), (4$'$), and (7$'$). Observe first that, by~\cite[Proposition 9.3]{KMM}, $(b^{p_0,q_0}_{\alpha_0})^{1-\theta}(b^{p_1,q_1}_{\alpha_1})^\theta=b^{p_\theta,q_\theta}_{\alpha_0}$. Assertion (4) then follows from Lemma~\ref{lem:75} and some elementary density remarks. For what concerns assertion (4$'$), observe that, arguing as in~\textsc{step I}, one may see that 
	\[
	(\mathring b^{p_0,q_0}_{\alpha_0})^{1-\theta}(\mathring b^{p_1,q_1}_{\alpha_1})^\theta=\mathring b^{p_\theta,q_\theta}_{\alpha_0},
	\]
	so that the assertion follows from Lemma~\ref{lem:75} again. Assertion (7$'$) is proved analogously.
\end{proof}

\section{Algebra Properties}

\begin{teo}\label{teo:6c}
	Take   $p, q\in (0,\infty]$ and $\alpha, \alpha'\in\R$ such that $\alpha'>\alpha_++((1/p-1)_+Q_*-\alpha)_+ $. Then,  the following hold:
	\begin{enumerate}
		\item[\textnormal{(i)}] if   $\alpha>Q_*(1/p-1)_+$, then $B^{p,q}_\alpha(G)\cap L^\infty(G)$ is an algebra under pointwise multiplication;

		\item[\textnormal{(ii)}] the mapping $(f,g)\mapsto fg$ induces a  continuous bilinear mapping 
		\[
		B^{p,q}_\alpha(G)\times (B^{\infty,\infty}_{\alpha'}\cap C^\infty(G))\to B^{p,q}_\alpha(G)  
		\]
		where $B^{\infty,\infty}_{\alpha'}\cap C^\infty(G)$ is endowed with the topology induced by $B^{\infty,\infty}_{\alpha'}(G)$. 
	\end{enumerate}
\end{teo}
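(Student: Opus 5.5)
We argue with Bony-type paraproducts built from the functional calculus of $\Lc$, combined with the atomic decomposition of Theorem~\ref{teo:13bis} to handle the cases $\min(p,q)<1$. Fix $(\psi_j)\in\widetilde\Psi_{\eps}$ with $\psi_j=\Kc_{\eps^j\Lc}(\phi_j)$ and set $S_j\coloneqq\sum_{h\meg j}\psi_h$, so that $f=\sum_j f*\psi_j$ in $\Sr'(G)$. For $f,g$ as in (i) or (ii) we split
\[
fg=\sum_{j}(f*S_{j-N})(g*\psi_j)+\sum_j (f*\psi_j)(g*S_{j-N})+\sum_{\abs{j-h}<N}(f*\psi_j)(g*\psi_h)=\Pi_1+\Pi_2+\Pi_3 ,
\]
with $N$ large. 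This is legitimate as long as the products converge in $\Sr'(G)$, which we first check for $f,g\in\Sr(G)$ and then extend by density (for $p,q<\infty$) or by a weak$^*$/Fatou-type argument in the remaining cases.

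The main work is to treat each piece as a sum of $(K,S,M,\infty,\Lc)$-molecules in the sense of Definition~\ref{def:11}. For a term $u_jv_j$, with $u_j$ a "low-frequency" factor and $v_j=g*\psi_j$ (or $f*\psi_j$) a "high-frequency" factor, we discretize with the sampling Proposition~\ref{prop:14}. We use a reduced $(\eps^{1/\grado},\delta,R)$-lattice and a smooth partition subordinate to the balls $B(x_{j,k},\delta R\eps^{j/\grado})$, and write $u_jv_j=\sum_k\lambda_{j,k}a_{j,k}$. Here $\lambda_{j,k}$ is essentially $\sup_{B(x_{j,k},c\eps^{j/\grado})}\abs{v_j}\cdot\norm{u_j}_\infty$, and the $a_{j,k}$ are normalized pieces. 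Smoothness of order $K\grado$ of $a_{j,k}$ at scale $\eps^{j/\grado}$ follows from Theorem~\ref{teo:7} and Proposition~\ref{cor:12}, using $\norm{X(f*S_{j})}_\infty\lesssim \eps^{-j\deg X/\grado}\norm f_\infty$ when $f\in L^\infty$ (respectively from $g\in B^{\infty,\infty}_{\alpha'}$).

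The cancellation condition $a_{j,k}=\Lc^Sb_{j,k}$ is the delicate point, since pointwise products destroy spectral localization. We write $v_j=\Lc^S(\eps^{jS}\Kc_{\eps^j\Lc}(\phi_j(\lambda)\lambda^{-S}))*g\cdot\eps^{-jS}$ and use the Leibniz rule for $\Lc^S$ to move derivatives off $u_j$. The commutator terms carry derivatives on $u_j$, and these are only partially controlled. For $\Pi_1,\Pi_2$ this produces a loss that is compensated by $\alpha'>\alpha_+$ and the extra gap. For $\Pi_3$, where no cancellation is available, we instead use $S=0$ and rely on $\alpha>Q_*(1/p-1)_+$ in (i), respectively on $\alpha+\alpha'>(1/p-1)_+Q_*$, which is ensured by $\alpha'>((1/p-1)_+Q_*-\alpha)_+$, in (ii). This is exactly the restriction in Theorem~\ref{teo:13bis} that allows $S=0$ when $\alpha>Q_*(1/\min(1,p)-1)$, after shifting regularity onto the smoother factor.

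The coefficient estimates then reduce to sequence-space bounds. For $\Pi_1$ we need $\norm{\lambda}_{b^{p,q}_\alpha}\lesssim\norm f_{L^\infty}\norm g_{B^{p,q}_\alpha}$, which follows from Proposition~\ref{prop:14} applied to $g$ with shifted sampling points $x_{j,k}z_{j,k}$, since the maximum over $z$ controls the local sup. For $\Pi_3$ in (ii) the sums over $h$ near $j$ are handled by Lemma~\ref{lem:25d}, and $\norm{g*\psi_h}_\infty\lesssim\eps^{h\alpha'/\grado}$ gives the geometric decay. For the terms where $f$ is the low-frequency factor in (ii), note that $\norm{f*S_j}_\infty$ is not bounded. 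It is, however, bounded by $\sum_{h\meg j}\eps^{-h(Q_*/p-\alpha)_+/\grado}\times$ local quantities, through the Besov embedding into $B^{\infty,\infty}_{\alpha-Q_*/p}$, and the condition $\alpha'>\alpha_++((1/p-1)_+Q_*-\alpha)_+$ absorbs this growth.

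I expect the main obstacle to be the molecule verification for $\Pi_3$ and for the mixed term in (ii) when $p<1$. The available cancellation degree, together with the bookkeeping that makes the exponent conditions in Theorem~\ref{teo:13bis} hold, is what forces the stated thresholds. I would first try to choose $K$ large and $S$ minimal, putting all derivatives on the $B^{\infty,\infty}_{\alpha'}$ factor. Continuity in (ii) then follows since all constants are bounded by the norms used, and (i) follows by symmetry of $\Pi_1,\Pi_2$ and the $L^\infty$ bounds.
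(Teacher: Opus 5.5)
Your route differs from the paper's. The paper builds no atoms for this theorem. It uses the paraproduct of Lemma~\ref{lem:28c}, whose pieces carry an outer $*\psi'_j$ or $*\psi_j$, so that testing against $\psi_{j'}$ gives $\eps^{(j'-j)_+m\grado}$ for free. For $j'<j$ it replaces $f*\psi_j$ and $g*\psi'_j$ by lattice majorants and applies an $\ell^{p_0}$ inequality, losing $\eps^{-(j-j')(1/p_0-1)Q_*}$, and then concludes with Proposition~\ref{cor:18} and Lemma~\ref{lem:25d}. The Leibniz rule is used only for the low--low term, to move $\Lc^m$ from the outer $\psi_j$ onto the two low-frequency factors. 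Below I use the paper's scaling $(\psi_j)\in\widetilde\Psi_{\eps^{\grado}}$, so that $X\in U(G)$ costs $\eps^{-j\deg X}$.

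\textbf{Part (i).} Your approach works here, but not as you describe it. Since $\alpha>Q_*(1/p-1)_+$, Theorem~\ref{teo:13bis} holds with $S=0$ once your compactly supported pieces are normalized in $L^1$; with $p_0=\infty$, $S=0$ would require $\alpha>Q_*/\min(1,p)$. So none of $\Pi_1,\Pi_2,\Pi_3$ needs cancellation. The coefficients are the $L^\infty$ norm of one factor times local maxima of Peetre maximal functions of the high-frequency part of the other. Your appeal to $\alpha'>\alpha_+$ has no meaning in (i). Moreover, the Leibniz commutators would gain nothing there, since only $\norm{X(f*S_{j-N})}_\infty\lesssim\eps^{-j\deg X}\norm{f}_\infty$ is available. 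With $S=0$ throughout, (i) is a clean alternative to the paper's $p_0$-discretization, which is in effect a hand-made synthesis without cancellation.

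\textbf{The gap: part (ii) when $\alpha\meg Q_*(1/p-1)_+$.} The only term that needs cancellation is $\Pi_2=\sum_j(f*\psi_j)(g*S_{j-N})$, where the rough factor sits at high frequency. $\Pi_1$ and $\Pi_3$ carry the extra regularity of $g$. They can be synthesized with $S=0$ in some $B^{p,q}_{\alpha''}$ with $\alpha''>\max(\alpha,Q_*(1/p-1)_+)$, $\alpha''<\alpha'$ and $\alpha''\meg\alpha+\alpha'$; such $\alpha''$ exist exactly under the hypothesis on $\alpha'$.

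For $\Pi_2$, write $f*\psi_j=\Lc^S w_j$. One Leibniz step then leaves terms $(X_kw_j)(Y_ku_j)$ with $\deg Y_k>0$.
\begin{itemize}
\item These terms are not of the form $\Lc^S b$.
\item In the worst case they gain only $\eps^{j\min(\deg Y_k,\alpha')}$ over the main term.
\item Since $\deg Y_k$ can be as small as the least degree of the generators, feeding them to Theorem~\ref{teo:13bis} with $S=0$ would need $\alpha+\deg Y_k>Q_*(1/p-1)_+$, which fails in general.
\item You cannot simply iterate either. $X_k$ does not commute with $\Lc$, so $X_kw_j$ is not $\Lc^{S'}$ of a localized function, and Theorem~\ref{teo:13bis} admits only exact cancellation $a_{j,k}=\Lc^S b_{j,k}$.
\end{itemize}

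What works is a duality argument. Test against $\psi_{j'}$ with $j'<j$, write $f*\psi_j=\eps^{jS\grado}\Lc^S(f*\widetilde\psi_j)$, and integrate $\Lc^S$ by parts onto $(g*S_{j-N})\,\psi_{j'}(\,\cdot\,^{-1}x)$. Derivatives of $g*S_{j-N}$ cost at most $\eps^{-j(\deg X-\alpha')_+}$, up to logarithms, so you obtain the factor $\eps^{(j-j')\min(S\grado,\alpha')}$. Together with $\eps^{(j-j')\alpha}$ from the weights, this beats the $p_0$-loss precisely when $\alpha+\alpha'>(1/p_0-1)Q_*$, which the hypothesis guarantees.

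As written, the paper's Step~I does not settle this term either. Its bound on $\sup_j\eps^{j(\alpha-\alpha'')}\widetilde g(j,\,\cdot\,)$ holds for $g*\psi_j$, but not for the low-pass $g*\psi'_j$ appearing in $\Pi^{(j'')}_f g$. So that step closes only when $\alpha''=\alpha>(1/p_0-1)Q_*$.

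\textbf{Bookkeeping for the rough low-frequency factor.} This part of your proposal is also wrong. The embedding into $B^{\infty,\infty}_{\alpha-Q_*/p}$ loses $Q_*/p$, not $Q_*(1/p-1)_+$. For example, with $p=2$ and $\alpha=0$ the hypothesis is only $\alpha'>0$. In addition, sup bounds cannot give $\ell^p$-summable coefficients on a non-compact $G$. Instead, take local maxima of $\abs{f*S_{j-N}}$ on the scale-$\eps^j$ lattice. Then control $\eps^{jQ_*/p}\norm{\lambda_{j,\,\cdot\,}}_{\ell^p}$ by the $L^p$ norms of maximal functions of the $f*\psi_h$, $h<j$; this is what produces the constraints on $\alpha''$ stated above.
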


\begin{teo}\label{teo:6d}
	Take   $p, q\in (0,\infty]$ and $\alpha, \alpha'\in\R$ such that $\alpha'> \alpha_++((1/\min(p,q)-1)_+Q_*-\alpha)_+ $.   Then,  the following hold:
	\begin{enumerate} 
		\item[\textnormal{(i)}]  if  $\alpha>Q_*(1/\min(p,q)-1)_+$, then $F^{p,q}_\alpha(G)\cap L^\infty(G)$ is an algebra under pointwise multiplication;
		
		\item[\textnormal{(ii)}]  the mapping $(f,g)\mapsto fg$ induces a  continuous bilinear mapping 
		\[
		F^{p,q}_\alpha(G)\times (B^{\infty,\infty}_{\alpha'}\cap C^\infty(G))\to F^{p,q}_\alpha(G),
		\]
		where $B^{\infty,\infty}_{\alpha'}\cap C^\infty(G)$ is endowed with the topology induced by $B^{\infty,\infty}_{\alpha'}(G)$. 
	\end{enumerate}
\end{teo}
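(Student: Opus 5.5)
The plan is to run a paraproduct decomposition, as one would for Theorem~\ref{teo:6c}, but to control the resulting pieces through the atomic decomposition (Theorem~\ref{teo:13}) and the sampling theorem (Proposition~\ref{prop:14}) rather than through square functions. The point is that $\min(p,q)<1$ rules out any direct use of $L^p(\ell^q)$ duality or vector-valued maximal inequalities with exponent below one.

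Fix $(\psi_j)\in\widetilde\Psi_\eps$ with $\sum_j\psi_j=\delta_e$ in $\Sr'(G)$, and write $f_j\coloneqq f*\psi_j$, $g_j\coloneqq g*\psi_j$ and $S_j g\coloneqq\sum_{i\meg j}g_i$. We then decompose
\[
fg=\sum_j f_j S_{j-N}g+\sum_j g_j S_{j-N}f+\sum_{\abs{i-j}<N}f_ig_j,
\]
with $N$ chosen large enough. The sums converge in $\Sr'(G)$ once the estimates below are available, for $f\in F^{p,q}_\alpha$ and $g$ in $L^\infty$ or in $B^{\infty,\infty}_{\alpha'}$. The terms $f_j$ and $g_j$ are not band limited in a Fourier sense, because the kernels $\Kc_{\eps^j\Lc}(\phi_j)$ lack compact spectral support with respect to products. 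We therefore treat each summand $f_j S_{j-N}g$ as a molecule at scale $\eps^{j/\grado}$.

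The first step is to discretize. Using Proposition~\ref{prop:14} and the strong atomic decomposition of Theorem~\ref{teo:13}, we write $f=\sum_{j,k}\lambda_{j,k}a_{j,k}$, with strong $(K,S,\Lc)$-atoms and with $\norm{\lambda}_{f^{p,q}_\alpha}\lesssim\norm{f}_{F^{p,q}_\alpha}$; here $K,S$ are chosen large in terms of $\alpha$ and $Q_*(1/\min(1,p,q)-1)$. It then suffices to show that $\sum\lambda_{j,k}\,a_{j,k}g$ is controlled in $F^{p,q}_\alpha$. For (ii), take $g\in B^{\infty,\infty}_{\alpha'}\cap C^\infty$. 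Each product $a_{j,k}g$ is supported in $B(x_{j,k},2\delta R\eps^{j})$, and the task is to show that, up to a harmless constant $C\norm{g}_{B^{\infty,\infty}_{\alpha'}}$, it is a $(K',S',N,\infty,\Lc)$-atom in the sense of Definition~\ref{def:11} with $K'$ and $S'$ still admissible for Theorem~\ref{teo:13}. The size and smoothness conditions follow from Leibniz' rule for $X\in U_{K\grado}$ together with the fact that $g\in B^{\infty,\infty}_{\alpha'}$ has bounded derivatives $Xg$ up to order just below $\alpha'$. The condition $\alpha'>\alpha_+$ supplies the regularity required by the $K'$-condition.

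The second step, which we expect to be the main obstacle, is the cancellation condition. For $j\Meg1$ we must write $a_{j,k}g=\Lc^{S'}b'_{j,k}$, with $b'_{j,k}$ localized and of size $\eps^{j(S'\grado+Q_*/p_0)}$. We plan to obtain this by Taylor-expanding $g$ around $x_{j,k}$ up to order below $\alpha'$, using the homogeneous structure of $G_*$ and a stratified Taylor formula. Writing $a_{j,k}=\Lc^S b_{j,k}$ and moving derivatives across, the Taylor polynomial terms preserve moments, while the remainder is $O(\eps^{j\alpha'})$ in the scaled sense. That remainder behaves like an atom lacking $(\alpha'-\alpha)$-worth of cancellation. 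This is exactly what Theorem~\ref{teo:13} tolerates provided $S'\grado>Q_*(1/\min(1,p,q)-1)-\alpha$, and it is why the hypothesis on $\alpha'$ contains the term $((1/\min(p,q)-1)_+Q_*-\alpha)_+$. If a direct Taylor argument on $G$ proves too awkward, the first fallback is the paraproduct route: estimate $\norm{(f_jS_{j-N}g)*\psi_{j'}}$ with Lemmas~\ref{lem:40} and~\ref{cor:13}, obtaining a decay factor $\min((\eps^{j}/\eps^{j'})^{\lambda},(\eps^{j'}/\eps^{j})^{\mi})$. Summation would then be handled by Lemma~\ref{lem:25d}, together with Proposition~\ref{cor:18} and Lemma~\ref{lem:2c}, with the Peetre-type maximal functions $\Nc_{p_0,b,\eps^j}$ for $p_0<\min(p,q)$.

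For (i), take $f,g\in F^{p,q}_\alpha\cap L^\infty$. The two low–high terms are estimated pointwise by $\norm{S_{j-N}g}_\infty\lesssim\norm{g}_\infty$ times a molecule built from $f_j$. The $f^{p,q}_\alpha$-norm of the resulting sequence is then bounded by $\norm{g}_\infty\norm{f}_{F^{p,q}_\alpha}$, again via the maximal operator estimates of Proposition~\ref{cor:18} and Lemma~\ref{lem:2c} for $p=\infty$. The high–high diagonal term $\sum f_ig_j$ has no cancellation at scale $j'$ finer than $i$; this is controlled by using $\alpha>Q_*(1/\min(p,q)-1)_+$, which makes the sum over $j\Meg j'$ converge in the $p_0$-power sense via Lemma~\ref{lem:25c}. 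Continuity of the bilinear maps follows from the quantitative bounds.
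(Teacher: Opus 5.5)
\paragraph{There is a genuine gap.} Your main route for (ii) is to decompose $f$ into strong atoms via Theorem~\ref{teo:13} and show that each $a_{j,k}g$ is again an admissible atom. This fails at both conditions of Definition~\ref{def:11}.

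\emph{Smoothness.} Theorem~\ref{teo:13} needs an integer $K'$ with $K'\grado>\alpha+Q_*/p_0$. That means uniform control of $\Lc^{K'}(a_{j,k}g)$, hence of derivatives of $g$ of order $K'\grado$. Moreover $K'\grado\Meg\grado\Meg 2$ as soon as $\alpha\Meg 0$. The only information you have on $g$ is $\norm{g}_{B^{\infty,\infty}_{\alpha'}(G)}$, and $\alpha'$ may lie well below $\grado$ (e.g.\ $p,q\Meg 1$, $\alpha=1/2$, $\alpha'=1$). Derivatives of $g$ of order larger than $\alpha'$ are not controlled at all; as the paper's footnote notes, they need not even grow polynomially. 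So ``$\alpha'>\alpha_+$ supplies the $K'$-condition'' confuses fractional smoothness with the integer-order condition built into the atoms.

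\emph{Cancellation.} Theorem~\ref{teo:13} needs an exact factorization $a_{j,k}g=\Lc^{S'}b'_{j,k}$ with $b'_{j,k}$ decaying like $(1+d(\,\cdot\,,x_{j,k})/\eps^j)^{-N}$. The product has no such structure: already for the Laplacian on $\R^n$ one has $\int a_{j,k}g\neq 0$ in general. Also, $G$ carries no global polynomials, and nothing available converts vanishing moments into $\Lc^{S'}$-divisibility with the required decay. The Taylor remainder $a_{j,k}(g-P)$ has no cancellation whatsoever, and Theorem~\ref{teo:13} does not ``tolerate'' a fractional lack of cancellation. You could only absorb that remainder as a cancellation-free atom at smoothness index $\alpha+\alpha'$, which again requires more than $\alpha'$ derivatives of $g$. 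In particular, the term $((1/\min(p,q)-1)_+Q_*-\alpha)_+$ does not arise where you place it.

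\paragraph{The fallback.} This is essentially the paper's route, but as sketched it misses what makes it work when $\min(p,q)<1$.
\begin{itemize}
\item Since the kernels are not band-limited, the pieces $(f*\psi_j)(g*\psi'_j)$ (or your $f_jS_{j-N}g$) have no cancellation. Testing them against $\psi_{j'}$ therefore gives decay only for $j'>j$. There is no two-sided factor $\min((\eps^j/\eps^{j'})^\lambda,(\eps^{j'}/\eps^j)^\mu)$: Lemmas~\ref{lem:40} and~\ref{cor:13} concern kernels, and the paper applies them only to $\psi'_j*\psi_{j'}$.
\item The coarse-scale tail $j'<j$ comes out as an $\Nc_{1,a,\eps^{j'}}$ average, whereas Proposition~\ref{cor:18} requires $\Nc_{p_0}$ with $p_0<\min(p,q)$.
\end{itemize}

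The paper handles both points with the paraproduct of Lemma~\ref{lem:28c}, in which each piece carries an outer convolution with $\psi'_j$ or $\psi_j$. It then replaces $f*\psi_j$ and $g*\psi'_j$ by majorants $\widetilde f(j,\,\cdot\,)$ and $\widetilde g(j,\,\cdot\,)$ that are constant on the cells $B_{j,k}$ of a lattice at scale $\eps^j$. This lets one pass from $\Nc_1$ to $\Nc_{p_0}$ at the cost of a factor $\eps^{-(j-j')_+(1/p_0-1)Q_*}$. That loss appears in the low--high terms too, not only in the high--high one. It is what the hypothesis $\alpha>Q_*(1/\min(p,q)-1)_+$ pays for in (i). In (ii), the paper instead trades powers of $\eps^j$ between the two factors through an auxiliary exponent $\alpha''>\max(\alpha,(1/p_0-1)Q_*)$; this is the origin of the shape of the condition on $\alpha'$.

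The remaining term $\Pi(f,g)$ is treated by writing $\psi_j=\eps^{jm\grado}\Lc^m\widetilde\psi_j$ and distributing $\Lc^m$ by Leibniz onto the kernels $\psi'_{j+1}$. In this way no derivative ever falls on $g$ itself, which is exactly what your atomic route cannot avoid.
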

 
Notice that
\[
\alpha_++(A-\alpha)_+=\max(\alpha_+,A+\alpha_-)=\begin{cases}
	\alpha & \text{if $\alpha \Meg A$}\\
	A & \text{if $0\meg \alpha \meg A$}\\
	A-\alpha & \text{if $\alpha \meg 0$}
\end{cases}
\]
for every $\alpha\in \R$ and for every $A\Meg 0$. In addition, if $D=Q_*$ (that is, if $Q_G\meg Q_*$), then the assumptions of (i) reduce to $\alpha>Q_*(1/\min(p,q)-1)_+$, while the assumptions of (ii) become empty. The rather cumbersome assumptions of (i) and (ii) are due to some technical difficulties in the proof.

Notice that, in (ii) of both theorems, we consider $B^{\infty,\infty}_{\alpha'}(G)\cap C^\infty(G)$ instead of $B^{\infty,\infty}_{\alpha'}(G)$ in order to ensure that $fg$ is well defined for every $f\in \Sr'(G)$. One may remove this assumption as follows. Take $\alpha''\in (\max(\alpha_+,\alpha_-+(1/p-1)_+Q_*) ,\alpha')$ for Besov spaces and $\alpha''\in (\max(\alpha_+,\alpha_-+(1/\min(p,q)-1)_+Q_*) ,\alpha')$ for Triebel--Lizorkin spaces, so that $B^{\infty,\infty}_{\alpha'}(G)\subseteq B^{\infty,1}_{\alpha''}(G)\subseteq B^{\infty,\infty}_{\alpha''}(G)$ continuously, thanks to~\cite[Proposition 4.10]{Calzi4}. By (ii), $(f,g)\mapsto f g$ induces a continuous bilinear mapping $X\times (B^{\infty,1}_{\alpha''}(G)\cap C^\infty(G))\to X$, where $X\in \Set{B^{p,q}_\alpha(G),F^{p,q}_\alpha(G)}$. Since $B^{\infty,1}_{\alpha''}(G)\cap C^\infty(G)$ is \emph{dense} in $B^{\infty,1}_{\alpha''}(G)$ by~\cite[Proposition 3.17]{Calzi4},  one may then extend canonically this mapping, and then induce a continuous bilinear mapping $X\times B^{\infty,\infty}_{\alpha'}(G)\to X$.

Concerning the restriction $q\Meg 1$ when $p=\infty$ in Theorem~\ref{teo:6d}, this is due to the weakness of Lemma~\ref{lem:2c} in comparison with the stronger version of Proposition~\ref{cor:18} which may be obtained combining~\cite[Lemma 2.29]{Calzi4} with~\cite{Singular}. 

We prove only Theorem~\ref{teo:6d}, since the proof of Theorem~\ref{teo:6c} is similar but simpler.
 
Before we pass to the proof, we need a few technical lemmas.
 
\begin{lem}\label{lem:28c}
	Take $p,q\in [1,\infty]$ with $\frac1p+\frac 1 q\meg 1$, $N\Meg 0$ and $(\psi_j)\in \widetilde\Psi_\eps$ for some $\eps\in (0,1)$, and set $\psi'_j\coloneqq \sum_{k=0}^{j-1}\psi_k$ for every $j\Meg 0$.  Then, for every $f\in (1+\abs{\,\cdot\,}_*)^N L^p(G)$ and for every $g\in (1+\abs{\,\cdot\,}_*)^N L^q(G) $, 
	\[
	f g=\lim_{j'\to \infty}(\Pi^{(j')}_f g+\Pi^{(j')}_g f)+\Pi(f,g)+\widetilde \Pi(f,g),
	\]
	where
	\[
	\begin{aligned}
		\Pi_f ^{(j')}g&=   \sum_{1\meg j\meg j'}  [(  f*\psi_j)(g*\psi'_j)]*\psi'_j ,\\
		\Pi_g ^{(j')}f&=  \sum_{1\meg j\meg j'}  [(  f*\psi'_j)(g*\psi_j)]*\psi'_j,
	\end{aligned}
	\]
	and
	\[
	\begin{split}
	\Pi(f,g)&=  \sum_{j\Meg 0}   [( f*\psi'_{j+1})(g*\psi'_{j+1})]*\psi_j\\
	\widetilde\Pi(f,g)&=  \sum_{j\Meg 1}   [( f*\psi_j)(g*\psi_j)]*\psi'_j
	\end{split}
	\]
	in $\Sr'(G)$.
\end{lem}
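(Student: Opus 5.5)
The plan is to obtain the decomposition from a purely algebraic Abel summation identity, applied to the approximating products $F_j\coloneqq (f*\psi'_{j+1})(g*\psi'_{j+1})$, $j\Meg 0$, with $F_{-1}\coloneqq 0$. Then I would justify the limits in $\Sr'(G)$.

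\emph{Algebraic identity.} Since $\psi'_{m+1}=\psi'_m+\psi_m$, expanding the product gives
\[
\Delta_m\coloneqq F_m-F_{m-1}=(f*\psi_m)(g*\psi'_m)+(f*\psi'_m)(g*\psi_m)+(f*\psi_m)(g*\psi_m)
\]
for $m\Meg 1$. Next I would write $F_J*\psi'_{J+1}=\sum_{j\meg J}F_J*\psi_j$ and split each term as $F_J=F_j+\sum_{j<m\meg J}\Delta_m$. Exchanging the finite sums, and using $\sum_{j<m}\psi_j=\psi'_m$, yields
\[
F_J*\psi'_{J+1}=\sum_{0\meg j\meg J}F_j*\psi_j+\sum_{1\meg m\meg J}\Delta_m*\psi'_m .
\]
By the formula for $\Delta_m$, the last sum is exactly $\Pi^{(J)}_f g+\Pi^{(J)}_g f+\sum_{1\meg m\meg J}[(f*\psi_m)(g*\psi_m)]*\psi'_m$. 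So the statement reduces to three facts: $F_J*\psi'_{J+1}\to fg$, the series defining $\Pi(f,g)$ converges, and the series defining $\widetilde\Pi(f,g)$ converges, all in $\Sr'(G)$. The existence of $\lim_{j'}(\Pi^{(j')}_f g+\Pi^{(j')}_g f)$ then follows by difference.

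\emph{Analytic ingredients.} Take $r\in[1,\infty]$ with $\frac1r=\frac1p+\frac1q$. By Proposition~\ref{cor:12}, the $\psi_j$ and the $\psi'_j$ are bounded in $L^1$ with weights $(1+\abs{\,\cdot\,}_*)^{N}$, so $(f*\psi'_j)$ and $(g*\psi'_j)$ are bounded in $(1+\abs{\,\cdot\,}_*)^N L^p$ and $(1+\abs{\,\cdot\,}_*)^N L^q$. By Hölder's inequality, $(F_J)$ is then bounded in $(1+\abs{\,\cdot\,}_*)^{2N}L^r$. Pairing with $\phi\in\Sr(G)$ gives
\[
\langle F_j*\psi_j,\phi\rangle=\langle F_j,\phi*\psi_j^*\rangle .
\]
Since $0\notin\overline{\bigcup_{j\Meg 1}\supp\phi_j}$, we may write $\phi*\psi_j^*=(\Lc^M\phi)*\Kc_{\eps^j\Lc}(\phi_j(\lambda)/\lambda^M)\,\eps^{jM}$. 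Proposition~\ref{cor:12} then shows that every weighted $L^{r'}$ seminorm of $\phi*\psi_j^*$ is $O(\eps^{jM})$, so $\Pi(f,g)$ converges absolutely. Similarly, $\phi*(\delta_e-\psi'^*_{J+1})=\sum_{k>J}\phi*\psi_k^*$ is $O(\eps^{JM})$ in $\Sr(G)$. Hence $\langle F_J*\psi'_{J+1},\phi\rangle-\langle F_J,\phi\rangle\to0$, and $F_J\to fg$ in the sense of distributions. The latter follows since $f*\psi'_j\to f$ and $g*\psi'_j\to g$ weakly, with one of them in norm whenever the corresponding exponent is finite. When $p=q=\infty$ this last step is excluded by $\frac1p+\frac1q\meg1$.

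\emph{Main obstacle: convergence of $\widetilde\Pi(f,g)$ alone.} Here no high-frequency gain is available. By the remark above, $\phi*\psi'^*_j-\phi$ decays like $\eps^{jM}$ in $\Sr(G)$, so it suffices to control $\sum_j\langle (f*\psi_j)(g*\psi_j),\phi\rangle$. I would first try the Cauchy--Schwarz bound
\[
\sum_j\abs{f*\psi_j}\abs{g*\psi_j}\meg \norm{f*\psi_j}_{\ell^2_j}\norm{g*\psi_j}_{\ell^2_j},
\]
together with Hölder's inequality and the (weighted, via the decay of $\phi$) square-function bound $F^{p,2}_0=L^p$, valid for $1<p<\infty$. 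This settles the cases $1<p,q<\infty$. The endpoint $\Set{p,q}=\Set{1,\infty}$ is where I expect difficulty. There I would instead show directly that $\Pi^{(j')}_f g$ converges, using the localisation $\phi*\psi'^*_j$ and the $L^1$ convergence $f*\psi'_j\to f$; $\widetilde\Pi$ would then be obtained as a difference of convergent sequences.
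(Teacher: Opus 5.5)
Your summation-by-parts identity $F_J*\psi'_{J+1}=\sum_{j\meg J}F_j*\psi_j+\sum_{1\meg m\meg J}\Delta_m*\psi'_m$ is correct. It is the discrete content of the abstract lemma the paper imports from [Calzi2, Lemma~3.2], applied with $\mu=\sum_j\delta_{\eps^j}$ after trading weights to reach $q=p'$. Your arguments for the convergence of $\Pi(f,g)$ and of $F_J*\psi'_{J+1}$ to $fg$ are also fine, with one slip. The case $p=q=\infty$ is \emph{not} excluded by $\frac1p+\frac1q\meg 1$ (the left-hand side is $0$), and there both factors converge only weak$^*$. This is easily repaired by the weight trading the paper uses: $(1+\abs{\,\cdot\,}_*)^NL^\infty(G)\subseteq(1+\abs{\,\cdot\,}_*)^{N+D+1}L^2(G)$. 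In fact this reduces every admissible pair except $\Set{p,q}=\Set{1,\infty}$ to $1<p,q<\infty$. In that range your square-function treatment of $\widetilde\Pi(f,g)$ is a workable alternative to the abstract lemma, provided you handle the polynomial weights by localising $f,g$ near and far from the test function, rather than by putting absolute values inside the square function.

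The genuine gap is the endpoint $\Set{p,q}=\Set{1,\infty}$, and it cannot be closed. Your identity only shows that $\Pi^{(J)}_fg+\Pi^{(J)}_gf+\sum_{1\meg j\meg J}[(f*\psi_j)(g*\psi_j)]*\psi'_j$ converges. So convergence of $\Pi^{(J)}_fg$ alone would not give $\widetilde\Pi(f,g)$; you would also need $\Pi^{(J)}_gf$, which is exactly what the paper says it cannot control. Worse, $\widetilde\Pi(f,g)$ itself may diverge there.

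Here is a counterexample. Take $G=\R$, $\Lc=-\partial_x^2$, $\phi_0=\chi$ and $\phi_j=\chi-\chi(\,\cdot\,/\eps)$ for $j\Meg 1$, where $\chi\in C^\infty_c(\R)$ equals $1$ on $[-1,1]$ and vanishes on $[\eps^{-1},\infty)$. All kernels are then real and even, so for $\zeta\in\Sr(\R)$ one has $\langle\sum_{1\meg j\meg J}[(f*\psi_j)(g*\psi_j)]*\psi'_j,\zeta\rangle=\int f\,h_J$ with $h_J=\sum_{1\meg j\meg J}[(g*\psi_j)(\zeta*\psi'_j)]*\psi_j$. Moreover $\sup_J\norm{h_J-\zeta\,(g*K_J)}_{L^\infty}<\infty$, where $K_J\coloneqq\sum_{1\meg j\meg J}\psi_j*\psi_j$. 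The errors come from $\zeta*\psi'_j-\zeta=O(\eps^{jM})$ and from commuting multiplication by $\zeta$ with $*\psi_j$, which costs $O(\eps^{j/2}\norm{g}_{L^\infty})$.

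The multiplier $\sum_{j\Meg1}\phi_j(\eps^j\xi^2)^2$ oscillates between $\chi(s)^2+(1-\chi(s))^2$ and $1$, so it is bounded but not constant. Hence, up to an error bounded in $L^1(-1,1)$ uniformly in $J$, $K_J$ equals $\chi_{\Set{\eps^{J/2}\meg\abs{y}\meg 1}}\Omega(y)/\abs{y}$, for some continuous $\Omega\not\equiv 0$ invariant under $y\mapsto\eps^{1/2}y$. With $g(y)\coloneqq\mathrm{sgn}\,\Omega(-y)\,\chi_{[-1,1]}(y)$ one gets $(g*K_J)(0)\Meg cJ-C$ with $c>0$. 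Taking $\zeta(0)\neq 0$ gives $\norm{h_J}_{L^\infty}\to\infty$, and the Banach--Steinhaus theorem yields $f\in L^1(\R)$ with $\sup_J\abs{\int f h_J}=\infty$.

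So for $p=1$, $q=\infty$, $N=0$, neither $\widetilde\Pi(f,g)$ nor $\lim_{j'}(\Pi^{(j')}_fg+\Pi^{(j')}_gf)$ need exist. The paper's proof, which only refers to [Calzi2] and says the term $\widetilde\Pi$ must be ``dealt with'', does not cover this case either. The statement should either exclude $\Set{p,q}=\Set{1,\infty}$, or require $\sum_j\norm{(1+\abs{\,\cdot\,}_*)^{-N}(g*\psi_j)}_{L^\infty(G)}<\infty$ for the $L^\infty$-type factor. The latter condition holds in case (ii) of Theorems~\ref{teo:6c} and~\ref{teo:6d}, where $g\in B^{\infty,\infty}_{\alpha'}(G)$ with $\alpha'>0$, and then $\widetilde\Pi(f,g)$ converges absolutely.
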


Notice that $\psi'_0=0$. In addition,   we do not know whether the limits $\lim\limits_{j'\to \infty}\Pi^{(j')}_f g$ and $\lim\limits_{j'\to \infty} \Pi^{(j')}_g f $ exist `individually' in the generality of the statement.

\begin{proof}
	As in the proof of~\cite[Corollary 3.3]{Calzi2}, we observe that $p'\meg q$, so that $(1+\abs{\,\cdot\,}_*)^N L^q(G)\subseteq (1+\abs{\,\cdot\,}_*)^{N+D+1}L^{p'}(G)$. Consequently, up to replacing $N$ with $N+D+1$, we may assume that $q=p'$.
	We wish to apply~\cite[Lemma 3.2]{Calzi2} with $\mi=\sum_{j=0}^\infty \delta_{\eps^j}$, $B_1=(1+\abs{\,\cdot\,}_*)^N L^p(G)$, $B_2=(1+\abs{\,\cdot\,}_*)^N L^{p'}(G)$, $B_3=(1+\abs{\,\cdot\,}_*)^{2N} L^1(G) $, $W'_j(t)f=  f*\psi_{[\log_\eps t]}$, and $W_j(t)f=f*\psi'_{[\log_\eps t]}$ for every $f\in B_j$, for every $t\in (0,1]$, and for every $j=1,2,3$, where $[\,\cdot\,]$ denotes the integer part.  The proof is then almost identical to that of~\cite[Corollary 3.3]{Calzi2}. The main difference is that we have to deal with the additional term $\widetilde\Pi(f,g)$, while the term $[(f*\psi'_0)(g*\psi'_0)]*\psi'_0$ vanishes.
\end{proof}
  
\begin{proof}[Proof of Theorem~\ref{teo:6c}.]
	For (i), we  take $f,g\in F^{p,q}_\alpha(G)\cap L^\infty(G)\subseteq L^{\max(1,p)}(G)\cap L^\infty(G)$ (cf.~\cite[Proposition 4.10]{Calzi4}), while for (ii) we take $f\in F^{p,q}_\alpha(G)\cap L^{\max(p,1)}(G)$ and $g\in B^{\infty,\infty}_{\alpha'}(G)\cap C^\infty(G) \subseteq L^\infty(G)$. This will allow us to define canonically $f g$ and to apply Lemma~\ref{lem:28c}. Notice that the extra restriction on $f$ for (ii) is harmless, since our estimates will only depend on the quasi-norm of $F^{p,q}_\alpha(G)$. In fact, if $(\psi_j)\in \widetilde \Psi_{\eps^{\grado}}$ for some $\eps\in (0,1)$ and $\psi'_j\coloneqq \sum_{k=0}^{j-1} \psi_k$, then the sequence $f*\psi'_j$ is uniformly bounded in $F^{p,q}_\alpha(G)$, takes elements in $F^{p,q}_\alpha(G)\cap L^{\max(1,p)}(G)$ by~\cite[Proposition 4.10]{Calzi4}, and converges to $f$ in $\Sr'(G)$. Since the $(f*\psi'_j)g$ are uniformly bounded in $F^{p,q}_\alpha(G)$, they are in particular uniformly bounded in $\Sr'(G)$, hence converge to $f g$ in $\Sr'(G)$ (and not only in the space of general distributions),\footnote{Notice that $X g$ need not grow polynomially when $X\in U(G)$ and $\deg(X)>\alpha'$, so that this extra step is necessary. For example, when $G=\R$, one may take $g=(1-D^2)^{-N}(\sin(\ee^{\,\cdot\,}))$, where $D$ denotes the usual derivative, so that $g\in B^{\infty,\infty}_{2N}(\R)$, but $D(1-D^2)^N g$ does \emph{not} have polynomial growth.} so that $f g\in F^{p,q}_\alpha(G)$ by lower semi-continuity.
	
	\textsc{Step I} Take a reduced $(\eps,1/4,2)$-lattice $(x_{j,k})_{(j,k)\in J}$ on $G$, and choose, for every $j\in\N$, a Borel partition $(B_{j,k})_{k}$ of $G$ such that $B(x_{j,k}, \eps^j/4)\subseteq B_{j,k}\subseteq B(x_{j,k}, \eps^j/2)$ for every $k$ with $(j,k)\in J$. Take $(\psi_j)$ and $(\psi'_j)$ as above, and define $\widetilde f, \widetilde g\colon \N\times G\to [0,\infty)$ so that
	\[
	\widetilde f (j,x)\coloneqq \max_{\overline B(x_{j,k}, \eps^j/2)} \abs{f*\psi_j} \chi_{B_{j,k}}(x) \qquad \text{and} \qquad \widetilde g (j,x)\coloneqq \max_{\overline B(x_{j,k}, \eps^j/2)} \abs{g*\psi'_j} \chi_{B_{j,k}}(x)
	\]
	for every $(j,x)\in \N\times G$, so that $\abs{f*\psi_j}\meg \widetilde f(j,\,\cdot\,)$ and $\abs{g*\psi'_j}\meg \widetilde g(j,\,\cdot\,)$ for every $j\in\N$. Take $p_0\in (0,\min(1,p,q))$ so that $\alpha>Q_*(1/p_0-1)$ for (i) and $\alpha'>\alpha_++((1/p_0-1)Q_*-\alpha)_+$ for (ii).
	In addition, take  $a>D/p_0$ and take  $m\in \N $ such that $m\grado>2 \alpha-Q_*/p_0+a$.   Observe that, by Lemma~\ref{cor:13}, there is a constant $C_1>0$ such that
	\[
	\abs{(\psi'_j*\psi_{j'})(x)}\meg \frac{C_1 \eps^{-\min(j,j')Q_*}}{(1+\abs{x}_*/\eps^{j'})^a} \min(1,\eps^{(j'-j)m\grado})  =\frac{C_1\eps^{-\min(j,j')Q_*}}{(1+\abs{x}_*/\eps^{j'})^a}  \eps^{(j'-j)_+ m\grado}
	\]
	Consequently, 
	\[
	\begin{split}
		\abs{ [(f*\psi_j)(g*\psi'_j)]*\psi'_j*\psi_{j'} }&\meg C_1\eps^{(j'-j)_+ (m \grado +Q_*)} \Nc_{1,a,\eps^{j'}}((f*\psi_j)(g*\psi'_j))\\
			&\meg C_1\eps^{(j'-j)_+ (m \grado+Q_*)} \Nc_{1,a,\eps^{j'}}(\widetilde f(j,\,\cdot\,)\widetilde g(j,\,\cdot\,)).
	\end{split}
	\]
	Now, observe that, setting $f_{j,k}\coloneqq \widetilde f(j,x_{j,k})$ and $g_{j,k}\coloneqq \widetilde g(j,x_{j,k})$ to simplify the notation,
	\[
	\begin{split}
		\Nc_{1,a,\eps^{j'}}(\widetilde f(j,\,\cdot\,)\widetilde g(j,\,\cdot\,))&= \eps^{-j' Q_*}\sum_k \int_{B_{j,k}}\frac{f_{j,k}g_{j,k}}{(1+\abs{x}_*/\eps^{j'})^a}\,\dd \beta(x)\\
		&\meg \eps^{-j' Q_*}\sum_k \beta(B_{j,k})\frac{f_{j,k}g_{j,k}}{(1/2+\abs{x_{j,k}}_*/\eps^{\min(j,j')})^a}\\
		&\meg 2^a\eps^{-j' Q_*}\bigg(\sum_k \beta(B_{j,k})^{p_0}\frac{f_{j,k}^{p_0}g_{j,k}^{p_0}}{(1+\abs{x_{j,k}}_*/\eps^{\min(j,j')})^{ap_0}}\bigg)^{1/p_0}\\
		&\meg 2^a C_1' \eps^{-j' Q_*+jQ_*(1-1/p_0)}\bigg(\sum_k \beta(B_{j,k}) \frac{f_{j,k}^{p_0}g_{j,k}^{p_0}}{(1+\abs{x_{j,k}}_*/\eps^{\min(j,j')})^{ap_0}}\bigg)^{1/p_0}\\
		&\meg 4^a C_1' \eps^{-j' Q_*+jQ_*(1-1/p_0)}\bigg(\int_G \frac{\widetilde f(j,x)^{p_0} \widetilde g(j,x)^{p_0}}{(1+\abs{x}_*/\eps^{\min(j,j')})^{ap_0}}\,\dd \beta(x)\bigg)^{1/p_0}
	\end{split}
	\]
	where $C_1'=\sup_{j,k} (\eps^{-jQ_*}\beta(B_{j,k}))^{1-1/p_0}$. Consequently,
	\[
	\abs{ [(f*\psi_j)(g*\psi'_j)]*\psi'_j*\psi_{j'} }\meg C_1''\eps^{(j'-j)_+  m \grado -(j-j')_+(1/p_0-1)_+Q_*} \Nc_{p_0,ap_0,\eps^{\min(j,j')}}(\widetilde f(j,\,\cdot\,)\widetilde g(j,\,\cdot\,))
	\] 
	where $C_1''=C_1 4^a C_1'$, since
	\[
	(j'-j)_+-j'+j(1-1/p_0)+\min(j,j')/p_0=(1/p_0-1)(\min(j,j')-j)=-(j-j')_+(1/p_0-1).
	\]
	Next, observe that, since
	\[
	1+\abs{x}_*/\eps^{\min(j,j')}\Meg \eps^{(j'-j)_+}(1+\abs{x}_*/\eps^{j'})
	\]
	for every $x\in G$, one has
	\[
	\Nc_{p_0,ap_0,\eps^{\min(j,j')}}(\widetilde f(j,\,\cdot\,)\widetilde g(j,\,\cdot\,))\meg \eps^{(j'-j)_+(Q_*/p_0-a)}\Nc_{p_0,ap_0,\eps^{j'}}(\widetilde f(j,\,\cdot\,)\widetilde g(j,\,\cdot\,))
	\]
	for every $j,j'\in\N$. Consequently,
	\[
	\abs{ [(f*\psi_j)(g*\psi'_j)]*\psi'_j*\psi_{j'} }\meg C_1''\eps^{(j'-j)_+ ( m \grado+Q_*/p_0-a) -(j-j')_+(1/p_0-1)_+Q_*} \Nc_{p_0,ap_0,\eps^{j'}}(\widetilde f(j,\,\cdot\,)\widetilde g(j,\,\cdot\,)).
	\]

	Assume first that $p<\infty$. For case (i), set $\alpha''\coloneqq \alpha$, whereas for case (ii) fix $\alpha''\in (\max(\alpha,(1/p_0-1)Q_*),\alpha'-\alpha_-)$. This is possible since $\alpha'-\alpha_-> \alpha_++((1/p_0-1)Q_*-\alpha)_+-\alpha_-=\alpha+((1/p_0-1)Q_*-\alpha)_+=\max(\alpha,(1/p_0-1)Q_*)$. 
	Then, by means of Proposition~\ref{cor:18} and Lemma~\ref{lem:25d}, we see that there is a constant $C_2>0$ such that, for every $j''\in\N$,
	\[
	\begin{split}
		&\norm*{\eps^{-j'\alpha}[(\Pi_f^{(j'')} g)*\psi_{j'}](x)}_{L^{q,p}(\N,G)}\\
			&\qquad\meg   C_1'' \norm*{ \eps^{-j'\alpha}\sum_{j\in\N} \eps^{(j'-j)_+ (m \grado+Q_*/p_0-a)-(j-j')_+(1/p_0-1)Q_*}\Nc_{p_0,a p_0,\eps^{j'} }(\widetilde f(j,\,\cdot\,)\widetilde g(j,\,\cdot\,))(x)   }_{L^{q,p}_{j',x}(\N,G)}\\
			&\qquad\meg   C_1'' \norm*{ \eps^{-j'\alpha} \Nc_{p_0,a p_0,\eps^{j'} }\Big(\sum_{j\in\N}\eps^{(j'-j)_+ (m \grado+Q_*/p_0-a)-(j-j')_+(1/p_0-1)Q_*}\widetilde f(j,\,\cdot\,)\widetilde g(j,\,\cdot\,)\Big)(x)   }_{L^{q,p}_{j',x}(\N,G)}\\
			&\qquad\meg C_2\norm*{ \eps^{-j'\alpha}  \sum_{j\in\N}\eps^{(j'-j)_+ (m \grado+Q_*/p_0-a) -(j-j')_+(1/p_0-1)Q_*}\widetilde f(j,x)\widetilde g(j,x)   }_{L^{q,p}_{j',x}(\N,G)}\\ 
			&\qquad\meg C_2^2 \norm*{  \eps^{-j\alpha'' } 
				\widetilde f(j,x) \widetilde g(j,x)}_{L^{q,p }_{j,x}(\N,G)},
 	\end{split}
	\]
	where the second inequality follows from the subadditivity of the mapping $s\mapsto s^{p_0}$, the third inequality follows from Proposition~\ref{cor:18} while the last inequality follows from Lemma~\ref{lem:25d}, since $ (j'-j)_+ (m \grado+Q_*/p_0-a)-(j-j')_+(1/p_0-1)Q_*-j'\alpha+j\alpha''\Meg (j'-j)_+(m \grado+Q_*/p_0-a-\alpha)+(j-j')_+(\alpha''-(1/p_0-1)Q_*)$ for every $j,j'\in \N$, since $\alpha''\Meg \alpha$.
	For what concerns case (i), one has $\alpha''=\alpha$, so that by Propositions~\ref{prop:14} and~\ref{cor:12}, and Young's inequality, there is a constant $C_3>0$ such that
	\[
	\begin{split} 
		\norm*{  \eps^{-j\alpha } 
			\widetilde f(j,x) \widetilde g(j,x)}_{L^{q,p }_{j,x}(\N,G)}&\meg \norm*{\eps^{-j\alpha} \widetilde f(j,x)}_{L^{q,p  }_{j,x}(\N,G)} \norm*{\sup_{j\in\N} \widetilde g(j,x)}_{L^{\infty}(G)}\\
			&\meg C_3\norm{f}_{F^{p,q}_\alpha(G)} \norm{g}_{L^\infty(G)}
	\end{split}
	\]
	since $f,g\in F^{p,q}_\alpha(G)\cap L^\infty(G)$. For what concerns case (ii), one has  $f\in F^{p,q}_\alpha(G)\cap L^{\max(1,p)}(G)$ and $g\in B^{\infty,\infty}_{\alpha'}(G)\cap C^\infty(G)$, so that one may  find a constant $C_3'>0$ such that
	\[
	\begin{split}
	\norm*{  \eps^{-j\alpha'' } 
	 	\widetilde f(j,x) \widetilde g(j,x)}_{L^{q,p }_{j,x}(\N,G)}&\meg \norm*{\eps^{-j\alpha} \widetilde f(j,x)}_{L^{q,p  }_{j,x}(\N,G)} \norm*{\sup_{j\in\N} \eps^{j(\alpha-\alpha'')}\widetilde g(j,x)}_{L^{\infty}(G)}\\
	 &\meg C_3'\norm{f}_{F^{p,q}_\alpha(G)} \norm{g}_{B^{\infty,\infty}_{\alpha'}(G)}
	\end{split}
	\]
	since   $\alpha'>\alpha''+\alpha_-\Meg \alpha''-\alpha$.
	
	We shall also consider the case $f\in B^{\infty,\infty}_{\alpha'}(G)\cap C^\infty(G)$ and $g\in F^{p,q}_\alpha(G)$, which is relevant when applying symmetric arguments to estimate $\Pi_g^{(j'')} f$.
	Then, as above we see that there is a constant $C_3''>0$ such that\footnote{Notice that, even though we cannot apply Proposition~\ref{prop:14} to control $ \norm{\eps^{j(\alpha'-\alpha'')}  \widetilde g(j,x)}_{L^{q,p  }_{j,x}(\N,G)}$ with $\norm{  \eps^{j(\alpha'-\alpha'')}(g*\psi'_j)(x)}_{L^{q,p  }_{j,x}(\N,G)}$, we may still use the more general~\cite[Proposition 2.40]{Calzi4}.}
	\[
	\begin{split} 
		\norm*{  \eps^{-j\alpha'' } 
			\widetilde f(j,x) \widetilde g(j,x)}_{L^{q,p }_{j,x}(\N,G)}&\meg\norm*{\sup_{j\in\N}\eps^{-j\alpha'}\widetilde f(j,x)}_{L^{\infty}(G)} \norm*{\eps^{j(\alpha'-\alpha'')}  \widetilde g(j,x)}_{L^{q,p  }_{j,x}(\N,G)} \\
		&\meg C_3''\norm{f}_{B^{\infty,\infty}_{\alpha'}(G)} \norm*{  \eps^{j(\alpha'-\alpha'')}(g*\psi'_j)(x)}_{L^{q,p  }_{j,x}(\N,G)} .
	\end{split}
	\]
	In order to deal with the second term, observe that, by Lemma~\ref{lem:25d}, there is a constant $C_4>0$ such that
	\[
	\begin{split}
		 \norm*{  \eps^{j(\alpha'-\alpha'')}(g*\psi'_j)(x)}_{L^{q,p  }_{j,x}(\N,G)}&\meg \norm*{\eps^{j(\alpha'-\alpha'')}\sum_{j'<j} \abs{(g*\psi_{j'})(x)}   }_{L^{q,p  }_{j,x}(\N,G)}\\
		 	&\meg  \norm*{\sum_{j'\in\N} \eps^{\abs{j-j'} (\alpha'-\alpha'' ) } \eps^{-j'\alpha}\abs{(g*\psi_{j'})(x)}   }_{L^{q,p  }_{j,x}(\N,G)}\\
		 	&\meg C_4 \norm*{ \eps^{-j\alpha}  \abs{(g*\psi_{j})(x)}   }_{L^{q,p  }_{j,x}(\N,G)}
	\end{split}
	\]
	since $\alpha''< \alpha'-\alpha_-$.
	 The desired inequality follows. 
	
	Assume now that $p=\infty$.  
	In order to estimate $\norm{[(\Pi^{(j'')}_f g )*\psi_{j'}](x)}_{\Cc^q_{(j',x)}(\eps)}$, one may   proceed as in the above computations, replacing Proposition~\ref{cor:18} with  Lemma~\ref{lem:25d}   and observing that Lemma~\ref{lem:25d} may be replaced by Lemma~\ref{lem:25c} since there is $c>0$ such that  $\widetilde f(j,\,\cdot\,) \widetilde g(j,\,\cdot\,)\meg c\Nc_{p_0,ap_0,\eps^j}(f(j,\,\cdot\,) \widetilde g(j,\,\cdot\,))$ (cf.~\cite[proof of Remark 4.5]{Calzi4}).

	The term $\widetilde \Pi(f,g)$ may be dealt with  the same techniques, whereas the term $\Pi^{(j'')}_g f$ may be dealt with swapping the roles of $f$ and $g$ in the above computations.

	\textsc{Step II} We now deal with the term $\Pi(f,g)$. In this case, we observe that we may take $(\widetilde \psi_j)\in  \Psi_{\eps^\grado}$ such that $\psi_j=\eps^{j m\grado}\Lc^m \widetilde \psi_j=\eps^{j m \grado} (\Lc^m)^R \widetilde \psi_j$ for every $j\Meg 1$. In addition, there are two finite families $(X_k)_{k\in K}$ and $(Y_k)_{k\in K}$ of elements of $U(G)$ such that $\Lc^m(\phi\psi)=\sum_{k\in K} (X_k \phi)(Y_k\psi)$ for every $\phi,\psi\in C^\infty(G)$, and such that $\deg(X_k)+\deg(Y_k)\meg m\grado$ for every $k\in K$. Then,
	\[
	\begin{split}
		[(f*\psi'_{j+1})(g*\psi'_{j+1})]*\psi_j*\psi_{j'}=\eps^{j m \grado}\sum_k [(f*X_k \psi'_{j+1})(g*Y_k \psi'_{j+1})]*\widetilde \psi_j*\psi_{j'}
	\end{split}
	\]
	for every $j ,j'\in \N$. One may then proceed as in the proof of~\cite[Theorem 3.1]{Calzi2} with the above modifications. The proof is therefore complete. 
\end{proof}

\begin{cor}\label{cor:20b}
	Take $p,q\in (0,\infty]$ and $\alpha> Q_*/p$. Then $B^{p,q}_\alpha(G)$ and, if $p<\infty$, $B^{p,\min(1,p,q)}_{Q_*/p}(G)$,  are algebras under pointwise multiplication. 
	
	Assume, in addition, that    $\alpha>(1/\min(p,q)-1)_+Q_*$.
	Then, the same holds for $F^{p,q}_\alpha(G)$ and, if $p<1$, for $F^{p,q}_{Q_*/p}(G)$.
\end{cor}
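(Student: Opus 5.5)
The plan is to derive the corollary from part (i) of Theorems~\ref{teo:6c} and~\ref{teo:6d} together with the embeddings of~\cite[Proposition 4.10]{Calzi4}. The key point is that each of the listed spaces embeds continuously in $L^\infty(G)$. If $X$ is such a space and $X\subseteq L^\infty(G)$ continuously, then $X\cap L^\infty(G)=X$. Moreover, the quasi-norm of $X\cap L^\infty(G)$ (the sum of the two quasi-norms) is then equivalent to that of $X$. So part (i) of the relevant theorem yields $\norm{fg}_X\meg C\norm{f}_X\norm{g}_X$ directly. Part (i) needs more than the bilinear estimate: I will track through its proof that the constant depends only on the quasi-norms of $f$ and $g$ in $X\cap L^\infty(G)$.

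For Besov spaces with $\alpha>Q_*/p$, I will use the embedding $B^{p,q}_\alpha(G)\subseteq B^{\infty,q}_{\alpha-Q_*/p}(G)$. This holds since $\alpha-Q_*/p>(\alpha-Q_*/p)-Q_*/\infty$, so any $q$ is allowed. Next, $B^{\infty,q}_{\alpha-Q_*/p}(G)\subseteq B^{\infty,\min(1,q)}_{0^+}(G)\subseteq L^\infty(G)$, using strict positivity of the smoothness to lower $q$ to $1$. The inclusion $B^{\infty,1}_0\subseteq L^\infty$ is seen by writing $f=\sum_j f*\psi_j$ with $(\psi_j)\in\widetilde\Psi_\eps$ and summing the $L^\infty$ norms. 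Since $\alpha>Q_*/p\Meg Q_*(1/p-1)_+$, the hypothesis of Theorem~\ref{teo:6c}(i) holds.

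In the endpoint case $\alpha=Q_*/p$ with $p<\infty$ and $q'=\min(1,p,q)$, the Besov embedding with equal differential dimension requires $q'\meg q$. This gives $B^{p,q'}_{Q_*/p}(G)\subseteq B^{\infty,q'}_0(G)\subseteq B^{\infty,1}_0(G)\subseteq L^\infty(G)$. The condition $\alpha=Q_*/p>Q_*(1/p-1)_+$ holds because $Q_*/p>Q_*/p-Q_*$ when $p<1$ and $Q_*/p>0$ otherwise. Since the $q$-index here is $q'$, Theorem~\ref{teo:6c}(i) applies to $B^{p,q'}_{Q_*/p}(G)$ itself.

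For Triebel--Lizorkin spaces, I will use $F^{p,q}_\alpha(G)\subseteq B^{p,\max(p,q)}_\alpha(G)$ when $\alpha>Q_*/p$ and $p<\infty$; this is an elementary consequence of Minkowski-type inequalities. This reduces the first case to the Besov one. The condition $\alpha>(1/\min(p,q)-1)_+Q_*$ is exactly the hypothesis of Theorem~\ref{teo:6d}(i). When $p=\infty$, the assumption reads $\alpha>0$, and I will use $F^{\infty,q}_\alpha\subseteq B^{\infty,\infty}_\alpha\subseteq L^\infty$ for $\alpha>0$. For the endpoint $F^{p,q}_{Q_*/p}$ with $p<1$, I will use the Triebel--Lizorkin embedding with $p_1<p_2$ and arbitrary $q$. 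Taking $p_2=1$ gives $F^{p,q}_{Q_*/p}(G)\subseteq F^{1,1}_{Q_*}(G)=B^{1,1}_{Q_*}(G)\subseteq B^{\infty,1}_0(G)\subseteq L^\infty(G)$. Finally, the hypothesis $Q_*/p>(1/\min(p,q)-1)_+Q_*$ is exactly what is assumed.

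I expect the main obstacle to be the endpoint embeddings into $L^\infty$, and specifically checking that the cited embedding statements of~\cite{Calzi4} cover $p=\infty$ and the equality case of differential dimensions with the stated $q$-restrictions. The algebra property itself is immediate from the theorems once these embeddings are in place.
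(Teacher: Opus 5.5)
Your proposal is correct and is essentially the paper's argument: the paper deduces the corollary from part (i) of Theorems~\ref{teo:6c} and~\ref{teo:6d} combined with the embeddings of~\cite[Propositions 4.9 and 4.10]{Calzi4}, which give $X\subseteq L^\infty(G)$ for each listed space $X$, exactly as you do. One slip: $B^{p,q}_\alpha(G)\subseteq B^{\infty,q}_{\alpha-Q_*/p}(G)$ is the equality case of the differential-dimension condition, so it holds because you keep the same $q$, not for arbitrary $q$; this is harmless, since your next step to $B^{\infty,1}_0(G)$ uses the strict inequality $\alpha-Q_*/p>0$.
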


\begin{proof}
	This follows from Theorem~\ref{teo:6c} and~\cite[Propositions 4.9 and 4.10]{Calzi4}.
\end{proof}

\begin{cor}
	There is a sequence $(T_j)$ of continuous linear mappings from $\Sr'(G)$ into $C^\infty_c(G)$ such that $T_j f\to f $ in $\Sr'(G)$ for every $f\in \Sr'(G)$ and such that, for every  $p,q\in (0,\infty]$ and for every $\alpha\in \R$, the $T_j$  induce  equicontinuous endomorphisms of $B^{p,q}_\alpha(G)$ and of $F^{p,q}_\alpha(G)$.
\end{cor}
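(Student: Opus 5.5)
We will take $T_j f\coloneqq \tau_j\,(f*\psi'_j)$, where $\psi'_j=\sum_{k<j}\psi_k$ for a fixed $(\psi_k)\in\widetilde\Psi_{\eps^{\grado}}$, and $(\tau_j)$ is a sequence of cut-off functions in $C^\infty_c(G)$ with $0\meg\tau_j\meg 1$, $\tau_j=1$ on $B(e,j)$, and $\sup_j\norm{X\tau_j}_{L^\infty}\meg C_X$ for every $X\in U(G)$. Such cut-offs exist because $G$ has polynomial growth and the control modulus is a connected modulus. For instance, we may take $\tau_j=\chi_{B(e,2j)}*\phi$ with $\phi\in C^\infty_c(G)$, $\phi\Meg0$, $\int\phi\,\dd\beta=1$, supported in $B(e,1)$. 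Then $X\tau_j=\chi_{B(e,2j)}*X\phi$ is bounded uniformly in $j$, and all derivatives of $1-\tau_j$ are uniformly bounded.

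\textbf{Continuity and convergence.} Since $\psi'_j\in\Sr(G)$, the map $f\mapsto f*\psi'_j$ is continuous from $\Sr'(G)$ into the smooth functions of polynomial growth, and multiplication by $\tau_j$ then yields a continuous map into $C^\infty_c(G)$. For convergence in $\Sr'(G)$: $f*\psi'_j\to f$ because $\sum_k\psi_k=\delta_e$ in $\Sr'(G)$ and convolution is hypocontinuous. For the cut-off, we test against $\phi\in\Sr(G)$ and write $\langle \tau_j(f*\psi'_j),\phi\rangle=\langle f, (\tau_j\phi)*\check\psi'_j\rangle$. Then $\tau_j\phi\to\phi$ in $\Sr(G)$ by the uniform derivative bounds and the rapid decay of $\phi$, while $(\check\psi'_j)$ is bounded in $\Oc'_{C,L}(G)$, being the kernels of $\phi_j'(\Lc)$ with $\phi'_j=\sum_{k<j}\phi_k(\eps^{k\grado}\,\cdot\,)$ uniformly bounded with uniformly bounded derivatives. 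Equicontinuity together with pointwise convergence gives the claim.

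\textbf{Equicontinuity on $B^{p,q}_\alpha$ and $F^{p,q}_\alpha$.} The convolution step is harmless. We have $\norm{f*\psi'_j}_{X}\meg C\norm{f}_X$ uniformly in $j$ for $X\in\Set{B^{p,q}_\alpha(G),F^{p,q}_\alpha(G)}$, since $(f*\psi'_j)*\psi_k$ only involves the indices $k$ with $\abs{k-j}\meg1$, with uniformly controlled kernels. This follows from the sampling/atomic characterizations (Proposition~\ref{prop:14}, Theorems~\ref{teo:13bis} and~\ref{teo:13}), or as in the proof of Theorem~\ref{teo:6c}, where it is observed that $f*\psi'_j$ is uniformly bounded. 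The remaining step is multiplication by $\tau_j$. We apply Theorems~\ref{teo:6c}(ii) and~\ref{teo:6d}(ii) with some $\alpha'$ larger than the thresholds there. It then suffices to show that $(\tau_j)$ is bounded in $B^{\infty,\infty}_{\alpha'}(G)\cap C^\infty(G)$. This holds because $\norm{\tau_j*\psi_0}_{L^\infty}\meg\norm{\psi_0}_{L^1}$, and for $k\Meg1$ we write $\psi_k=\eps^{k m\grado}(\Lc^m)^R\widetilde\psi_k$ with $(\widetilde\psi_k)$ bounded in $L^1$. This gives $\norm{\tau_j*\psi_k}_{L^\infty}\meg C\eps^{km\grado}\sup_j\norm{\Lc^m\tau_j}_{L^\infty}$, and we choose $m\grado>\alpha'$. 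Since the bilinear maps of the two theorems are continuous, we obtain $\norm{T_jf}_X\meg C\norm{\tau_j}_{B^{\infty,\infty}_{\alpha'}}\norm{f*\psi'_j}_X\meg C'\norm{f}_X$.

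\textbf{Main obstacle.} The delicate point is the cut-off construction: we need uniform bounds on all derivatives $X\tau_j$ for $X\in U(G)$, in the filtered, non-Riemannian setting. The convolution construction above handles this, since left-invariant derivatives fall on $\phi$. We also need the convergence $\tau_j\phi\to\phi$ in $\Sr(G)$, which requires the Leibniz rule for $U(G)$ together with the fact that all derivatives of $1-\tau_j$ vanish on $B(e,j)$. A secondary check is that $\Sr'$-convergence does not require the individual quasi-norm bounds, which we have handled separately through the duality argument.
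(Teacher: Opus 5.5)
Your proposal is correct and is essentially the paper's proof. The paper also takes $T_jf=\tau_j\sum_{k\meg j}f*\psi_k$ with $\tau_j=\chi_{B(e,j+1)}*\phi$, obtains $T_jf\to f$ in $\Sr'(G)$ (simply citing \cite[Lemma 3.7]{Calzi4}, where you give the duality/hypocontinuity argument), and gets equicontinuity from the uniform boundedness of the partial sums combined with part (ii) of Theorems~\ref{teo:6c} and~\ref{teo:6d}, applied to the family $(\tau_j)$, which is bounded in $B^{\infty,\infty}_{\alpha'}(G)$. Two justifications need correcting:
\begin{enumerate}
\item $(f*\psi'_j)*\psi_k$ is not confined to $\abs{k-j}\meg 1$. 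It equals $f*\psi_k$ for $k$ well below $j$ and vanishes for $k$ well above $j$, so only boundedly many intermediate indices need a separate estimate, of maximal-function type when $p$ or $q$ is below $1$.
\item The uniform weighted-$L^1$ bounds on $\psi'_j$ do not follow merely from bounded derivatives of the multiplier. They come from writing $\psi'_j=\Kc_{\eps^{j\grado}\Lc}(\tilde m_j)$, with $\tilde m_j$ agreeing on $\eps^{j\grado}\sigma(\Lc)$ with functions bounded in $C^\infty_c(\R)$, and applying Proposition~\ref{cor:12}. These bounds give boundedness of $(\check\psi'_j)$ in both $\Oc'_{C,L}(G)$ and $\Oc'_{C,R}(G)$; the latter is what the hypocontinuity of $\Sr(G)\times\Oc'_{C,R}(G)\to\Sr(G)$ actually requires.
\end{enumerate}
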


The operators $T_j$ may be used to transfer results from $\mathring B^{p,q}_\alpha(G)$ and $\mathring F^{p,q}_\alpha(G)$ to $B^{p,q}_\alpha(G)$ and $F^{p,q}_\alpha(G)$, respectively, arguing by lower semi-continuity.

\begin{proof}
	Take $(\psi_j)\in \widetilde \Psi_\eps$ for some $\eps\in (0,1)$, and set $\tau_j\coloneqq \chi_{B(e,j+1)}*\phi$, where $\phi$ is a positive element of $C^\infty_c(G)$ supported in $ B(e,1)$ and with integral $1$, so that $\chi_{B(e,j)}\meg \tau_j\meg \chi_{B(e,j+2)}$ for every $j\in\N$. Define $T_j f\coloneqq \tau_j \sum_{k=0}^j f*\psi_k$ for every $f\in \Sr'(G)$, so that clearly $T_j f\in C^\infty_c(G)$ and $T_j f\to f$ in $\Sr'(G)$ (cf.~\cite[Lemma 3.7]{Calzi4}). It then suffices to show that the $T_j$ induce equicontinuous endomorphisms of $B^{p,q}_\alpha(G)$ and of $F^{p,q}_\alpha(G)$. This, in turn, follows easily by means of Theorem~\ref{teo:6c}.
\end{proof}

\section{Localization}

The content of this section is an elaboration of the techniques developed in~\cite{Jordi}. 

\begin{deff}\label{def:5}
	Take $\eps\in (0,1)$, $S\in\N$ and $M\in\N$. We say that a family $(\psi_j)_{j\Meg -M}$ of elements of $\Sr(G)$ satisfies the $(\eps,S,\Lc)$-Calder\'on condition if there are families $(\widetilde \psi_j)_{j\Meg 1}$, $(\eta_j)_{j\Meg -M}$, and $(\widetilde \eta_j)_{j\Meg 1}$ of elements of $\Sr(G)$ such that the following hold:
	\begin{enumerate}
		\item[\textnormal{(1)}]   $\psi_j= (\eps^{j\grado}\Lc^R)^S \widetilde \psi_j$ and $\eta_j=(\eps^{j\grado}\Lc)^{3S} \widetilde \eta_j$ for every $j\Meg 1$;
		
		\item[\textnormal{(2)}] $\sum_{j=-M}^{+\infty} \psi_j*\eta_j =\delta_e$ in $\Oc'_{C,R}(G)$ and in $ \Oc'_{C,L}(G)$;
		
		\item[\textnormal{(3)}] for every $X,Y\in U(G)$ and for every $ N\in\N$, there is a constant $C>0$ such 
		\[
		\abs{(X Y^R \widetilde\psi_j)(x)}, \abs{(X Y^R  \widetilde\eta_j)(x)}\meg C\frac{\eps^{-j(Q_*+\deg(X)+\deg(Y))}}{(1+\abs{x}_*/\eps^j)^N}
		\]
		for every $x\in G$ and for every $j\Meg 1$.
	\end{enumerate}
	
	Given $\rho>0$, we say that $(\psi_j)$ satisfies the strong $(\eps,S,\rho,\Lc)$-Calder\'on condition if, in addition, the following condition hold:
	\begin{enumerate}
		\item[\textnormal{(4)}] $\supp \psi_j$, $\supp \eta_j$,   $\supp \widetilde \psi_{j'}$, $\supp \widetilde \eta_{j'} \subseteq B(e,\rho)$ for every $j\Meg -M$ and for every $j'\Meg 1$.
	\end{enumerate} 
\end{deff}

\begin{lem}\label{lem:64}
	Take $S\in\N$, $\rho>0$, and $\eps\in (0,1)$. Then, there are $M\in\N$ and a family $(\psi_j)_{j\Meg -M}$ satisfying the strong $(\eps,S,\rho,\Lc)$-condition. 
\end{lem}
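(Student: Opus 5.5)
The plan is to cut off kernels from the functional calculus of $\Lc$ to make them compactly supported, and then absorb the resulting error into finitely many low-frequency terms indexed by $j=-M,\dots,0$.

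\emph{Step 1: the uncut kernels.} Fix $\phi\in C^\infty_c(\R)$ with $\phi=1$ near $0$, and set $m(\lambda)\coloneqq\phi(\lambda)-\phi(\lambda/\eps^{\grado})$. Then
\[
1=\phi(\lambda)+\sum_{j\Meg1}m(\eps^{j\grado}\lambda) \qquad \text{on } \sigma(\Lc).
\]
Since $m$ vanishes near $0$, we may write $m(\lambda)=\lambda^{4S}\widetilde m(\lambda)$ with $\widetilde m\in C^\infty_c(\R)$. Factor $\widetilde m=\widetilde m_1\widetilde m_2$ with both factors in $C^\infty_c(\R)$, and put
\[
\widetilde\psi^0_j\coloneqq\Kc_{\eps^{j\grado}\Lc}(\widetilde m_1),\qquad \widetilde\eta^0_j\coloneqq\Kc_{\eps^{j\grado}\Lc}(\widetilde m_2).
\]
Set $\psi^0_j\coloneqq(\eps^{j\grado}\Lc^R)^S\widetilde\psi^0_j$ and $\eta^0_j\coloneqq(\eps^{j\grado}\Lc)^{3S}\widetilde\eta^0_j$. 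Because convolution kernels of the functional calculus commute and $\Lc$ acting on a kernel can be moved to $\Lc^R$, we get $\psi^0_j*\eta^0_j=\Kc_{\eps^{j\grado}\Lc}(m)$. By Proposition~\ref{cor:12} (applied with $p=\infty$, using $r=\eps^j$), these functions satisfy the estimates in (3) of Definition~\ref{def:5} uniformly in $j$.

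\emph{Step 2: localization and control of the errors.} Fix a smooth cutoff $\tau$ with $\tau=1$ on $B(e,\rho/4)$ and $\supp\tau\subseteq B(e,\rho/2)$. Define $\widetilde\psi_j\coloneqq\tau\widetilde\psi^0_j$ and $\widetilde\eta_j\coloneqq\tau\widetilde\eta^0_j$, and then $\psi_j,\eta_j$ by the formulas in (1). Property (4) holds for these terms, and (3) still holds, by the Leibniz rule and the decay estimates from Step 1.

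The error $\psi^0_j*\eta^0_j-\psi_j*\eta_j$ involves only the parts of the kernels living where $\abs{x}_*\Meg\rho/4$. There the decay $(1+\abs{x}_*/\eps^j)^{-N}$ yields a bound $C_N\eps^{jN}$ in every Schwartz seminorm. Hence the sum over $j\Meg1$ of these errors converges in $\Sr(G)$. It follows that
\[
R\coloneqq\delta_e-\sum_{j\Meg1}\psi_j*\eta_j=\Kc_\Lc(\phi)+\sum_{j\Meg1}\bigl(\psi^0_j*\eta^0_j-\psi_j*\eta_j\bigr)
\]
belongs to $\Sr(G)$, and this identity holds in $\Oc'_{C,R}(G)\cap\Oc'_{C,L}(G)$. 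On the other hand, $R$ is a sum of $\delta_e$ and terms supported in $B(e,\rho)$. Therefore $R\in C^\infty_c(G)$ with $\supp R\subseteq B(e,\rho)$.

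\emph{Step 3: absorbing $R$ (main obstacle).} It remains to write $R$ exactly as a finite sum $\sum_{j=-M}^0\psi_j*\eta_j$ of convolutions of elements of $C^\infty_c(G)$ supported in $B(e,\rho)$, without destroying the structure already obtained. My first attempt would be an iterative splitting. Take an approximate identity $\eta\in C^\infty_c(B(e,\rho/2))$ of small scale, write $R=R*\eta+R*(\delta_e-\eta)$, and put $\psi_0\coloneqq R$, $\eta_0\coloneqq\eta$. The remainder $R*(\delta_e-\eta)$ is again in $C^\infty_c(G)$, but it is not small in any exact sense, so iterating finitely many times will not close the identity.

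The alternative I would pursue is to build the flexibility in from the start. Run Steps 1--2 at a shifted scale, with $j$ replaced by $j+M$ for $M$ large. Then every localization error is $O(\eps^{MN})$ in the relevant weighted $L^1$-Sobolev seminorms. Consequently $\sum_j\psi_j*\eta_j=\delta_e-R'$, where $R'$ is smooth, compactly supported, and of small norm in a convolution Banach algebra of compactly supported kernels (supports allowed to grow by a fixed amount each convolution). One then corrects one of the two factors in each pair by a finite Neumann-type truncation $\sum_{n\meg n_0}R'^{*n}$. The point to be checked is that the support stays inside $B(e,\rho)$ (this forces $n_0$ and the initial support radius to be chosen jointly), and that the leftover $R'^{*(n_0+1)}$ is exactly representable by the low-frequency terms $j=-M,\dots,0$, in which no factorization through $\Lc$ is required. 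Controlling this last step, exactness together with compact support, is where I expect the real difficulty to lie.
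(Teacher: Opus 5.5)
Your Steps~1 and~2 are essentially the paper's construction. There one takes $(\tau_j)\in\Psi_{\eps^{\grado}}$ with $\sum_j\tau_j*\tau_j=\delta_e$ and cuts off $(\eps^{j\grado}\Lc)^{-S}\tau_j$ and $(\eps^{j\grado}\Lc)^{-3S}\tau_j$ with a bump at a scale $\rho'$ such that $B(e,\rho')^2\subseteq B(e,\rho)$. The localization errors are summed in $\Sr(G)$, and one concludes that $\delta_e-\sum_{j\Meg1}\psi_j*\eta_j$ is a test function supported in $B(e,\rho)$.

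The gap is Step~3, which you leave open although it is the only non-routine point. The test function $R$ must be written \emph{exactly} as $\sum_{j=-M}^{0}\psi_j*\eta_j$ with $\psi_j,\eta_j\in C^\infty_c(G)$ supported in $B(e,\rho)$. Your Neumann-series route does not achieve this, and the obstruction is not where you place it: the leftover $R'^{*(n_0+1)}=R'*R'^{*n_0}$ is already a convolution of two test functions. What fails is the correction itself.

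To telescope, $Q=\sum_{n\meg n_0}R'^{*n}$ must sit on the outside of every pair. But $Q*\psi_j=((\eps^{j\grado}\Lc)^SQ)*\widetilde\psi_j$ and $\eta_j*Q=\widetilde\eta_j*((\eps^{j\grado}\Lc^R)^{3S}Q)$. Since $\Lc^SQ\neq(\Lc^R)^SQ$ in general, nothing makes these of the forms $(\eps^{j\grado}\Lc^R)^S(\cdot)$ and $(\eps^{j\grado}\Lc)^{3S}(\cdot)$ with kernels satisfying (3), as Definition~\ref{def:5}(1) requires. Inserting $Q$ between the two factors does preserve (1), but then $\sum_j\psi_j*Q*\eta_j$ no longer telescopes, because convolution is not commutative. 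So the terms with $j\Meg1$ must be left untouched, and everything reduces to factoring the single test function $R$.

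The missing ingredient is the Dixmier--Malliavin factorization theorem, which is exactly what the paper invokes here (\cite[Th\'eor\`eme 3.1]{DixmierMalliavin}). On a Lie group, every element of $C^\infty_c(G)$ is a finite sum of convolutions of elements of $C^\infty_c(G)$, with control on the supports. Applied to $R$, whose support is a compact subset of $B(e,\rho)$, it directly gives $M\in\N$ and $\psi_j,\eta_j\in C^\infty_c(G)$ supported in $B(e,\rho)$, $-M\meg j\meg 0$, with $R=\sum_{j=-M}^{0}\psi_j*\eta_j$. Definition~\ref{def:5} imposes only (2) and (4) on these low-frequency terms, so this finishes the proof. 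No shift of the dyadic index and no smallness of $R$ are needed, and $M$ is just the number of summands in the factorization.

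This input is genuinely needed. Parametrix-type identities obtained from $R=R*\delta_e$, such as $R=R*R+(\Lc^SR)*\sum_{j\Meg1}\eps^{jS\grado}\widetilde\psi_j*\eta_j$, do give a finite factorization. However, the last factor is only controlled in $C^k$ for $k$ limited in terms of $S$, whereas Definition~\ref{def:5} requires smooth factors.
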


\begin{proof}
	Take $\rho'\in (0,\rho]$ so that $B(e,\rho')^2\subseteq B(e,\rho)$, and take $(\tau_j)\in \Psi_{\eps^{\grado}}$ such that $\sum_{j=0} \tau_j*\tau_j=\delta_e$ in $\Oc'_{C,R}(G)$ and in $\Oc'_{C,L}(G)$, thanks to~\cite[Lemma 3.7]{Calzi4}. Then, take $\chi\in C^\infty_c(G)$ so that $\chi_{B(e,\rho'/3)} \meg \chi \meg \chi_{B(e,(2/3)\rho')}$, and set $\widetilde\eta_j\coloneqq \chi (\eps^{j\grado} \Lc)^{-3S}\tau_j$, $\eta_j=(\eps^{j \grado}\Lc)^{3S}\widetilde \eta_j $, $\widetilde\psi_j\coloneqq \chi (\eps^{j \grado}\Lc)^{-S} \tau_j$ and $\psi_j\coloneqq (\eps^{j\grado} \Lc^R)^S \widetilde \psi_j$ for every $j\Meg 1$. Then, (1), (3), (4) in Definition~\ref{def:5} follow by construction and Proposition~\ref{cor:12}. Next, observe that, at least formally,
	\[
	\begin{split}
		\delta_e-\tau_0*\tau_0&= \sum_{j=1}^\infty \tau_j*\tau_j\\
			&=\sum_{j=1}^\infty   \psi_j *\eta_j + \sum_{j=1}^\infty [(\Lc^R)^S((1-\chi)\Lc^{-S}\tau_j)*\eta_j + \tau_j*\Lc^{3S}((1-\chi)\Lc^{-3S}\tau_j)].
	\end{split}
	\] 
	Let us show that the sum $\sum_{j=1}^\infty [(\Lc^R)^S((1-\chi)\Lc^{-S}\tau_j)*\eta_j + \tau_j*\Lc^{3S}((1-\chi)\Lc^{-3S}\tau_j)]$ converges to some $\psi$ in $\Sr(G)$. Take $X\in U(G)$ and $N\in\N$, and assume that $N> Q_*+\deg(X)$.   
	Observe that, by Proposition~\ref{cor:12}, there is a constant $C_1>0$ such that 
	\[
	\abs{YZ^R\Lc^{-3S}\tau_j(x)}\meg C_1\abs{Y}\abs{Z} \frac{\eps^{-j(Q_*-3S\grado+\deg(Y)+\deg(Z))}}{(1+\abs{x}_*/\eps^j)^{2N+D+1}}
	\]
	for every $x\in G$, for every $j\Meg 1$, and for every $Y,Z\in U_{3 S\grado+\deg(X)}$.
	Consequently, there is a constant $C_2>0$ such that 
	\[
	\begin{split}
		\abs{(\Lc^R)^S((1-\chi)\Lc^{-S}\tau_j)(x)}&\meg C_2 \frac{\eps^{-j Q_*}\chi_{G\setminus B(e, \rho'/3)(x)}}{(1+\abs{x}_*/\eps^j)^{2N+D+1}}\meg C_2\frac{\eps^{-j Q_*}(3 \eps^j/\rho')^N}{(1+\abs{x}_*/\eps^j)^{N+D+1}},\\
		\abs{X\eta_j(x)}&\meg C_2 \abs{X}\frac{\eps^{-j (Q_*+\deg(X))} }{(1+\abs{x}_*/\eps^j)^{2N+D+1}},\\
		\abs{X\Lc^{3S}((1-\chi)\Lc^{-3S}\tau_j)(x)}&\meg C_2 \abs{X}\frac{\eps^{-j (Q_*+\deg(X))}\chi_{G\setminus B(e, \rho'/3)(x)}}{(1+\abs{x}_*/\eps^j)^{2N+D+1}}\meg C_2\abs{X}\frac{\eps^{-j (Q_*+\deg(X))}(3 \eps^j/\rho')^N}{(1+\abs{x}_*/\eps^j)^{N+D+1}},
	\end{split}
	\]
	for every $j\Meg 1$ and for every $x\in G$.
	Consequently,  by Young's inequality and Lemma~\ref{lem:37}, there is a constant $C_3>0$ such that
	\[
	\begin{split}
	\abs{  [(\Lc^R)^S((1-\chi)\Lc^{-S}\tau_j)*X\eta_j + \tau_j*X\Lc^{3S}((1-\chi)\Lc^{-3S}\tau_j)](x)} 	&\meg C_3 \frac{\eps^{j(N-Q_*-\deg(X))}}{(1+\abs{x}_*/\eps^j)^N}\\
		&\meg C_3 \frac{\eps^{j(N-Q_* -\deg(X))}}{(1+\abs{x}_*)^N}
	\end{split}
	\]
	for every $j\Meg 1$ and for every $x\in G$. It then follows that the sum 
	\[
	\sum_{j=1}^\infty \norm*{ (1+\abs{\,\cdot\,}_*)^N X[(\Lc^R)^S((1-\chi)\Lc^{-S}\tau_j)* \eta_j + \tau_j* \Lc^{3S}((1-\chi)\Lc^{-3S}\tau_j)] }_{L^\infty(G)}
	\]
	converges, so that our claim follows by the arbitrariness of $N$ and $X$. 
	Observe that $\psi+\tau_0*\tau_0=\delta_e- \sum_{j=1}^\infty \psi_j*\eta_j $, so that $\psi+\tau_0*\tau_0$ is supported in $B(e,\rho')^2\subseteq B(e,\rho)$. Consequently, by~\cite[Théorème 3.1]{DixmierMalliavin} there are $M\in\N$ and $\psi_j,\eta_j\in C^\infty_c(G)$, $j=-M,\dots,0$, such that $\supp\psi_j,\supp \eta_j\subseteq B(e,\rho)$ for every $j=-M,\dots,0$ and such that $\tau_0*\tau_0+\psi=\sum_{j=-M}^0 \psi_j*\eta_j$. The assertion follows.
\end{proof}
 
 \begin{deff}
 	For every $\alpha\Meg 0$ we define $\Bc_\alpha(G)$ as the space of $f\in \Sr'(G)$ such that there is a constant $C>0$ such that
 	\[
 	\abs{\langle f,\phi\rangle}\meg C   \sup_{X\in U_\alpha, \abs{X}\meg 1} \norm{(1+\abs{\,\cdot\,}_*)^\alpha X\phi }_{L^1(G)}
 	\]
 	for every $\phi \in \Sr(G)$.
 \end{deff}
 
 Notice that, since $  W^{\alpha,1}(G)\subseteq B^{1,\infty}_{[\alpha/\dd] \dd}(G)$ by~\cite[Theorem 3.20 and Propositions 3.19 and 4.10]{Calzi4}, one has $B^{\infty,1}_{-[\alpha/\dd]\dd}(G)\subseteq   \Bc_{\alpha}(G)$ by~\cite[Theorem 5.1]{Calzi4}. In particular, if $\alpha$ is an integer multiple of $\dd$, then $B^{\infty,1}_{-\alpha}(G)\subseteq \Bc_{\alpha}(G)$. By means of~\cite[Propositions 4.9 and 4.10]{Calzi4} we then see that, if $\alpha'>Q_*/p-[\alpha/\dd]\dd$, then $B^{p,q}_{\alpha'}(G), F^{p,q}_{\alpha'}(G)\subseteq \Bc_\alpha(G)$ for every $p,q\in (0,\infty]$. In other words, every Besov or Triebel--Lizorkin space is contained in some $\Bc_\alpha$.

\begin{lem}\label{lem:65}
	Take $N>0$, $\eps\in (0,1)$,   and a family $(\psi_j)_{j\Meg -M}$ of elements of $\Sr(G)$. Then, there is a constant $C>0$ such that  the following holds. If  $(\psi_j)_{j\Meg -M}$ satisfies the $(\eps,S,\Lc)$-Calder\'on condition for some integer $S\Meg N/\grado$, then 
	\begin{equation}\label{eq:14}
		\Nc_{\infty,N,\eps^j}(f*\psi_{j}) \meg  \Big(C\sum_{j'=-M}^\infty [\eps^{\abs{j-j'}N }  \Nc_{p,N, \eps^{j'}}(f*\psi_{j'})]^p\Big)^{1/p}
	\end{equation}
	for every $p\in (0,1]$, for every $j\Meg -M$, and for every $f\in \Bc_{S\grado}(G)$ (for every $f\in \Sr'(G)$ when $p=1$).
\end{lem}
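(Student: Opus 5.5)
We use the Calderón reproducing formula and a Peetre/Rychkov-type bootstrap, in the spirit of~\cite{Jordi}. By (2) of Definition~\ref{def:5},
\[
f*\psi_j=\sum_{j'\Meg -M} f*\psi_{j'}*\eta_{j'}*\psi_j .
\]
For $f\in\Sr'(G)$ this holds in $\Sr'(G)$ because $\sum\psi_{j'}*\eta_{j'}=\delta_e$ in $\Oc'_{C,L}(G)$ and convolution is associative in the relevant ranges. Pointwise convergence of the sum is what requires $f\in\Bc_{S\grado}(G)$; we return to this below.

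\textsc{Step 1: kernel bound.} We first bound the kernels $\eta_{j'}*\psi_j$. For $j'\Meg j$ we write $\eta_{j'}=(\eps^{j'\grado}\Lc)^{3S}\widetilde\eta_{j'}$ and move $\Lc^{3S}$ onto $\psi_j$; for $j'<j$ we write $\psi_j=(\eps^{j\grado}\Lc^R)^S\widetilde\psi_j$ and move $(\Lc^R)^S$ onto $\eta_{j'}$. Condition (3) says $\widetilde\psi_j,\widetilde\eta_j$ form standard systems at scale $\eps^j$. Lemma~\ref{cor:13}, used with $L^\infty$ estimates, then gives
\[
\abs{(\eta_{j'}*\psi_j)(x)}\meg C\,\eps^{\abs{j-j'}S\grado}\,\eps^{-\min(j,j')Q_*}(1+\abs{x}_*/\eps^{\min(j,j')})^{-a}
\]
for any prescribed $a$. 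Since $S\grado\Meg N$, the gain is at least $\eps^{\abs{j-j'}N}$; we take $a$ large, and keep a surplus from the factor $3S$ when $j'>j$.

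\textsc{Step 2: the case $p=1$.} We evaluate $f*\psi_j$ at $xz$ and divide by $(1+\abs{z}_*/\eps^j)^N$. Using the kernel bound together with $(1+d(x,y)/\eps^{j})\meg \eps^{-(j-j')_+}(1+d(x,y)/\eps^{j'})$ and its reverse, we get
\[
\Nc_{\infty,N,\eps^j}(f*\psi_j)\meg C\sum_{j'}\eps^{\abs{j-j'}N}\,\Nc_{1,N,\eps^{j'}}(f*\psi_{j'}).
\]
The powers $\eps^{\pm Q_*}$ coming from the normalisation $r^{-Q_*}$ in $\Nc_{1}$ are absorbed by the surplus decay. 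This is \eqref{eq:14} for $p=1$, valid for all $f\in\Sr'(G)$.

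\textsc{Step 3: the case $p<1$.} We write $\abs{f*\psi_{j'}(y)}=\abs{f*\psi_{j'}(y)}^p\abs{f*\psi_{j'}(y)}^{1-p}$ and bound the second factor by
\[
\bigl[\Nc_{\infty,N,\eps^{j'}}(f*\psi_{j'})(x)\bigr]^{1-p}(1+d(x,y)/\eps^{j'})^{N(1-p)}.
\]
This yields
\[
\Nc_{\infty,N,\eps^j}(f*\psi_j)(x)\meg C\sum_{j'}\eps^{\abs{j-j'}N'}\bigl[\Nc_{p,N,\eps^{j'}}(f*\psi_{j'})(x)\bigr]^p\bigl[\Nc_{\infty,N,\eps^{j'}}(f*\psi_{j'})(x)\bigr]^{1-p},
\]
with some room in the exponent $N'$. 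Set $A:=\sup_{j'}\eps^{\abs{j-j'}N''}\Nc_{\infty,N,\eps^{j'}}(f*\psi_{j'})(x)$. Young's inequality $ab^{1-p}\meg \delta b+C_\delta a^{1/p}$ and absorption of $\delta A$ into the left-hand side give the $\ell^p$ sum.

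\textsc{Main obstacle.} The main obstacle is the finiteness of $A$, which the absorption step needs. This is where $f\in\Bc_{S\grado}(G)$ enters. Testing $f$ against $\psi_{j'}(y^{-1}\cdot)$ gives $\abs{f*\psi_{j'}(y)}\meg C\eps^{-j'(Q_*+S\grado)}(1+\abs{y}_*)^{S\grado}$, since the $U_{S\grado}$-derivatives cost at most $\eps^{-j'S\grado}$. Hence $\Nc_{\infty,N,\eps^{j'}}(f*\psi_{j'})(x)\lesssim \eps^{-j'(Q_*+S\grado+N)}(1+\abs{x}_*)^{N}$, which is finite but grows in $j'$. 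We therefore run the absorption with a modified weight $\eps^{j'L}\Nc_{\infty,N,\eps^{j'}}(\cdots)$ for $L$ large, or first with truncated sums $j'\meg J$, and then let $J\to\infty$. Ensuring that the constants do not depend on $L$ or $J$ requires the extra decay margin $3S$ versus $S$ from Step 1; this is the delicate bookkeeping.
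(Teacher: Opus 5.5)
Your reproducing-formula estimate, the kernel bound with the $3S$ margin, the case $p=1$, and the absorption through $\Nc_{1,N,r}\meg \Nc_{p,N,r}^p\,\Nc_{\infty,N,r}^{1-p}$ are sound and follow the paper. The gap is in the finiteness step that you yourself flag.

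From $f\in\Bc_{S\grado}(G)$ you only get $\abs{(f*\psi_{j'})(y)}\lesssim \eps^{-j'S\grado}(1+\abs{y}_*)^{S\grado}$, i.e.\ spatial growth of order $S\grado$. With the lemma's parameter $N$ this yields only
\[
\Nc_{\infty,N,\eps^{j'}}(f*\psi_{j'})(x)\lesssim \eps^{-j'S\grado}\sup_{y\in G}(1+\abs{y}_*)^{S\grado}(1+d(x,y)/\eps^{j'})^{-N}.
\]
The supremum is $+\infty$ whenever $S\grado>N$. That is exactly the case the lemma is meant for, since its constant is supposed to be independent of $S$. So your bound by $(1+\abs{x}_*)^N$ does not follow, and $A$ may be infinite because of a single $j'$. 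Neither the modified weight $\eps^{j'L}$ nor the truncation $j'\meg J$ helps, because both act on the scale index and not on this spatial divergence. Without $A<\infty$ the absorption step cannot be carried out.

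The missing idea is a two-level bootstrap. First run your absorption argument with $N$ replaced by $S\grado$, for the quantity
\[
\Nc^*_j f\coloneqq \sup_{j'\Meg -M}\eps^{(j-j')_+S\grado+(j'-j)_+2S\grado}\,\Nc_{\infty,S\grado,\eps^{j'}}(f*\psi_{j'}).
\]
Its weights are submultiplicative, and it is finite everywhere for two reasons:
\begin{itemize}
\item the spatial decay $S\grado$ now matches the growth of $f*\psi_{j'}$;
\item the exponent $2S\grado$, which is what survives of the $3S\grado$ gain after changing scale, beats the factor $\eps^{-j'S\grado}$.
\end{itemize}
This gives a pointwise bound for $\abs{f*\psi_j}$, and hence \eqref{eq:14}, but with a constant depending on $S$.

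That $S$-dependent estimate shows that $\sup_{j'}\eps^{\abs{j-j'}N}\Nc_{\infty,N,\eps^{j'}}(f*\psi_{j'})(x)$ is finite wherever the right-hand side of \eqref{eq:14} is finite; elsewhere there is nothing to prove. Only then can you rerun the absorption at level $N$, which gives the $S$-independent constant coming from the $p=1$ estimate.
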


This is an elaboration of~\cite[Lemma 3.5]{Jordi}. Notice that, if $(\psi_j)$ satisfies the $(\eps,S,\Lc)$-Calder\'on condition for every $S\in \N$, then~\eqref{eq:14} holds for every $f\in \bigcup_{\alpha>0}\Bc_\alpha(G)=\Sr'(G)$ (since the constant $C$ depends on $N$ but not on $S$). Nonetheless, if we wish the $\psi_j$ to be compactly supported for $j\Meg 1$, then we cannot hope to achieve this stronger assertion.

\begin{proof}
	\textsc{Step I} Observe first that
	\[
	f*\psi_{j}= \sum_{j'\Meg -M} [f*(\psi_{j'}*\eta_{j'})]*\psi_{j}= \sum_{j'\Meg -M} (f*\psi_{j'})*(\eta_{j'}*\psi_{j}),
	\]
	with convergence in $\Sr'(G)$ and in $C^\infty(G)$. Next, observe that, by Lemma~\ref{lem:40},    there is a constant $C_1>0$ such that
	\[
	\abs{\eta_{j'}*\psi_{j}}\meg C_{1} \eps^{ (j-j')_+N+ (j'-j)_+3N-\min(j,j')Q_*} (1+\abs{\,\cdot\,}_*/\eps^{\min(j,j')})^{-N} 
	\]
	for every $j,j'\Meg -M$. Therefore,
	\[
	\abs{(f*\psi_{j'})*(\eta_{j'}*\psi_{j})}\meg  C_{1} \eps^{(j-j')_+N+ (j'-j)_+3N}\Nc_{1,N, \eps^{\min(j,j')}}(f*\psi_{j'}) 
	\]
	for every $j,j'\Meg -M$. Since
	\[
	(1+\eps^{-\min(j,j')}\abs{x}_*)^{N}(1+\eps^{-j}\abs{y}_*)^{N}\Meg   (1+\eps^{-\min(j,j')}\abs{x y}_*)^{N}\Meg \eps^{(j'-j)_+ N} (1+\eps^{-j'}\abs{x y}_*)^{N}
	\]
	for every $x,y\in G$, it then follows that\footnote{Here, we should in fact get $2N+Q_*$, which would lead to weaker requirements on $(\eta_j)$. Since, however, $Q_*$ is not an integer, in general, we preferred to avoid the  more precise, but substantially more cumbersome, conditions that would have arisen.}
	\[
	\Nc_{\infty,N,\eps^{j}}[(f*\psi_{j'})*(\eta_{j'}*\psi_{j})]\meg C_{1} \eps^{(j-j')_+N+ (j'-j)_+2N} \Nc_{1,N,\eps^{j'}}(f*\psi_{j'})
	\]
	for every $j,j'\Meg -M$.  The assertion follows when $p=1$ (for every $f\in \Sr'(G)$). 
	
	\textsc{Step II} Now, define $(\Nc^*_{j} f)(x)\coloneqq \sup_{j'\Meg-M} \eps^{(j-j')_+S\grado+(j'-j)_+2 S\grado} \Nc_{\infty,S\grado,\eps^{j'}} (f*\psi_{j'})(x)$ for every $x\in G$ and for every $j\Meg -M$, and let us prove that  $\Nc^*_{j } f$ is finite  everywhere for every $j\Meg -M$ (for $f\in \Bc_{S\grado}(G)$). To this aim, observe that, since $f\in \Bc_{S\grado}(G)$, there is $C_2>0$ (depending on $f$) such that 
	\[
	\abs{\langle f, \phi\rangle}\meg C_2  \sup_{X\in U_{S\grado}, \abs{X}\meg 1} \norm{(1+\abs{\,\cdot\,}_*)^{S\grado}X \phi}_{L^1(G)}
	\]
	for every $\phi\in \Sr(G)$. Therefore, by Lemma~\ref{lem:37} there is a constant $C_3>0$ such that
	\[
	\begin{split}
		\abs{(f*\psi_{j'})(x)}&=\abs{\langle f, \psi_{j'}(\,\cdot\,^{-1}x)\rangle}\\
		&\meg C_2  \sup_{X\in U_{S\grado}, \abs{X}\meg 1} \norm{(1+\abs{\,\cdot\,}_*)^{S\grado}(X^R \psi_{j'})(\,\cdot\,^{-1}x)}_{L^1(G)}\\
		&\meg C_2 C_3 \eps^{-j'S\grado} (1+\abs{x}_*)^{S\grado}
	\end{split}
	\]
	for every $j'\Meg -M$ and for every $x\in G$, so that
	\[
	(\Nc^*_{j } f)(x)\meg C_2 C_3  \eps^{-j S\grado} (1+\abs{x}_*)^{S\grado},
	\]
	for every $j\Meg -M$ and for every $x\in G$.
	
	\textsc{Step III} We now conclude the proof for $p<1$. Observe first that, arguing as in~\textsc{Step I}, we see that there is a constant $C_4>0$ (depending on $S$) such that
	\[
	\Nc_{\infty,S\grado,\eps^j}(f*\psi_j) \meg C_4 \sum_{j'\Meg-M} \eps^{(j-j')_+S\grado+ (j'-j)_+2S\grado} \Nc_{1,S\grado,\eps^{j'}}(f*\psi_{j'}).
	\] 
	Using the fact that
	\[
	\eps^{(j-j')_+S\grado+ (j'-j)_+2S\grado}\eps^{(j'-j'')_+S\grado+ (j''-j')_+2S\grado}\meg \eps^{(j-j'')_+S\grado+ (j''-j)_+2S\grado}
	\]
	for every $j,j',j''\Meg -M$, we then see that
	\[
	\Nc^*_{j } f\meg C_4 \sum_{j'\Meg-M} \eps^{(j-j')_+S\grado+ (j'-j)_+2S\grado} \Nc_{1,S\grado,\eps^{j'}}(f*\psi_{j'})
	\]
	for every $j\Meg -M$. Consequently,
	\[
	\Nc_{j  }^* f\meg  C_{4} (\Nc_{j }^* f)^{1-p} \sum_{j'\Meg-M}  [\eps^{(j-j')_+S\grado+ (j'-j)_+2S\grado }\Nc_{p,S\grado,\eps^{j'}}(f*\psi_{j'})]^{p},
	\]
	whence
	\[
	(\Nc_{j }^* f)^{1/p} \meg C_4^{1/p}   (\Nc_{j }^* f)^{1/p-1} \Big(\sum_{j'\Meg-M}   [\eps^{(j-j')_+S\grado+ (j'-j)_+2S\grado }\Nc_{p,S\grado,\eps^{j'}}(f*\psi_{j'})]^{p}\Big)^{1/p}.
	\]
	The assertion follows if $N=S\grado$ since $\Nc_{j }^* f$ is everywhere finite for every $j\Meg -M$ by~\textsc{step II}.
	
	Now, assume that $N< S\grado$. Then,
	\[
	\begin{split}
		\abs{f*\psi_j}&\meg \Nc_{j }^* f\\
		&\meg C_4^{1/p} \Big(\sum_{j'\Meg-M}   [\eps^{(j-j')_+S\grado+ (j'-j)_+2S\grado }\Nc_{p,S\grado,\eps^{j'}}(f*\psi_{j'})]^{p}\Big)^{1/p}\\
		&\meg C_4^{1/p}\Big(\sum_{j'\Meg-M}   [\eps^{(j-j')_+N+ (j'-j)_+2N }\Nc_{p, N,\eps^{j'}}(f*\psi_{j'})]^{p}\Big)^{1/p}
	\end{split}
	\]
	for every $j\Meg -M$. Notice that we cannot conclude yet, in this case, since $C_4$ may depend on $S$. 
	Since
	\[
	(1+d(x,y)/\eps^j)^N\Meg \eps^{(j'-j)_+ N}(1+d(x,y)/\eps^{j'})^N
	\]
	for every $x,y\in G$ and for every $j,j'\Meg -M$, we then deduce that 
	\[
	\Nc_{\infty,N,\eps^j}(f*\psi_j) \meg C_4^{1/p}\Big(\sum_{j'\Meg-M}   [\eps^{\abs{j-j'}N  }\Nc_{p, N,\eps^{j'}}(f*\psi_{j'})]^{p}\Big)^{1/p}
	\]
	for every $j,j'\Meg -M$.
	Consequently, if we set $\Nc^{**}_{j} f\coloneqq \sup_{j'\Meg -M} \eps^{\abs{j-j'}N} \Nc_{\infty,N,\eps^{j'}}(f*\psi_{j'})$, then
	\[
	\Nc^{**}_{j} f\meg C_4^{1/p}\Big(\sum_{j'\Meg-M}   [\eps^{\abs{j-j'}N  }\Nc_{p, N,\eps^{j'}}(f*\psi_{j'})]^{p}\Big)^{1/p}.
	\]
	In particular, if $\sum_{j'\Meg-M}   [\eps^{\abs{j-j'}N  }\Nc_{p, N,\eps^{j'}}(f*\psi_{j'})(x)]^{p}$ is finite, then also $(\Nc^{**}_j f)(x)$ is finite. Arguing as before (but using the estimates of~\textsc{step I}) we then see that
	\[
	(\Nc^{**}_j f)(x)\meg C_1^{1/p} \Big(\sum_{j'\Meg-M}   [\eps^{\abs{j-j'}N  }\Nc_{p, N,\eps^{j'}}(f*\psi_{j'})(x)]^{p}\Big)^{1/p},
	\]
	whence the conclusion in this case. If, otherwise, $\sum_{j'\Meg-M}   [\eps^{\abs{j-j'}N  }\Nc_{p, N,\eps^{j'}}(f*\psi_{j'})(x)]^{p}=\infty$, then the assertion is trivial.
\end{proof}

\begin{lem}\label{lem:70}
	Take $\eps\in (0,1)$. Then,  
	\[
	\sum_{ j\in\Z}  \eps^{\abs{j-j'}+\abs{j-j''}} = \Big(\frac{2}{1-\eps^2}+\abs{j'-j''}\Big)\eps^{\abs{j'-j''}}
	\]
	for every $j',j''\in\Z$.
\end{lem}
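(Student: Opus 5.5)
We compute the sum directly by splitting $\Z$ according to the position of $j$ relative to $j'$ and $j''$.

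\emph{Reduction.} The expression is symmetric in $j'$ and $j''$ and invariant under the translation $j\mapsto j+c$, $j'\mapsto j'+c$, $j''\mapsto j''+c$. We may therefore assume $j'=0$ and $j''=n\coloneqq\abs{j'-j''}\Meg 0$, so that the sum becomes $\sum_{j\in\Z}\eps^{\abs{j}+\abs{j-n}}$.

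\emph{Splitting the sum.} We split $\Z$ into three ranges.
\begin{itemize}
\item For $0\meg j\meg n$ the exponent is $j+(n-j)=n$. These $n+1$ terms contribute $(n+1)\eps^n$.
\item For $j=-h$ with $h\Meg 1$ the exponent is $h+(n+h)=n+2h$. This range contributes $\eps^n\sum_{h\Meg1}\eps^{2h}=\eps^n\frac{\eps^2}{1-\eps^2}$.
\item For $j=n+h$ with $h\Meg1$ the exponent is $(n+h)+h=n+2h$, giving the same contribution $\eps^n\frac{\eps^2}{1-\eps^2}$.
\end{itemize}
All series are geometric with ratio $\eps^2<1$, so convergence is immediate and no step is a genuine obstacle.

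\emph{Adding up.} Summing the three contributions gives
\[
\Big(n+1+\frac{2\eps^2}{1-\eps^2}\Big)\eps^n=\Big(\frac{1+\eps^2}{1-\eps^2}+\abs{j'-j''}\Big)\eps^{\abs{j'-j''}} .
\]

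\emph{Comparison with the statement.} The only delicate point is bookkeeping the boundary terms $j=0$ and $j=n$, which must be counted once, in the middle range. Doing this carefully shows that the exact value has constant $\frac{1+\eps^2}{1-\eps^2}=\frac{2}{1-\eps^2}-1$ in place of $\frac{2}{1-\eps^2}$. For instance, when $j'=j''$ the sum is $\sum_{j}\eps^{2\abs{j}}=\frac{1+\eps^2}{1-\eps^2}$, not $\frac{2}{1-\eps^2}$.

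So the identity as written appears to be off by an additive $\eps^{\abs{j'-j''}}$. What the computation does establish is the inequality
\[
\sum_{j\in\Z}\eps^{\abs{j-j'}+\abs{j-j''}}\meg\Big(\frac{2}{1-\eps^2}+\abs{j'-j''}\Big)\eps^{\abs{j'-j''}} ,
\]
with exact value as displayed above. I would state and prove the lemma in this corrected form. Any later use of the lemma presumably needs only the upper bound, which is what the argument above delivers.
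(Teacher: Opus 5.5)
Your computation is correct and uses the same three-range splitting as the paper's proof. Your objection is also valid: the identity as stated is false. Already $j'=j''$ gives $\frac{1+\eps^2}{1-\eps^2}$, and in general the exact value is $\bigl(\frac{1+\eps^2}{1-\eps^2}+\abs{j'-j''}\bigr)\eps^{\abs{j'-j''}}$, so the stated right-hand side overshoots by exactly $\eps^{\abs{j'-j''}}$.

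The paper's proof slips at precisely the boundary bookkeeping you flagged. For $j'<j''$ it evaluates $\sum_{j=j'+1}^{j''-1}\eps^{j''-j'}$ as $(j''-j')\eps^{j''-j'}$, although this sum has only $j''-j'-1$ terms. For $j'=j''$, the two tail sums over $j\meg j'$ and $j\Meg j''$ both contain $j=j'$, so that term is counted twice. (The second tail is also misprinted as running over $j\meg j''$.)

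The lemma is used only in the proof of Proposition~\ref{lem:69}. There $\eps$ is replaced by $\eps^{Np_0}$, and the lemma serves only as an upper bound of the form $C_3(\abs{j'-j''}+1)\eps^{\abs{j'-j''}Np_0}$. So your corrected inequality is all that is needed, and nothing downstream is affected.
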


\begin{proof}
	Assume first that $j'\meg j''$. Then,
	\[
	\begin{split}
		\sum_{ j\in\Z}  \eps^{\abs{j-j'}+\abs{j-j''}}&=\sum_{j\meg j'}\eps^{j'+j''-2j}+\sum_{j=j'+1}^{j''-1}\eps^{-j'+j''}+\sum_{j\meg j''}\eps^{2j-j'-j''}\\
			&=\frac{1}{1-\eps^2}\eps^{j'+j''-2j'}+(j''-j')\eps^{j''-j'}+\frac{1}{1-\eps^2}\eps^{2j''-j'-j''}\\
			&=\Big(\frac{2}{1-\eps^2}+j''-j'\Big) \eps^{j''-j'},
	\end{split}
	\]
	whence the result. The other case is treated similarly.
\end{proof}

\begin{prop}\label{lem:69}
	Take $N>0$, $p,q\in (0,\infty]$, $\alpha\in (-N,N)$, $\eps\in (0,1)$, and a family $(\psi_j)_{j\Meg -M}$ of elements of $\Sr(G)$. Then, there is a constant $C>1$ such that, if $(\psi_j)$ satisfies the $(\eps,S,\Lc)$-Calder\'on condition for some integer $S\Meg N/\grado$, then the following hold:
	\begin{enumerate}
		\item[\textnormal{(1)}] if $N>D/\min(1,p)$, then for every $f\in \Bc_{S\grado}(G)$ (for every $f\in \Sr'(G)$ if $p\Meg 1$)
		\[
		\frac{1}{C} \norm{f}_{B^{p,q}_\alpha(G)}\meg \norm{ \eps^{-j\alpha} (f*\psi_j)(x)  }_{L^{p,q}_{x,j}(G, \N-M)}\meg C \norm{f}_{B^{p,q}_\alpha(G)};
		\]
		
		\item[\textnormal{(2)}] if $p<\infty$ and $N>D/\min(1,p,q)$, then for every $f\in \Bc_{S\grado}(G)$ (for every $f\in \Sr'(G)$ if $p>1$ and $q\Meg 1$)
		\[
		\frac{1}{C} \norm{f}_{F^{p,q}_\alpha(G)}\meg \norm{ \eps^{-j\alpha} (f*\psi_j)(x)  }_{L^{q,p}_{j,x}( \N-M,G)}\meg C \norm{f}_{F^{p,q}_\alpha(G)};
		\]
		
		\item[\textnormal{(3)}] if  $N>D/\min(1,q)$, then for every $f\in \Bc_{S\grado}(G)$ (for every $f\in \Sr'(G)$ if $q\Meg 1$)
		\[
		\frac{1}{C} \norm{f}_{F^{\infty,q}_\alpha(G)}\meg \norm{ \eps^{-j\alpha} (f*\psi_{j-M})(x)  }_{\Cc^{q}_{(j,x)}(\eps)}\meg C \norm{f}_{F^{\infty,q}_\alpha(G)}.
		\]
	\end{enumerate}
\end{prop}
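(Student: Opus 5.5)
We compare the quantities built from $(\psi_j)_{j\Meg -M}$ with those built from a fixed $(\phi_j)\in\widetilde\Psi_{\eps^{\grado}}$ through almost-orthogonality, and use Lemma~\ref{lem:65} to handle $\min(p,q)<1$. The plan has four steps.

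\textbf{Step 1 (kernel bounds).} Write $\phi'_j$ for the elements of a second system in $\Psi_{\eps^{\grado}}$ such that $\sum_j\phi_j*\phi'_j=\delta_e$. Using Lemma~\ref{lem:40} and Lemma~\ref{cor:13}, we obtain
\[
\abs{\phi'_{j'}*\psi_j}+\abs{\eta_{j'}*\phi_j}\meg C\eps^{\abs{j-j'}N'-\min(j,j')Q_*}(1+\abs{\,\cdot\,}_*/\eps^{\min(j,j')})^{-N'}.
\]
For the first kernel, the decay as $j'\to\infty$ comes from the cancellation $\psi_j=(\eps^{j\grado}\Lc^R)^S\widetilde\psi_j$. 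This cancellation has order $S\grado\Meg N$, and that is exactly why we need $\abs{\alpha}<N$. The decay as $j\to\infty$ comes from the smoothness of $\phi'_{j'}$. For the second kernel, the roles are played by $\eta_{j'}=(\eps^{j'\grado}\Lc)^{3S}\widetilde\eta_{j'}$ and by the smoothness of $\phi_j$.

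\textbf{Step 2 (upper bound).} We write $f*\psi_j=\sum_{j'}(f*\phi_{j'})*(\phi'_{j'}*\psi_j)$. This gives the pointwise bound
\[
\abs{f*\psi_j}\meg C\sum_{j'}\eps^{\abs{j-j'}N'}\Nc_{1,N',\eps^{j'}}(f*\phi_{j'}).
\]
When $\min(p,q)<1$, we first dominate $\Nc_{1,\cdot}$ by $\Nc_{r,\cdot}$ for some $r<\min(1,p,q)$ with $N>D/r$. To do this we apply Lemma~\ref{lem:65} to the system $(\phi_j)$, which satisfies the Calder\'on condition for every $S$ and hence applies to all of $\Sr'(G)$. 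This yields
\[
\Nc_{1,N,\eps^{j'}}(f*\phi_{j'})\meg C\Big(\sum_{j''}[\eps^{\abs{j'-j''}N}\Nc_{r,N,\eps^{j''}}(f*\phi_{j''})]^r\Big)^{1/r}.
\]
We then sum in $j'$ by Lemma~\ref{lem:25d}, or more precisely its $\ell^{q/r}$ version applied after raising to the power $r$, using $\abs{\alpha}<N$. The maximal bound in $L^{p,q}$ and $L^{q,p}$ follows from Proposition~\ref{cor:18} with $p_0=r$. For (3) we use Lemmas~\ref{lem:2c} and~\ref{lem:25c} instead. The index shift $j-M$ in (3) only accounts for starting at $j=-M$.

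\textbf{Step 3 (lower bound).} We reverse the roles by writing $f*\phi_j=\sum_{j'\Meg-M}(f*\psi_{j'})*(\eta_{j'}*\phi_j)$, using (2) of Definition~\ref{def:5}. This gives
\[
\abs{f*\phi_j}\meg C\sum_{j'}\eps^{\abs{j-j'}N}\Nc_{1,N,\eps^{j'}}(f*\psi_{j'}).
\]
For $p\Meg1$ (respectively $p>1$ and $q\Meg1$, or $q\Meg1$) we proceed exactly as in Step 2, without any passage to $r<1$. This is why those cases hold for every $f\in\Sr'(G)$.

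\textbf{Step 4 (main obstacle: the case $\min(p,q)<1$ in the lower bound).} Here we must pass from $\Nc_{1}$ to $\Nc_{r}$ of $f*\psi_{j'}$. This is precisely Lemma~\ref{lem:65} applied to the given system $(\psi_j)$, and it requires $f\in\Bc_{S\grado}(G)$. That requirement is the source of the hypothesis in the statement. The lemma gives
\[
\Nc_{\infty,N,\eps^j}(f*\psi_j)\meg C\Big(\sum_{j'}[\eps^{\abs{j-j'}N}\Nc_{r,N,\eps^{j'}}(f*\psi_{j'})]^r\Big)^{1/r}.
\]
We insert this into the Step 3 bound, using $\Nc_{1,N}\lesssim\Nc_{\infty,N}$ after slightly lowering $N$. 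We then combine the two geometric factors via Lemma~\ref{lem:70} and control the result with Lemma~\ref{lem:25d} and Proposition~\ref{cor:18} (or Lemmas~\ref{lem:2c} and~\ref{lem:25c} for $p=\infty$).

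The delicate point is the bookkeeping of exponents. Lemma~\ref{lem:70} produces an extra factor $\abs{j'-j''}$, so some of the decay $N$ must be sacrificed to absorb it. We therefore need the strict inequalities $N>D/r$ and $\abs{\alpha}<N$ with some room. We gain that room by choosing $r$ close to $\min(1,p,q)$ and applying the earlier estimates with $N$ replaced by a slightly smaller $N'$ that still satisfies both strict inequalities.
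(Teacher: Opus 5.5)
Your architecture for the left-hand inequalities matches the paper's: expand $f*\phi_j$ through $\sum_{j'}\psi_{j'}*\eta_{j'}=\delta_e$, pass to $\Nc_r$ via Lemma~\ref{lem:65}, merge the geometric factors with Lemma~\ref{lem:70}, and conclude with Lemma~\ref{lem:25d}, Proposition~\ref{cor:18} and Lemmas~\ref{lem:2c}, \ref{lem:25c}. The paper imports the right-hand inequalities from \cite[Lemmas 3.5 and 3.8]{Calzi4}; your self-contained argument through an auxiliary system with cancellation of every order is a reasonable substitute, since Lemma~\ref{lem:65} applies to such a system on all of $\Sr'(G)$ by the remark following it. However, the exponent bookkeeping in Steps 1, 3 and 4, which you yourself single out as the crux, does not work as written.

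The claim ``$\Nc_{1,N}\lesssim\Nc_{\infty,N}$ after slightly lowering $N$'' is false. From $\abs{g(y)}\meg \Nc_{\infty,N',t}(g)(x)\,(1+d(x,y)/t)^{N'}$ one gets $\Nc_{1,N,t}(g)\lesssim \Nc_{\infty,N',t}(g)$ uniformly in $t\in(0,1]$ only when $N-N'>D$, and $g(y)=(1+d(x,y)/t)^{N'}$ shows this is sharp. Since only $S\grado\Meg N$ is assumed, you cannot raise $N$ in Lemma~\ref{lem:65}. Running it with some $N'<N-D$ instead forces $N'>D/r$ and $N'>\abs{\alpha}$, i.e.\ $N>D+D/r$ and $N>D+\abs{\alpha}$, which are not hypotheses.

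The same problem is already latent in Steps 1 and 3. Your kernel bound lives at the coarser scale $\eps^{\min(j,j')}$. Rewriting it in the form $\Nc_{1,N,\eps^{j'}}(f*\psi_{j'})$ when $j'>j$ costs $\eps^{-(j'-j)(N-Q_*)}$. So a symmetric decay $\eps^{\abs{j-j'}N}$ leaves only $\eps^{(j'-j)Q_*}$, which cannot absorb the factor $\eps^{(j'-j)\alpha}$ from the weights once $\alpha\meg -Q_*$. This is exactly why Definition~\ref{def:5} requires $3S$, not $S$, powers of $\Lc$ in $\eta_j$.

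The missing idea is to put the weight at the scale of $f*\psi_{j'}$ from the outset and pay for it with the extra cancellation of $\eta_{j'}$. Lemma~\ref{cor:13} with $a=N$, which uses cancellation of order $2N\meg 3S\grado$ of $\eta_{j'}$, gives $\norm{(1+\abs{\,\cdot\,}_*/\eps^{j'})^N(\eta_{j'}*\phi_j)}_{L^1(G)}\lesssim\eps^{\abs{j-j'}N}$. Hence $\abs{(f*\psi_{j'})*(\eta_{j'}*\phi_j)}\lesssim \eps^{\abs{j-j'}N}\,\Nc_{\infty,N,\eps^{j'}}(f*\psi_{j'})$, which is precisely the left-hand side of Lemma~\ref{lem:65}, with no loss in $N$. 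This is what the paper does, and it bypasses $\Nc_1$ entirely. Inside your $\Nc_1$ route, the equivalent repair is to raise the spatial exponent in Step 3 to some $A>N+D$, paying $\eps^{-(j'-j)(A-Q_*)}$ out of the $3S\grado\Meg 3N$ cancellation.

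Two smaller corrections:
\begin{itemize}
\item In case (1) take $r=\min(1,p)$, as the paper does, rather than $r<\min(1,p,q)$. The hypothesis $N>D/\min(1,p)$ does not give $N>D/r$ for $r<q$ when $q<\min(1,p)$, and Lemma~\ref{lem:25d} handles the $\ell^q$-summation for every $q$.
\item In Step 1 the attributions are reversed. For $\phi'_{j'}*\psi_j$, the decay as $j\to\infty$ comes from the cancellation of $\psi_j$ (this is where $S\grado\Meg N>\abs{\alpha}$ enters), and the decay as $j'\to\infty$ comes from that of $\phi'_{j'}$.
\end{itemize}
In Step 2 both bookkeeping issues are harmless. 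There the direction $j'>j$ is governed by cancellation and smoothness of every order, so any spatial exponent is available.
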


This result is an elaboration of~\cite[Theorem 4.2]{Jordi}. The additional restrictions on $f$ come from the analogous restrictions in Lemma~\ref{lem:65}. Notice, though, that $B^{p,q}_\alpha(G), F^{p,q}_\alpha(G)\subseteq \Bc_{S\grado}(G)$ if $ S\grado>Q_*/p-[\alpha/\dd]\dd$, so that the above result actually provides equivalent quasi-norms on $B^{p,q}_\alpha(G), F^{p,q}_\alpha(G)$ (and also characterizes the whole spaces when $p\Meg 1$ and when $p>1$ and $q\Meg 1$, respectively).

\begin{proof}
	Observe first that the right inequalities follow from~\cite[Lemmas 3.5 and 3.8]{Calzi4}. Then, take $(\psi'_j)\in \Psi_{\eps^{\grado}}$ and observe that
	\[
	f*\psi'_{j'}= \sum_{j\Meg -M} (f*\psi_j)*(\eta_j*\psi'_{j'})
	\]
	for every $j'\in \N$. In addition, by Lemma~\ref{cor:13}  there is a constant $C_1>0$ such that
	\[
	\norm{(1+\abs{\,\cdot\,}_*/\eps^{j})^{N}   (\eta_{j}*\psi'_{j'})}_{L^1(G)}\meg C_1\eps^{ \abs{j-j'} N}  
	\]
	for every $j,j'\Meg -M$, so that 
	\[
	\abs{  (f*\psi_j)*(\eta_j*\psi'_{j'}) }\meg C_1  \eps^{ \abs{j-j'} N} \Nc_{\infty, N, \eps^{j}}(f*\psi_j) 
	\] 
	Take $p_0\in (0,1]$ so that $N>D/p_0$ and so that: $p_0=\min(1,p)$ for (1); $p_0<p$, $p_0\meg q$, and $p_0=1$ if $p>1$ and $q\Meg 1$, for (2); $p_0=q$ for (3).  Therefore, by Lemma~\ref{lem:65}, there is a constant $C_2>0$ such that
	\[
	\begin{split}
		\abs{f*\psi'_{j'}} &\meg C_2\sum_{j\Meg -M} \eps^{\abs{j-j'}N} \Big(  \sum_{j''\Meg -M} [\eps^{\abs{j-j''}N  } \Nc_{p_0, N, \eps^{j''}}(f*\psi_{j''})]^{p_0} \Big)^{1/p_0}\\
			&\meg C_2 \Big( \sum_{j\Meg -M} \eps^{\abs{j-j'}N p_0} \sum_{j''\Meg -M} [\eps^{\abs{j-j''}N } \Nc_{p_0, N, \eps^{j''}}(f*\psi_{j''}) ]^{p_0}\Big)^{1/p_0}
	\end{split}
	\]
	for every $f$ as in the statement. Now, by Lemma~\ref{lem:70}  there is a constant $C_3>0$ such that
	\[
	\sum_{ j\Meg -M}  \eps^{\abs{j-j'}N p_0+\abs{j-j''}N p_0} \meg C_3 (\abs{j'-j''}+1)\eps^{\abs{j'-j''}N p_0}
	\]
	for every $j'\in \N$ and for every $j''\Meg -M$.
	Consequently, given $\kappa\in (0,N-\abs{\alpha})$, we may find a constant $C_4>0$ such that 
	\[
	\begin{split}
		\eps^{-j' \alpha }\abs{f*\psi'_{j'}} &\meg C_2 C_3^{1/p_0} \Big(   \sum_{j''\Meg -M} (\abs{j'-j''}+1)[\eps^{\abs{j'-j''}N  +(j''-j')\alpha  } \eps^{-j''\alpha } \Nc_{p_0, N, \eps^{j''}}(f*\psi_{j''})]^{p_0}  \Big)^{1/p_0}\\
			&\meg C_4 \Big(   \sum_{j''\Meg -M}  [\eps^{(j'-j'')_+( N-\alpha-\kappa)  +(j''-j')_+( N+\alpha-\kappa) }  \eps^{-j''\alpha  } \Nc_{p_0, N, \eps^{j''}}(f*\psi_{j''}) ]^{p_0} \Big)^{1/p_0}
	\end{split}
	\]
	for every $j'\in \N$ and for every $f$ as in the statement. For (1), it suffices to apply Young's inequality and then Lemma~\ref{lem:25d}. For (2), it suffices to apply Lemma~\ref{lem:25d} and then Lemma~\ref{lem:2c}. For (3), one may apply Lemma~\ref{lem:25c}. 
\end{proof}

\begin{prop}\label{prop:30c}
	Take $p,q\in (0,\infty]$ and $\alpha\in \R$. Take $\delta>0$, $R\Meg 2$, and a $(\delta,R)$-lattice $(x_j)_{j\in J}$ on $G$.  
	Take two bounded families $(\phi_j)_{j\in J}$ and $(\psi_j)_{j\in J}$ of elements of $C^\infty_c(G)$. 
	Then, the continuous linear mappings
	\[
	\Ic_{(\phi_j)}\colon \Dc'(G)\ni u\mapsto (\phi_j(x_j^{-1}\,\cdot\,) u)\in \Dc'(G)^J
	\]
	and
	\[
	\Rc_{(\psi_j)}\colon \Dc'(G)^J \ni  (u_j) \mapsto \sum_{j\in J} \psi_j(x_j^{-1}\,\cdot\,) u_j \in \Dc'(G)
	\]
	induce continuous linear mappings 
	\[
	\begin{aligned} 
		F^{p,q}_\alpha(G)&\to \ell^p(J;F^{p,q}_\alpha(G)), & \mathring F^{p,q}_\alpha(G)&\to \ell^p(J;\mathring F^{p,q}_\alpha(G))
	\end{aligned}
	\]
	and
	\[
	\begin{aligned}
		\ell^p(J;F^{p,q}_\alpha(G))&\to F^{p,q}_\alpha(G),& \ell^p(J;\mathring F^{p,q}_\alpha(G))&\to \mathring F^{p,q}_\alpha(G)
	\end{aligned}
	\] 
	respectively.
	If, in addition, $\sum_j (\phi_j\psi_j)(x_j^{-1}\,\cdot\,)=1$, then $\Rc_{(\psi_j)} \Ic_{(\phi_j)}=I$.
\end{prop}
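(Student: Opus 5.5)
The plan is to use the local characterization of Proposition~\ref{lem:69}, with a family $(\psi'_j)_{j\Meg -M}$ satisfying the strong $(\eps,S,\rho,\Lc)$-Calder\'on condition (Lemma~\ref{lem:64}). We take $\rho$ small compared with $\delta$ and $S$ large, so that $N\coloneqq S\grado>D/\min(1,p,q)+\abs{\alpha}$ and $F^{p,q}_\alpha(G)\subseteq \Bc_{S\grado}(G)$. The point of this choice is that $(\phi u)*\psi'_j$ depends only on $u$ near $\supp\phi$. We therefore write $u*\psi'_j$ and then multiply by cut-offs, and must control the commutator. For a fixed $\phi\in C^\infty_c(G)$ (translated by $x_j$), we will show that
\[
\norm{\phi(x_j^{-1}\,\cdot\,)u}_{F^{p,q}_\alpha}^p\lesssim \int_{B(x_j,c)}\norm{\eps^{-k\alpha}(u*\psi'_k)(x)}_{\ell^q_k}^p\,\dd\beta(x)+\text{(error terms localized near $x_j$)},
\]
uniformly in $j$. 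Summing over $j$ and using the bounded overlap of the balls $B(x_j,c)$ (a lattice property) then gives $\Ic_{(\phi_j)}$ bounded into $\ell^p(J;F^{p,q}_\alpha)$. For $\Rc_{(\psi_j)}$ we argue dually. Since $\supp((\psi_j(x_j^{-1}\cdot)u_j)*\psi'_k)\subseteq B(x_j,c)$, at each point only boundedly many $j$ contribute. Thus
\[
\norm{\eps^{-k\alpha}\textstyle\sum_j(\dots)*\psi'_k}_{L^p(\ell^q)}^p\lesssim\sum_j\norm{\psi_j(x_j^{-1}\cdot)u_j}^p_{F^{p,q}_\alpha},
\]
using $\abs{\sum_{i\meg C}a_i}^p\lesssim\sum\abs{a_i}^p$. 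This reduces the estimate for $\Rc$ to pointwise multiplication by a single bounded family of test functions, which is Theorem~\ref{teo:6d}(ii): $C^\infty_c$ functions from a bounded family are bounded in $B^{\infty,\infty}_{\alpha'}$, uniformly after left translation, because the quasi-norms are left-invariant.

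For the $\Ic$ bound, the local estimate is obtained as follows. Write $u=\sum_{k'}(u*\psi'_{k'})*\eta_{k'}$, and split $\phi u$ as $\sum_{k'}[\phi\,((u*\psi'_{k'})*\eta_{k'})]$. Then compute $(\phi w)*\psi'_k$ for $w=(u*\psi'_{k'})*\eta_{k'}$ by a Taylor expansion of $\phi$ around the point $x$, as in the paraproduct estimates of the proof of Theorem~\ref{teo:6c}. Because $\eta_{k'}=(\eps^{k'\grado}\Lc)^{3S}\widetilde\eta_{k'}$ and $\psi'_k=(\eps^{k\grado}\Lc^R)^S\widetilde\psi'_k$, integrating by parts yields off-diagonal decay $\eps^{\abs{k-k'}N}$. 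All kernels are supported in $B(e,2\rho)$, so everything is localized in a fixed neighbourhood of $\supp\phi(x_j^{-1}\cdot)$. We then obtain the analogue of Lemma~\ref{lem:65},
\[
\abs{(\phi w)*\psi'_k}\lesssim\eps^{\abs{k-k'}N}\,\chi_{B(x_j,c)}\,\Nc_{p_0,N,\eps^{k'}}(u*\psi'_{k'}),
\]
with $p_0<\min(1,p,q)$. Summing over $k'$ via Lemma~\ref{lem:25d}, and noting that $\chi_{B(x_j,c)}\Nc_{p_0,N,\eps^{k'}}(g)$ is controlled by $\Nc_{p_0,N,\eps^{k'}}(\chi_{B(x_j,2c)}g)$ up to a rapidly decaying tail, Proposition~\ref{cor:18} gives the desired local bound. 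The case $p=\infty$ is handled analogously with Lemma~\ref{lem:2c} and the $\Cc^q$ norm; there $\ell^\infty(J)$ requires only uniformity in $j$, which is easier.

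The main obstacle is the tail: $\Nc_{p_0,N,\eps^{k'}}(g)$ at a point of $B(x_j,c)$ sees $g$ far away, and after summation over $j$ these contributions must be $\ell^p$-summable. We handle this by decomposing $G$ into the balls $B(x_i,c)$ and using the factor $(1+d(x_j,x_i)/\eps^{k'})^{-N}\meg(1+d(x_j,x_i))^{-N}$. Polynomial growth ($N>D/p_0$) then makes $\sum_j(1+d(x_j,x_i))^{-Np_0}$ uniformly bounded, and a Fubini/Schur argument in $(i,j)$ closes the estimate. The claims for $\mathring F$ follow because $\Ic$ and $\Rc$ map $\Sr$ (respectively finite sequences of Schwartz functions) into the corresponding spaces, together with continuity and density. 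Finally, $\Rc\Ic u=\sum_j(\phi_j\psi_j)(x_j^{-1}\cdot)u=u$, with the sum locally finite.
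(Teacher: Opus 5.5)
Your argument for $\Rc_{(\psi_j)}$ is essentially the paper's. Both characterize the quasi-norm through a strong Calder\'on family with small supports (Proposition~\ref{lem:69}), use that at each point only boundedly many of the pieces $(\psi_j(x_j^{-1}\,\cdot\,)u_j)*\eta_k$ are non-zero, exchange the $\ell^p_j$ and $L^p_x$ norms, and conclude with Theorem~\ref{teo:6d}(ii).

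For $\Ic_{(\phi_j)}$ you take a genuinely different and heavier route. You re-derive a localized multiplier estimate by hand: Leibniz' rule and integration by parts against the factors $(\eps^{k\grado}\Lc^R)^S$ and $(\eps^{k'\grado}\Lc)^{3S}$ supplied by the Calder\'on condition (no Taylor expansion is needed), followed by Lemma~\ref{lem:65}, Lemma~\ref{lem:25d} and Proposition~\ref{cor:18}.

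The paper proves no new kernel estimates. It splits $J$ into finitely many classes $J_1,\dots,J_N$ with $d(x_j,x_{j'})\Meg 2(R'+\delta)$ for distinct $j,j'$ in the same class, where $R'$ bounds the supports of the $\phi_j$ and $\delta$ those of the $\eta_k$. Within a class the functions $(\phi_j(x_j^{-1}\,\cdot\,)f)*\eta_k$ have pairwise disjoint supports, so at each point at most one $j$ contributes. Hence $\ell^p_j$ can be replaced by $\ell^q_j$ and exchanged with $\ell^q_k$. The $\ell^p_j(J_h)$-sum of the local quasi-norms then equals the local quasi-norm of the single product $\big(\sum_{j\in J_h}\phi_j(x_j^{-1}\,\cdot\,)\big)f$. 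Since that multiplier is smooth with bounded derivatives, Theorem~\ref{teo:6d}(ii) finishes the proof.

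Your route is viable and gives a slightly finer per-$j$ bound, but it essentially reproves the multiplier theorem in localized form. Also, the ``tail'' you single out as the main obstacle is not one. The function
$H(x)=\norm{\eps^{-k\alpha}\sum_{k'}\eps^{\abs{k-k'}N}\Nc_{p_0,N,\eps^{k'}}(u*\psi'_{k'})(x)}_{\ell^q_k}$
is global, with $\norm{H}_{L^p(G)}\lesssim\norm{u}_{F^{p,q}_\alpha(G)}$. Your bound for the $j$-th piece already carries the factor $\chi_{B(x_j,c)}$, so bounded overlap gives $\sum_j\int_{B(x_j,c)}H^p\,\dd\beta\lesssim\norm{H}^p_{L^p(G)}$ directly. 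You need neither to localize the input nor a Schur argument.

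One caveat, which the paper's proof shares, concerns $p=\infty$. Your density argument for $\Rc_{(\psi_j)}$ on $\ell^\infty(J;\mathring F^{\infty,q}_\alpha(G))$ breaks down, since finitely supported sequences are not dense in $\ell^\infty$. In fact that part of the claim is false for non-compact $G$. Take $(\psi_j)$ with $\sum_j\psi_j(x_j^{-1}\,\cdot\,)=1$, and set $u_j\coloneqq\chi(x_j^{-1}\,\cdot\,)$ with $\chi\in C^\infty_c(G)$ equal to $1$ on the supports of all the $\psi_j$. By left invariance, $(u_j)$ is bounded in $\mathring F^{\infty,q}_\alpha(G)$, yet $\Rc_{(\psi_j)}(u_j)=1$. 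Moreover $1\notin\mathring F^{\infty,q}_\alpha(G)$, for three reasons:
\begin{itemize}
\item the quasi-norm dominates a multiple of $\sup_{x\in G}\norm{f*\tau}_{L^q(B(x,1))}$ for each kernel $\tau$ in the defining family;
\item $1*\tau$ is a non-zero constant for at least one such $\tau$;
\item $\phi*\tau$ vanishes at infinity for every $\phi\in\Sr(G)$.
\end{itemize}
That assertion should therefore be read with $c_0(J;\mathring F^{\infty,q}_\alpha(G))$ in place of $\ell^\infty(J;\mathring F^{\infty,q}_\alpha(G))$.
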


\begin{proof}
	The last assertion is obvious. 	In order to simplify the notation, we set $\widetilde \phi_j\coloneqq \phi_j(x_j^{-1}\,\cdot\,)$ and $\widetilde \psi_j\coloneqq \psi_j(x_j^{-1}\,\cdot\,)$ for every $j\in \N$.  
	
	Take a strong $(\eps,S, \delta,\Lc)$-Calder\'on family $(\eta_j)_{j\Meg -M}$ for some $S>(\abs{\alpha}+Q_*/p)/\grado$ (cf.~Lemma~\ref{lem:64}), so that Proposition~\ref{lem:69} shows that  there is a constant $C_1>0$ such that
	\[
	\frac{1}{C_1}\norm{f}_{F^{p,q}_\alpha(G)}\meg  \norm*{ \eps^{-j\alpha}(f*\eta_j )(x)  }_{L^{q,p}_{j,x}(\N-M,G)}   \meg C_1 \norm{f}_{F^{p,q}_\alpha(G)}
	\]
	for every $f\in F^{p,q}_\alpha(G)$, if $p<\infty$, and
	\[
	\frac{1}{C_1}\norm{f}_{F^{p,q}_\alpha(G)}\meg \norm*{ \eps^{-j\alpha}(f*\eta_{j-M} )(x)  }_{\Cc^{q}_{(j,x)}(\eps)}   \meg C_1 \norm{f}_{F^{p,q}_\alpha(G)}
	\]
	for every $f\in F^{\infty,q}_\alpha(G)$. In addition, take $R' $  so large that the $\phi_j$ and the $\psi_j$ are supported in $B(e,R' )$, and observe that by~\cite[Lemma 4.3]{Calzi2} there is a partition $J_1,\dots, J_N$ of $J$ such that $d(x_j,x_{j'})\Meg 2(R'+\delta)$ for every $j,j'\in J_h$, $j\neq j'$, and for every $h=1,\dots, N$. 
	Then, observe that, if $p<\infty$,
	\[
	\begin{split}
		\norm*{ \norm{ \eps^{-k\alpha} ((\widetilde \phi_j f)*\eta_k )(x)  }_{L^{q,p}_{k,x}(\N-M,G)}   }_{\ell^p_j(J)}^{\min(1,p)}&\meg \sum_{h=1}^N \norm*{ \norm{ \eps^{-k\alpha} ((\widetilde \phi_j f)*\eta_k )(x)   }_{L^{q,p}_{k,x}(\N-M,G)}   }_{\ell^p_j(J_h)} ^{\min(1,p)}\\
		&= \sum_{h=1}^N \norm*{\norm*{ \norm{ \eps^{-k\alpha} ((\widetilde \phi_j f)*\eta_k )(x)  }_{\ell^q_k(\N-M)}}_{\ell^p_j(J_h)}  }_{L^p_x(G)}^{\min(1,p)}\\
		&=\sum_{h=1}^N\norm*{\norm*{ \norm{ \eps^{-k\alpha} ((\widetilde \phi_j f)*\eta_k )(x)  }_{\ell^q_k(\N-M)}}_{\ell^q_j(J_h)}  }_{L^p_x(G)}^{\min(1,p)}\\
		&=\sum_{h=1}^N\norm*{\norm*{ \norm{ \eps^{-k\alpha} ((\widetilde \phi_j f)*\eta_k )(x)  }_{\ell^q_j(J_h)} }_{\ell^q_k(\N-M)} }_{L^p_x(G)}^{\min(1,p)}\\
		&=\sum_{h=1}^N\norm*{\norm*{  \eps^{-k\alpha} \sum_{j\in J_h}((\widetilde \phi_j f)*\eta_k )(x)  }_{\ell^q_k(\N-M)} }_{L^p_x(G)}^{\min(1,p)}\\
		&\meg C_1\sum_{h=1}^N\norm*{  \sum_{j\in J_h}\widetilde \phi_j  f }_{F^{p,q}_\alpha (G)}^{\min(1,p)}
	\end{split}
	\]
	where the second and the fourth equality follow from the fact that, for every $x\in G$ and for every $h=1,\dots,N$, there is at most one $j\in J_h$ such that $ ((\widetilde \phi_j f)*\eta_k )(x)\neq0 $ for some $k\in\N-M$. The continuity of $\Ic_{(\phi_j)}\colon F^{p,q}_\alpha(G)\to \ell^p(J; F^{p,q}_\alpha(G))$ then follows from  Theorem~\ref{teo:6c} and~\cite[Proposition 9.2]{BCP} when $p<\infty$, since $\sum_{j\in J_h}\widetilde \phi_j $ clearly belongs to $W^{\infty,\infty}(G)$ for every $h=1,\dots, N$.  The case $p=\infty$ follows easily from Theorem~\ref{teo:6c}, since the $\widetilde \phi_j$ are uniformly bounded in $B^{\infty,\infty}_{\abs{\alpha}+1}(G)$.
	Since  $\Ic_{(\phi_j)}(C^\infty_c(G))\subseteq C^\infty_c(G)^{(J)}$, it is also clear that $\Ic_{(\phi_j)}$ maps $\mathring F^{p,q}_\alpha(G)$ into $ \ell^p(J; \mathring F^{p,q}_\alpha(G))$ continuously.  
	
	Next, take $(f_j)\in  \ell^p(J;   F^{p,q}_\alpha(G))$, set $f\coloneqq \sum_j \widetilde \psi_j f_j$, and observe that, if $p<\infty$,
	\[
	\begin{split}
		\norm*{ \norm{\eps^{-k\alpha}  (f*\eta_k )(x)  }_{\ell^q_k(\N-M)}  }_{L^p_x(G)}&\meg \norm*{\sum_j \norm{ \eps^{-k\alpha}  [(\widetilde \psi_j f_j)*\eta_k](x)  }_{\ell^q_k(\N-M)}  }_{L^p_x(G)}\\
		&\meg N^{1/p'}\norm*{\norm*{ \norm{ \eps^{-k\alpha}  [(\widetilde \psi_j f_j)*\eta_k ](x)  }_{\ell^q_k(\N-M)}  }_{\ell^p_j(J)}}_{L^p_x(G)}\\
		&= N^{1/p'}\norm*{\norm*{ \norm{ \eps^{-k\alpha}  [(\widetilde \psi_j f_j)*\eta_k ](x)  }_{\ell^q_k(\N-M)}  }_{L^p_x(G)}}_{\ell^p_j(J)}\\
		&\meg C_1 N^{1/p'}\norm*{\norm{  \widetilde \psi_j f_j  }_{F^{p,q}_\alpha(G)}}_{\ell^p_j(J)}.
	\end{split}
	\]
	The continuity of $\Rc_{(\psi_j)}\colon \ell^p(J; F^{p,q}_\alpha(G))\to  F^{p,q}_\alpha(G) $ then follows again from Theorem~\ref{teo:6c} and~\cite[Proposition 9.2]{BCP}, since the $\widetilde \psi_j$ are uniformly bounded in $W^{\infty,\infty}(G)$. 
	When $p=\infty$, one may observe that 
	\[
	\begin{split}
		&\eps^{-N'Q_*/q}\norm{ \chi_{[N',+\infty)\times B(x,\eps^{N'}))}(k,x') \eps^{-k\alpha} (f*\eta_{k-M})(x')  }_{L^q_{(k,x')}(\N\times G)} \\
		&\qquad\meg N^{1/q} \sup_{j\in J} \norm{ \eps^{-k\alpha} [(\widetilde \psi_j f_j)*\eta_{k-M}](x')}_{\Cc^q_{(k,x')}(\eps) }
	\end{split}
	\]
	for every $N'\in \N$ and for every $x\in G$.
	Since  $\Rc_{(\psi_j)}(C^\infty_c(G)^{(J)})\subseteq C^\infty_c(G)$, it is also clear that $\Rc_{(\psi_j)}$ maps $\ell^p(J; \mathring F^{p,q}_\alpha(G))$ into $  \mathring F^{p,q}_\alpha(G) $ continuously.
\end{proof}

\section{Pointwise Multipliers of Triebel--Lizorkin Spaces}

In this section we characterize the space of pointwise multipliers of the Triebel--Lizorkin spaces $F^{p,q}_\alpha(G)$ for $\alpha>Q_*/p$ (under some additional conditions). Since the spaces $F^{p,q}_\alpha(G)$ `localize' well (cf.~Proposition~\ref{prop:30c}), the corresponding space of pointwise multipliers turns out to be the space of tempered distributions which stay locally in $F^{p,q}_\alpha(G )$ with uniformly bounded norms.
We begin with a technical lemma which is necessary to define the space of such distributions.

\begin{lem}\label{lem:51c}
	Take $p,q\in (0,\infty]$ and $\alpha\in \R$. Take two   $\eta_1,\eta_2\in C^\infty_c(G)$ with $\eta_2\neq 0$. 
	Then, there is a constant $C>0$ such that 
	\[
	\norm{\eta_1(x\,\cdot\,) f}_{F^{p,q}_\alpha(G)}\meg C  \norm{f}_{F^{p,q}_\alpha(G)}
	\]
	for every $x\in G$ and for every $f\in \Sr'(G)$, and such that
	\[
	\sup_{x\in G}  \norm{\eta_1(x\,\cdot\,) f}_{F^{p,q}_\alpha(G)}\meg \sup_{x\in G}  \norm{\eta_2(x\,\cdot\,) f}_{F^{p,q}_\alpha(G)} 
	\]
	for every $f\in \Sr'(G)$.
\end{lem}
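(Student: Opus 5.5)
The plan is to derive both inequalities from the multiplier estimate of Theorem~\ref{teo:6d}~(ii), combined with the left-invariance of the Besov quasi-norms. If $\norm{f}_{F^{p,q}_\alpha(G)}=\infty$ (respectively, the right-hand supremum is infinite), there is nothing to prove, so I assume it is finite throughout.

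\textsc{Step I} (first inequality). I fix $\alpha'>\alpha_++((1/\min(p,q)-1)_+Q_*-\alpha)_+$ and write $\eta_{1,x}\coloneqq\eta_1(x\,\cdot\,)$, which is the left translate $L_{x^{-1}}\eta_1$. The defining quasi-norm of $B^{\infty,\infty}_{\alpha'}(G)$ only involves the quantities $\norm{\eta_{1,x}*\psi_j}_{L^\infty(G)}$. Left translations commute with right convolution: $(L_{x^{-1}}\eta_1)*\psi_j=L_{x^{-1}}(\eta_1*\psi_j)$. Left translations are also isometries of $L^\infty(G)$. Hence
\[
\norm{\eta_{1,x}}_{B^{\infty,\infty}_{\alpha'}(G)}=\norm{\eta_1}_{B^{\infty,\infty}_{\alpha'}(G)}<\infty
\]
for every $x\in G$, where finiteness holds because $\eta_1\in C^\infty_c(G)\subseteq\Sr(G)$. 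Moreover $\eta_{1,x}\in C^\infty(G)$. Theorem~\ref{teo:6d}~(ii) provides a continuous bilinear map
\[
F^{p,q}_\alpha(G)\times(B^{\infty,\infty}_{\alpha'}(G)\cap C^\infty(G))\to F^{p,q}_\alpha(G),
\]
and continuity of such a map between quasi-normed spaces means a bound $\norm{fg}\meg C'\norm{f}\norm{g}$. Applying it with $g=\eta_{1,x}$ gives the first inequality with $C=C'\norm{\eta_1}_{B^{\infty,\infty}_{\alpha'}(G)}$, uniformly in $x$. For a general $f\in\Sr'(G)$ with finite norm, the product $\eta_{1,x}f$ is understood as in the discussion following Theorem~\ref{teo:6d}, which is consistent with the ordinary product since $\eta_{1,x}$ is a test function.

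\textsc{Step II} (second inequality). I read this inequality as holding up to a multiplicative constant $C$ depending on $\eta_1,\eta_2,p,q,\alpha$; this constant seems to be implicit in the statement. The idea is to factor $\eta_1$ through finitely many translates of $\eta_2$.

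Since $\eta_2\neq0$, there are $c>0$ and an open set $V$ on which $\abs{\eta_2}>c$. The left translates $w^{-1}V$, $w\in G$, cover $G$. By compactness of $\supp\eta_1$, there are finitely many $w_1,\dots,w_n$ with $\supp\eta_1\subseteq\bigcup_i w_i^{-1}V$.

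I then take a smooth partition of unity $(\chi_i)$ on a neighbourhood of $\supp\eta_1$, with $\chi_i\in C^\infty_c(w_i^{-1}V)$. I set $\theta_i\coloneqq \eta_1\chi_i/\eta_2(w_i\,\cdot\,)$; this lies in $C^\infty_c(G)$ because $\abs{\eta_2(w_i\,\cdot\,)}>c$ on $\supp\chi_i$. By construction
\[
\eta_1=\sum_i\theta_i\,\eta_2(w_i\,\cdot\,),
\]
and hence, for every $x\in G$,
\[
\eta_1(x\,\cdot\,)f=\sum_{i=1}^n\theta_i(x\,\cdot\,)\,\eta_2(xw_i\,\cdot\,)f .
\]

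\textsc{Step III} (conclusion). I apply the quasi-triangle inequality in $F^{p,q}_\alpha(G)$, with a constant depending only on $n$, $p$ and $q$. Then I apply Step~I to each summand, with $\eta_1$ replaced by $\theta_i$ and $f$ replaced by $\eta_2(xw_i\,\cdot\,)f$. This gives
\[
\norm{\eta_1(x\,\cdot\,)f}_{F^{p,q}_\alpha(G)}\meg C''\sum_{i=1}^n\norm{\eta_2(xw_i\,\cdot\,)f}_{F^{p,q}_\alpha(G)}\meg nC''\sup_{y\in G}\norm{\eta_2(y\,\cdot\,)f}_{F^{p,q}_\alpha(G)}.
\]
Taking the supremum over $x\in G$ concludes the argument.

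The only genuinely delicate point is in Step~I. One has to check that Theorem~\ref{teo:6d}~(ii) really yields a bound that is uniform over the family $\{\eta_{1,x}\}_{x\in G}$. This reduces to the exact left-translation invariance of the $B^{\infty,\infty}_{\alpha'}$ quasi-norm noted above. The remaining steps are elementary compactness and partition-of-unity arguments.
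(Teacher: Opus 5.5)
Your proposal is correct and follows essentially the same route as the paper: the first estimate comes from the multiplier theorem (Theorem~\ref{teo:6d}~(ii)) applied to the translates $\eta_1(x\,\cdot\,)$, which are uniformly bounded in $B^{\infty,\infty}_{\alpha'}(G)\cap C^\infty(G)$; the second comes from writing $\eta_1=\sum_i\theta_i\,\eta_2(w_i\,\cdot\,)$ via a finite cover of $\supp\eta_1$ by translates of a set where $\abs{\eta_2}$ is bounded below, and, as you note, that inequality only holds up to a constant, which the paper's proof also produces. The one slip is cosmetic: translating the identity gives $\eta_1(x\,\cdot\,)f=\sum_i\theta_i(x\,\cdot\,)\,\eta_2(w_ix\,\cdot\,)f$ rather than $\eta_2(xw_i\,\cdot\,)f$, which is harmless because you then take the supremum over all translates.
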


\begin{proof}  
	\textsc{Step I}	The first assertion follows easily from Theorem~\ref{teo:6c}, since the $\eta_1(x\,\cdot\,)$ are uniformly bounded in $C^\infty(G)\cap B^{\infty,\infty}_{\abs{\alpha} +1}(G)$.
	
	\textsc{Step II}  Observe that there are $c>0$, $x_0\in G$, and $r>0$ such that $\abs{\eta_2(x)}\Meg c$ for every $x\in B(x_0,r)$. In addition, there is a finite subset $J$ of $G$ such that $\supp{\eta_1}\subseteq J B(x_0,r)$. Consequently, there is a family $(\psi_j)_{j\in J}$ of elements of $C^\infty_c(G)$ such that $\eta_1=\sum_{j\in J} \psi_j \eta_2(j^{-1}\,\cdot\,)$. Then, by~\textsc{step I} we see that there is a constant $C_2>0$ such that
	\[
	\norm{\eta_1(x\,\cdot\,) f}_{F^{p,q}_\alpha(G)}\meg C_2\sum_{j\in J} \norm{\eta_2(j^{-1}x\,\cdot\,) f}_{F^{p,q}_\alpha(G)}\meg C_2 \card(J) \sup_{x'} \norm{\eta_2(x'\,\cdot\,) f}_{F^{p,q}_\alpha(G)}
	\]
	for every $x\in G$.  
	The assertion follows. 
\end{proof}

\begin{deff}
	Take $p,q\in (0,\infty]$ and $\alpha\in\R$. We define $F^{p,q}_{\alpha,\unif}(G)$ as the space of $f\in \Sr'(G)$ such that $\sup_{x\in G} \norm{\eta(x\,\cdot\,) f}_{F^{p,q}_\alpha(G)}$ is finite 
	for some non-zero $\eta\in C^\infty_c(G)$, endowed with the corresponding topology.
\end{deff}

\begin{deff}
	Take $p,q\in (0,\infty]$ and $\alpha\in \R$. We define $\Mc( F^{p,q}_\alpha(G))$ as the space of $u\in \Sr'(G)$ such that the mapping  $\Sr(G)\ni f\mapsto u f\in \Sr'(G)$ induces a continuous linear mapping $\mathring F^{p,q}_\alpha(G)\to F^{p,q}_\alpha(G)$, endowed with the corresponding topology.
\end{deff}

\begin{teo}\label{teo:14b}
	Take $p,q\in (0,\infty]$, with $p<\infty$,  $\alpha>Q_*/p,(1/\min(p,q)-1)_+Q_*$, and $\alpha'> (1/q-1)_+ Q_*$. Then, $\Mc(F^{p,q}_\alpha(G))= F^{p,q}_{\alpha,\unif}(G)$ and $\Mc(F^{\infty,q}_{\alpha'}(G))=F^{\infty,q}_{\alpha'}(G)$. In addition, the canonical   bilinear mapping\footnote{Notice that $F^{p,q}_\alpha(G)\subseteq L^\infty(G)$ by~\cite[Proposition 4.10]{Calzi4}, so that $F^{p,q}_{\alpha,\unif}(G)\subseteq L^\infty (G)$ and $u f$ may be defined unambiguously as an element of $L^\infty (G)$.}
	\[
	F^{p,q}_\alpha(G)\times F^{p,q}_{\alpha,\unif}(G)\ni (f,u)\mapsto u f\in F^{p,q}_\alpha(G)
	\] 
	is continuous.
\end{teo}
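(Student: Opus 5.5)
We prove the inclusions $F^{p,q}_{\alpha,\unif}(G)\subseteq \Mc(F^{p,q}_\alpha(G))$ and $\Mc(F^{p,q}_\alpha(G))\subseteq F^{p,q}_{\alpha,\unif}(G)$ separately; the continuity of the bilinear map comes out of the first. Throughout, fix a $(\delta,R)$-lattice $(x_j)_{j\in J}$ on $G$ and $\phi,\psi\in C^\infty_c(G)$ with $\psi=1$ on $\supp\phi$ and $\sum_j\phi(x_j^{-1}\,\cdot\,)=1$. Such a partition of unity exists by the usual construction, since the number of overlapping translates is uniformly bounded.

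For the first inclusion with $p<\infty$, take $u\in F^{p,q}_{\alpha,\unif}(G)$ and $f\in \mathring F^{p,q}_\alpha(G)$ (or $f\in F^{p,q}_\alpha(G)$). Since $\psi=1$ on $\supp\phi$, we can write
\[
uf=\sum_j \psi(x_j^{-1}\,\cdot\,)u\,\phi(x_j^{-1}\,\cdot\,)f=\Rc_{(\psi)}\big((\psi(x_j^{-1}\,\cdot\,)u)(\phi(x_j^{-1}\,\cdot\,)f)\big)_j .
\]
The hypotheses $\alpha>Q_*/p$ and $\alpha>(1/\min(p,q)-1)_+Q_*$ are exactly those of Corollary~\ref{cor:20b}, so $F^{p,q}_\alpha(G)$ is a quasi-Banach algebra. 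Hence
\[
\norm{(\psi(x_j^{-1}\,\cdot\,)u)(\phi(x_j^{-1}\,\cdot\,)f)}_{F^{p,q}_\alpha}\meg C\norm{\psi(x_j^{-1}\,\cdot\,)u}_{F^{p,q}_\alpha}\norm{\phi(x_j^{-1}\,\cdot\,)f}_{F^{p,q}_\alpha}.
\]
By Lemma~\ref{lem:51c}, the first factor is at most $C\sup_x\norm{\eta(x\,\cdot\,)u}_{F^{p,q}_\alpha}$. Taking $\ell^p(J)$ norms and applying Proposition~\ref{prop:30c} twice (first the continuity of $\Rc_{(\psi)}$, then of $\Ic_{(\phi)}$) gives
\[
\norm{uf}_{F^{p,q}_\alpha}\meg C\norm{u}_{F^{p,q}_{\alpha,\unif}}\norm{f}_{F^{p,q}_\alpha}.
\]
This is also the continuity of the bilinear map.

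For the reverse inclusion, take $u\in \Mc(F^{p,q}_\alpha(G))$. The functions $\eta(x\,\cdot\,)$ lie in $C^\infty_c(G)\subseteq \mathring F^{p,q}_\alpha(G)$, and their norms are bounded uniformly in $x$. This holds by left-invariance of the quasi-norm, since convolution on the right commutes with left translations and the Haar measure is left invariant. Therefore
\[
\sup_x\norm{\eta(x\,\cdot\,)u}_{F^{p,q}_\alpha}\meg \norm{u}_{\Mc}\sup_x\norm{\eta(x\,\cdot\,)}_{F^{p,q}_\alpha}<\infty ,
\]
so $u\in F^{p,q}_{\alpha,\unif}(G)$.

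For $p=\infty$, we use $F^{\infty,q}_{\alpha'}(G)\subseteq L^\infty(G)$ when $\alpha'>0$ (\cite[Proposition 4.10]{Calzi4}), and show $F^{\infty,q}_{\alpha'}=F^{\infty,q}_{\alpha',\unif}$. One inclusion is Lemma~\ref{lem:51c}, Step I. For the other, given $u\in F^{\infty,q}_{\alpha',\unif}$, we use the $p=\infty$ part of Proposition~\ref{prop:30c}, which yields $\norm{u}=\norm{\Rc_{(\psi)}(\phi(x_j^{-1}\,\cdot\,)u)_j}\meg C\sup_j\norm{\phi(x_j^{-1}\,\cdot\,)u}$. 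Then $\Mc(F^{\infty,q}_{\alpha'})=F^{\infty,q}_{\alpha',\unif}$ follows by the same two arguments as above.

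\textbf{Main obstacle.} In the first argument for $p=\infty$ we need $F^{\infty,q}_{\alpha'}$ to be an algebra. Theorem~\ref{teo:6d}(i) only gives that $F^{\infty,q}_{\alpha'}\cap L^\infty$ is an algebra under $\alpha'>(1/\min(\infty,q)-1)_+Q_*=(1/q-1)_+Q_*$, so we must combine it with the $L^\infty$ embedding. That embedding needs $\alpha'>0$; when $q\Meg1$ the hypothesis does not give it, and this case must be handled by a separate argument. A second difficulty is that the product of $u$ with an element of $\mathring F$ must be defined and must coincide with the distributional product. We would handle this by approximating $f$ by the operators $T_j$ of the corollary above and using lower semicontinuity of the quasi-norms, together with the convergence in $\Sr'(G)$ already established in the proof of Theorem~\ref{teo:6c}.
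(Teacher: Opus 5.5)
Your argument is correct. For $p<\infty$ it is the paper's own: decompose $uf$ with a partition of unity, use the algebra property of $F^{p,q}_\alpha(G)$ (Corollary~\ref{cor:20b}), Lemma~\ref{lem:51c} and Proposition~\ref{prop:30c} in both directions, and get $\Mc(F^{p,q}_\alpha(G))\subseteq F^{p,q}_{\alpha,\unif}(G)$ by testing on the translates $\eta(x\,\cdot\,)$, whose quasi-norms do not depend on $x$.

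One minor correction: for a general lattice there is no \emph{single} $\phi$ with $\sum_j\phi(x_j^{-1}\,\cdot\,)=1$. Take a bounded family $(\phi_j)$ instead, for instance $\phi_j=\psi_0/\Psi(x_j\,\cdot\,)$ with $\Psi=\sum_k\psi_0(x_k^{-1}\,\cdot\,)$, as the paper does. Proposition~\ref{prop:30c} is stated for bounded families, and a single $\psi$ equal to $1$ on all their supports still works.

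For $p=\infty$ you take a different route. The paper gets $F^{\infty,q}_{\alpha'}(G)\subseteq\Mc(F^{\infty,q}_{\alpha'}(G))$ directly from Corollary~\ref{cor:20b}. For the converse it multiplies $u$ by smooth cut-offs $\tau_k$ with $\chi_{B(e,k)}\meg\tau_k\meg\chi_{B(e,k+2)}$. These are uniformly bounded in $W^{\infty,\infty}(G)$, hence in $\mathring F^{\infty,q}_{\alpha'}(G)$, so $u\tau_k\to u$ in $\Sr'(G)$ with bounded quasi-norms, and lower semicontinuity gives $u\in F^{\infty,q}_{\alpha'}(G)$.

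You instead prove $F^{\infty,q}_{\alpha'}(G)=F^{\infty,q}_{\alpha',\unif}(G)$ via the $p=\infty$ case of Proposition~\ref{prop:30c}, then rerun the $p<\infty$ argument. This is exactly the alternative the paper mentions right after the statement. Your version is uniform in $p$ and yields the identification with the uniform space as a by-product. The cost is that at $p=\infty$ it rests on the full localization machinery (Proposition~\ref{prop:30c}, hence Proposition~\ref{lem:69} and the $\Cc^q$ estimates). The paper's Fatou-type argument needs only the algebra property and lower semicontinuity of the quasi-norm.

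The ``main obstacle'' you describe does not exist. Since $(1/q-1)_+Q_*\Meg 0$, the hypothesis $\alpha'>(1/q-1)_+Q_*$ always forces $\alpha'>0$; for $q\Meg1$ it \emph{is} $\alpha'>0$. It is precisely the condition $\alpha'>\max\big(Q_*/p,(1/\min(p,q)-1)_+Q_*\big)$ of Corollary~\ref{cor:20b} at $p=\infty$. So $F^{\infty,q}_{\alpha'}(G)$ is an algebra contained in $L^\infty(G)$, and there is no separate case to handle.

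Likewise, defining $uf$ causes no difficulty. For $p<\infty$, $\alpha>Q_*/p$ gives $F^{p,q}_\alpha(G),F^{p,q}_{\alpha,\unif}(G)\subseteq L^\infty(G)$, and for $p=\infty$ you have $F^{\infty,q}_{\alpha'}(G)\subseteq L^\infty(G)$. So $uf$ is an ordinary product of bounded functions, as in the footnote to the statement, and no approximation by the $T_j$ is needed.
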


As a consequence of Proposition~\ref{prop:30c}, $F^{\infty,q}_{\alpha'}(G)=F^{\infty,q}_{\alpha',\unif}(G)$, so that we could have stated the above result without considering separately the case $p=\infty$, but the above formulation seemed more natural.

\begin{proof}
	Let us first prove that $\Mc(F^{\infty,q}_{\alpha'}(G))=F^{\infty,q}_{\alpha'}(G)$. The continuous inclusion $F^{\infty,q}_{\alpha'}(G)\subseteq \Mc(F^{\infty,q}_{\alpha'}(G)) $ follows from Corollary~\ref{cor:20b}. For the converse inclusion, take a positive $\phi\in C^\infty_c(G)$ with integral $1$ and supported in $B(e,1)$, and consider $\tau_k\coloneqq \chi_{B(e,k+1)}*\phi$, so that $\chi_{B(e,k)}\meg \tau_k\meg \chi_{B(e,k+2)}$ for every $k\in\N$. In addition, the $\tau_k$ are uniformly bounded in $W^{\infty,\infty}(G)$,  hence in $\mathring F^{\infty,q}_{\alpha'}(G)$. It then follows that, if $f\in \Mc(F^{\infty,q}_{\alpha'}(G))$, then the $f\tau_k$ are uniformly bounded in $F^{\infty,q}_{\alpha'}(G)$, so that their limit $f$ (in $\Sr'(G)$) belongs to $F^{\infty,q}_{\alpha'}(G)$. The (continuous) inclusion $\Mc(F^{\infty,q}_{\alpha'}(G))\subseteq F^{\infty,q}_{\alpha'}(G)$  follows. 
	
	Next, take a $(1,2)$-lattice $(x_j)_{j\in J}$ on $G$ and a bounded family $(\phi_j)_{j\in J}$ of elements of $C^\infty_c(G)$ such that $\sum_j \widetilde \phi_j=1$, where $\widetilde \phi_j=\phi_j(x_j^{-1}\,\cdot\,)$ for every $j\in J$. In addition, take $\psi\in C^\infty_c(G)$ such that $\psi=1$ on $B(e,R+1)$, where $R>0$ is chosen so that  the $\phi_j$ are supported in $B(e,R)$.
	Observe that, by Proposition~\ref{prop:30c} and Corollary~\ref{cor:20b}, there is a constant $C_1>1$ such that
	\[
	\begin{split}
		\norm{f g}_{F^{p,q}_\alpha(G)}&\meg C_1 \norm*{\norm{ \widetilde \phi_j f g}_{F^{p,q}_\alpha(G)}}_{\ell^p_j(J)}\\
		&= C_1 \norm*{\norm{ \widetilde \phi_j f \psi(x_j^{-1}\,\cdot\,) g}_{F^{p,q}_\alpha(G)}}_{\ell^p_j(J)}\\
		&\meg C_1^2\norm*{\norm{ \widetilde \phi_j f}_{F^{p,q}_\alpha(G)} \norm{\psi(x_j^{-1}\,\cdot\,)g}_{F^{p,q}_\alpha(G)}}_{\ell^p_j(J)}\\
		&\meg C_1^2 \norm*{\norm{ \widetilde \phi_j f}_{F^{p,q}_\alpha(G)}}_{\ell^p_j(J)}\sup_j \norm{\psi(x_j^{-1}\,\cdot\,)g}_{F^{p,q}_\alpha(G)}\\
		&\meg C_1^3 \norm{f}_{F^{p,q}_\alpha(G)} \norm{g}_{F^{p,q}_{\alpha,\unif}(G)},
	\end{split}
	\]
	whence the last assertion and the continuous inclusion $F^{p,q}_{\alpha,\unif}(G)\subseteq \Mc(F^{p,q}_\alpha(G))$. For what concerns the converse inclusion, take $f\in \mathring F^{p,q}_\alpha(G)$, $u\in \Mc(F^{p,q}_\alpha(G))$, and a non-zero $\phi\in C^\infty_c(G)$. Then,   for every $x\in G$,
	\[
	\begin{split}
		\norm{\phi(x\,\cdot\,) u  }_{F^{p,q}_\alpha(G)}& \meg  \norm{u}_{\Mc(F^{p,q}_\alpha(G))} \norm{\phi(x\,\cdot\,)  }_{F^{p,q}_\alpha(G)}\\
		&=\norm{u}_{\Mc(F^{p,q}_\alpha(G))}   \norm{ \phi   }_{F^{p,q}_\alpha(G)},
	\end{split}
	\]
	whence the conclusion.
\end{proof}

\section{Pointwise Multipliers of Besov Spaces}

The spaces of pointwise multipliers of Besov spaces $B^{p,q}_\alpha(G)$ look like their counterparts for Triebel--Lizorkin spaces only when $p\meg q$. When $p=\infty$, they are particularly simple to describe, since they coincide with the space $B^{\infty,q}_\alpha(G)$ itself (for $\alpha>0$). For $p<q$, we may only provide a slightly simplified description, showing that one may `get away' with testing the continuity of pointwise multiplication on a relatively simple subspace.

\begin{deff}
	Take $p,q\in (0,\infty]$ and $\alpha\in \R$. We define $\Mc(B^{p,q}_\alpha(G))$ as the space of $u\in \Sr'(G)$ such that the mapping $\Sr(G)\ni f \mapsto u f\in \Sr'(G)$ induces a continuous linear mapping $\mathring B^{p,q}_\alpha(G)\to B^{p,q}_\alpha(G)$, endowed with the corresponding topology.
\end{deff} 

\begin{prop}\label{prop:21c}
	Take $q\in (0,\infty]$ and $\alpha >0$. Then, $\Mc(B^{\infty,q}_\alpha(G))=B^{\infty,q}_\alpha(G)$.
\end{prop}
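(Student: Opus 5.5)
The plan is to copy the argument used for $\Mc(F^{\infty,q}_{\alpha'}(G))=F^{\infty,q}_{\alpha'}(G)$ in the proof of Theorem~\ref{teo:14b}, with Besov spaces in place of Triebel--Lizorkin spaces. There are two inclusions to prove.

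\emph{First inclusion: $B^{\infty,q}_\alpha(G)\subseteq \Mc(B^{\infty,q}_\alpha(G))$, continuously.} Take $u\in B^{\infty,q}_\alpha(G)$ and $f\in \Sr(G)$.
\begin{enumerate}
\item Since $\alpha>0=Q_*(1/\infty-1)_+$, Theorem~\ref{teo:6c}(i) applies with $p=\infty$: the space $B^{\infty,q}_\alpha(G)\cap L^\infty(G)$ is an algebra under pointwise multiplication.
\item By~\cite[Proposition 4.10]{Calzi4}, $B^{\infty,q}_\alpha(G)\subseteq L^\infty(G)$ for $\alpha>0$. Hence the algebra in the previous step is $B^{\infty,q}_\alpha(G)$ itself, and the product $uf$ is a well-defined bounded function.
\item Inspecting the proof of Theorem~\ref{teo:6c} (the Besov analogue of the paraproduct estimates in Theorem~\ref{teo:6d}) gives
\[
\norm{uf}_{B^{\infty,q}_\alpha(G)}\meg C\big(\norm{u}_{B^{\infty,q}_\alpha(G)}\norm{f}_{L^\infty(G)}+\norm{u}_{L^\infty(G)}\norm{f}_{B^{\infty,q}_\alpha(G)}\big).
\]
\item Combining this with the continuous inclusion $B^{\infty,q}_\alpha(G)\subseteq L^\infty(G)$, we get $\norm{uf}_{B^{\infty,q}_\alpha(G)}\meg C'\norm{u}_{B^{\infty,q}_\alpha(G)}\norm{f}_{B^{\infty,q}_\alpha(G)}$.
\item This shows that $f\mapsto uf$ extends continuously from $\mathring B^{\infty,q}_\alpha(G)$ to $B^{\infty,q}_\alpha(G)$, with operator norm controlled by $\norm{u}_{B^{\infty,q}_\alpha(G)}$.
\end{enumerate}
If one prefers not to reopen the proof of Theorem~\ref{teo:6c}, Corollary~\ref{cor:20b} gives the same conclusion directly when $\alpha>0=Q_*/p$.

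\emph{Second inclusion: $\Mc(B^{\infty,q}_\alpha(G))\subseteq B^{\infty,q}_\alpha(G)$.} Let $u\in\Mc(B^{\infty,q}_\alpha(G))$.
\begin{enumerate}
\item Take $\tau_k\coloneqq\chi_{B(e,k+1)}*\phi$, with $\phi$ a positive test function of integral $1$ supported in $B(e,1)$, exactly as in the proof of Theorem~\ref{teo:14b}. Then $\tau_k\in C^\infty_c(G)\subseteq\Sr(G)$ and $\tau_k\to 1$ locally uniformly.
\item The $\tau_k$ are uniformly bounded in $W^{\infty,\infty}(G)$, since $X\tau_k=\chi_{B(e,k+1)}*X\phi$. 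We then need $W^{\infty,\infty}(G)$, in the bounded sense, to sit inside $B^{\infty,q}_\alpha(G)$; this follows from~\cite[Theorem 3.20 and Propositions 3.19 and 4.10]{Calzi4}, by choosing a Sobolev order larger than $\alpha$. As the $\tau_k$ lie in $\Sr(G)$, they are bounded in $\mathring B^{\infty,q}_\alpha(G)$.
\item Therefore $u\tau_k$ is bounded in $B^{\infty,q}_\alpha(G)$ by $\norm{u}_{\Mc}\sup_k\norm{\tau_k}$.
\item We have $u\tau_k\to u$ in $\Sr'(G)$. This holds because $u\tau_k\to u$ in $\Dc'(G)$, and the family is bounded in $B^{\infty,q}_\alpha(G)\hookrightarrow\Sr'(G)$. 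Since bounded subsets of $\Sr'(G)$ are relatively compact, every limit point must be $u$.
\item By the lower semicontinuity of the $B^{\infty,q}_\alpha$ quasi-norm under $\Sr'$ convergence (each $u\tau_k*\psi_j\to u*\psi_j$ pointwise, then apply Fatou), we conclude $u\in B^{\infty,q}_\alpha(G)$, with $\norm{u}_{B^{\infty,q}_\alpha(G)}\meg C\norm{u}_{\Mc}$.
\end{enumerate}

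The main obstacle is the first inclusion: it requires the \emph{bilinear} estimate for $B^{\infty,q}_\alpha$, and Theorem~\ref{teo:6c}(i) as stated only asserts the algebra property. I would first try to obtain it from the continuity statement in Corollary~\ref{cor:20b} together with the closed graph theorem, which is available for quasi-Banach spaces. If that proves awkward, I would instead repeat \textsc{step I} and \textsc{step II} of the proof of Theorem~\ref{teo:6d}, with $L^{q,p}$ replaced by $\ell^q(L^\infty)$, which is easier here.
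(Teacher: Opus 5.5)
Your proposal is correct and is essentially the paper's argument. The paper omits this proof as a repetition of the first part of the proof of Theorem~\ref{teo:14b}: the inclusion $B^{\infty,q}_\alpha(G)\subseteq \Mc(B^{\infty,q}_\alpha(G))$ is taken directly from Corollary~\ref{cor:20b} (with $p=\infty$, so that $\alpha>Q_*/p=0$), which is your fallback route, and the converse inclusion uses the cut-offs $\tau_k$, bounded in $W^{\infty,\infty}(G)$ and hence in $\mathring B^{\infty,q}_\alpha(G)$, together with $u\tau_k\to u$ in $\Sr'(G)$ and lower semicontinuity of the quasi-norm.
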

 
 The proof essentially repeats the first part of the proof of Theorem~\ref{teo:14b} and is omitted.

\begin{lem}\label{lem:51d}
	Take  $p,q\in (0,\infty]$ and $\alpha\in\R$. Take two   $\eta_1,\eta_2\in C^\infty_c(G)$ with $\eta_2\neq 0$. Then, there is a constant $C>0$ such that 
	\[
	\norm{\eta_1(x\,\cdot\,) f}_{B^{p,q}_\alpha(G)}\meg C  \norm{f}_{B^{p,q}_\alpha(G)}
	\]
	for every $x\in G$ and for every $f\in \Sr'(G)$, and such that
	\[
	\sup_{x\in G}\norm{\eta_1(x\,\cdot\,) f}_{B^{p,q}_\alpha(G)}\meg \sup_{x\in G} \norm{\eta_2(x\,\cdot\,) f}_{B^{p,q}_\alpha(G)}
	\]
	for every $f\in \Sr'(G)$.
\end{lem}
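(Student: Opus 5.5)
The plan is to repeat the proof of Lemma~\ref{lem:51c}, replacing Triebel--Lizorkin spaces by Besov spaces and Theorem~\ref{teo:6d} by Theorem~\ref{teo:6c}.

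\emph{First assertion.} We use part (ii) of Theorem~\ref{teo:6c} with some $\alpha'>\alpha_++((1/p-1)_+Q_*-\alpha)_+$. The functions $\eta_1(x\,\cdot\,)$, $x\in G$, are left translates of a single test function. The seminorms of $B^{\infty,\infty}_{\alpha'}(G)$ are controlled by finitely many $L^\infty$ norms of left-invariant derivatives: for instance, $\norm{g}_{B^{\infty,\infty}_{\alpha'}}\lesssim \max_{X\in U_{k\dd},\abs{X}\meg1}\norm{Xg}_{L^\infty}$ for $k\dd\Meg\alpha'$, using the embedding $W^{k\dd,\infty}\subseteq B^{\infty,\infty}_{k\dd}$ recalled from~\cite{Calzi4}. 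Left-invariant operators commute with left translations, so these norms are independent of $x$. Hence the family $\eta_1(x\,\cdot\,)$ is bounded in $C^\infty(G)\cap B^{\infty,\infty}_{\alpha'}(G)$, and the continuity of the bilinear map in Theorem~\ref{teo:6c}(ii) gives a uniform constant $C$.

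There is one point to check: Theorem~\ref{teo:6c}(ii) is stated on $B^{p,q}_\alpha(G)$, while here $f\in\Sr'(G)$ is arbitrary. If $f\notin B^{p,q}_\alpha(G)$, the right-hand side is infinite and there is nothing to prove. Otherwise the product $\eta_1(x\,\cdot\,)f$ is well defined for any distribution, since $\eta_1(x\,\cdot\,)\in C^\infty_c(G)$, and it agrees with the product furnished by Theorem~\ref{teo:6c}.

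\emph{Second assertion.} Choose $c>0$, $x_0\in G$ and $r>0$ with $\abs{\eta_2}\Meg c$ on $B(x_0,r)$. Cover $\supp\eta_1$ by finitely many left translates $yB(x_0,r)$, $y\in J$. Using a smooth partition of unity subordinate to this cover, write $\eta_1=\sum_{y\in J}\psi_y\,\eta_2(y^{-1}\,\cdot\,)$ with $\psi_y\in C^\infty_c(G)$; here $\psi_y$ is the partition function divided by $\eta_2(y^{-1}\,\cdot\,)$, which is bounded below on its support. Then
\[
\eta_1(x\,\cdot\,)f=\sum_{y\in J}\psi_y(x\,\cdot\,)\,\eta_2(y^{-1}x\,\cdot\,)f .
\]
The quasi-triangle inequality in $B^{p,q}_\alpha(G)$ has constant depending only on $p,q$ and $\card J$. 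Combining it with the first assertion applied to each $\psi_y$ and the function $\eta_2(y^{-1}x\,\cdot\,)f$, we obtain
\[
\norm{\eta_1(x\,\cdot\,)f}_{B^{p,q}_\alpha}\meg C'\sup_{x'}\norm{\eta_2(x'\,\cdot\,)f}_{B^{p,q}_\alpha}.
\]
Taking the supremum over $x$ gives the claim, up to a multiplicative constant. As in Lemma~\ref{lem:51c}, the inequality in the statement should be read with such a constant; it cannot hold literally without one.

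The only point that needs care is the uniform boundedness of the left translates $\eta_1(x\,\cdot\,)$ in $B^{\infty,\infty}_{\alpha'}(G)$, which is handled by the left invariance of the defining derivatives as above. The rest is routine.
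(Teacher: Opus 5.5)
Your proof is correct and is essentially the paper's argument, namely the proof of Lemma~\ref{lem:51c} transposed to Besov spaces: the left translates $\eta_1(x\,\cdot\,)$ are uniformly bounded in $C^\infty(G)\cap B^{\infty,\infty}_{\alpha'}(G)$, so Theorem~\ref{teo:6c}(ii) gives the first estimate, and the decomposition $\eta_1=\sum_{y\in J}\psi_y\,\eta_2(y^{-1}\,\cdot\,)$ gives the second, which indeed holds only up to a constant. Two small remarks: $\psi_y$ should be $\eta_1\chi_y/\eta_2(y^{-1}\,\cdot\,)$, since you forgot the factor $\eta_1$; and your explicit choice of $\alpha'>\alpha_++((1/p-1)_+Q_*-\alpha)_+$ is more careful than the paper's $B^{\infty,\infty}_{\abs{\alpha}+1}$, which for small $p$ does not meet the hypothesis of Theorem~\ref{teo:6c}(ii), although this is harmless because the translates are bounded in every $B^{\infty,\infty}_{\alpha'}(G)$.
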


The proof essentially repeats that of Lemma~\ref{lem:51c} and is omitted.

\begin{deff}
	Take  $p,q\in (0,\infty]$ and $\alpha\in\R$.  We define $B^{p,q}_{\alpha,\unif}(G)$ as the space of $f\in \Sr'(G)$ such that $\sup_{x\in G} \norm{\eta(x\,\cdot\,) f}_{B^{p,q}_\alpha(G)}$ is finite 
	for some non-zero $\eta\in C^\infty_c(G)$, endowed with the corresponding topology.
\end{deff}

\begin{lem}\label{lem:53c}
	Take $p,q\in (0,\infty]$ and $\alpha\in\R$.  Then, $\Mc(B^{p,q}_\alpha(G))\subseteq B^{p,q}_{\alpha,\unif}(G)$ continuously. 
	If, in addition, $\alpha\Meg Q_*/p$ and either $q\meg \min(1,p)$ or $\alpha>Q_*/p$, then $B^{p,q}_{\alpha,\unif}(G)\subseteq L^\infty(G)$ continuously.
\end{lem}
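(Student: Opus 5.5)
The first inclusion can be handled exactly as in the last part of the proof of Theorem~\ref{teo:14b}. Take $u\in \Mc(B^{p,q}_\alpha(G))$ and a non-zero $\eta\in C^\infty_c(G)$. For every $x\in G$ the function $\eta(x\,\cdot\,)$ belongs to $C^\infty_c(G)\subseteq \Sr(G)$, hence to $\mathring B^{p,q}_\alpha(G)$. Therefore
\[
\norm{\eta(x\,\cdot\,)u}_{B^{p,q}_\alpha(G)}\meg \norm{u}_{\Mc(B^{p,q}_\alpha(G))}\norm{\eta(x\,\cdot\,)}_{B^{p,q}_\alpha(G)}.
\]
It then suffices to show that $\sup_{x\in G}\norm{\eta(x\,\cdot\,)}_{B^{p,q}_\alpha(G)}<\infty$. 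I would obtain this from left invariance. The convolutions $f\mapsto f*\psi_j$ commute with left translations, and $\beta$ is left invariant, so the $B^{p,q}_\alpha$ quasi-norm is invariant under $f\mapsto f(x\,\cdot\,)$. Hence $\norm{\eta(x\,\cdot\,)}_{B^{p,q}_\alpha(G)}=\norm{\eta}_{B^{p,q}_\alpha(G)}$, and this gives the continuous inclusion.

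For the second assertion, fix a non-zero $\eta\in C^\infty_c(G)$. The hypotheses on $\alpha$ and $q$ are exactly those under which $B^{p,q}_\alpha(G)\subseteq L^\infty(G)$ continuously, by~\cite[Proposition 4.10]{Calzi4} (used in the same way in the footnote to Theorem~\ref{teo:14b} and in Corollary~\ref{cor:20b}). So there is $C>0$ with $\norm{g}_{L^\infty}\meg C\norm{g}_{B^{p,q}_\alpha(G)}$ for all $g\in B^{p,q}_\alpha(G)$.

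Next choose, by Lemma~\ref{lem:51d}, a cutoff $\eta$ with $\abs{\eta}\Meg c>0$ on some ball $B(e,r)$; replacing $\eta$ changes the quasi-norm of $B^{p,q}_{\alpha,\unif}(G)$ only up to equivalence. Take $f\in B^{p,q}_{\alpha,\unif}(G)$. Then $\eta(x\,\cdot\,)f\in L^\infty$ with norm at most $C\norm{f}_{B^{p,q}_{\alpha,\unif}(G)}$. On $B(x^{-1},r)$ we have $\abs{\eta(x\,\cdot\,)}\Meg c$, so $f$ restricted to $B(x^{-1},r)$ is a function bounded by $c^{-1}C\norm{f}_{B^{p,q}_{\alpha,\unif}(G)}$. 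The balls $B(x^{-1},r)$, $x\in G$, cover $G$, and the local $L^\infty$ representatives agree on overlaps. Hence $f\in L^\infty(G)$ with $\norm{f}_{L^\infty}\meg c^{-1}C\norm{f}_{B^{p,q}_{\alpha,\unif}(G)}$.

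The step needing most care is the passage from $\eta(x\,\cdot\,)f$ to $f$ as distributions. On $B(x^{-1},r)$ one can write $f=\big(\eta(x\,\cdot\,)\big)^{-1}\,\eta(x\,\cdot\,)f$ after multiplying by a smooth function that equals $1/\eta(x\,\cdot\,)$ there. This identifies $f$ locally with an $L^\infty$ function, and the uniform bound then follows as above.
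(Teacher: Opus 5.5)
Your proposal is correct and follows essentially the same route as the paper. For the first inclusion, the paper also uses left invariance of the $B^{p,q}_\alpha$ quasi-norm, $\norm{\eta(x\,\cdot\,)}_{B^{p,q}_\alpha(G)}=\norm{\eta}_{B^{p,q}_\alpha(G)}$, and for the second it applies the embedding $B^{p,q}_\alpha(G)\subseteq L^\infty(G)$ to the localized pieces $\eta(x\,\cdot\,)f$. The only cosmetic difference is that the paper takes the cutoff equal to $1$ on $B(e,1)$ instead of dividing by $\eta$ on a ball where $\abs{\eta}\Meg c$; in doing so it tacitly uses Lemma~\ref{lem:51d} to change the cutoff, which you make explicit.
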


\begin{proof}
	Take  $u\in \Mc(B^{p,q}_\alpha(G))$ and a non-zero $\phi\in C^\infty_c(G)$. Then,  for every $x\in G$,
	\[
	\norm{\phi(x\,\cdot\,) u  }_{B^{p,q}_\alpha(G)}\meg \norm{u}_{\Mc(B^{p,q}_\alpha(G))} \norm{\phi(x\,\cdot\,)  }_{B^{p,q}_\alpha(G)}=\norm{u}_{\Mc(B^{p,q}_\alpha(G))}   \norm{ \phi   }_{B^{p,q}_\alpha(G )},
	\]
	whence the continuous inclusion $\Mc(B^{p,q}_\alpha(G))\subseteq B^{p,q}_{\alpha,\unif}(G)$.
	
	Next, assume that $\alpha\Meg Q_*/p$ and either $q\meg \min(1,p)$ or $\alpha>Q_*/p$. Take $f\in  B^{p,q}_{\alpha ,\unif}(G)$ and  assume that $\phi=1$ on $B(e,1)$. Then, by~\cite[Proposition 4.10]{Calzi4}, there is a constant $C>0$ such that
	\[
	\norm{f}_{L^\infty(G)}\meg \sup_{x\in G} \norm{\phi(x\,\cdot\,) f}_{L^\infty(G)}\meg C \sup_{x\in G} \norm{\phi(x\,\cdot\,) f}_{B^{p,q}_{\alpha}(G)}.
	\]
	The assertion follows.
\end{proof}

\begin{teo}\label{teo:20c}
	Take $p,q\in (0,\infty]$ and $\alpha \Meg Q_*/p$ such that $p\meg q$ and either $p=q\meg 1$ or $\alpha>Q_*/p$. Then, $\Mc(B^{p,q}_\alpha(G))=B^{p,q}_{\alpha,\unif}(G)$. 
	In addition, the canonical   bilinear mapping 
	\[
	B^{p,q}_\alpha(G)\times B^{p,q}_{\alpha,\unif}(G)\ni (f,u)\mapsto u f\in B^{p,q}_\alpha(G)
	\] 
	is continuous.
\end{teo}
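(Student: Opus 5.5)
The plan follows the proof of Theorem~\ref{teo:14b}, with a localization estimate for Besov spaces in place of Proposition~\ref{prop:30c}. The inclusion $\Mc(B^{p,q}_\alpha(G))\subseteq B^{p,q}_{\alpha,\unif}(G)$ is already given by Lemma~\ref{lem:53c}, so two things remain. First, the reverse inclusion. Second, the bilinear estimate $\norm{uf}_{B^{p,q}_\alpha}\le C\norm{f}_{B^{p,q}_\alpha}\norm{u}_{B^{p,q}_{\alpha,\unif}}$, which implies the reverse inclusion. By Lemma~\ref{lem:53c}, $u\in L^\infty(G)$. By~\cite[Proposition 4.10]{Calzi4}, $f\in L^\infty(G)$. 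So $uf$ is unambiguously defined.

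Fix a $(1,2)$-lattice $(x_j)_{j\in J}$ and a bounded family $(\phi_j)$ in $C^\infty_c(G)$ with $\sum_j\widetilde\phi_j=1$, where $\widetilde\phi_j=\phi_j(x_j^{-1}\,\cdot\,)$. Take $\psi\in C^\infty_c(G)$ equal to $1$ near $\bigcup_j\supp\phi_j$. The key step is the Besov analogue of Proposition~\ref{prop:30c}, used in two directions:
\[
\norm*{\sum_j \widetilde\psi_j g_j}_{B^{p,q}_\alpha(G)}\le C\norm*{\norm{g_j}_{B^{p,q}_\alpha(G)}}_{\ell^p_j(J)}
\quad\text{when $p\le q$,}\qquad
\norm*{\norm{\widetilde\phi_j f}_{B^{p,q}_\alpha(G)}}_{\ell^p_j(J)}\le C\norm{f}_{B^{p,q}_\alpha(G)}
\quad\text{when $q\le p$.}
\]
Here $\widetilde\psi_j$ is a translate of a fixed cutoff equal to $1$ on $\supp\widetilde\phi_j$. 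Only the first inequality is needed for the full range $p\le q$. Applying it to $g_j=\widetilde\phi_j f u$, one does not write $\norm{\widetilde\phi_j f}$ but expands $\norm{g_j}_{B^{p,q}_\alpha}$ directly, as follows.

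To prove the first inequality, use a strong $(\eps,S,\delta,\Lc)$-Calder\'on family $(\eta_k)$ (Lemma~\ref{lem:64}) and the quasi-norm of Proposition~\ref{lem:69}(1). Split $J$ into $N$ classes $J_h$ of well-separated points. Then for each $k$ and $x$ only boundedly many terms $(\widetilde\psi_j g_j)*\eta_k(x)$ are non-zero, so
\[
\norm{(\textstyle\sum_j \widetilde\psi_j g_j)*\eta_k}_{L^p}^p\lesssim \sum_j\norm{(\widetilde\psi_j g_j)*\eta_k}_{L^p}^p .
\]
Taking $\ell^{q/p}_k$ norms and using Minkowski's inequality in $\ell^{q/p}$ (valid since $q/p\ge1$) gives
\[
\norm*{\sum_j \widetilde\psi_j g_j}_{B^{p,q}_\alpha}\lesssim \norm*{\norm{\widetilde\psi_j g_j}_{B^{p,q}_\alpha}}_{\ell^p_j}.
\]
Theorem~\ref{teo:6c}(ii) then removes the uniformly bounded multipliers $\widetilde\psi_j$.

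For the product, the first inequality gives
\[
\norm{uf}_{B^{p,q}_\alpha}\lesssim \norm*{\norm{(\widetilde\phi_j f)(\psi(x_j^{-1}\,\cdot\,)u)}_{B^{p,q}_\alpha}}_{\ell^p_j}
\lesssim \norm*{\norm{\widetilde\phi_j f}_{B^{p,q}_\alpha}}_{\ell^p_j}\,\norm{u}_{B^{p,q}_{\alpha,\unif}}.
\]
The second step uses the algebra property of Corollary~\ref{cor:20b}. That corollary covers $\alpha>Q_*/p$, and also $\alpha=Q_*/p$ with $q=p\le1$, since then $\min(1,p,q)=q$.

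It then remains to bound $\norm{\norm{\widetilde\phi_j f}_{B^{p,q}_\alpha}}_{\ell^p_j}$ by $\norm{f}_{B^{p,q}_\alpha}$. This is the main obstacle, because for $p<q$ the $\ell^p_j$ sum does not commute with the $\ell^q_k$ norm. It is precisely the step that forces $p=q$ in the borderline case and costs an argument when $p<q$. When $p=q$, Fubini gives the bound exactly as in Proposition~\ref{prop:30c}.

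When $p<q$ and $\alpha>Q_*/p$, I would try to avoid the localization of $f$ altogether via the paraproduct decomposition of Lemma~\ref{lem:28c}. In the terms where $u$ carries the high frequency, $\Pi_u f$ and $\widetilde\Pi$, use only $f\in L^\infty$ together with local control of $u*\psi_k$. The local control comes from writing $u*\psi_k$ near $x_j$ as $(\psi(x_j^{-1}\,\cdot\,)u)*\psi_k$ up to an error that decays rapidly in $k$. In the term $\Pi_f u$, where $f$ carries the high frequency, use $u\in L^\infty$, treating it as in Step~I of the proof of Theorem~\ref{teo:6c}. For the first estimate, note that $\norm{\widetilde\phi_j f}_{L^\infty}\le\norm{f}_{L^\infty}$ and, after discretizing with Proposition~\ref{prop:14}, sum the local pieces at fixed $k$ in $\ell^p$. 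This sum is dominated by $\norm{f}_{B^{p,q}_\alpha}$ through the embedding $B^{p,q}_\alpha\subseteq B^{p,\infty}_{Q_*/p+}$ combined with the surplus $\alpha>Q_*/p$.

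Finally, for the reverse inclusion, given $u\in B^{p,q}_{\alpha,\unif}$ the estimate shows that $f\mapsto uf$ is bounded on $\Sr(G)$ for the $B^{p,q}_\alpha$ quasi-norm. Hence it extends from $\mathring B^{p,q}_\alpha$, and $u\in\Mc(B^{p,q}_\alpha)$ with the required continuity.
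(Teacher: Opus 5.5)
Your argument for $p=q$ is correct and is the paper's: the paper says this case goes as in Theorem~\ref{teo:14b}, i.e.\ localization plus Corollary~\ref{cor:20b}. For $p<q$ you rightly observe that $\ell^p$-localization of $B^{p,q}_\alpha$ fails, and you switch, as the paper does, to the paraproduct of Lemma~\ref{lem:28c}, using $u\in L^\infty$ in the term where $f$ carries the high frequency.

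However, the sketch for $p<q$ is missing the one step that needs an idea, and as written it would not close. In the terms where $u$ carries the high frequency, ``use only $f\in L^\infty$'' cannot work. The function $u$ is only \emph{uniformly locally} in $B^{p,q}_\alpha$. Pairing it with $\norm{\widetilde\phi_j f}_{L^\infty}\meg\norm{f}_{L^\infty}$ therefore gives at best a $B^{p,q}_{\alpha,\unif}$ bound for the product, never a global one. You also never say how $\Pi(f,u)$ is handled. After writing $\psi_j=\eps^{jm\grado}(\Lc^m)^R\widetilde\psi_j$ and distributing $\Lc^m$ by the Leibniz rule, it contains pieces where the derivatives fall on $u$, and these need the same treatment.

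What is needed, and what the paper does, has two parts. The paper uses compactly supported $\psi_j=\eta\psi''_j$, so that everything is exactly local.

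First, bound the low-frequency factor of $f$ near $x_j$ by a quantity independent of $k$ and $\ell^p$-summable over the lattice:
\[
a_j\coloneqq\sup_k\sup_{B(x_j,c)}\abs{f*\psi'_k}\lesssim\norm{\phi'(x_j^{-1}\,\cdot\,)f}_{L^\infty},
\]
for a fixed cutoff $\phi'$, with $\norm{a}_{\ell^p}\lesssim\norm{f}_{B^{p,p}_{Q_*/p+\kappa}}\lesssim\norm{f}_{B^{p,q}_\alpha}$. The paper gets this from Proposition~\ref{prop:30c} at $p=q$ and the embedding into $L^\infty$. Your discretization-plus-surplus idea is an acceptable substitute here, provided the supremum over $k$ sits inside the $\ell^p_j$ sum.

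Second, apply Minkowski's inequality in $\ell^{q/p}$:
\[
\norm*{\Big(\sum_j a_j^p U_{j,k}^p\Big)^{1/p}}_{\ell^q_k}\meg\Big(\sum_j a_j^p\norm{U_{j,\,\cdot\,}}_{\ell^q}^p\Big)^{1/p}\meg\norm{a}_{\ell^p}\sup_j\norm{U_{j,\,\cdot\,}}_{\ell^q},\qquad U_{j,k}\coloneqq\eps^{-k\alpha}\norm{u*\psi_k}_{L^p(B(x_j,c))},
\]
together with $\sup_j\norm{U_{j,\,\cdot\,}}_{\ell^q}\lesssim\norm{u}_{B^{p,q}_{\alpha,\unif}}$.

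This interchange is the only place where $p\meg q$ enters when $p<q$, and your sketch never uses that hypothesis there. Your order, summing over the lattice at fixed $k$ first with control of the $f$-factor that is merely uniform in $k$, leaves $\norm{\sup_j U_{j,k}}_{\ell^q_k}$. The uniform norm does not control this quantity, because the local pieces of $u$ at different lattice points may concentrate at different values of $k$.
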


\begin{proof}
	The case $p=q$ may be treated as in the proof of Theorem~\ref{teo:14b}.
	We may therefore assume that $p<q$, so that $\alpha>Q_*/p$. Since the inclusion $\Mc(B^{p,q}_\alpha(G))\subseteq B^{p,q}_{\alpha,\unif}(G)$ follows from Lemma~\ref{lem:53c}, we may reduce to proving the continuity of $B^{p,q}_\alpha(G)\times B^{p,q}_{\alpha,\unif}(G) \ni (f,g)\mapsto fg\in B^{p,q}_\alpha(G)$. 
	We proceed as in the proof of Theorem~\ref{teo:6c}.
	
	Take a reduced $(\eps,1/4,2)$-lattice $(x_{j,k})_{(j,k)\in J}$ on $G$, and choose, for every $j\in\N$, a Borel partition $(B_{j,k})_{k }$ of $G$ such that $B(x_{j,k}, \eps^j/4)\subseteq B_{j,k}\subseteq B(x_{j,k}, \eps^j/2)$ for every $k \in \N$ with $(j,k)\in J$. 
	Take $(\psi''_j)\in \widetilde\Psi_{\eps^{\grado}}$, $\eta\in C^\infty_c(G)$ such that $\chi_{B(e,1/8)}\meg \eta \meg \chi_{B(e,1/4)}$,   and set $\psi_j\coloneqq \eta \psi''_j$ and $\psi'_j\coloneqq \sum_{k=0}^{j-1}\eta\psi''_k$ for every $j\Meg 0$. Thus, arguing as in the proof of Lemma~\ref{lem:28c} we see that
	\[
	f g=\lim_{j'\to \infty}(\Pi_f^{(j')} g+\Pi^{(j')}_g f)+\Pi(f,g)+\widetilde\Pi(f,g),
	\]
	where
	\[
	\Pi_f ^{(j')}g\coloneqq\sum_{1\meg j\meg j'} [(f*\psi_j)(g*\psi_j')]*\psi'_j, \qquad \Pi_g ^{(j')}f=\sum_{1\meg j\meg j'} [(f*\psi'_j)(g*\psi_j)]*\psi'_j,
	\]
	for every $j'\Meg 1$, and where
	\[
	\Pi(f,g)\coloneqq \sum_{j\Meg 0} [(f*\psi'_{j+1})(g*\psi_{j+1}')]*\psi_j \qquad \text{and} \qquad \widetilde \Pi(f,g)\coloneqq \sum_{j\Meg 1} [(f*\psi_{j})(g*\psi_{j})]*\psi'_j.
	\]
	In addition, we take a bounded family $(\widetilde \phi_j)_{j\in\N}$ of elements of $C^\infty_c(G)$ supported in $B(e,1)$ such that $\sum_{j\in \N} \phi_j=1$ locally uniformly, where $\phi_j\coloneqq \widetilde \phi_j(x_{j,0}^{-1}\,\cdot\,)$ for every $j\in\N$, and a family $(\eta_j)_{j\Meg -M}$ satisfying the strong $(\eps,S,1,\Lc)$-Calder\'on condition for some $M\in\N$ and some $S>\max(D/\min(1,p),\alpha)/\grado$ (cf.~Lemma~\ref{lem:64}). 
	Furthermore, take $\phi'\in C^\infty_c(G)$ so that $\chi_{B(e, 4)}\meg \phi' \meg \chi_{B(e,5)}$, and set $\phi'_j\coloneqq \phi'(x_{j,0}^{-1}\,\cdot\,)$.
	Observe that, since   $B^{p,q}_\alpha(G), B^{p,q}_{\alpha,\unif}(G)\subseteq L^\infty(G)$ by~\cite[Proposition 4.10]{Calzi4} and Lemma~\ref{lem:53c}, we have $fg\in L^\infty(G)\subseteq \Bc_{S\grado}(G)$, so that Proposition~\ref{lem:69} shows that it will suffice to show that there is a constant $C_1>0$ such that
	\[
	\norm{\eps^{-j\alpha} [(fg)*\eta_j](x)}_{L^{p,q}_{x,j}(G,\N-M)}\meg C_1 \norm{f}_{B^{p,q}_\alpha(G)}\norm{g}_{B^{p,q}_{\alpha,\unif}(G)}.
	\]

	Define $\widetilde f, \widetilde g, \widetilde f_{j'}, \widetilde g_{j'}\colon \N\times G\to [0,\infty)$ ($j'\in\N$) so that
	\[
	\widetilde f (j,x)\coloneqq \max_{\overline B(x_{j,k}, \eps^j/2)} \abs{f*\psi_j} \chi_{B_{j,k}}(x) \qquad \text{and} \qquad \widetilde g (j,x)\coloneqq \max_{\overline B(x_{j,k}, \eps^j/2)} \abs{g*\psi'_j} \chi_{B_{j,k}}(x)
	\]
	and also
	\[
	\widetilde f_{j'} (j,x)\coloneqq \max_{\overline B(x_{j,k}, \eps^j/2)} \abs{(\phi_{j'} f)*\psi_j} \chi_{B_{j,k}}(x) \qquad \text{and} \qquad \widetilde g_{j'} (j,x)\coloneqq \max_{\overline B(x_{j,k}, \eps^j/2)} \abs{(\phi'_{j'}g)*\psi'_j} \chi_{B_{j,k}}(x)
	\]
	for every $(j,x)\in \N\times G$, so that $\abs{f*\psi_j}\meg \widetilde f(j,\,\cdot\,)$ and $\abs{g*\psi'_j}\meg \widetilde g(j,\,\cdot\,)$ and, analogously, $\abs{(\phi_{j'}f)*\psi_j}\meg \widetilde f_{j'}(j,\,\cdot\,)$ and $\abs{(\phi'_{j'}g)*\psi'_j}\meg \widetilde g_{j'}(j,\,\cdot\,)$ for every $j,j'\in\N$. 
	Take $p_0\in (0,\min(1,p,q))$, $a>D/p_0$,  and $m\in \N $ with $m\grado>2 \alpha$, and observe that, by Lemma~\ref{cor:13}, there is a constant $C_1>0$ such that
	\[
	\abs{(\psi'_j*\eta_{j'})(x)}\meg \frac{C_1 \eps^{-Q_* \min(j,j')}}{(1+\abs{x}_*/\eps^{j'})^a} \min(1,\eps^{(j'-j)m\grado}) =\frac{C_1\eps^{-Q_* \min(j,j')}}{(1+\abs{x}_*/\eps^{j'})^a}  \eps^{(j'-j)_+ m\grado}
	\]
	for every $x\in G$, for every $j\in \N$, and for every $j'\Meg -M$.
	Consequently, arguing as in the proof of Theorem~\ref{teo:6c},
	\[
	\begin{split}
		\abs{ [(f*\psi_j)(g*\psi'_j)]*\psi'_j*\eta_{j'} }&\meg C_1\eps^{(j'-j)_+ (m \grado+Q_*)} \Nc_{1,a,\eps^{j'}}((f*\psi_j)(g*\psi'_j))\\
		&\meg C_1\eps^{(j'-j)_+ (m \grado+Q_*)} \Nc_{1,a,\eps^{j'}}(\widetilde f(j,\,\cdot\,)\widetilde g(j,\,\cdot\,))\\
		&\meg 4^a C_1''\eps^{(j'-j)_+ m \grado -(j-j')_+(1/p_0-1)Q_*} \Nc_{p_0,a p_0}(\widetilde f(j,\,\cdot\,)\widetilde g(j,\,\cdot\,)),
	\end{split}
	\]
	with $C_1''$ defined as in the proof of Theorem~\ref{teo:6c}.
	Then, by means of Proposition~\ref{cor:18} and Lemma~\ref{lem:25d}, we see that there is a constant $C_2>1$ such that, for every $j''\Meg 1$,
	\[
	\begin{split}
		&\norm{\eps^{-j'\alpha}[(\Pi_f^{(j'')} g)*\eta_{j'}](x)}_{L^{p,q}_{x,j'}(G,\N-M)}\\
		&\qquad\meg C_1'' \norm*{ \eps^{-j'\alpha}\sum_{j\in\N} \eps^{(j'-j)_+ m \grado -(j-j')_+(1/p_0-1)Q_*}\Nc_{p_0,a p_0}(\widetilde f(j,\,\cdot\,)\widetilde g(j,\,\cdot\,))(x)   }_{L^{p,q}_{x,j'}(G,\N-M)}\\
		&\qquad\meg  C_1'' \norm*{ \eps^{-j'\alpha} \Nc_{p_0,a p_0}\Big(\sum_{j\in\N}\eps^{(j'-j)_+ m \grado -(j-j')_+(1/p_0-1)Q_*}\widetilde f(j,\,\cdot\,)\widetilde g(j,\,\cdot\,)\Big)(x)   }_{L^{p,q}_{x,j'}(G,\N-M)}\\
		&\qquad\meg C_2\norm*{  \sum_{j\in\N}\eps^{(j'-j)_+ (m \grado-\alpha) +(j-j')_+(\alpha-(1/p_0-1)Q_*) }\eps^{-j\alpha} \widetilde f(j,x)\widetilde g(j,x)   }_{L^{p,q}_{x,j'}(G,\N-M)}\\
		&\qquad\meg C_2^2 \norm*{  \eps^{-j\alpha } \widetilde f(j,x) \widetilde g(j,x)}_{L^{p,q }_{x,j}(G,\N)}.
	\end{split}
	\]
	Now take, by~\cite[Lemma 4.3]{Calzi2}, a partition $(J_1,\dots, J_{N'})$ of $\N$ so that $d(x_{j,0},x_{j',0})\Meg  8$ for every two distinct $j,j'\in J_h$, $h=1,\dots, N'$. Then,
	\[
	\begin{split}
		\norm*{  \eps^{-j\alpha } \widetilde f(j,x) \widetilde g(j,x)}_{L^{p,q }_{x,j}(G,\N)}&\meg\norm*{\sum_{j'\in \N}  \eps^{-j\alpha } \widetilde f_{j'}(j,x) \widetilde g(j,x)}_{L^{p,q }_{x,j}(G,\N)} \\
			&\meg N'^{1/\min(1,p,q)-1}\sum_{h=1}^{N'} \norm*{\sum_{j'\in J_h}  \eps^{-j\alpha } \widetilde f_{j'}(j,x) \widetilde g_{j'}(j,x)}_{L^{p,q }_{x,j}(G,\N)}\\
			&\meg N'^{1/\min(1,p,q)-1}\sum_{h=1}^{N'}\norm*{ \sum_{j'\in J_h}  \eps^{-j\alpha } \widetilde f_{j'}(j,x) }_{L^{p,q }_{x,j}(G,\N)} \sup_{j'\in \N} \norm{\widetilde g_{j'}}_{L^\infty(G\times \N)},
	\end{split}
	\]
	where the second inequality follows from the fact that $\widetilde f_{j'}(j,\,\cdot\,)$ is supported in $B(x_{j',0},9/4)$, on which $\widetilde g(j,\,\cdot\,)=\widetilde g_{j'}(j,\,\cdot\,)$.
	Now, let  $U$ be the set of $(z_{j,k})\in G^J$ such that $\abs{z_{j,k}}_*\meg \eps^j/2$ for every $(j,k)\in J$, and observe that
	\[
	[(\phi_{j'} f)*\psi_j](x_{j,k}z_{j,k})=\langle \phi_{j'}f \vert \psi_j^*(z_{j,k}^{-1}x_{j,k}^{-1}\,\cdot\,)  \rangle
	\]
	for every $j'\in \N$ and for every $(j,k)\in J$, and that there is a constant $c>0$ such that $(c\eps^{jQ_*}\psi_j^*(z_{j,k}^{-1}x_{j,k}^{-1}\,\cdot\,))$ is a system of $(K,S,N,\infty,\Lc)$-atoms for $K>(Q_*(1/p-1)_+-\alpha)/\grado$, $S>(Q_*+\alpha)/\grado$ and $N>D/\min(1,p)$ and for every $(z_{j,k})\in U$, thanks to Proposition~\ref{cor:12}. Therefore, by means of~\cite[Proposition 5.9]{Calzi4} we see that there is a constant $C_3>0$ such that
	\[
	\begin{split}
	\norm*{ \sum_{j'\in J_h}  \eps^{-j\alpha } \widetilde f_{j'}(j,x) }_{L^{p,q }_{x,j}(G,\N)}& =\sup_{(z_{j,k})\in U} \norm*{\bigg \langle \sum_{j'\in J_h}  \eps^{-j\alpha }\phi_{j'}f \bigg\vert \psi_j^*(z_{j,k}^{-1}x_{j,k}^{-1}\,\cdot\,)  \bigg\rangle }_{L^{p,q }_{x,j}(G,\N)} \\
		&\meg  C_3\norm*{\sum_{j'\in J_h}\phi_{j'} f }_{B^{p,q  }_\alpha(G)},
	\end{split}
	\]
	where the first equality follows from the fact that that
	\[
	\sum_{j'\in J_h} \widetilde f_{j'}(j,x)= \sum_{(j,k)\in J}\max_{B(x_{j,k},\eps^j/2)} \abs*{\sum_{j'\in J_h} \phi_{j'} f*\psi_j  } \chi_{B_{j,k}}(x) 
	\]
	since the $\widetilde f_{j'}(j,\,\cdot\,)$, $j'\in J_h$, have pairwise disjoint supports.

	Then, by  Proposition~\ref{cor:12} and Young's inequality,  Theorem~\ref{teo:6c}, and Lemma~\ref{lem:53c}, we may find a constant $C_3'>1$ such that
	\[
	\begin{split} 
	\norm*{  \eps^{-j\alpha } 
	\widetilde f(j,x) \widetilde g(j,x)}_{L^{q,p }_{j,x}(\N,G)}&\meg C_3'\sum_{h=1}^{N'} \norm*{\sum_{j'\in J_h}\phi_{j'} f }_{B^{p,q  }_\alpha(G)} \norm*{g }_{L^{\infty}(G\times N)}\\
	&\meg C_3'^2\norm{f}_{B^{p,q}_\alpha(G)} \norm{g}_{B^{p,q}_{\alpha,\unif}(G)}.
	\end{split}
	\]
	The term $\widetilde \Pi(f,g)$ may be estimated in a similar way. We then pass to the term $\Pi_g^{(j'')} f$. In order to avoid introducing new notation, we shall estimate the term $\Pi_f ^{(j'')}g$ with $f\in B^{p,q}_{\alpha,\unif}(G)$ and $g\in B^{p,q}_\alpha(G)$ instead.
	
	Then, observe that, by Lemma~\ref{lem:37}, Young's inequality, and Proposition~\ref{prop:14}, there is a constant $C_4>1$ such that (arguing as above)
	\[
	\begin{split} 
		\norm*{  \eps^{-j\alpha } \widetilde f(j,x) \widetilde g(j,x)}_{L^{p,q }_{x,j}(G,\N)}&\meg\norm*{\sum_{j'\in \N}  \eps^{-j\alpha } \widetilde f_{j'}(j,x) \widetilde g(j,x)}_{L^{p,q }_{x,j}(G,\N)} \\ 
		&\meg N'^{1/\min(1,p,q)-1} \sum_{h=1}^{N'} \norm*{\norm*{\sum_{j'\in J_h}  \eps^{-j\alpha } \widetilde f_{j'}(j,x) \widetilde g_{j'}(j,x)}_{L^{p }_{x}(G)}}_{\ell^q_j(\N)}\\
		&\meg N'^{1/\min(1,p,q)-1}\sum_{h=1}^{N'} \norm*{\norm*{ \eps^{-j\alpha }   \norm{\widetilde f_{j'}(j,x) \widetilde g_{j'}(j,x)}_{L^{p }_{x}(G)}}_{\ell^p_{j'}(J_h)}  }_{\ell^q_j(\N)}\\
		&\meg N'^{1/\min(1,p,q)-1}\sum_{h=1}^{N'} \norm*{\norm*{\eps^{-j\alpha }     \norm{\widetilde f_{j'}(j,x)}_{L^\infty_x(G)}\norm{\widetilde g_{j'}(j,x)}_{L^{p }_{x}(G)}}_{\ell^p_{j'}(J_h)}  }_{\ell^q_j(\N)}\\
		&\meg C_4 \sum_{h=1}^{N'} \norm*{\norm*{     \norm{ \phi_{j'}f}_{L^\infty (G)}\eps^{-j\alpha }\norm{\widetilde g_{j'}(j,x)}_{L^{p }_{x}(G)}}_{\ell^p_{j'}(J_h)}  }_{\ell^q_j(\N)}\\
		&\meg C_4 \sum_{h=1}^{N'}  \norm*{  \norm{ \phi_{j'}f}_{L^\infty (G)} \norm*{ \eps^{-j\alpha }\norm{\widetilde g_{j'}(j,x)}_{L^{p }_{x}(G)}  }_{\ell^q_j(\N)}}_{\ell^p_{j'}(J_h)}\\
		&\meg C_4^2 \sum_{h=1}^{N'}  \norm*{  \norm{ \phi_{j'}f}_{L^\infty (G)}    }_{\ell^p_{j'}(J_h)} \norm{g}_{B^{p,q}_{\alpha,\unif}(G)}
	\end{split}
	\]
	where the sixth inequality follows from the fact that $p\meg q$. Then, observe that, by~\cite[Propositions 4.10]{Calzi3} and~\ref{prop:30c}  there is a constant $C_5>1$ such that
	\[
	\begin{split}
		 \norm*{  \norm{ \phi_{j'}f}_{L^\infty (G)}    }_{\ell^p_{j'}(J_h)}&\meg C_5 \norm*{  \norm{ \phi_{j'}f}_{B^{p,p}_{Q_*/p+\kappa} (G)}    }_{\ell^p_{j'}(J_h)}\\
		 	&\meg C_5^2    \norm{ f}_{B^{p,p}_{Q_*/p+\kappa} (G)}     \\
		 	&\meg C_5^2    \norm{ f}_{B^{p,q}_{\alpha} (G)}
	\end{split}
	\]
	for any fixed $\kappa\in (0,\alpha-Q_*/p)$ and for every $h=1,\dots, N'$.

	Finally, the term $\Pi(f,g)$ may be dealt with following the arguments of~\cite[Theorem 6.6]{Calzi2}, with modifications analogous to the ones used above.
\end{proof}

We now pass to the case $p<q$. We begin with a lemma that proves the consistency of the definition of the spaces $M^{p,q}_\alpha$ introduced below.

\begin{lem}\label{lem:57c}
	Take $p,q\in (0,\infty]$ and $\alpha>Q_*(1/p-1)_+$.
	In addition, take $\delta>0$, $R\Meg 2$, a $(\delta,R)$-lattice  $(x_j)_{j\in J}$  on $G$, and two bounded families $(\phi_j)_{j\in J}$ and $(\psi_{j'})_{j'\in J'}$ in $C^\infty_c(G)$. Set $\widetilde \phi_j\coloneqq \phi_j(x_j^{-1}\,\cdot\,)$   for every $j\in J$, and assume that $\sum_j  \widetilde \phi_j=1$. Then, there is a constant $C>0$ such that 
	\[
	\norm*{ \sum_{j'} a'_{j'} \widetilde \psi_{j'} f }_{B^{p,q}_\alpha(G)}\meg C \norm{a'_{j'}}_{\ell^p_{j'}(J')} \sup_{\norm{a_j}_{\ell^p_j(J)}\meg 1} \norm*{  \sum_j a_j \widetilde \phi_j f }_{B^{p,q}_\alpha(G)}
	\]
	for every $(\delta,R)$-lattice  $(x'_{j'})_{j'\in J'}$ on $G$, for every $f\in \Sr'(G)$, and for every $(a'_{j'})\in \ell^p(J')$, where  $\widetilde \psi_{j'}\coloneqq \psi_{j'}(x'^{-1}_{j'}\,\cdot\,)$  for every $j'\in J'$.
\end{lem}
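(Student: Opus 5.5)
We decompose each bump $\widetilde\psi_{j'}$ along the partition of unity $(\widetilde\phi_j)$, but transfer the multiplication by $\widetilde\psi_{j'}$ onto a localized cutoff of $f$. This lets us use Theorem~\ref{teo:6c}(ii) uniformly. The key point is that the $\widetilde\psi_{j'}$, and more generally finite sums of translates of the $\psi_{j'}$ with well-separated supports, are uniformly bounded in $B^{\infty,\infty}_{\alpha'}(G)\cap C^\infty(G)$ for every $\alpha'$. This holds because they are uniformly bounded in $W^{\infty,\infty}$-type norms of every order; cf.\ the argument in Lemma~\ref{lem:51c}.

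\textsc{Step I (reduction to separated families).} Choose $R'$ so large that all $\phi_j$ and $\psi_{j'}$ are supported in $B(e,R')$. By~\cite[Lemma 4.3]{Calzi2}, partition $J'$ into $J'_1,\dots,J'_{N}$ so that $d(x'_{j'},x'_{j''})\Meg 4(R'+\delta R)$ for distinct $j',j''$ in the same class, with $N$ depending only on $\delta,R,R'$ (polynomial growth gives a uniform bound on the number of lattice points in a ball). By the quasi-triangle inequality in $B^{p,q}_\alpha(G)$, it suffices to treat each $J'_h$ separately, at the cost of a factor $N^{(1/\min(1,p,q)-1)_+}$ or similar. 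So assume the supports of the $\widetilde\psi_{j'}$ are far apart.

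\textsc{Step II (writing the sum as a multiplier times a localized $f$).} For $j'\in J'_h$, let $A_{j'}\coloneqq\Set{j\in J\colon \supp\widetilde\phi_j\cap\supp\widetilde\psi_{j'}\neq\emptyset}$. Since $(x_j)$ is a $(\delta,R)$-lattice and the supports are uniformly bounded, $\card A_{j'}\meg N_0$ uniformly. Moreover, by the separation, the sets $A_{j'}$, $j'\in J'_h$, are pairwise disjoint. Since $\sum_j\widetilde\phi_j=1$, we get $\widetilde\psi_{j'}f=\widetilde\psi_{j'}\sum_{j\in A_{j'}}\widetilde\phi_j f$. Define $a_j\coloneqq a'_{j'}$ for $j\in A_{j'}$ and $a_j\coloneqq0$ otherwise, so that $\norm{a}_{\ell^p(J)}\meg N_0^{1/p}\norm{a'}_{\ell^p(J'_h)}$. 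Also set $\Psi_h\coloneqq\sum_{j'\in J'_h}\widetilde\psi_{j'}$. Then
\[
\sum_{j'\in J'_h}a'_{j'}\widetilde\psi_{j'}f=\Psi_h\sum_{j\in J}a_j\widetilde\phi_j f,
\]
because for $j\in A_{j''}$ with $j''\neq j'$ the product $\widetilde\psi_{j'}\widetilde\phi_j$ vanishes by separation: the support of $\widetilde\phi_j$ meets that of $\widetilde\psi_{j''}$, hence is far from that of $\widetilde\psi_{j'}$.

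\textsc{Step III (multiplier estimate and conclusion).} Because the supports are disjoint and uniformly separated, $\Psi_h$ is smooth and bounded in $B^{\infty,\infty}_{\alpha'}(G)$ uniformly in the lattice, for a fixed $\alpha'$ larger than the threshold in Theorem~\ref{teo:6c}. Theorem~\ref{teo:6c}(ii) then gives
\[
\norm*{\Psi_h\sum_j a_j\widetilde\phi_j f}_{B^{p,q}_\alpha}\meg C\norm*{\sum_j a_j\widetilde\phi_jf}_{B^{p,q}_\alpha},
\]
and homogeneity in $a$ yields the claim.

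The main obstacle is the justification of this step. We must make sure that Theorem~\ref{teo:6c}(ii) applies to a general $f\in\Sr'(G)$ (the product $\Psi_h u$ is well defined since $\Psi_h\in C^\infty$ with bounded derivatives), and we must verify the uniform $B^{\infty,\infty}_{\alpha'}$ bound on $\Psi_h$. For the latter I would use that $\Psi_h$ is uniformly bounded in $W^{k\dd,\infty}$ for large $k$, together with the embedding into $B^{\infty,\infty}_{k\dd}$ from~\cite{Calzi4}. If the supremum on the right-hand side is infinite there is nothing to prove. If the right-hand side is finite, one may also have to check that the infinite sums converge in $\Sr'(G)$; this follows from the local finiteness of the sums.
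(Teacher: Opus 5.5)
Your proposal is correct and follows essentially the paper's proof: partition $J'$ via~\cite[Lemma 4.3]{Calzi2} so that, within each class, the sets of $j$ with $\widetilde\phi_j\widetilde\psi_{j'}\neq 0$ are pairwise disjoint, and transfer $a'_{j'}$ to coefficients $a^{(h)}_j$ with $\norm{a^{(h)}}_{\ell^p}\meg N^{1/p}\norm{a'}_{\ell^p}$. Then apply Theorem~\ref{teo:6c}(ii) with the multiplier $\sum_{j'\in J'_h}\widetilde\psi_{j'}\in W^{\infty,\infty}(G)\subseteq B^{\infty,\infty}_{\alpha+1}(G)$. The only cosmetic difference is that you derive the disjointness of these index sets from a separation condition on the centres $x'_{j'}$, whereas the paper imposes it directly on the partition.
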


\begin{proof}
	Observe that, by~\cite[Lemma 4.3]{Calzi2}, there is $N\in\N$ such that $\card(J_{j'})\meg N$ for every $j'\in J'$, where $J_{j'}=\Set{j\in J\colon \widetilde \psi_{j'} \widetilde \phi_j\neq 0}$. In addition, by the same reference, we may partition $J'$ in subsets $J'_1,\dots, J'_N$ such that the $J_{j'}$, $j'\in J'_h$, are pairwise disjoint for every $h=1,\dots, N$. Then, take $(a'_{j'})\in \ell^p(J')$ and set $a^{(h)}_j\coloneqq \sum_{j'\in J'_h} \chi_{J_{j'}}(j) a'_{j'}$, so that
	\[
	\begin{split}
		\sum_{j'} a'_{j'} \widetilde \psi_{j'}&=\sum_{h=1}^N \sum_{j'\in J'_h}\sum_{j\in J_{j'}} a'_{j'}\widetilde \psi_{j'}  \widetilde \phi_{j}\\
		&= \sum_{h=1}^N \sum_{j'\in J'_h}\sum_{j\in J_{j'}} \widetilde \psi_{j'}  a^{(h)}_j\widetilde \phi_{j}\\
		&=  \sum_{h=1}^N \sum_{j'\in J'_h}  \widetilde \psi_{j'}  \sum_{j\in J} a^{(h)}_j\widetilde \phi_{j}.
	\end{split}
	\]
	Consequently, by Theorem~\ref{teo:6c} there is a constant $C_1>0$ such that
	\[
	\begin{split} 
		\norm*{ \sum_{j'} a'_{j'} \widetilde \psi_{j'} f }_{B^{p,q}_\alpha(G)}&\meg N^{1/\min(1,p,q)-1} \sum_{h=1}^N \norm*{  \sum_{j'\in J'_h}  \widetilde \psi_{j'}  \sum_{j\in J} a^{(h)}_j\widetilde \phi_{j} f }_{B^{p,q}_\alpha(G)} \\
		&\meg C_1 \sum_{h=1}^N \norm*{  \sum_{j'\in J'_h}  \widetilde \psi_{j'}  }_{B^{\infty,\infty}_{\alpha+1}(G )}\norm*{  \sum_{j\in J} a^{(h)}_j\widetilde \phi_{j} f }_{B^{p,q}_\alpha(G)}.
	\end{split}
	\]
	Since clearly $ \sum_{j'\in J'_h}  \widetilde \psi_{j'} \in W^{\infty,\infty}(G )\subseteq B^{\infty,\infty}_{\alpha+1}(G)$ (cf.~\cite[Proposition 9.2]{BCP}) and   $\norm{a^{(h)}_j}_{\ell^p_j(J)}\meg N^{1/p} \norm{a'_{j'}}_{\ell^p_{j'}(J')}$ for every $h=1,\dots,N$,  the assertion follows.
\end{proof}

\begin{deff}\label{def:4c}
	Take $p,q\in (0,\infty]$ and $\alpha >Q_*(1/p-1)_+$. We define $M^{p,q}_\alpha$ as the space of $f\in \Sr'(G)$ such that
	\[
	\norm{f}_{M^{p,q}_\alpha}\coloneqq \sup_{\norm{a_j}_{\ell^p_j(J)}\meg 1} \norm*{ \sum_j a_j \widetilde \phi_j f }_{B^{p,q}_\alpha(G_L)}<\infty,
	\]
	endowed with the corresponding topology, where the $\widetilde \phi_j$ are as in Lemma~\ref{lem:57c}.
\end{deff}

Observe that, if $N\coloneqq\max \sum_j \chi_{B(x_j,  (R+2)\delta)}$, then we may take $(\phi_j)$ such that $\phi_j\Meg 1/N$ on $B(e,\delta)$ for every $j\in J$.\footnote{For example, one may take $\psi\in C^\infty_c(G)$ so that $\chi_{B(e,R\delta)}\meg\psi\meg \chi_{B(e,(R+1)\delta)}$, so that $\Psi\coloneqq \sum_j \psi(x_j^{-1}\,\cdot\,)$ takes values in $[1,N]$, and then define $\phi_j\coloneqq \psi/\Psi(x_j\,\cdot\,)$ for every $j\in J$. } In this case, it is readily seen that there is a constant $c>1$ such that\footnote{The first inequality follows easily from the inclusion $B^{p,q}_\alpha(G)\subseteq L^p(G)$, cf.~\cite[Proposition 4.10]{Calzi4}, while the second inequality follows from Theorem~\ref{teo:13bis}.}  
\[
\frac 1 c\norm{a_j}_{\ell^p_j(J)}\meg \norm{a_j \widetilde \phi_j}_{B^{p,q}_\alpha(G )} \meg c\norm{a_j}_{\ell^p_j(J)}.
\]
Consequently, the space $M^{p,q}_\alpha$ is defined essentially in the same way as $\Mc(B^{p,q}_\alpha(G ))$, except for the fact that, instead of testing the continuity of pointwise multiplication on the whole of $B^{p,q}_\alpha(G )$, we restrict to a particularly simple closed vector subspace (namely, the one generated by the $\widetilde \phi_j$).

\begin{lem}\label{lem:58c}
	Take $p,q\in (0,\infty]$ and $\alpha >Q_*(1/p-1)_+$. Then, $\Mc(B^{p,q}_\alpha(G)) \subseteq M^{p,q}_\alpha\subseteq B^{p,q}_{\alpha,\unif}(G)$ continuously.
\end{lem}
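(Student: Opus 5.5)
The plan is to prove the two inclusions separately, using in both cases the two-sided estimate stated right after Definition~\ref{def:4c}: with the $\phi_j$ chosen so that $\phi_j\Meg 1/N$ on $B(e,\delta)$, one has $\norm{\sum_j a_j\widetilde\phi_j}_{B^{p,q}_\alpha(G)}\asymp \norm{a_j}_{\ell^p_j(J)}$. Lemma~\ref{lem:57c} shows that $M^{p,q}_\alpha$ does not depend, up to equivalent quasi-norms, on the choice of $(x_j)$ and $(\phi_j)$. So we may fix such a choice with this extra property. We may also take the $\phi_j$ all equal to a single $\phi$ on the region where they are used, to simplify translations.

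\emph{First inclusion.} Take $u\in \Mc(B^{p,q}_\alpha(G))$ and $(a_j)$ with $\norm{a_j}_{\ell^p_j(J)}\meg 1$. For finitely supported $(a_j)$, the function $\sum_j a_j\widetilde\phi_j$ lies in $C^\infty_c(G)\subseteq \mathring B^{p,q}_\alpha(G)$. Hence
\[
\norm*{\sum_j a_j\widetilde\phi_j u}_{B^{p,q}_\alpha(G)}\meg \norm{u}_{\Mc(B^{p,q}_\alpha(G))}\norm*{\sum_j a_j\widetilde\phi_j}_{B^{p,q}_\alpha(G)}\meg c\norm{u}_{\Mc(B^{p,q}_\alpha(G))}.
\]
For general $(a_j)\in\ell^p(J)$ we truncate to finite sets and pass to the limit. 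The truncated products converge to $\sum_j a_j\widetilde\phi_j u$ in $\Sr'(G)$, because the sum is locally finite and $u\in\Sr'(G)$. The bound then persists by lower semicontinuity of the $B^{p,q}_\alpha$ quasi-norm with respect to convergence in $\Sr'(G)$, which follows from Fatou's lemma applied to $f*\psi_j$. This gives $\norm{u}_{M^{p,q}_\alpha}\meg c\norm{u}_{\Mc(B^{p,q}_\alpha(G))}$.

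\emph{Second inclusion.} Take $f\in M^{p,q}_\alpha$ and a non-zero $\eta\in C^\infty_c(G)$; by Lemma~\ref{lem:51d} the choice of $\eta$ does not matter. For $x\in G$, the function $\eta(x\,\cdot\,)$ is a left translate by $x^{-1}$ of $\eta$. By the bounded overlap of the lattice, only a uniformly bounded number of $j$, say at most $N_0$ independently of $x$, satisfy $\eta(x\,\cdot\,)\widetilde\phi_j\neq0$. Let $A_x$ be this set of indices. Since $\sum_j\widetilde\phi_j=1$,
\[
\eta(x\,\cdot\,)f=\eta(x\,\cdot\,)\sum_{j\in A_x}\widetilde\phi_j f .
\]
Lemma~\ref{lem:51d} (the first estimate, i.e.\ Theorem~\ref{teo:6c}(ii) with the $\eta(x\,\cdot\,)$ uniformly bounded in $B^{\infty,\infty}_{\abs{\alpha}+1}\cap C^\infty$) then gives
\[
\norm{\eta(x\,\cdot\,)f}_{B^{p,q}_\alpha}\meg C\norm*{\sum_{j\in A_x}\widetilde\phi_j f}_{B^{p,q}_\alpha}\meg C N_0^{1/p}\norm{f}_{M^{p,q}_\alpha},
\]
where in the last step we test the definition of $M^{p,q}_\alpha$ with $a_j=N_0^{-1/p}\chi_{A_x}(j)$. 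Alternatively, Lemma~\ref{lem:57c} can be applied directly with a single-element family $\psi$ placed at $x^{-1}$. Taking the supremum over $x$ yields the continuous inclusion $M^{p,q}_\alpha\subseteq B^{p,q}_{\alpha,\unif}(G)$.

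The only delicate point is the limiting argument in the first inclusion, namely the convergence in $\Sr'(G)$ together with lower semicontinuity. Here the hypothesis $\alpha>Q_*(1/p-1)_+$ enters: it is what makes the estimate $\norm{\sum_j a_j\widetilde\phi_j}_{B^{p,q}_\alpha}\asymp\norm{a}_{\ell^p}$ and Lemma~\ref{lem:57c} available.
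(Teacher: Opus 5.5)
Your proof is correct and essentially follows the paper's. The first inclusion rests, as in the paper, on the boundedness of $(a_j)\mapsto\sum_j a_j\widetilde\phi_j$ from $\ell^p(J)$ into $B^{p,q}_\alpha(G)$, which follows from Theorem~\ref{teo:13bis}; you also supply the truncation and lower-semicontinuity step that the paper leaves implicit. That step is cleanest if you note that the truncated products are uniformly bounded in $B^{p,q}_\alpha(G)$, hence in $\Sr'(G)$, and converge in $\Dc'(G)$; local finiteness of the sum alone does not give $\Sr'(G)$-convergence. Your second inclusion is just the proof of Lemma~\ref{lem:57c} specialized to a single translated cut-off, which is exactly how the paper argues.
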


\begin{proof}
	The continuity of the inclusion $M^{p,q}_\alpha\subseteq B^{p,q}_{\alpha,\unif}(G)$ follows from Lemma~\ref{lem:57c}, choosing $J'=J$, $x'_j\coloneqq x x_j$, $a'_j=\chi_{\Set{j_0}}(j)$ for some $j_0\in J$, and any non-zero $\psi_{j_0}$.  
	For the other inclusion,   it will suffice to show that the mapping
	\[
	\ell^p(J)\ni(a_j)\mapsto \sum_j a_j \widetilde \phi_j \in B^{p,q}_\alpha(G)
	\]
	is continuous. This follows from Theorem~\ref{teo:13bis}.
\end{proof}

\begin{teo}\label{teo:21c}
	Take $p,q\in (0,\infty]$ and take $\alpha\Meg Q_*/p$ such that either $q\meg \min(1,p)$ or $\alpha>Q_*/p$. 
	Then, $\Mc(B^{p,q}_\alpha(G))=M^{p,q}_\alpha$. In addition, the canonical bilinear mapping
	\[
	B^{p,q}_\alpha(G)\times M^{p,q}_\alpha\ni (f,g)\mapsto f g \in B^{p,q}_\alpha(G)
	\]
	is continuous.
\end{teo}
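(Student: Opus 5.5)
By Lemma~\ref{lem:58c}, $\Mc(B^{p,q}_\alpha(G))\subseteq M^{p,q}_\alpha$ continuously. It therefore suffices to prove that there is $C>0$ such that
\[
\norm{fg}_{B^{p,q}_\alpha(G)}\meg C\norm{f}_{B^{p,q}_\alpha(G)}\norm{g}_{M^{p,q}_\alpha}
\]
for every $f\in B^{p,q}_\alpha(G)$ and every $g\in M^{p,q}_\alpha$. The bilinear continuity and the reverse inclusion then follow at once. By Lemmas~\ref{lem:58c} and~\ref{lem:53c} we have $M^{p,q}_\alpha\subseteq B^{p,q}_{\alpha,\unif}(G)\subseteq L^\infty(G)$, and $B^{p,q}_\alpha(G)\subseteq L^\infty(G)$ by~\cite[Proposition 4.10]{Calzi4}. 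Hence $fg\in L^\infty(G)\subseteq\Bc_{S\grado}(G)$, and Proposition~\ref{lem:69} lets us measure $fg$ through a strong Calder\'on family $(\eta_j)_{j\Meg -M}$ (Lemma~\ref{lem:64}). I would reproduce the setup of the proof of Theorem~\ref{teo:20c}:
\begin{itemize}
\item the localized kernels $\psi_j=\eta\psi''_j$ and $\psi'_j$;
\item the paraproduct decomposition $fg=\lim(\Pi_f^{(j')}g+\Pi_g^{(j')}f)+\Pi(f,g)+\widetilde\Pi(f,g)$;
\item the maximal sequences $\widetilde f,\widetilde g,\widetilde f_{j'},\widetilde g_{j'}$;
\item the partition $J_1,\dots,J_{N'}$ of the lattice indices into well-separated families.
\end{itemize}

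For the terms $\Pi_f^{(j'')}g$ and $\widetilde\Pi(f,g)$, where $f$ carries the high-frequency pieces, the argument of Theorem~\ref{teo:20c} goes through verbatim. One reduces to bounding $\norm{\eps^{-j\alpha}\widetilde f(j,x)\widetilde g(j,x)}_{L^{p,q}_{x,j}}$ by $\sum_h\norm{\sum_{j'\in J_h}\phi_{j'}f}_{B^{p,q}_\alpha}\sup_{j'}\norm{\widetilde g_{j'}}_{L^\infty}$. This is controlled by $\norm{f}_{B^{p,q}_\alpha}\norm{g}_{L^\infty}\lesssim\norm{f}_{B^{p,q}_\alpha}\norm{g}_{M^{p,q}_\alpha}$. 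The term $\Pi(f,g)$ is handled as in~\cite[Theorem 6.6]{Calzi2}, with the same modifications. Here one writes $\psi_j=\eps^{jm\grado}\Lc^m\widetilde\psi_j$ and distributes $\Lc^m$ via the Leibniz-type identity. This reduces the term to products of low-frequency parts, which are again bounded using $L^\infty$ control of one factor and the $B^{p,q}_\alpha$ norm of the other.

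The main obstacle is $\Pi_g^{(j'')}f$, where $g$ carries the high frequencies. In Theorem~\ref{teo:20c} this term used $p\meg q$ to interchange $\ell^p_{j'}$ and $\ell^q_j$, and that interchange is no longer available. My plan is to exploit the definition of $M^{p,q}_\alpha$ directly.
\begin{enumerate}
\item Since the pieces $\widetilde f_{j'}\widetilde g_{j'}$, $j'\in J_h$, have disjoint supports, bound $\widetilde f_{j'}(j,\,\cdot\,)$ by $a_{j'}\coloneqq\norm{\phi_{j'}f}_{L^\infty(G)}$.
\item Deduce $\sum_{j'\in J_h}\widetilde f_{j'}\widetilde g_{j'}\meg\sum_{j'\in J_h}a_{j'}\widetilde g_{j'}$.
\item Observe that $\sum_{j'\in J_h}a_{j'}\widetilde g_{j'}$ is dominated, up to the maximal construction, by the analogous quantity built from $\big(\sum_{j'\in J_h}a_{j'}\phi'_{j'}\big)g$, since on the support of $\widetilde g_{j'}$ only the term $a_{j'}\phi'_{j'}$ is non-zero.
\item Control the latter via~\cite[Proposition 5.9]{Calzi4} (the atom argument used for $\widetilde f_{j'}$ in Theorem~\ref{teo:20c}) by $\norm{\sum_{j'\in J_h}a_{j'}\phi'_{j'}g}_{B^{p,q}_\alpha}$.
\item By Lemma~\ref{lem:57c}, applied with $\psi_{j'}=\phi'$ and the lattice $(x_{j',0})_{j'\in J_h}$, bound this by $C\norm{(a_{j'})}_{\ell^p(J_h)}\norm{g}_{M^{p,q}_\alpha}$.
\item Conclude with $\norm{(a_{j'})}_{\ell^p}\lesssim\norm{f}_{B^{p,p}_{Q_*/p+\kappa}}\lesssim\norm{f}_{B^{p,q}_\alpha}$, exactly as at the end of Theorem~\ref{teo:20c} (via Proposition~\ref{prop:30c} and the embedding into $L^\infty$).
\end{enumerate}

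Step 6 requires $\alpha>Q_*/p$. In the endpoint case $\alpha=Q_*/p$ with $q\meg\min(1,p)$, I would instead use $B^{p,q}_{Q_*/p}\subseteq L^\infty$ together with $q\meg p$, so that the $\ell^q_j$ and $\ell^p_{j'}$ norms interchange as in Theorem~\ref{teo:20c}. The delicate point I expect is step 3. There the maximal construction $\widetilde g_{j'}$ uses $g*\psi'_j$ rather than $g*\psi_j$, so the high/low roles must be tracked carefully. Also, $\phi'_{j'}$ versus $\phi_{j'}$ must be reconciled through the support condition $B(x_{j',0},9/4)\subseteq\{\phi'_{j'}=1\}$.
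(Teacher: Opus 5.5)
Your proposal follows the paper's proof essentially step by step: the reduction via Lemma~\ref{lem:58c}, the setup of Theorem~\ref{teo:20c}, bounding the low-frequency factor of $\Pi_g^{(j'')}f$ by the $L^\infty$ norms $a_{j'}$ of localized pieces of $f$, then \cite[Proposition 5.9]{Calzi4} and the definition of $M^{p,q}_\alpha$, and finally $\norm{a}_{\ell^p}\lesssim\norm{f}_{B^{p,q}_\alpha(G)}$ (via $B^{p,p}_{Q_*/p+\kappa}(G)$, or at the endpoint via the $\ell^q$/$\ell^p$ interchange allowed by $q\meg p$). The only differences are cosmetic, plus one caveat. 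The paper localizes $g$ with the partition of unity $(\widetilde\phi_{j'})$ defining $M^{p,q}_\alpha$ and $f$ with the enlarged cutoffs $\widetilde\phi'_{j'}$ (so $a_{j'}=\norm{\widetilde\phi'_{j'}f}_{L^\infty(G)}$), so the definition of $M^{p,q}_\alpha$ applies directly and your step~3 becomes automatic (as written, that step should restrict to the support of the $f$-factor, not of $\widetilde g_{j'}$). The caveat: the paper reduces to estimating both $\Pi_g^{(j'')}f$ and $\Pi(f,g)$. After distributing $\Lc^m$, the pieces of $\Pi(f,g)$ where the derivatives fall on $g$ are of $\Pi_g f$ type and are not controlled by $\norm{g}_{L^\infty(G)}$ alone, so $\Pi(f,g)$ needs the same $M^{p,q}_\alpha$-localization rather than being a routine term.
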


Observe that this result, combined with Theorem~\ref{teo:20c} and Proposition~\ref{prop:21c}, shows that $M^{p,q}_\alpha=B^{p,q}_{\alpha,\unif}(G)$ when $p\meg q$, and that $M^{\infty,q}_\alpha=B^{\infty,q}_\alpha(G)$.

\begin{proof}
	Take $(x_j)$, $(\phi_j)$, and $(\widetilde \phi_j)_{j\in J}$ as in Lemma~\ref{lem:57c}. Observe that, by~\cite[Lemma 4.3]{Calzi2} we may find $N\in\N$ and a partition $J_1,\dots, J_N$ of $J$ such that the $d(x_j,x_{j'})\Meg 2R+6$  for every two distinct $j,j'\in J_h$, $h=1,\dots, N$, where $R>0$ is such that each $\phi_j$ is supported in $B(e, R)$. Take $\psi\in C^\infty_c(G)$ such that $\chi_{B(e,R+2)}\meg \phi'\meg \chi_{B(e,R+3)}$, and define $\widetilde \phi'_j\coloneqq \phi'(x_j^{-1}\,\cdot\,)$ for every $j\in J$.
	
	Observe that the continuous  inclusion $\Mc(B^{p,q}_\alpha(G))\subseteq M^{p,q}_\alpha$ follows from Lemma~\ref{lem:58c}, so that we may reduce to proving that the bilinear mapping $B^{p,q}_\alpha(G)\times M^{p,q}_\alpha \ni(f,g)\mapsto f g\in  B^{p,q}_\alpha(G)$ is continuous. Since $M^{p,q}_\alpha\subseteq B^{p,q}_{\alpha,\unif}(G)$ by Lemma~\ref{lem:58c}, arguing as in the proof of~\cite[Theorem 6.6]{Calzi2}, we may reduce to estimating $ \Pi_g^{(j'')} f$ for every $j''\in \N$ and  $\Pi(f,g)$, with the notation of the proof of~\cite[Theorem 6.6]{Calzi2}.  In fact, we shall only estimate $ \Pi_g^{(j'')} f$.
	
	Take $(\psi_j),(\psi'_j)$, $(x_{j,k})$, $(J_h)_{h=1,\dots, N'}$, $(\eta_j)_{j\Meg -M}$, and $(B_{j,k})$ as in the proof of Theorem~\ref{teo:20c}
	Define the auxiliary functions $\widetilde f, \widetilde g, \widetilde f_{j'}, \widetilde g_{j'}\colon G\times \N\to [0,\infty)$ ($j'\in\N$) so that
	\[
	\widetilde f(x,j)\coloneqq \max_{\overline B(x_{j,k},\eps^j/2)} \abs{f*\psi_j'}\chi_{B_{j,k}}(x) \qquad \text{and} \qquad \widetilde g(x,j)\coloneqq \max_{\overline B(x_{j,k},\eps^j/2)} \abs{g*\psi_j}\chi_{B_{j,k}}(x)
	\]
	and also
	\[
	\widetilde f_{j'}(x,j)\coloneqq \max_{\overline B(x_{j,k},\eps^j/2)} \abs{(\widetilde \phi'_{j'}f)*\psi_j'}\chi_{B_{j,k}}(x) \qquad \text{and} \qquad \widetilde g(x,j)\coloneqq \max_{\overline B(x_{j,k},\eps^j/2)} \abs{(\widetilde \phi_{j'}g)*\psi_j}\chi_{B_{j,k}}(x)
	\]
	for every $(x,j)\in G\times \N$.
	Then, as in the proof of Theorem~\ref{teo:20c} we see that there is a constant $C_1>0$ such that
	\[
	\begin{split}
		\norm{\eps^{-j'\alpha}[(\Pi_g^{(j'')}f)*\eta_{j'}]}_{L^{p,q}_{x,j'}(G,\N-M)}&\meg C_1 \norm*{\eps^{-j\alpha}\sum_{j'\in \N} \widetilde f(j,x) \widetilde g(j,x)}_{L^{p,q}_{x,j}(G,\N)}\\
			&\meg C_1N'^{1/\min(1,p,q)-1} \sum_{h=1}^{N'} \norm*{\eps^{-j\alpha}\sum_{j'\in J_h} \widetilde f_{j'}(j,x) \widetilde g_{j'}(j,x)}_{L^{p,q}_{x,j}(G,\N)}.
	\end{split}
	\]
	Now, using~\cite[Proposition 5.9]{Calzi4} as in the proof of Theorem~\ref{teo:20c}, we see that  there is a constant $C_2>0$ such that
	\[
	\begin{split} 
		\norm*{\eps^{-j\alpha}\sum_{j'\in J_h} \widetilde f_{j'}(j,x) \widetilde g_{j'}(j,x)}_{L^{p,q}_{x,j}(G,\N)}&\meg  \norm*{\eps^{-j\alpha} \sum_{j'\in J_h}\norm{\widetilde \phi'_{j'}f}_{L^\infty(G)} \widetilde g_{j'}(j,x)}_{L^{p,q}_{x,j}(G,\N)}\\
			&\meg C_2 \norm*{\sum_{j\in J_h} a_{j'}  \widetilde \phi_{j'} g }_{B^{p,q}_\alpha(G)}\\
			&\meg C_2 \norm{a }_{\ell^p(J_h)} \norm{ g}_{M^{p,q}_\alpha},
	\end{split}
	\]
	where $a_j\coloneqq \norm{\widetilde \phi'_{j}f}_{L^\infty(G)}$ for every $j\in \N$. Now, if $\alpha>Q_*/p$, then   the desired estimate  follows  from the fact that, by~\cite[Proposition 4.10]{Calzi4} and Proposition~\ref{prop:30c}, there is a constant $C_3>1$ such that  
	\[
	\begin{split}
		\norm{a_j}_{\ell^p_j(J_h)}&\meg C_3 \norm*{\norm{\widetilde \phi'_j f}_{B^{p,p}_{Q_*/p+\eps}(G)} }_{\ell^p_j(J)}\\
		&\meg C_3^2 \norm{f}_{B^{p,p}_{Q_*/p+\kappa}(G)}\\
		&\meg C_3^3 \norm{f}_{B^{p,q}_\alpha(G)}
	\end{split}
	\]
	for any fixed $\kappa\in (0,\alpha-Q_*/p)$. If, otherwise, $\alpha=Q_*/p$, then $q\meg\min(1,p)$,  so that by~\cite[Proposition 4.10]{Calzi4} there is a constant $C_4>0$ such that 
	\[
	\begin{split}
		\norm{a_j}_{\ell^p_j(J_h)}&\meg C_4 \norm*{\norm{\widetilde \phi'_j f}_{B^{p,q}_{Q_*/p}(G)} }_{\ell^p_j(J)}\\
		&\meg C_4^2  \norm*{\norm*{ \norm{(\widetilde \phi'_j f)*\eta_{j'}}_{L^p(G)}}_{\ell^q_{j'}(\N-M)} }_{\ell^p_j(J)}\\
		&\meg C_4^2 N'^{(1/p-1)_+}\sum_{h=1}^{N'} \norm*{\norm*{ \norm{(\widetilde \phi'_j f)*\eta_{j'}}_{L^p(G)}}_{\ell^q_{j'}(\N-M)}  }_{\ell^p_j(J_h)}\\
		&\meg C_4^2N'^{(1/p-1)_+}\sum_{h=1}^{N'} \norm*{\norm*{ \norm{(\widetilde \phi'_j f)*\eta_{j'}}_{L^p(G)}}_{\ell^p_j(J_h)}}_{\ell^q_{j'}(\N-M)} \\
		&= C_4^2N'^{(1/p-1)_+}\sum_{r=1}^N \norm*{  \norm{ (\widetilde\phi'^{(h)}f)*\eta_{j'}  }_{L^p(G)} }_{\ell^q_{j'}(\N-M)},
	\end{split}
	\]
	where $\widetilde\phi'^{(h)}=\sum_{j\in J_h} \widetilde \phi'_j$,
	so that the desired estimate follows from Corollary~\ref{cor:20b}.
\end{proof}

\end{document}